% !TEX TS-program = pdflatexmk
\documentclass[11pt]{article}
\usepackage[margin=1in]{geometry}
\usepackage[usenames,dvipsnames]{color}
\usepackage{xcolor}
\usepackage{subfigure}
 
\usepackage{listings}
\setlength{\headheight}{15pt}
\usepackage{geometry}

\usepackage{amsmath,amsthm,amsfonts,amssymb,amscd}
\usepackage{times}

\usepackage{mathtools}
\usepackage{mathabx}
\usepackage{float}
\usepackage{makeidx}
\usepackage{stmaryrd}
\usepackage{mathrsfs}
\usepackage{emptypage}
\usepackage{xfrac}
\usepackage{bbm}

\renewcommand{\epsilon}{\varepsilon}

%%%%%%%%%%%%%%%%%%%%%%%%%%%%%%%%%%%%%%%%%%%%
\usepackage{graphicx}

%%Citation keys in blue, small on the side
%\providecommand*\showkeyslabelformat[1]{\normalfont \tiny#1}
%\usepackage[notref,notcite,color]{showkeys}
%\definecolor{labelkey}{rgb}{0,0,1}

\newcommand{\pa}{\partial}

\newcommand*{\sign}{\ensuremath{\mathrm{sgn\,}}}

\renewcommand*{\tilde}{\widetilde}

\newcommand{\X}[1]{\left \rVert #1 \right \rVert_{L^2}}

\newcommand{\Xm}[1]{\left \rVert #1 \right \rVert_{X_m}}

\newcommand{\Ltwo}[1]{\left \rVert #1 \right \rVert_{L^2}}
\newcommand{\Ltwox}[1]{\left \rVert #1 \right \rVert_{L_x^2}}
\newcommand{\Y}[1]{\left \rVert #1 \right \rVert_{L^\infty}}
\newcommand{\Lone}[1]{\left \rVert #1 \right \rVert_{L^1 }}
\newcommand{\Loneinf}[1]{\left \rVert #1 \right \rVert_{L^1 L^\infty}}
\newcommand{\Ltwoinf}[1]{\left \rVert #1 \right \rVert_{L_y^2 L_x^\infty}}
\newcommand{\Linf}[1]{\left \rVert #1 \right \rVert_{L_y^\infty L_x^\infty}}

\newenvironment{detailssth}[1]{\begin{trivlist} \item[] {\textbf{Details of #1:}}}{
                       \end{trivlist}}

\newtheorem{theorem}{Theorem}[section]
\newtheorem{lemma}[theorem]{Lemma}

\newtheorem{proposition}[theorem]{Proposition}
\newtheorem{corollary}[theorem]{Corollary}
\theoremstyle{definition}
\newtheorem{definition}[theorem]{Definition}

\newtheorem{rmk}[theorem]{Remark}
%\theoremstyle{lemma}

%%% EQUATION Numbering Starts Fresh at each Section

\numberwithin{equation}{section}

%%%%%%%%%%%%%%%%%%%%%%%%%%%%%%%%%%%%%%%%%%%%%%%%%%%%%%%%%%%%%%%

\def\f1r{{\frac{1}{r}}  }
%%%%%%%%%%%%%%%%%%%%%%%%%%%%%%%%%%%%%%%%%%%%%%%%%%%%%%%%%%%%%%%
%%%%%%%%%%%%%%%%%%%%%%%%%%%%%%%%%%%%%%%%%%%%%%%%%%%%%%%%%%%%%%%

\def\f1r{{\frac{1}{r}}  }

%\def\theta{\uptheta}
%\def\Theta{\Uptheta}
%\def\epsilon{\upvarepsilon}
%\def\a{\textsf{a}}
%\def\b{\textsf{b}}
%\def\h{\textsf{h}}
%\def\x{{\textsf{x}}}
%\def\z{\textsf{z}}
%%%%%%%%%%%%%%%%%%%%%%%%%%%%%%%%%%%%%%%%%%%%%%%%%%%%%%%%%%%%%%%

%\DeclareMathAlphabet{\mathpzc}{OT1}{pzc}{m}{it}
%\usepackage[doi=false,isbn=false,url=false,eprint=false,maxbibnames=99]{biblatex} %Imports biblatex package
%\addbibresource{euler.bib}
%\addbibresource{references.bib} %Import the bibliography file

%%%%%%%%%%%%%%TITLE STARTS HERE%%%%%%%%%%%%%%%%%%%%%%%%%%%%%%%%%%%%%%

\title{{\bf Existence of analytic non-convex V-states}}
 \author{Gerard Castro-L\'opez, Javier G\'omez-Serrano}

\date{} %
\begin{document}

\maketitle
%\tableofcontents

\begin{abstract}
    V-states are uniformly rotating vortex patches of the incompressible 2D Euler equation and the only known explicit examples are circles and ellipses. In this paper, we prove the existence of 
    non-convex V-states with analytic boundary which are far from the known examples. To prove it, we use a combination of analysis of the linearized operator at an approximate solution and computer-assisted proof techniques.
\end{abstract}

\tableofcontents
\section{Introduction}
\subsection{The equations}

In this paper we are concerned with the incompressible 2D Euler equation (in vorticity form):
\begin{equation}\label{EulerEq_vorticity1}
    \begin{cases}
        \partial_t \omega + u \cdot \nabla \omega   = 0 & \text{in } \mathbb{R}^2 \times [0, \infty) \\
        \text{curl} (u) = \omega &\text{in } \mathbb{R}^2 \times [0, \infty)\\
        \nabla \cdot u = 0 & \text{in } \mathbb{R}^2 \times [0, \infty) \\
        \omega(x, 0) = \omega_0(x) & \text{in } \mathbb{R}^2. 
    \end{cases}
\end{equation}
It is a transport equation for the vorticity, which is conserved along the particle trajectories. In 1963 Yudovich \cite{Yudovich:Nonstationary-ideal-incompressible} proved uniqueness and global existence of (weak) solutions for initial data $\omega_0$ in $L^1\cap L^\infty$. We will now focus on vortex patch solutions, namely when the initial data is the characteristic function of a closed set $D_0 \subset \mathbb{R}^2$.  Throughout this paper, we will assume that $D_0$ is simply connected.

Since the vorticity is transported along the flow of $u$, the vorticity remains a characteristic function of a set $D(t)$ that changes over time.
Then, assuming $\partial D(t) = \{z(t, x) : x \in [0, 2\pi]\}$, where $z(t,x) \in \mathbb{R}^2$ is a parametrization of the boundary of the set $D(t)$ at time $t$, we can reduce the Euler equation \eqref{EulerEq_vorticity1} to an non local evolution equation for the boundary of the set $D(t)$:
\begin{equation}\label{eq_vortex_patch_perp}
    \partial_t z(t, x) \cdot \partial_x z(t, x)^\perp = -\frac{1}{2\pi}
 \int_0^{2\pi} \log (|z(t, y)- z(t, x)|) \partial_y z(t, y) \cdot \partial_x z(t, x)^\perp d y.
 \end{equation}

Vortex patches are globally well posed for boundary data in $C^{1,\theta}$. That is, for every initial boundary data $z_0(x)$ in $C^{1, \theta}$ the solution $z(t, x)$ always stays in $C^{1, \theta}$. This was proved by Chemin \cite{Chemin:persistance-structures-fluides-incompressibles}, and also by Bertozzi-Constantin \cite{Bertozzi-Constantin:global-regularity-vortex-patches}. Recently, Kiselev--Luo \cite{Kiselev-Luo:illposedness-c2-vortex-patches} have proved ill-posedness in $C^2$. From now on, we will focus on a more particular solution,  uniformly rotating vortex patches or \textit{V-states}. These are vortex patches $D(t)$ such that they rotate with constant angular velocity $\Omega$, so $D(t) = M(\Omega t) D_0$ where $\Omega$ is the angular velocity and $M$ is the counterclockwise rotation matrix
\begin{equation*}
    M (\Omega t)= 
    \begin{pmatrix}
        \cos(\Omega t) & -\sin(\Omega t) \\
        \sin(\Omega t) & \cos(\Omega t).
    \end{pmatrix}
\end{equation*}
To find which initial domains $D_0$ evolve like this, we impose $z(t, x) = M(\Omega t) z_0(x)$ in equation \eqref{eq_vortex_patch_perp} and obtain
\begin{equation}\label{eqz_vstate}
    \Omega z_0(x)^\perp \cdot \partial_x z_0(x) = -\frac{1}{2\pi} \int_0^{2\pi} \log (|z_0(x)- z_0(y)|) \partial_y z_0(y) \cdot \partial_x z_0(x)^\perp. 
\end{equation}

Parametrizing in polar coordinates the boundary  $z_0(x) = R(x)(\cos(x), \sin(x))$, we arrive to the following equation for $R(x)$:
\begin{equation}\label{eqR}
     R R' =   F[R] 
\end{equation}
where $F[R] := F_1[R] + F_2[R]$  and the $F_i$ are
\begin{equation}\label{F_def}
\begin{aligned}
    &F_1[R] \coloneqq \frac{1}{4\pi\Omega}\int_0^{2\pi} \cos(x-y) \log \left ( (R(x)-R(y))^2 + 4R(x)R(y)\sin^2 \left (\frac{x-y}{2} \right )  \right ) (R(x)R'(y)-R'(x)R(y)) dy  \\ 
    &F_2[R] \coloneqq \frac{1}{4\pi\Omega}\int_0^{2\pi} \sin(x-y) \log \left ( (R(x)-R(y))^2 + 4R(x)R(y)\sin^2 \left (\frac{x-y}{2} \right )  \right ) (R(x)R(y) + R'(x)R'(y)) dy. 
\end{aligned}
\end{equation}
We will consider solutions to \eqref{eqR} defined on the torus $\mathbb{T} = \mathbb{R}/2\pi\mathbb{Z}$.

\subsection{Historical background}

The existence of V-states has been of great importance over centuries, getting an explosion of results during the last decade. It is known since the 1800s that circles are stationary (and thus rotating for any $\Omega$). The first example of non-radial solutions was given by Kirchhoff in 1874 \cite{Kirchhoff:vorlesungen-math-physik}, proving that ellipses (now known as the \textit{Kirchhoff ellipses}) are also uniformly rotating vortex patches with angular velocity $\Omega_{a,b} = \frac{ab}{(a+b)^2}$, being $a,b$ the semiaxes of the ellipse. Other than these two families there are no other explicit solutions known.

% Numerics
The first comprehensive and methodical search for V-states was done numerically. Deem--Zabusky \cite{Deem-Zabusky:vortex-waves-stationary} coined the term and discovered numerically families of $m$-fold uniformly rotating solutions bifurcating from the circle for multiple $m \geq 3$ doing continuation together with a Newton-Raphson method. These solutions can be seen as a generalization of the Kirchhoff ellipses branch. Wu--Overman--Zabusky \cite{Wu-Overman-Zabusky:steady-state-Euler-2d}, using more computational power and improved second- and third-order algorithms were able to perform the continuation until close to the limiting shape, suggesting a 90-degree corner (see \cite{Overman:steady-state-euler-local-analysis-vstates} and \cite{Saffman-Tanveer:touching-pair-unniform-vortices} for asymptotic analysis and \cite{Wang-Zhang-Zhou:Boundary-regularity-v-states} for a calculus of variations' proof of the alternatives of the endpoint). Previously, Pierrehumbert \cite{Pierrehumbert:v-states-cusp} had predicted a cusp. Kamm \cite{Kamm:thesis-shape-stability-patches} computed secondary bifurcations from the elliptical branch, while Saffman--Szeto  \cite{Saffman-Szeto:equilibrium-shapes-equal-uniform-vortices} calculated translating V-states and their stability. Using variational energy arguments and velocity impulse diagrams,
Luzatto-Fegiz--Williamson \cite{LuzzattoFegiz-Williamson:efficient-numerical-method-steady-uniform-vortices,LuzzattoFegiz-Williamson:resonant-instability-2d-vortex,LuzzattoFegiz-Williamson:stability-2d-flows-imperfect-velocity-impulse,LuzzattoFegiz:bifurcation-stability-opposite-signed-vortex,LuzzattoFegiz-Williamson:stability-conservative-flows-steady-solutions-variational-argument,LuzzattoFegiz-Williamson:stability-elliptical-vortices-ivi-diagrams,LuzzattoFegiz-Williamson:structure-stability-von-karman-street} (cf. also \cite{Cerretelli-Williamson:new-family-vortices}) managed to establish linear stability properties without the need to actually perform a full linear stability
analysis, as well as to compute limiting V-states with high precision.

Related numerical computations include the work of Dritschel \cite{Dritschel:stability-energetics-corotating-vortices,Dritschel:general-theory-2d-vortex,Dritschel:repeated-filamentation-2d-vorticity} on co-rotating vortices, unequal vorticity equilibria and instability of disks and ellipses respectively; Mitchell-Rossi \cite{Mitchell-Rossi:evolution-kirchhoff-elliptic-vortices} on the evolution of perturbed ellipses; Dhanak \cite{Dhanak:stability-polygon-vertices} on the stability of small-area vortices arranged as a polygon; Elcrat--Fornberg--Miller \cite{Elcrat-Fornberg-Miller:stability-vortices-cylinder} on the stability of vortices past a cylinder and Elcrat--Protas \cite{Elcrat-Protas:framework-linear-stability-vortices} on revisiting the question of the linear stability analysis.

% Theory
Concerning existence of V-states, Burbea \cite{Burbea:motions-vortex-patches} gave the first proof of local bifurcation branches of $m$-fold symmetric solutions emerging from the disk at angular velocities $\Omega_m = \frac{m-1}{2m}$. Later, Hmidi--Mateu--Verdera \cite{Hmidi-Mateu-Verdera:rotating-vortex-patch} proved $C^\infty$ boundary regularity of those solutions, and Castro--Córdoba--Gómez-Serrano \cite{Castro-Cordoba-GomezSerrano:analytic-vstates-ellipses} improved the result to analytic. The existence of secondary bifurcations from the elliptical branch were proved by Castro--C\'ordoba--G\'omez-Serrano \cite{Castro-Cordoba-GomezSerrano:analytic-vstates-ellipses} (with analytic boundary regularity) and Hmidi-Mateu \cite{Hmidi-Mateu:bifurcation-kirchhoff-ellipses} (with $C^\alpha$). We also mention other scenarios such as the confined case \cite{delaHoz-Hassainia-Hmidi-Mateu:vstates-disk-euler} or the doubly connected case \cite{Hmidi-delaHoz-Mateu-Verdera:doubly-connected-vstates-euler}, where degenerations appear \cite{Hmidi-Mateu:degenerate-bifurcation-vstates-doubly-connected-euler,Hmidi-Renault:existence-small-loops-doubly-connected-euler,Wang-Xu-Zhou:degenerate-bifurcations-2fold-doubly-connected-patches}, as well as the co-rotating case \cite{Hmidi-Mateu:existence-corotating-counter-rotating,Garcia-Haziot:global-bifurcation-corotating-counter-rotating}, multipole configurations \cite{Hassainia-Wheeler:multipole-vstates-active-scalars,Garcia:Karman-vortex-street,Garcia:vortex-patch-choreography}, asymmetric vortex pairs \cite{Hassainia-Hmidi:steady-asymmetric-vortex-pairs-euler} and variational constructions \cite{Turkington:corotating-vortices}. Using global bifurcation theory, Hassainia-Masmoudi-Wheeler \cite{Hassainia-Masmoudi-Wheeler:global-bifurcation-vortex-patches} constructed global bifurcating curves (with analytic boundary) bifurcating from the circle. Their theorem outlines alternatives for the curve to end but doesn't narrow it to one due to the lack of information on solutions along the branch. See also the work of Carrillo–Mateu–Mora–Rondi–Scardia–Verdera \cite{Carrillo-Mateu-Mora-Rondi-Scardia-Verdera:dislocation-ellipses,Mateu-Mora-Rondi-Scardia-Verdera:explicit-minimisers-anisotropic-coulomb-3d} for variational techniques
applied to other anisotropic problems related to vortex patches and the very recent results by Choi--Jeong--Sim and Huang--Tong \cite{Choi-Jeong-Sim:existence-sadovskii-vortex-patch,Huang-Tong:steady-contiguous-vortex-patch-dipole} on the Sadovskii vortex patch. Other constructions of periodic or quasi-periodic patch solutions using KAM theory have been performed by Berti--Hassainia--Masmoudi \cite{Berti-Hassainia-Masmoudi:time-quasiperiodic-vortex-patches} close to ellipses, Hassainia--Hmidi--Roulley \cite{Hassainia-Hmidi-Roulley:invariant-kam-annular-vortex-patches} close to annuli and Hassainia--Roulley \cite{Hassainia-Roulley:quasiperiodic-euler-boundary} close to disks.

Another interesting direction involving V-states is to determine the range (in $\Omega$) of their existence (also known as the question of \textit{rigidity} vs \textit{flexibility}). Fraenkel \cite{Fraenkel:book-maximum-principles-symmetry-elliptic} proved using moving plane methods that the only simply-connected patch that is a solution for $\Omega = 0$ is the disk. This was later improved by Hmidi \cite{Hmidi:trivial-solutions-rotating-patches} in the cases $\Omega = \frac12$ and $\Omega < 0$ (the latter case under an additional convexity assumption). Finally, G\'omez-Serrano--Park--Shi--Yao \cite{GomezSerrano-Park-Shi-Yao:radial-symmetry-stationary-solutions}, using calculus of variations methods closed the problem, allowing them to remove the simply connected geometry assumption, and proving that the only uniformly rotating vortex patches for $\Omega \notin (0, \frac12)$ and signed vorticity are the disks, with the bounds on $\Omega$ being sharp. See also results on quantitative estimates by Park \cite{Park:quantitative-estimates-v-states} and related results by Huang \cite{Huang:rigidity-vstate-near-rankine}. In a different direction G\'omez-Serrano--Park--Shi \cite{GomezSerrano-Park-Shi:nonradial-stationary-solutions} constructed nonradial, finite energy patch solutions with sign-changing vorticity. See also the recent nonradial constructions by Enciso--Fern\'andez--Ruiz--Sicbaldi \cite{Enciso-Fernandez-Ruiz-Sicbaldi:schiffer-annuli-stationary-planar-euler,
Enciso-Fernandez-Ruiz:smooth-nonradial-stationary-euler}
and other rigidity/flexibility results by Hamel--Nadirashvili \cite{Hamel-Nadirashvili:rigidity-euler-annulus}, G\'omez-Serrano--Park--Shi--Yao \cite{GomezSerrano-Park-Shi-Yao:rotating-solutions-vortex-sheet-rigidity, GomezSerrano-Park-Shi-Yao:rotating-solutions-vortex-sheet-flexibility,GomezSerrano-Park-Shi:nonradial-stationary-solutions}, Dom\'inguez--Enciso--Peralta-Salas \cite{Dominguez-Enciso-PeraltaSalas:piecewise-smooth-stationary-euler} and Ruiz \cite{Ruiz:symmetry-compactly-supported-steady-2d-euler}.

% Stability
With respect to the stability of V-states, Love \cite{Love:stability-ellipses} proved the linear stability for ellipses of aspect ratio bigger than $\frac13$ and linear instability for ellipses of aspect
ratio smaller than $\frac13$. Since then, most of the results have been devoted to upgrade Love's results to the nonlinear setting. Wan \cite{Wan:stability-rotating-vortex-patches}, and Tang \cite{Tang:nonlinear-stability-vortex-patches} proved the nonlinear
stable case, whereas Guo–Hallstrom–Spirn \cite{Guo-Hallstrom-Spirn:dynamics-unstable-kirchhoff-ellipse} settled the nonlinear unstable one. See also \cite{Constantin-Titi:evolution-nearly-circular-vortex-patches}. We also point out to other stability results by Sideris--Vega \cite{Sideris-Vega:stability-L1-patches}, Choi--Jeong 
\cite{Choi-Jeong:stability-instability-kelvin-waves} and Cao--Wan--Wang \cite{Cao-Wan-Wang:nonlinear-stability-patches-bounded-domains}.

% \begin{figure}[t]
%     \centering
%     \includegraphics[width=6cm]{Net2/Images/zabusky.png}
%     \caption{Numerical solutions from Deem-Zabusky \cite{Deem1978}}
%     \label{fig:Zabusky}
% \end{figure}

The main result of this paper, Theorem \ref{main_theorem}, is the first positive existence result that gives quantitative information about the solution outside  the perturbative region around the circle or ellipses. The difficulty of finding solutions far from those regions is what makes the problem hard: global bifurcation or calculus of variations' techniques fails to provide quantitative information and local bifurcation doesn't work far from trivial solutions. We hope our method to be applicable to other situations and use the Main Theorem from this paper to showcase its power. In particular, the concrete choice of the $m=6$ branch is important since the branches corresponding to $m=2,3$ seem to be always convex so any general argument which doesn't use any additional information about $m$ is doomed to fail.

\subsection{Computer-assisted proofs}

One of the fundamental pillars of the proof relies on a computer-assisted argument. This type of arguments have accumulated a lot of momentum over the last few years, leading to the possibility to tackle more complex scenarios and problems. The main paradigm is to substitute floating point numbers on a computer by rigorous bounds, which are then propagated through every operation that the computer performs taking into account any error made throughout the process.

In our concrete case, we will mostly use the computer to perform three tasks:

\begin{enumerate}
    \item Compute certified eigenvalue bounds of matrices of moderate sizes (Lemma \ref{I+Ainvert})
    \item Compute operator bounds which can be recasted into integrals of complicated functions (Lemma \ref{L-L_Fbound})
    \item Quantify the error (the \textit{defect}) made by the approximation (Lemma \ref{CE0})
\end{enumerate}

Towards the first bullet point, somewhat similar techniques were applied in the context of computer-assisted proofs for eigenvalue problems in polygons by G\'omez-Serrano--Orriols \cite{GomezSerrano-Orriols:negative-hearing-shape-triangle}, Dahne--G\'omez-Serrano--Hou \cite{Dahne-GomezSerrano-Hou:counterexample-payne}, Bogosel--Bucur \cite{Bogosel-Bucur:polygonal-faber-krahn-validated}, whereas the reduction from operator bounds into complicated integrals has been implemented in other settings by Castro--C\'ordoba--G\'omez-Serrano \cite{Castro-Cordoba-GomezSerrano:global-smooth-solutions-sqg}, Chen-Hou-Huang \cite{Chen-Hou-Huang:blowup-degregorio}, Dahne \cite{Dahne:highest-cusped-waves-fkdv} or Dahne--G\'omez-Serrano \cite{Dahne-GomezSerrano:highest-wave-burgershilbert}.

In a broader context (computer-assisted proofs in PDE), the reduction of the proof of existence to a fixed-point argument has also been particularly successful in the context of \textit{radii polynomials}, developed in \cite{vandenBerg-Lessard:chaotic-braided-solutions-swift-hohenberg,Day-Lessard-Mischaikow:validated-continuation-equilibria-pde,Gameiro-Lessard-Mischaikow:validated-continuation-large-parameter-ranges-pde} and later used among others by van den Berg--Breden--Lessard--van Veen \cite{vandenBerg-Breden-Lessard-vanVeen:periodic-orbits-ns} in the context of periodic solutions of Navier-Stokes, or by Castelli--Gameiro--Lessard \cite{Castelli-Gameiro-Lessard:rigorous-numerics-ill-posed-pde} for Boussinesq.

We would also like to highlight other works that have successfully completed a computer-assisted proof for problems in fluid mechanics such as Guo--Had\v zi\'c--Jang--Schrecker \cite{Guo-Hadzic-Jang-Schrecker:gravitational-collapse-stars-self-similar} for Euler-Poisson; Kobayashi \cite{Kobayashi:global-uniqueness-stokes} for Stokes' extreme waves; Chen--Hou--Huang \cite{Chen-Hou-Huang:blowup-degregorio} for De Gregorio; Castro--C\'ordoba--G\'omez-Serrano \cite{Castro-Cordoba-GomezSerrano:global-smooth-solutions-sqg} for SQG;  Cadiot \cite{Cadiot:proofs-existence-stability-capillary-gravity-whitham} for Whitham; Arioli--Koch, Figueras--De la Llave, Gameiro--Lessard, Figueras--Gameiro--Lessard--De la Llave, Zgliczynski, Zgliczynski--Mischaikow \cite{Arioli-Koch:cap-stationary-ks,Figueras-DeLaLLave:cap-periodic-orbits-kuramoto,Gameiro-Lessard:periodic-orbits-ks,Figueras-Gameiro-Lessard-DeLaLLave:framework-cap-invariant-objects,Zgliczynski:periodic-orbit-kuramoto,Zgliczynski-Mischaikow:rigorous-numerics-kuramoto} for Kuramoto--Shivasinsky; Buckmaster--Cao-Labora--G\'omez-Serrano \cite{Buckmaster-CaoLabora-GomezSerrano:implosion-compressible} for compressible Euler and Navier-Stokes; and Arioli--Gazzola--Koch, Bedrossian--Punshon-Smith \cite{Arioli-Gazzola-Koch:uniqueness-bifurcation-ns,Bedrossian-PunshonSmith:chaos-stochastic-2d-galerkin-ns} for Navier-Stokes.

Another important set of works, more generally in (elliptic or parabolic) PDE are the classical works of Plum
\cite{Plum:H2-estimates-elliptic-bvp,Plum:numerical-existence-nonlinear-elliptic-bvps} and Nakao
\cite{Nakao:numerical-approach-elliptic-problems,Nakao:nonlinear-parabolic-problems} as well as more modern ones such as Fazekas--Pacella--Plum 
\cite{Fazekas-Pacella-Plum:nonradial-lane-emden-ball}, van den Berg--H\'enot--Lessard \cite{vandenBerg-Henot-Lessard:radial-solutions-semilinear-elliptic}, Jaquette--Lessard--Takayasu 
\cite{Jaquette-Lessard-Takayasu:global-dynamics-nonconservative-nls} or Breden--Engel 
\cite{Breden-Engel:cap-chaos-hopf-systems}. 

We also refer the reader to the books \cite{Moore-Bierbaum:methods-applications-interval-analysis,Tucker:validated-numerics-book} and to the survey \cite{GomezSerrano:survey-cap-in-pde} and the book \cite{Nakao-Plum-Watanabe:cap-for-pde-book} for a more specific treatment of computer-assisted proofs in 
PDE.

Finally, we would like to mention that our implementation of the computer-assisted proof was done using the Arb \cite{Johansson:Arb} library in C/C++.

\subsection{Main theorem, sketch of the proof and organization of the paper}
\label{subsec:main_result}

The main theorem of this work is the following:

\begin{theorem}\label{main_theorem}
    There exists an analytic solution $R(x)$ of \eqref{eqR} such that it parametrizes a vortex patch $D\subset \mathbb{R}^2$ which is non-convex and has $6$-fold symmetry. See Figure \ref{fig:patch}.
\end{theorem}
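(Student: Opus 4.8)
The plan is to recast the V-state equation \eqref{eqR} as a zero-finding problem $\mathcal{F}[R] = 0$, where $\mathcal{F}[R] := RR' - F[R]$ with $F = F_1 + F_2$ given by \eqref{F_def} and $\Omega$ a fixed parameter, and to solve it by a Newton--Kantorovich / contraction-mapping argument around a numerically constructed approximate solution. The natural setting is a Banach space $X_m$ (here $m = 6$) of $2\pi$-periodic functions that are $m$-fold symmetric and whose Fourier coefficients decay exponentially; membership in $X_m$ then already encodes both analyticity of the boundary and the $6$-fold symmetry, and $X_m$ is a Banach algebra, so $\mathcal{F}$ is well defined and smooth on it once one checks that the logarithmic kernel in \eqref{F_def} maps $X_m$ to $X_m$ — the latter by extracting the explicit singular piece $\log\!\big(4\sin^2(\tfrac{x-y}{2})\big)$ (whose Fourier series is known) and bounding the remaining analytic factor. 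The first step is to produce, by numerical continuation along the $m = 6$ Burbea branch far from the disk followed by a Newton iteration, an explicit $R_0 \in X_m$ represented as a trigonometric polynomial with interval coefficients, chosen so that the curve $x \mapsto R_0(x)(\cos x, \sin x)$ is manifestly non-convex — its signed curvature is strictly negative on an explicit arc — and far from any circle or ellipse. Everything after this is a rigorous a posteriori verification that an exact solution lies in a small ball $B(R_0, \varrho) \subset X_m$.

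The verification rests on three quantitative computer-assisted estimates. First, a \emph{defect bound} $\Xm{\mathcal{F}[R_0]} \le Y_0$ obtained by rigorously enclosing the integrals in \eqref{F_def}; this is Lemma \ref{CE0}, the delicate point being again the logarithmic singularity near $x = y$. Second, \emph{invertibility of the linearized operator} $\mathcal{L} := D\mathcal{F}[R_0]$ with an explicit bound $\norm{\mathcal{L}^{-1}} \le Z_0$: here one writes $\mathcal{L} = \mathcal{A} + \mathcal{B}$, where $\mathcal{A}$ is an explicitly invertible approximation — its compression to a large but finite block of low Fourier modes, glued to a Fourier-multiplier model that captures the order-one principal part of $\mathcal{L}$ (coming from the $R_0'\,h + R_0\,h'$ term and the leading behaviour of the linearized kernel) on the complementary high modes — while $\mathcal{B}$ is a remainder. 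One inverts $\mathcal{A}$ by certifying that the singular values of the finite matrix are bounded away from zero, which is Lemma \ref{I+Ainvert}, and bounds $\norm{\mathcal{B}}$ by recasting it as integrals of explicit functions and enclosing those rigorously, which is Lemma \ref{L-L_Fbound}; a Neumann series then yields $Z_0$. Third, a \emph{second-order bound}: a Lipschitz estimate $\norm{\mathcal{L} - D\mathcal{F}[R]} \le Z_2\,\norm{R - R_0}$ for $R \in B(R_0, \varrho)$, again reducing to explicit kernel estimates.

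Given these constants, the map $\mathcal{T}[R] := R - \mathcal{L}^{-1}\mathcal{F}[R]$ satisfies $\norm{\mathcal{T}[R_0] - R_0} \le Z_0 Y_0$ and has Lipschitz constant at most $Z_0 Z_2 \varrho$ on $B(R_0, \varrho)$, so it is a contraction of $B(R_0, \varrho)$ into itself as soon as $Z_0 Z_2 \varrho^2 + Z_0 Y_0 \le \varrho$, which can be arranged for some $\varrho > 0$ whenever $4 Z_0^2 Z_2 Y_0 < 1$ — an inequality the computer checks with the above enclosures. Its unique fixed point $R^\ast$ is then an exact solution of \eqref{eqR}; since $R^\ast \in X_m$ it is automatically analytic and $6$-fold symmetric, and since $X_m \hookrightarrow C^2(\mathbb{T})$ with embedding constant times $\varrho$ small compared to the curvature margin of $R_0$, the patch $D$ bounded by $R^\ast(x)(\cos x, \sin x)$ inherits the strict non-convexity — and the non-triviality, i.e. $R^\ast$ non-constant — of $R_0$, which is the conclusion of Theorem \ref{main_theorem}.

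The step I expect to be the main obstacle is the invertibility of $\mathcal{L}$ with a usable bound. Because the target solution is deliberately far from the disk and the ellipses, $\mathcal{L}$ is \emph{not} a small perturbation of the diagonal operator supplied by Burbea's spectral computation at the circle, so the low-mode block must genuinely be diagonalized numerically and certified, and one must simultaneously prove that the decomposition $\mathcal{L} = \mathcal{A} + \mathcal{B}$ is chosen so that $\norm{\mathcal{B}}$ is truly small — this forces a careful quantitative splitting of the singular log-kernel and tight control of the decay of the off-diagonal entries. Balancing the truncation dimension against the tail bound, and preventing the interval-arithmetic enclosures from dilating, is where the real work lies.
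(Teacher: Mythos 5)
Your overall architecture — a Newton--Kantorovich fixed point around a numerically produced $R_0$, with a defect bound, a certified inverse of the linearization obtained from a finite-rank splitting plus Neumann series, and a Lipschitz bound on the second derivative — is of the same type as the paper's, and the three computer-assisted estimates you flag correspond to Lemmas \ref{CE0}, \ref{I+Ainvert}, and \ref{L-L_Fbound}. But there is a specific obstruction that your setup does not address and that would sink the verification step.

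The V-state equation has an exact dilation invariance: the identity
\begin{equation*}
\int_0^{2\pi}\cos(x-y)\bigl(R(x)R'(y)-R'(x)R(y)\bigr)\,dy + \int_0^{2\pi}\sin(x-y)\bigl(R(x)R(y)+R'(x)R'(y)\bigr)\,dy = 0
\end{equation*}
(integrate by parts) shows that the contribution of the $2\log\lambda$ term under $R\mapsto\lambda R$ cancels, so $\mathcal{F}[\lambda R]=\lambda^2\mathcal{F}[R]$ exactly. Differentiating at $\lambda=1$ gives $D\mathcal{F}[R_0]\,R_0 = 2\,\mathcal{F}[R_0]$. The vector $R_0$ is even and $m$-fold symmetric, so it lies in your space $X_m$. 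Since $R_0$ is a high-quality approximate solution, $\mathcal{F}[R_0]$ is minuscule (order $10^{-8}$ in the paper), which means $D\mathcal{F}[R_0]$ has a near-zero singular value in $X_m$ of the same order. Your $Z_0$ is then at least of order $\|R_0\|/\|\mathcal{F}[R_0]\|\sim 10^8$, and the radii-polynomial condition $4Z_0^2 Z_2 Y_0<1$ cannot hold with any realistic $Z_2$ and $Y_0$. Certifying that the singular values of the truncated block are "bounded away from zero" would either fail or, if it succeeded numerically because the defect is nonzero, produce a useless bound. The paper's parametrization $R=R_0+\int_0^x\tilde u$ with $u\in L^2([0,\pi/m])$ is precisely what removes this: $v(0)=0$ kills the dilation mode and the odd $2\pi/m$-periodic extension (hence even $v$) kills the rotation mode, and only then is the linearized operator $L$ uniformly invertible with $\|L^{-1}\|\le 35$. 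You need to quotient by both group orbits, either by this integral parametrization or by a Lyapunov--Schmidt step projecting out $\mathrm{span}\{R_0\}$; neither appears in your proposal.

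Separately, your route to analyticity (working from the start in a Banach algebra of functions with exponentially decaying Fourier coefficients) is genuinely different from the paper's: they close the contraction in $L^2$, then bootstrap $R$ to $C^\infty$ by repeatedly differentiating the equation and controlling the singular kernels (Proposition \ref{R_Cinfty}), and finally invoke the Kinderlehrer--Nirenberg--Spruck free-boundary regularity theorem to pass from $C^2$ to analytic (Proposition \ref{Ranalytic}). Your choice buys analyticity for free but requires showing that the logarithmic nonlinearity, after you subtract $\log\bigl(4\sin^2\tfrac{x-y}{2}\bigr)$, actually maps a ball of $X_m$ back into $X_m$ with controlled norm — this is nontrivial because the analytic strip of $\log A[R]$ depends on how close the curve $x\mapsto R(x)e^{ix}$ comes to self-intersection in the complexification, and you would need a quantitative lower bound on the width of that strip. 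The paper's low-regularity fixed point sidesteps this entirely. Both strategies are viable in principle, but the degeneracy issue above must be resolved before either can close.
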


\begin{figure}[h]
    \centering    \includegraphics[width=5cm]{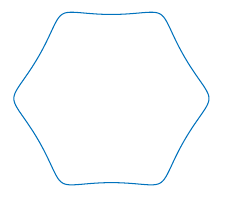}
    \caption{The boundary $\partial D$ is a curve contained in the plotted line.}
    \label{fig:patch}
\end{figure}

\begin{proof}[Proof of Theorem \ref{main_theorem}]
    Theorem \ref{Gcontractive} proves the existence of a fixed point for the functional $G$, and Proposition \ref{Unfolding_u_to_R} proves that given a fixed point of $G$ we can construct a solution $R \in H^1$ to our main equation \eqref{eqR}.
    Finally, using Proposition \ref{Ranalytic}, we prove that $R$ is analytic and Proposition \ref{NonConvex} proves the non-convexity. The proof of the theorem follows.
\end{proof}

% \begin{corollary}\label{main_corollary}
%     There exists a $\delta > 0$ such that for any angular velocity in $(\Omega-\delta, \Omega+\delta)$ there exists an analytic solution of equation \eqref{eqR} with 6-fold symmetry. %, where $\Omega$ is the angular velocity of the solution given by Theorem \ref{main_theorem}.
% \end{corollary}

% \begin{proof}
%     Let $\tilde{\Omega}$ be sufficiently close to $\Omega$ and $R_0$ be the approximate solution given in Section \ref{Sec:Prelim}. Since all the functionals are continuous in $\Omega$ and the fixed point conditions are open, then we can apply Theorem \ref{Gcontractive} to $(\tilde{\Omega}, R_0)$ if $|\Omega - \tilde{\Omega}|$ is small enough. 
    
% \end{proof}

% \color{orange}
% \begin{corollary}
%     There exists a $\delta > 0$ such that for any angular velocity in $(\Omega - \delta, \Omega + \delta)$ there exists an analytic of equation \eqref{eqR} such that its associated vortex patch is non-convex and has 6-fold symmetry.
% \end{corollary}
% \begin{proof}
%     All the bounds in the proof of the main theorem \ref{main_theorem} are open $(<)$ and continuous on $\Omega$, therefore, if we consider $\Tilde{\Omega}$ close enough to $\Omega$ the same proof holds.
% \end{proof}

%\color{red} NEW VERSION
\begin{corollary}
\label{main_corollary}
    There exists a $\delta > 0$ such that for any angular velocity in $(\Omega - \delta, \Omega + \delta)$ there exists an analytic solution of equation \eqref{eqR} such that it parametrizes a non-convex patch with 6-fold symmetry.
\end{corollary}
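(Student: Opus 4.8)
The plan is to re-run the fixed-point scheme behind Theorem \ref{main_theorem} with the angular velocity promoted to a parameter, keeping the approximate solution fixed, and to check that every bound survives a small perturbation of it. Writing $G=G_{\Omega}$ for the functional of Theorem \ref{Gcontractive}, one notes from \eqref{F_def} that $F[R]$ depends on $\Omega$ only through the scalar prefactor $\frac{1}{4\pi\Omega}$; hence the linearized operator, the defect, and $G_{\Omega}$ itself are (real-)analytic in $\Omega$ on any interval bounded away from $0$, with every $\Omega$-derivative controlled by the very norms already estimated in the proof. In particular the three computer-assisted ingredients — the eigenvalue bound of Lemma \ref{I+Ainvert}, the operator bound of Lemma \ref{L-L_Fbound}, and the defect bound of Lemma \ref{CE0} — vary continuously with $\Omega$, so the strict inequalities that make $G_{\Omega}$ a self-map and a contraction on the relevant ball persist on an open neighbourhood of the distinguished value; restricting to a closed subinterval $[\Omega-\delta,\Omega+\delta]$ renders the contraction constant and the self-mapping radius uniform there.

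Granting this, the conclusion follows exactly as in the proof of Theorem \ref{main_theorem}. For each $\Omega'\in(\Omega-\delta,\Omega+\delta)$ the contraction mapping theorem produces a fixed point $u_{\Omega'}$ of $G_{\Omega'}$; Proposition \ref{Unfolding_u_to_R} unfolds it to a solution $R_{\Omega'}\in H^1$ of \eqref{eqR} with angular velocity $\Omega'$ and $6$-fold symmetry; Proposition \ref{Ranalytic} upgrades $R_{\Omega'}$ to an analytic function. Finally, since the fixed point of a uniform contraction depends Lipschitz-continuously on the parameter, the map $\Omega'\mapsto R_{\Omega'}$ is continuous into the space on which $G$ acts, and the analyticity/regularity estimates propagate this continuity to $R_{\Omega'}''$; as non-convexity of $\partial D_{\Omega'}$ amounts to a sign change of the signed-curvature numerator $R_{\Omega'}^2+2(R_{\Omega'}')^2-R_{\Omega'}R_{\Omega'}''$, which is an open condition in this topology, the non-convexity established in Proposition \ref{NonConvex} at $\Omega$ survives after shrinking $\delta$ if necessary.

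The main obstacle is the first step: guaranteeing that the computer-assisted bounds, which are certified at the single value $\Omega$, remain favourable on a genuine interval rather than merely at a point. The cleanest remedy is to interval-enclose $\Omega$ from the outset — re-running Lemmas \ref{I+Ainvert}, \ref{L-L_Fbound} and \ref{CE0} with the scalar $\Omega$ replaced by the thin interval $[\Omega-\delta,\Omega+\delta]$ — so that the propagated rigorous enclosures automatically certify the self-map and contraction properties of $G_{\Omega'}$ simultaneously for all $\Omega'$ in the interval, at the cost of a slightly larger but still subunit contraction constant. This is precisely the robustness interval arithmetic is built to deliver, and it needs no new analytic input beyond the observation that the whole $\Omega$-dependence of the scheme is carried by the explicit factor $\frac{1}{4\pi\Omega}$. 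Choosing $\delta$ small enough for this, and for the non-convexity margin above, yields the corollary.
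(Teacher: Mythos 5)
Your overall strategy matches the paper's: observe that the entire $\Omega$-dependence of the scheme is just the explicit scalar factor $\tfrac{1}{4\pi\Omega}$, hence the operators and the defect depend continuously on $\Omega$, and the strict inequalities (invertibility, contraction, non-convexity margin) are open conditions, so the proof of Theorem \ref{main_theorem} transfers verbatim to any $\tilde\Omega$ close enough to $\Omega$ while keeping the same $R_0$ and $\epsilon$. Your first two paragraphs are therefore correct and equivalent in spirit to the paper's one-line argument; the suggestion of interval-enclosing $\Omega$ from the outset is a practical implementation remark rather than a mathematical necessity, since abstract continuity plus strictness of the inequalities already yields persistence.

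Where you diverge from the paper is in closing the non-convexity. You go through Lipschitz dependence of the fixed point on $\Omega$, then argue that the $C^2$-regularity bounds propagate this continuity to $R_{\Omega'}''$, and conclude via openness of the sign condition on the signed-curvature numerator $R^2+2(R')^2-RR''$. That chain works, but it requires establishing $L^\infty$-continuity of $R_{\Omega'}''$ in $\Omega'$, an extra step whose justification you only gesture at. The paper's route is more elementary: Proposition \ref{NonConvex} depends only on (i) the exact pinning $R(2\pi k/m)=R_0(0)$ coming from $v(0)=0$, (ii) the bound $|v|\le\epsilon\sqrt{p_m(\cdot)}$ which follows from $u\in X_m^\epsilon$, and (iii) the $\Omega$-independent inequality of Lemma \ref{Ineq_NonConvexity} on $R_0$ alone. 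None of these use any regularity beyond $H^1$ or any continuity in $\Omega$, so once the fixed point lands in the same $\epsilon$-ball for $\tilde\Omega$, Proposition \ref{NonConvex} applies directly and unchanged, with no curvature argument or parameter-continuity of derivatives needed. In short, your proof is correct, but the paper's path to non-convexity is cheaper because it is built to be robust at the $L^2$ level, which your curvature-based reformulation does not exploit.
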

\begin{proof}
    Let $\tilde{\Omega}$ be sufficiently close to $\Omega$ and $R_0$ be the approximate solution given in Section \ref{Sec:Prelim}. Since all the functionals are continuous in $\Omega$ and the non-convexity and fixed point conditions are open, we can apply Theorem \ref{main_theorem} to $(\tilde{\Omega}, R_0)$ if $|\Omega - \tilde{\Omega}|$ is small enough. 
\end{proof}

%\color{black}

We summarize the main steps of the proof of Theorem \ref{main_theorem} below:

The first step is to recast the problem as a perturbative one, close to an explicit approximate even, $2\pi/m$ periodic solution $R_0$. In practice $R_0$ is an analytic function represented by a finite sum $R_0(x) = \sum_{j=1}^{N_0} a_j \cos(jmx)$ and was found doing numerical continuation of the branch emanating from the circle. Once $R_0$ is fixed, we write  $R = R_0 + \int_0^x \Tilde{u}$ where $u : [0, \pi/m] \longrightarrow \mathbb{R}$ and $\Tilde{u}$ is the odd, $2\pi/m$ periodic extension of $u$. We will plug this expression into \eqref{eqR}, and solve for $u$ (cf. \eqref{equ}). This setup ensures that we are modding out the scaling and rotation symmetries of the equation and maintain the $m$-fold symmetry. Conversely, we prove that if $u$ solves equation \eqref{equ}, then $R = R_0 + \int_0^x \Tilde{u}$ is a solution of \eqref{eqR}.

Looking at \eqref{equ}, the equation that $u$ satisfies is of the form  $$Lu(x) = N_L[u](x) + \delta(x)$$ for all $x \in [0, \pi/m]$, with $Lu := u(x) + \int K(x,y) u(y) dy$, $\delta(x)$ proportional to the error of the approximate solution and $N_L$ being non-linear terms (cf. Definitions \ref{Lu_def}, \ref{NL_def}). Note that $K$ depends on $R_0$. The main idea is to use a fixed-point argument and set the system $u = L^{-1}( N_L[u] + \delta)$, provided that $L$ is invertible and $\delta$ is small. A simple bound of $\|L^{-1}\|$ can be given by $(1-\|K\|)^{-1}$ if $\|K\| < 1$. This type of bounds was used in the papers \cite{Enciso-GomezSerrano-Vergara:convexity-cusped-whitham,Dahne-GomezSerrano:highest-wave-burgershilbert,Dahne:highest-cusped-waves-fkdv}, however we will not have that luxury here since $\| K\|_\infty > 2$, and even $\| K\|_2 > 1.3$. Instead we approximate $K$ by a finite rank approximation $K_F$ and consider the operator $L_F u = u(x) + \int K_F(x,y)u(y) dy$. Bounding $L_F^{-1}$ and the difference $Lu-L_Fu$ allows us to invert $L$ as a Neumann series and bound $L^{-1}$. This is the first part of Section \ref{Sec:Fixed_Point}. In order to apply the fixed point theorem we need to show that $N_L$ satisfies a Lipschitz condition in a ball of radius $\varepsilon$. This  is accomplished in the second part of Section \ref{Sec:Fixed_Point} and Section \ref{Sec:Estimates}. Finally, using a bootstrap argument we upgrade the regularity of $R$ from $H^1$ to $C^\infty$, which in turn is upgraded subsequently to analytic using a free boundary elliptic problem approach.  Moreover, due to the strict non-convexity of the patch parametrized by $R_0$ and the smallness of $u$ we can conclude that the true solution $R$ is non-convex as well. This is all done in Section \ref{Sec:Regularity}.

\subsection{Acknowledgements}

We acknowledge the CFIS Mobility Program for the partial funding of this research work, particularly Fundaci\'o Privada Mir-Puig, CFIS partners, and donors of the crowdfunding program.
GCL has been partially supported by a MOBINT grant from Generalitat de Catalunya and by an Erasmus+ grant co-funded by the European Union. 
 JGS  has been partially supported
by the MICINN (Spain) research grant number PID2021–125021NA–I00,  and by the AGAUR project 2021-SGR-0087 (Catalunya). GCL and JGS were partially supported by NSF under grants DMS-2245017 and DMS-2247537. We thank Brown University  for computing
facilities. We also wholeheartedly thank the Department of Mathematics at Brown University for hosting GCL during the development of this project. Part of this research was conducted using computational resources and services at the
Center for Computation and Visualization, Brown University. 

\section{Preliminaries}
\label{Sec:Prelim}

Throughout the rest of the paper we will work with the following approximate solution $R_0$:
\begin{align}\label{def_R0_m_N0_Omega_epsilon}
        R_0(x) := \sum_{k = 0}^{N_0} c_k \cos(kmx) \quad \text{with } \ m = 6, \ N_0 = 30 ,   
\end{align}
where the coefficients $\{c_k\}$ are explicit, given in Appendix \ref{ck_coefs}. Moreover, we will fix $\Omega = \frac{1537}{3750}$, and $\varepsilon = 2 \cdot 10^{-5}$.

% \begin{definition}
%     Let the approximate solution $R_0$ be defined by the following truncated fourier series
%     \begin{align*}
%         R_0(x) := \sum_{k = 0}^{N_0} c_k \cos(kmx) \quad \text{with } \ m = 6, \ N_0 = 30 ,   
%     \end{align*}
%     where the coefficients $\{c_k\}_0^{N_0}$ are explicit numbers given in the appendix \ref{ck_coefs}.
% \end{definition}

% \begin{definition}\label{def_Omega}
%     Let the angular velocity be $\Omega = 0.409866666666667$.
%     \color{red} segur?? \color{black}
% \end{definition}

% \begin{definition}\label{def_epsilon}
%     Let $\epsilon = 3 \cdot 10^{-5}$.
% \end{definition}
% \color{red} mirar esto
% \color{black}
% We announce a Lemma that is going to be proved later in \ref{boundR0}.
% \begin{restatable*}{lemma}{mRMR} \label{boundR0}
%     The approximate solution is bounded by above and below by $m_{R_0}< R_0(x) < M_{R_0} $, with $m_{R_0} = 0.941$ and $M_{R_0} = 1.0925$.
% \end{restatable*}

\subsection{Functional Spaces setup}
Let us now define the following spaces:
\begin{align*}
    X_m := L^2 \left(\left [0, \frac{\pi}{m} \right ] \right ),  \quad X_m^{\epsilon} := \left \{ u \in X_m : \| u \|_{L^2 \left ([0, \frac{\pi}{m}]\right )}\leq \epsilon \right \}. 
\end{align*}
For any function $u \in X_m$ let $\Tilde{u}$ be its odd, $2\pi/m$ periodic extension. For these extensions we define the following spaces: 
\begin{align*}
    \Tilde{X}_m^{odd} := \left \{ w \in L^2 \left ( \mathbb{T} \right ) : w(-x)= -w(x) \text{ and } w(x+2\pi/m) = w(x) \right \}, \\
    \Tilde{X}_m^{odd, \epsilon} := \left \{ w \in  \Tilde{X}_m^{odd} : \frac{1}{\sqrt{2m}}\| w \|_{L^2 (\mathbb{T})} \leq \epsilon \right \}.
\end{align*}

We will work with perturbations of $R_0$ given $v = \int_0^x \Tilde{u}$ which belong to $H^1(\mathbb{T})$, but we can also define a more precise space where they belong:
\begin{equation*}
    \Tilde{Y}_m^{even, \epsilon} := \left \{ v(x) = \int_0^x w(y)dy : w \in  \Tilde{X}_m^{odd, \epsilon} \right \}.
\end{equation*}
% \begin{rmk}
%     If $v \in \Tilde{Y}_m^{even, \epsilon}$ then by the Sobolev embedding $\Y v \leq \epsilon\sqrt{\frac{\pi}{m}} $
% \end{rmk}

\subsection{Equation for the perturbation}
In this subsection we will write the equation that $u$ satisfies, where $R = R_0 + \int_0^x \Tilde{u}$, and $R$ solves $\eqref{eqR}$. Here we will only do formal computations to motivate the equation $Lu = N_L[u] + \frac{E_0}{K_1}$, and later  we will prove the converse, namely that if $u$ satisfies $Lu = N_L[u] + \frac{E_0}{K_1}$ then $R$ is a solution of the original equation.

Denoting $v = \int_0^x \Tilde{u}$ and substituting $R = R_0+ v$ in equation \eqref{eqR}, the equation for $v$ becomes 
\begin{equation}\label{eqv}
    T[v] = E[v]+ N[v]
\end{equation}
with $T, E, N$ given by 
    \begin{equation}\label{OPv} 
        \begin{aligned}
            T[v](x) &\coloneqq 
            v'(x)\left (  R_0(x) + \int \left ( R_0(y)C[v] - R_0'(y)S[v] \right ) dy \right )+
            v(x) \left ( R_0'(x) + \int \left ( -R_0'(y)C[v] - R_0(y)S[v] \right ) dy \right ) \\
            &+ \int \left ( -R_0(x)S[v] +R_0'(x)C[v] \right ) v(y)dy + \int \left (- R_0'(x)S[v] - R_0(x)C[v] \right ) v'(y)dy, \\
            \\
            E[v](x) &\coloneqq -R_0(x)R'_0(x) +   
            \int C[v](R_0(x)R_0'(y) - R_0(y)R_0'(x) )dy +   
            \int S[v] (R_0(x)R_0(y) + R_0'(x)R_0'(y)) dy, \\
            \\
            N[v](x) &\coloneqq -v(x)v'(x) +  
            \int C[v] (v(x)v'(y)- v(y)v'(x))dy + 
            \int S[v] (v(x)v(y) + v'(x)v'(y)) dy,
        \end{aligned}
    \end{equation}
    where
    \begin{align}
        A[v](x,y) &\coloneqq (R_0(x)-R_0(y) + v(x)-v(y))^2 + 4(R_0(x)+v(x))(R(y)+v(y))\sin^2 \left ( \frac{x-y}{2}\right ) \label{defA},
        \\
        C[v](x,y) &\coloneqq \frac{1}{4\pi\Omega} \cos(x-y) \log (A[v](x,y)), \quad
        S[v](x,y) \coloneqq \frac{1}{4\pi\Omega} \sin(x-y) \log (A[v](x,y)). \label{defCS}
    \end{align}
\begin{rmk}
    We can also write $A[v]$ as 
    \begin{equation}\label{defA2}
        A[v](x,y) = (R_0(x)+v(x))^2 + (R_0(y)+v(y))^2 - 2(R_0(x)+v(x))(R(y)+v(y))  \cos(x-y). 
    \end{equation}
\end{rmk}

%For this functionals to make sense we need $R = R_0+v$ to be bounded and positive (remember it is a radial component in polar coordinates).

We have the following bounds on the approximate solution and on $R$:
\begin{lemma} \label{boundR0}
    The approximate solution is bounded by above and below by $m_{R_0}< R_0(x) < M_{R_0} $, with $m_{R_0} := 0.941$ and $M_{R_0} := 1.0925$. Let $|\lambda| \leq 1$ and let $R_{\lambda} = R_{0} + \lambda v$, where $v \in \Tilde{Y}_m^{even, \epsilon}$. Then we have the bound $m_R < R_{\lambda}(x) < M_R$ with $m_R := 0.94, M_R := 1.1$.
\end{lemma}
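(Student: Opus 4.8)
The statement splits into two independent claims: a pointwise two-sided bound on the explicit trigonometric polynomial $R_0$, and a pointwise bound on $R_\lambda = R_0 + \lambda v$. The plan is to establish the first by a short rigorous computation and then deduce the second from it by a soft estimate on $\|v\|_{L^\infty}$.

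\textbf{Step 1 (bounds on $R_0$).} Since $R_0(x) = \sum_{k=0}^{N_0} c_k \cos(kmx)$ with $m = 6$, $N_0 = 30$ and explicit coefficients (Appendix \ref{ck_coefs}) is even and $2\pi/m$-periodic, it suffices to control it on $[0, \pi/m]$. I would partition $[0,\pi/m]$ into finitely many subintervals and evaluate $R_0$ on each with rigorous interval arithmetic (using Arb), which returns a certified enclosure; alternatively, evaluate $R_0$ at grid points and bound the oscillation on each subinterval by the a priori Lipschitz constant $\|R_0'\|_{L^\infty} \le \sum_{k=1}^{N_0} km\,|c_k|$. Refining the mesh until every enclosure lies strictly inside $(0.941,\, 1.0925)$ gives $m_{R_0} < R_0(x) < M_{R_0}$. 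This is the only computational ingredient of the lemma, and given the machinery already set up in the paper it is routine; the constants $0.941$ and $1.0925$ carry enough slack (note $R_0(0) = \sum_k c_k$ is comfortably interior) that no unreasonably fine mesh is needed.

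\textbf{Step 2 (bounds on $R_\lambda$).} Write $v = \int_0^x \tilde u$ with $\tilde u \in \tilde X_m^{odd,\epsilon}$, so that $\tfrac{1}{\sqrt{2m}}\|\tilde u\|_{L^2(\mathbb T)} \le \epsilon$. Because $\tilde u$ is odd and $2\pi/m$-periodic it is also odd about $\pi/m$, hence $\int_0^{2\pi/m} \tilde u = 0$ (so $v$ is $2\pi/m$-periodic) and, being the primitive of an odd function, $v$ is even; moreover $|\tilde u|^2$ is symmetric about $\pi/m$, which gives $\|\tilde u\|_{L^2(0,\pi/m)}^2 = \tfrac{1}{2m}\|\tilde u\|_{L^2(\mathbb T)}^2 \le \epsilon^2$. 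By Cauchy--Schwarz, for $x \in [0,\pi/m]$,
\[
|v(x)| \le \int_0^x |\tilde u(y)|\,dy \le \sqrt{x}\,\|\tilde u\|_{L^2(0,\pi/m)} \le \sqrt{\tfrac{\pi}{m}}\,\epsilon ,
\]
and by evenness and periodicity of $v$ this bound holds for every $x$. With $m = 6$ and $\epsilon = 2\cdot 10^{-5}$ we obtain $\|v\|_{L^\infty} \le \sqrt{\pi/6}\,\epsilon < 10^{-4}$. Finally, since $|\lambda| \le 1$, $|R_\lambda(x) - R_0(x)| = |\lambda|\,|v(x)| \le \|v\|_{L^\infty} < 10^{-4}$, so combining with Step 1,
\[
0.94 < 0.941 - 10^{-4} < R_\lambda(x) < 1.0925 + 10^{-4} < 1.1 ,
\]
which is exactly $m_R < R_\lambda(x) < M_R$. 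The only genuine work is the rigorous enclosure of $R_0$ in Step 1; the $O(\epsilon)$ perturbation is absorbed by the gaps $0.941 - 0.94$ and $1.1 - 1.0925$, which were chosen deliberately large for this reason.
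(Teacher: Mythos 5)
Your proposal is correct and follows essentially the same route as the paper: the bound on $R_0$ is established by rigorous interval enclosure on $[0,\pi/m]$, and the bound on $R_\lambda$ follows from the single estimate $\|v\|_{L^\infty} \le \epsilon\sqrt{\pi/m}$, which the paper quotes directly (cf. Lemma \ref{vLinftyv'Ltwobound}) and you re-derive via Cauchy--Schwarz; the arithmetic at the end then closes identically.
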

\begin{proof}
The proof of the statement about $R_0$ is computer-assisted and can be found in the supplementary material. We refer to Appendix \ref{appendix:implementation} for the implementation details. The proof of the statement about $R_\lambda$ follows from $\Y v \leq \epsilon \sqrt{\pi/m}$.
\end{proof}

\begin{lemma}
\label{Ineq_NonConvexity}
The approximate solution $R_0$ satisfies the following condition:
\begin{equation}\label{Ineq_NonConvexty_eq}
R_0(\pi/m)+\epsilon \sqrt{\pi/m} < R_0(0) \cos(\pi/m)
\end{equation}
\end{lemma}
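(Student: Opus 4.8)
The plan is to treat \eqref{Ineq_NonConvexty_eq} as what it is: a finite inequality between explicit constants, to be certified by rigorous (interval) arithmetic exactly as in the proof of Lemma \ref{boundR0}. Since $R_0(x) = \sum_{k=0}^{N_0} c_k \cos(kmx)$ with the $c_k$ explicit (Appendix \ref{ck_coefs}), evaluating at $x = 0$ and $x = \pi/m$ gives the two finite sums $R_0(0) = \sum_{k=0}^{N_0} c_k$ and $R_0(\pi/m) = \sum_{k=0}^{N_0} (-1)^k c_k$, each of which is enclosed in a narrow interval. The remaining ingredients are $\cos(\pi/m) = \cos(\pi/6) = \tfrac{\sqrt 3}{2}$ and the number $\epsilon\sqrt{\pi/m}$ with $\epsilon = 2\cdot 10^{-5}$, both of which are enclosed rigorously (Arb provides certified enclosures for $\pi$ and for square roots). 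The proof then reduces to checking that the computed upper bound for the left-hand side $R_0(\pi/m) + \epsilon\sqrt{\pi/m}$ lies strictly below the computed lower bound for the right-hand side $R_0(0)\cos(\pi/m)$. See Appendix \ref{appendix:implementation} for the implementation details.

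It is worth recording why this particular inequality is the relevant one: $R_0(0)\cos(\pi/m)$ is exactly the distance from the origin to the midpoint of the chord joining two adjacent ``tips'' $R_0(0)(\cos(2\pi j/m),\sin(2\pi j/m))$, $j=0,1$, of the $m$-fold symmetric curve, while $R_0(\pi/m)$ is the distance from the origin to the boundary point on the bisecting ray $x = \pi/m$. Thus \eqref{Ineq_NonConvexty_eq} says that, even after a radial perturbation of size $\|v\|_{L^\infty} \le \epsilon\sqrt{\pi/m}$ (the bound used in the proof of Lemma \ref{boundR0} for $v \in \Tilde{Y}_m^{even,\epsilon}$), the ``valley'' point still lies strictly inside that chord, which is precisely the geometric input that will be upgraded to the non-convexity of the true patch in Proposition \ref{NonConvex}.

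There is essentially no analytic obstacle; the only thing requiring care is the bookkeeping of rounding errors, which is already handled by the interval-arithmetic framework set up for Lemma \ref{boundR0}. The one mild point to watch is that the margin, while safely positive, is not large (the two sides differ only in the third decimal place: $R_0(\pi/m)$ is near $m_{R_0}$ and $R_0(0)\cos(\pi/m)$ is near $M_{R_0}\cdot\tfrac{\sqrt3}{2}$), so the enclosures of $R_0(0)$, $R_0(\pi/m)$ and of $\tfrac{\sqrt3}{2}$ must be produced with width well below this margin — which, with only $N_0 = 30$ terms, is immediate at working precision.
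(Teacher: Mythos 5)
Your proposal matches the paper's proof exactly: the paper certifies \eqref{Ineq_NonConvexty_eq} by a computer-assisted interval-arithmetic check, and Appendix \ref{appendix:implementation} specifies precisely what you describe — evaluating $R_0(0)$ and $R_0(\pi/m)$ from the explicit Fourier series of $R_0$ and verifying the inequality. Your added remarks (the closed forms $R_0(0)=\sum c_k$, $R_0(\pi/m)=\sum(-1)^k c_k$, the geometric interpretation feeding into Proposition \ref{NonConvex}, and the observation that the margin sits in the third decimal) are correct commentary but do not change the argument.
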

\begin{proof}
The proof is computer-assisted and can be found in the supplementary material. We refer to Appendix \ref{appendix:implementation} for the implementation details.
\end{proof}

Moreover, we also have the following bounds on the derivatives of $R_0$:

\begin{lemma}\label{MdR0}
    The derivative of the approximate solution is bounded by $|R_0'(x)| < M_{R_0'}$ with $M_{R_0'} = 0.52$.
\end{lemma}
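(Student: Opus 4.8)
\textbf{Proof proposal for Lemma \ref{MdR0}.}

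The plan is to exploit the explicit finite-sum structure of the approximate solution $R_0(x) = \sum_{k=0}^{N_0} c_k \cos(kmx)$, so that the derivative is itself an explicit trigonometric polynomial $R_0'(x) = -\sum_{k=1}^{N_0} c_k\, km \sin(kmx)$ of degree $N_0 m = 180$. The bound $|R_0'(x)| < M_{R_0'}$ with $M_{R_0'} = 0.52$ is then a statement about the sup-norm of a concrete trigonometric polynomial whose coefficients are the explicit rationals listed in Appendix \ref{ck_coefs}, and this is precisely the kind of quantity that the computer-assisted framework of the paper is designed to certify rigorously.

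Concretely, I would proceed as follows. First, note that by the $2\pi/m$-periodicity and oddness of $R_0'$ it suffices to bound $|R_0'(x)|$ for $x \in [0, \pi/m]$. Second, I would interval-enumerate: partition $[0,\pi/m]$ into a finite collection of subintervals $\{I_j\}$ and, on each $I_j$, evaluate $R_0'$ in interval arithmetic (using Arb, as in the rest of the paper), obtaining an enclosure of the range $R_0'(I_j)$. Since the derivative of $R_0'$ is again an explicit trigonometric polynomial with an a priori modulus bound $\sum_{k=1}^{N_0} |c_k| (km)^2$, the overestimation of each such enclosure is controlled linearly in $|I_j|$, so a sufficiently fine (but still finite) mesh forces every enclosure to lie strictly inside $(-0.52, 0.52)$. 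Taking the union over $j$ gives the claimed uniform bound. This is exactly the task described as the third computational bullet point in Section 1.2 and is analogous in spirit to Lemmas \ref{boundR0} and \ref{Ineq_NonConvexity}, whose proofs are already delegated to the supplementary material.

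The only mild subtlety — and the closest thing to an obstacle — is the dependency problem intrinsic to naive interval evaluation of a high-degree trigonometric polynomial: evaluating $\sum_k (-c_k km)\sin(kmx)$ termwise over a wide interval $I_j$ produces a wildly pessimistic enclosure because the individual $\sin(kmx)$ terms are each over-enclosed by $[-1,1]$. To get a usable bound one should either (i) use a fine enough mesh that each $I_j$ is short compared to $1/(N_0 m)$, so that $\sin(kmx)$ on $I_j$ is genuinely close to monotone and narrowly enclosed, or (ii) evaluate via a first-order Taylor model / mean-value form $R_0'(I_j) \subseteq R_0'(m_j) + R_0''(I_j)\,(I_j - m_j)$ around the midpoint $m_j$, which reduces the overestimation to second order in $|I_j|$. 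Either strategy is routine within the Arb-based implementation, and since $0.52$ leaves a comfortable margin above the true maximum of $|R_0'|$ (which is safely below this value by the numerical construction of the branch), the verification succeeds with a modest mesh. I refer to Appendix \ref{appendix:implementation} for the implementation details, consistent with the treatment of the other computer-assisted lemmas in this section.
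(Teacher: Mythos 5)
Your proposal is correct and follows essentially the same approach as the paper: restrict to $[0,\pi/m]$ by the $2\pi/m$-periodicity and parity of $R_0'$, partition into small subintervals, and certify the uniform bound via rigorous interval arithmetic in Arb. Of your two suggested strategies for controlling overestimation, the paper adopts your option (i) — direct interval propagation through the Fourier series on a mesh of width $10^{-3}$, explicitly without invoking derivative/Taylor information — rather than the mean-value form of option (ii).
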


\begin{proof}
The proof is computer-assisted and can be found in the supplementary material. We refer to Appendix \ref{appendix:implementation} for the implementation details.
\end{proof}

\begin{lemma}\label{MddR0}
    The second derivative of the approximate solution is bounded by $|R_0''(x)| < M_{R_0''}$ with $M_{R_0''} = 8.7$.
\end{lemma}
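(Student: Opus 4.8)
The plan is to exploit the fact that $R_0$ is an explicit finite trigonometric polynomial, $R_0(x) = \sum_{k=0}^{N_0} c_k \cos(kmx)$ with $m=6$, $N_0=30$, and the coefficients $\{c_k\}$ listed in Appendix \ref{ck_coefs}. Differentiating twice term by term gives the closed form
\begin{equation*}
    R_0''(x) = -\sum_{k=0}^{N_0} c_k (km)^2 \cos(kmx),
\end{equation*}
so the statement reduces to a rigorous upper bound on the supremum of an explicit degree-$N_0 m$ trigonometric polynomial on $\mathbb{T}$.

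The main step is to certify $\sup_{x}|R_0''(x)| < M_{R_0''}$ by a computer-assisted enclosure, exactly as in the preceding lemmas (Lemmas \ref{boundR0}, \ref{MdR0}). Concretely, I would subdivide the period — by $m$-fold symmetry and evenness it suffices to work on $[0,\pi/m]$ — into finitely many subintervals, evaluate $R_0''$ on each subinterval using interval/ball arithmetic (the Arb library, as stated in the paper), and take the maximum of the resulting enclosures; refining the grid until the rigorous upper bound drops below $8.7$. A cheap a priori sanity check that the target $M_{R_0''}=8.7$ is not too tight comes from the crude bound $|R_0''(x)| \le \sum_{k=0}^{N_0} |c_k| (km)^2$, which one expects to be comfortably below $8.7$ given the rapid decay of the $c_k$ (the branch is analytic and $R_0$ is close to the circle $R\equiv c_0$); if even this triangle-inequality bound already gives something below $8.7$, no subdivision is needed at all and the lemma is immediate.

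I do not anticipate a genuine mathematical obstacle here: unlike the operator-norm and defect estimates elsewhere in the paper, this is a one-variable polynomial sup-norm bound with a generous constant. The only thing to be careful about is the bookkeeping of rounding errors in summing $31$ interval-valued terms and in the interval evaluation of $\cos$, which the ball-arithmetic implementation handles automatically; the implementation details are deferred to Appendix \ref{appendix:implementation} in the same way as for Lemmas \ref{boundR0} and \ref{MdR0}.
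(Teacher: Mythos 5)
Your proposal is correct and follows essentially the same route as the paper: the proof is a computer-assisted sup-norm bound obtained by exploiting periodicity and evenness to reduce to $[0,\pi/m]$, subdividing into small intervals, and evaluating the explicit Fourier series of $R_0''$ directly on each subinterval via ball arithmetic, exactly as described in Appendix~\ref{appendix:implementation} for Lemmas~\ref{boundR0}, \ref{MdR0}, \ref{MddR0}, \ref{MdddR_MddddR}. Your triangle-inequality sanity check is in fact already decisive: $36\sum_{k=0}^{30}|c_k|k^2\approx 8.60<8.7$, so the crude bound alone would certify the lemma with no subdivision at all, a small shortcut the paper does not mention.
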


\begin{proof}
The proof is computer-assisted and can be found in the supplementary material. We refer to Appendix \ref{appendix:implementation} for the implementation details.
\end{proof}

\begin{lemma}\label{MdddR_MddddR}
    The third and fourth derivatives of the approximate solution are uniformly bounded by the following constants $|R_0^{(k)}| < M_{R_0^{(k)}}$ for $k = 3,4$ with $ M_{R_0^{(3)}} = 106, M_{R_0^{(4)}} = 4000$.
\end{lemma}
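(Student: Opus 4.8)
The plan is to exploit the explicit trigonometric representation \eqref{def_R0_m_N0_Omega_epsilon} of $R_0$ and reduce the sup-norm bounds on $R_0^{(3)}$ and $R_0^{(4)}$ to the evaluation of two finite sums with certified arithmetic, exactly as for Lemmas \ref{MdR0} and \ref{MddR0}. First I would differentiate term by term: since $R_0(x) = \sum_{k=0}^{N_0} c_k \cos(kmx)$, one has
\begin{align*}
    R_0^{(3)}(x) = \sum_{k=1}^{N_0} c_k (km)^3 \sin(kmx), \qquad
    R_0^{(4)}(x) = \sum_{k=1}^{N_0} c_k (km)^4 \cos(kmx),
\end{align*}
so that the triangle inequality gives the crude but fully rigorous bounds
\begin{align*}
    \sup_{x} |R_0^{(3)}(x)| \le \sum_{k=1}^{N_0} |c_k| (km)^3, \qquad
    \sup_{x} |R_0^{(4)}(x)| \le \sum_{k=1}^{N_0} |c_k| (km)^4 .
\end{align*}

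Then I would evaluate these two sums in interval arithmetic using the Arb library, feeding in the certified enclosures of the $c_k$ from Appendix \ref{ck_coefs}, and check that the resulting upper endpoints are strictly below $M_{R_0^{(3)}} = 106$ and $M_{R_0^{(4)}} = 4000$ respectively. Because $R_0$ is (an approximation to) an analytic function, the coefficients $c_k$ decay rapidly, so despite the factor $(km)^4$ reaching roughly $(180)^4 \approx 10^9$ at $k = N_0 = 30$, the sums are dominated by the first few modes and stay comfortably within the stated constants; the generous round values $106$ and $4000$ are chosen precisely to absorb this slack.

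The only genuine point to watch is whether the purely $\ell^1$-type bound above is tight enough. If for some reason it overshoots the target constants, one would instead subdivide $[0,\pi/m]$ into small subintervals, enclose $R_0^{(3)}$ and $R_0^{(4)}$ on each subinterval directly via interval evaluation of the trigonometric sums (bounding $\sin$ and $\cos$ on intervals), and take the maximum over the subdivision, which always yields a sharper enclosure than the coefficient sum. Given the decay of the $c_k$, I expect the elementary estimate to already succeed, so there is no obstacle beyond a routine certified summation; the bound is recorded here only because the constants $M_{R_0^{(3)}}$ and $M_{R_0^{(4)}}$ are needed later in the regularity and analyticity arguments of Section \ref{Sec:Regularity}.
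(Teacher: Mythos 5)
Your fallback plan is exactly what the paper does, but your preferred plan — the $\ell^1$ coefficient sum — would \emph{not} prove the stated bound for $R_0^{(3)}$, so the "I expect the elementary estimate to already succeed" expectation is wrong. Computing $\sum_{k\ge 1} |c_k|(km)^3$ with $m=6$ and the coefficients from Appendix~\ref{ck_coefs}, the first few terms already give roughly $15.1 + 25.6 + 26.6 + 23.0 + 18.0 + 13.2 + \dots$, and the full sum is on the order of $149$, well above the target $M_{R_0^{(3)}} = 106$. So the crude triangle-inequality bound overshoots by a factor of about $1.4$; the constant $106$ in the lemma actually relies on cancellation between the sine modes. (Curiously, the analogous $\ell^1$ sum for the fourth derivative, $\sum |c_k|(km)^4 \approx 3.9\times 10^3$, does squeak under $4000$, but that does not rescue the third derivative.)

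The route that works, and the one the paper implements according to Appendix~\ref{appendix:implementation}, is precisely your backup: exploit the $2\pi/m$-periodicity and parity of $R_0^{(k)}$ to restrict to $[0,\pi/m]$, subdivide into short subintervals (the paper uses mesh size $10^{-3}$), on each subinterval $X_n$ evaluate the finite trigonometric sum in interval arithmetic by propagating $X_n$ directly through the series (no Taylor/derivative information is used), and verify $|R_0^{(k)}(X_n)| < M_{R_0^{(k)}}$ on every piece. This captures the cancellations that the coefficient-sum bound throws away. So the conclusion is: promote your fallback to the main plan, and drop the assertion that the coefficient sum will suffice.
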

\begin{proof}
    The proof is computer-assisted and can be found in the supplementary material. We refer to Appendix \ref{appendix:implementation} for the implementation details.
\end{proof}

We also have the following bounds on the defect and its derivatives:
\begin{lemma}\label{CE0}
    The approximate solution error satisfies $\Xm{E[0](x)} = \Xm{R_0R_0'- F_1[R_0]-F_2[R_0]} \leq C_{E_0}$, with $C_{E_0} := 3\cdot 10^{-8}$. 
\end{lemma}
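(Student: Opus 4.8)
The quantity to bound is $E[0](x) = R_0(x)R_0'(x) - F_1[R_0](x) - F_2[R_0](x)$, which is the pointwise defect of the approximate solution $R_0$ plugged into \eqref{eqR}; we need its $L^2\big([0,\pi/m]\big)$ norm below $C_{E_0} = 3\cdot 10^{-8}$. Since $R_0$ is an explicit trigonometric polynomial $\sum_{k=0}^{N_0} c_k\cos(kmx)$ with rational coefficients, the plan is a direct computer-assisted evaluation: express $E[0]$ as an explicit function of $x$ involving the two singular integrals in $y$ defining $F_1$ and $F_2$, and then rigorously upper bound its $L^2$ norm with interval arithmetic in Arb.

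\textbf{Key steps.} First I would reduce the $L^2$ norm on $[0,\pi/m]$ to a supremum bound times $\sqrt{\pi/m}$ (or, for a sharper estimate, partition $[0,\pi/m]$ into subintervals and bound $\sup |E[0]|$ on each, then sum the squared contributions). So the core task is a rigorous enclosure of $E[0](x)$ for $x$ ranging over a small interval. Second, for fixed $x$ (an interval), $E[0](x)$ requires evaluating $\int_0^{2\pi} \cos(x-y)\log(A[0](x,y))\,(R_0(x)R_0'(y) - R_0'(x)R_0(y))\,dy$ and the analogous $F_2$ integrand; here $A[0](x,y) = (R_0(x)-R_0(y))^2 + 4R_0(x)R_0(y)\sin^2(\tfrac{x-y}{2})$ vanishes only at $y=x$, where the logarithm has an integrable singularity. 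I would split the $y$-integral into a neighborhood $|y-x|<\eta$ of the diagonal and its complement. On the complement, $A[0]$ is bounded below away from zero (using $R_0 > m_{R_0}$ and the $\sin^2$ term, as in Lemma \ref{boundR0}), so the integrand is analytic and a rigorous quadrature rule with an explicit remainder bound applies. Near the diagonal, I would factor $\log A[0](x,y) = \log\!\big((R_0(x)-R_0(y))^2 + 4R_0(x)R_0(y)\sin^2(\tfrac{x-y}{2})\big)$ and extract the $\log\sin^2(\tfrac{x-y}{2})$ (equivalently $\log|x-y|^2$ plus analytic part) singularity, integrating the singular piece against the smooth remaining factor (a polynomial in $x,y$) analytically or via a singularity-adapted quadrature, leaving an analytic remainder to be bounded by quadrature. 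Third, I would assemble $R_0(x)R_0'(x)$ minus these two integral enclosures, take the resulting interval-valued bound on $|E[0](x)|$, and verify it stays below $C_{E_0}/\sqrt{\pi/m}$ (or run the partitioned version) — all inside the Arb implementation described in Appendix \ref{appendix:implementation}.

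\textbf{Main obstacle.} The delicate point is the rigorous treatment of the logarithmic singularity of $\log A[0](x,y)$ at $y=x$: one must isolate it cleanly (writing $A[0](x,y) = |x-y|^2 \cdot B(x,y)$ with $B$ analytic and bounded above and below on the relevant range, so $\log A[0] = 2\log|x-y| + \log B$), integrate $\log|x-y|$ against the smooth weight with certified error, and control $\log B$ by interval quadrature — while making sure the enclosures are tight enough to beat the very small target $3\cdot 10^{-8}$, which forces a fairly fine subdivision in both $x$ and $y$ and careful propagation of rounding errors. Everything else (evaluating the trigonometric polynomial $R_0$ and its derivative, the algebra of $E[0]$) is routine interval arithmetic. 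The same singularity-splitting machinery is presumably reused for the operator bounds in Lemma \ref{L-L_Fbound}, so I would set it up once and invoke it here.
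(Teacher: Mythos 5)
Your outline matches the paper's high-level strategy: the bound is obtained by rigorous interval-arithmetic evaluation of $E[0]$ in Arb, with the singular $y$-integral split into a neighborhood of the diagonal (handled analytically) and its complement (handled by certified quadrature), followed by summing squared enclosures over a partition of $[0,\pi/m]$. Your factorization $A[0](x,y) = |x-y|^2\, B(x,y)$ with $B$ bounded above and below is valid — as $y\to x$, $B\to R_0(x)^2 + R_0'(x)^2 > 0$ — and gives a viable alternative to what the paper actually does, which is to bound the singular contribution $\int_{-\delta}^{\delta}|F_K(x,z)|\,dz$ by a closed-form estimate (Lemma \ref{singularities_E0}) and simply absorb it as a symmetric error interval.

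There is, however, one real issue: you propose to enclose $\sup_{X_k}|E[0]|$ by evaluating $E[0]$ directly on interval arguments $X_k$, and you acknowledge this forces "a fairly fine subdivision" to hit the target $3\cdot10^{-8}$. The paper explicitly flags this as infeasible — directly plugging an interval $X_k$ into the integral $\int F_K(X_k,z)\,dz$ incurs interval blow-up of order $\delta x\cdot\sup|\partial_x F_K|$, which is hopelessly larger than $3\cdot 10^{-8}$ for any reasonable $\delta x$. Instead, the paper evaluates $E[0]$ and $E[0]'$ only at \emph{midpoints} of tiny subintervals and controls the remainder via a second-order Taylor expansion in $x$, using a once-and-for-all uniform bound $|E[0]''|\le 50$ proved separately in Lemma \ref{ddE0_bound}. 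That precomputed second-derivative bound is the ingredient that makes the subdivision with $\delta x\approx 10^{-6}$ succeed: the Taylor remainder scales like $(\delta x)^2\cdot 50\sim 10^{-11}$, far below the target, while the $E[0](x_k^{\mathrm{mid}})$ and $E[0]'(x_k^{\mathrm{mid}})$ evaluations are at points, not intervals. Without some analogue of this trick your plan would not close; it is the key step missing from your proposal.
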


\begin{proof}
The proof is computer-assisted and can be found in the supplementary material. We refer to Appendix \ref{appendix:implementation} for the implementation details.
\end{proof}

\begin{lemma}\label{ddE0_bound}
    The second derivative of the error of the approximate solution is uniformly bounded by 50: $|E[0]''(x)| < 50$.
\end{lemma}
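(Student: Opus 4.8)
My plan is to estimate $E[0]''$ directly as a single quantity through a rigorous, computer-assisted computation, rather than term by term. Recall from \eqref{OPv}, and as already recorded in Lemma~\ref{CE0}, that
\[
E[0](x) = F_1[R_0](x) + F_2[R_0](x) - R_0(x)R_0'(x),
\]
where $F_1[R_0],F_2[R_0]$ are the integral expressions in \eqref{F_def} evaluated at $R_0$, with kernel depending on $A[0](x,y) = (R_0(x)-R_0(y))^2 + 4R_0(x)R_0(y)\sin^2(\tfrac{x-y}{2})$. Differentiating twice gives $E[0]'' = \partial_x^2(F_1[R_0]+F_2[R_0]) - (3R_0'R_0'' + R_0R_0''')$. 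By Lemmas~\ref{boundR0}, \ref{MdR0}, \ref{MddR0} and \ref{MdddR_MddddR} the polynomial piece alone is only bounded by $3\cdot 0.52\cdot 8.7 + 1.0925\cdot 106 \approx 130$, which already exceeds the claimed $50$. So a triangle-inequality estimate cannot work: the bound $|E[0]''|<50$ can only come from the near-cancellation of $(R_0R_0')''$ with $\partial_x^2(F_1[R_0]+F_2[R_0])$ — a cancellation that would be exact if $R_0$ solved \eqref{eqR} — and any proof must keep these two terms together.

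The analytic work lies in moving the two $x$-derivatives inside the integrals. Writing $A[0](x,y) = (x-y)^2 B(x,y)$ with $B$ smooth and $B(x,x)=R_0(x)^2+R_0'(x)^2>0$, we have $\log A[0] = 2\log|x-y| + \log B(x,y)$, so $\partial_x\log A[0]$ carries a $1/(x-y)$ singularity and $\partial_x^2\log A[0]$ a $1/(x-y)^2$ one. The prefactors, however, vanish linearly on the diagonal: both $R_0(x)R_0'(y)-R_0'(x)R_0(y)$ and $\sin(x-y)$ do. Collecting terms, every contribution to the integrand of $\partial_x^2 F_i[R_0]$ is then either $O(\log|x-y|)$ (integrable) or of the form $g(x,y)/(x-y)$ with $g$ smooth — a principal-value kernel — and no genuine $1/(x-y)^2$ survives. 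I would make this rigorous by fixing $x$ and splitting each $y$-integral at $|x-y|=h$: on $\{|x-y|>h\}$ all factors are smooth and bounded and the integral is enclosed by interval arithmetic; on $\{|x-y|\le h\}$ I would Taylor-expand $R_0$ about $x$ (remainders controlled by Lemmas~\ref{boundR0}--\ref{MdddR_MddddR}, in particular the bound $M_{R_0^{(4)}}$ on $R_0^{(4)}$), pull out the finitely many explicit singular monomials — whose principal-value integrals over the symmetric window either vanish by oddness or are computed in closed form — and bound the smooth remainder by a constant times $h$.

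To turn this into a uniform pointwise bound I would use the $6$-fold symmetry and evenness of $R_0$ to restrict to $x\in[0,\pi/m]$, partition that interval into finitely many short subintervals, and on each run the above estimate with interval enclosures for the $x$-dependence, verifying that the enclosure of $E[0]''$ stays strictly below $50$. Because $(R_0R_0')''$ and $\partial_x^2(F_1[R_0]+F_2[R_0])$ are evaluated inside the same enclosure, the cancellation is retained automatically. I expect the main obstacle to be precisely the quantitative side of this: one must extract the principal-value parts cleanly and choose $h$ and the quadrature resolution so that the combined near-diagonal remainder and far-field error are small enough to realise the gain from $\approx 130$ down to below $50$ — a coarse implementation would simply discard the cancellation and fail. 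A fallback that sidesteps the pointwise quadrature would be to compute rigorous enclosures of the Fourier coefficients $\widehat{E[0]}_k$ (with an exponential tail bound from the analyticity of $\log A[0]$ in a strip) and bound $|E[0]''(x)|\le \sum_k (km)^2|\widehat{E[0]}_k|$; given the size of the defect in Lemma~\ref{CE0}, this series sits comfortably below $50$.
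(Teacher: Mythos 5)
Your main plan follows the paper's computer-assisted strategy closely: subdivide $[0,\pi/m]$, compute an interval enclosure of $E[0]''(x)$ on each piece keeping the polynomial part $-(3R_0'R_0''+R_0R_0''')$ and the integral part $\partial_x^2(F_1[R_0]+F_2[R_0])$ in the same enclosure, and split each $y$-integral into a near-diagonal window handled by Taylor expansion and a far-field part handled by rigorous quadrature. Your preliminary observation that a term-by-term triangle inequality gives only $\approx 130 > 50$ is a useful sanity check that the paper does not spell out; it correctly identifies that the bound must exploit the near-cancellation of the two pieces (which is captured automatically once they sit in the same interval enclosure). One technical difference: you propose differentiating under the integral and treating the resulting $g(x,y)/(x-y)$ kernels as principal values. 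This is workable, but the paper sidesteps principal values entirely by first changing variables to $z=x-y$ and then differentiating at fixed $z$: since $A[0](x,x-z)=z^2B(x,z)$ with $B$ smooth and bounded below, one has $\partial_x\log A(x,x-z)=\partial_x\log B(x,z)$, which is bounded near $z=0$, so the only surviving singularity in $\partial_x^2 F_K(x,z)$ is the undifferentiated $\log|z|$, absolutely integrable against a prefactor that vanishes linearly. That is precisely what Lemma~\ref{singularities_E0} (third claim) packages, and it makes the near-diagonal bound an absolute-value estimate rather than a cancellation-of-odd-parts argument.

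By contrast, your proposed fallback does not work as stated. Lemma~\ref{CE0} controls $\|E[0]\|_{L^2([0,\pi/m])}$, which bounds only $\big(\sum_k |\widehat{E[0]}_k|^2\big)^{1/2}$, and this gives no control over the weighted $\ell^1$ quantity $\sum_k (km)^2|\widehat{E[0]}_k|$. Invoking analyticity of $E[0]$ in a strip to get an exponential tail is plausible in principle, but the constants (width of the strip, size of $E[0]$ on its boundary) are not supplied, and $\log A[0]$ itself is singular on the real diagonal, so this route would require its own quantitative argument; the conclusion that the series ``sits comfortably below $50$'' is not justified by Lemma~\ref{CE0} alone.
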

\begin{proof}
    The proof is computer-assisted and can be found in the supplementary material. We refer to Appendix \ref{appendix:implementation} for the implementation details.
\end{proof}

% \begin{lemma}\label{mRMR_not0_bound}
%     If $R = R_0 + v$ with $v \in \Tilde{Y}_m^{even, \epsilon}$ then it is bounded by above and below $m_R < R(x) < M_R$ with $m_R = 0.94, M_R = 1.1$.
% \end{lemma}
% \begin{proof}
%     It immediately follows from the bounds $m_{R_0}, M_{R_0}$ on $R_0$ \eqref{boundR0}, and that $\Y v \leq \epsilon \sqrt{\pi/m}$.
% \end{proof}

% \color{red}

% Probably remove -- leave it for now just in case 

% \begin{lemma} \label{Abounds}
%     If $v \in \Tilde{Y}_m^{even, \epsilon}$ the functional $A[v](x,y)$ satisfies the following bounds 
%     \begin{enumerate} 
%         \item $A[v](x,y) \geq 4 m_R^2 \sin^2 \left ( \frac{x-y}{2}\right )$,
%         \item $A[v](x,y) \geq (R(x)-R(y))^2 $,
%         \item $A[v](x,y) \geq 4 \left |(R(x)-R(y))m_R \sin \left ( \frac{x-y}{2}\right ) \right |$,
%         \item $A[v](x,y) \leq (M_R-m_R)^2 + 4 M_R^2 \sin^2 \left ( \frac{x-y}{2}\right ) $,
%         \item $A[v](x,y) \leq 4 M_R^2$,
%     \end{enumerate}
%     where $R = R_0+v$.
% \end{lemma}
% \begin{proof}
%     The first, second  and fourth inequality follow straight from the first definition of $A[v]$ \eqref{defA}. For the third one we need to use its definition and that $a^2+b^2 \geq 2ab$. The fifth one follows from the second definition of $A[v]$ \eqref{defA2}.
% \end{proof}

% \color{black}

In our setting where $R_0$ is $2\pi/m$ periodic and even, we first prove that the equation preserves those properties and hence it is enough to look at $x \in [0,\pi/m]$.%if we perturb maintaining the symmetries the equation is fully determined by the $[0, \pi/m]$ interval. This is a consequence of next proposition. 
We start with the following lemma:

\begin{lemma} \label{Asimmetries}
    If $v \in \Tilde{Y}_m^{even, \epsilon}$ the functional $A[v](x,y)$ satisfies the following properties
    \begin{enumerate} 
        \item $A[v](x+ \frac{2\pi}{m}, y) = A[v](x, y-\frac{2\pi}{m})$,
        \item $A[v](-x,y) = A[v](x,-y)$.
    \end{enumerate}
\end{lemma}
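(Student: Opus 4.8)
The plan is to verify both identities by direct substitution into the defining formula \eqref{defA}, using only the two structural facts about $v$: that it is $2\pi/m$-periodic and even (being the antiderivative from $0$ of an odd, $2\pi/m$-periodic function $\tilde u$). Recall
\[
A[v](x,y) = (R_0(x)-R_0(y)+v(x)-v(y))^2 + 4(R_0(x)+v(x))(R_0(y)+v(y))\sin^2\!\left(\tfrac{x-y}{2}\right).
\]
Since $R_0$ is $2\pi/m$-periodic, $R_0(x+\tfrac{2\pi}{m}) = R_0(x)$, and likewise $v(x+\tfrac{2\pi}{m}) = v(x)$. For the first identity I would evaluate $A[v](x+\tfrac{2\pi}{m},y)$: the "values" factors $R_0(x)+v(x)$ and $R_0(y)+v(y)$ are manifestly invariant under shifting $x$ by $\tfrac{2\pi}{m}$ in the first slot and, symmetrically, under shifting $y$ by $-\tfrac{2\pi}{m}$ in the second slot, because each depends on a single variable; and the remaining factor $\sin^2\!\left(\tfrac{x-y}{2}\right)$ depends only on the difference $x-y$, so shifting $x \mapsto x+\tfrac{2\pi}{m}$ changes $x-y$ by $\tfrac{2\pi}{m}$, exactly the same as shifting $y \mapsto y-\tfrac{2\pi}{m}$. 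Hence both sides agree term by term.

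For the second identity I would use that $R_0$ is even and $v$ is even, so $R_0(-x)+v(-x) = R_0(x)+v(x)$ and similarly in the $y$-slot. Then in $A[v](-x,y)$ the first bracket becomes $R_0(x)-R_0(y)+v(x)-v(y)$ (unchanged), and the sine factor is $\sin^2\!\left(\tfrac{-x-y}{2}\right) = \sin^2\!\left(\tfrac{x+y}{2}\right)$; comparing with $A[v](x,-y)$, whose sine factor is $\sin^2\!\left(\tfrac{x+y}{2}\right)$ and whose brackets are identical, we get equality. Both verifications are short term-by-term matchings, and I would present them as a couple of displayed lines each rather than as prose.

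I do not anticipate a genuine obstacle here: the lemma is essentially a bookkeeping statement, and the only thing to be careful about is that $v$ really is even and $2\pi/m$-periodic — this is where the hypothesis $v \in \tilde Y_m^{even,\epsilon}$ enters, since by definition $v = \int_0^x w$ with $w = \tilde u \in \tilde X_m^{odd,\epsilon}$ odd and $2\pi/m$-periodic, and the antiderivative (from $0$) of an odd periodic function with zero mean over a period is even and periodic. I would note in passing that $w$ has zero mean over $[0,2\pi/m]$ because it is odd and $2\pi/m$-periodic (so its mean over a symmetric period interval vanishes), which is what makes $v$ genuinely $2\pi/m$-periodic rather than merely having periodic derivative. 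With those two properties in hand, the two claimed symmetries follow immediately by the substitutions above.
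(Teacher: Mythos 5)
Your proof is correct and takes essentially the same route as the paper: verify both identities by term-by-term inspection of the formula for $A[v]$, using that $R_0$ and $v$ are even and $2\pi/m$-periodic. The only cosmetic difference is that the paper appeals to the equivalent form \eqref{defA2}, where $A[v](x,y)$ is visibly a function of $R_0(x)+v(x)$, $R_0(y)+v(y)$ and $\cos(x-y)$ alone, so the two symmetries drop out immediately, whereas you work directly with \eqref{defA}; both are fine, and your aside about why $v$ is genuinely $2\pi/m$-periodic (zero mean of the odd periodic integrand over a period) is a worthwhile observation that the paper leaves implicit.
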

\begin{proof}
By the second definition of $A$ \eqref{defA2}, $A[v](x,y)$ only depends on $R_0(x), R_0(y), v(x), v(y), \cos(x-y)$. Both $R_0$ and $v$ are $2\pi/m$ periodic and even, and the lemma follows.%the first one by definition and the second one by hypothesis of the Lemma. Using these properties we only need to write down the definition of $A[v]$ to see both equality's.
\end{proof}

\begin{proposition}\label{TEN_symmetries}
    If $v \in \Tilde{Y}_m^{even, \epsilon}$ the functionals $T, E, N$ maintain the symmetries of $v'$
    \begin{enumerate}
        \item $T[v], E[v], N[v]$ are $2\pi/m$ periodic.
        \item $T[v], E[v], N[v]$ are odd.
    \end{enumerate}
\end{proposition}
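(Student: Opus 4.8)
The plan is a systematic, term-by-term inspection of the formulas \eqref{OPv}, reducing everything to the symmetries of the building blocks. First I would record the symmetries of the functions that appear. Since $v \in \Tilde{Y}_m^{even,\epsilon}$, by definition $v' = \Tilde u$ is odd and $2\pi/m$-periodic, and $v = \int_0^x \Tilde u$ is even; it is also $2\pi/m$-periodic because an odd $2\pi/m$-periodic function has zero average over a period, so that $v(x+\tfrac{2\pi}{m}) - v(x) = \int_x^{x+2\pi/m} \Tilde u = \int_{-\pi/m}^{\pi/m} \Tilde u = 0$. The approximate solution $R_0$ is by construction even and $2\pi/m$-periodic, hence $R_0'$ is odd and $2\pi/m$-periodic. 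Finally, from the second expression \eqref{defA2}, $A[v](x,y)$ depends only on $R_0(x), R_0(y), v(x), v(y)$ and $\cos(x-y)$; combining the parities and periods just listed with $\cos((x+c)-(y+c)) = \cos(x-y)$ and $\cos(-x-(-y)) = \cos(x-y)$, one gets that $A[v]$ is \emph{jointly} $2\pi/m$-periodic, $A[v](x+\tfrac{2\pi}{m}, y+\tfrac{2\pi}{m}) = A[v](x,y)$, and \emph{jointly even}, $A[v](-x,-y) = A[v](x,y)$ (these are equivalent to the two identities of Lemma \ref{Asimmetries} once one also uses the manifest symmetry $A[v](x,y) = A[v](y,x)$). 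Consequently, by \eqref{defCS}, $C[v]$ inherits joint $2\pi/m$-periodicity and joint evenness, whereas $S[v]$ is jointly $2\pi/m$-periodic and jointly \emph{odd}, the extra sign coming from $\sin(-x-(-y)) = -\sin(x-y)$.

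For periodicity (item (1)), I would replace $x$ by $x + \tfrac{2\pi}{m}$ in each of $T[v], E[v], N[v]$. The pointwise prefactors $R_0(x), R_0'(x), v(x), v'(x)$ are unchanged. In every integral I change variables $y \mapsto y + \tfrac{2\pi}{m}$; since the integral is over the torus $\mathbb{T}$ this is an invariance, and using the joint periodicity of $C[v], S[v]$ together with the $2\pi/m$-periodicity of the $y$-dependent factors $R_0(y), R_0'(y), v(y), v'(y)$, each integral returns to itself. This proves that $T[v], E[v], N[v]$ are $2\pi/m$-periodic.

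For oddness (item (2)), I would replace $x$ by $-x$ and, in each integral, change variables $y \mapsto -y$, which is again an invariance of $\int_{\mathbb{T}}$ since all integrands are $2\pi$-periodic in $y$. A generic prefactor picks up a sign equal to the parity of the corresponding factor among $\{R_0, v\}$ (even) and $\{R_0', v'\}$ (odd); inside the integral, $C[v](-x,-y) = C[v](x,y)$ and $S[v](-x,-y) = -S[v](x,y)$, and the remaining $y$-factors contribute their own parities. Collecting signs one checks that in every term of $E[v]$ and $N[v]$ the $C$-integrand carries exactly one derivative factor and the $S$-integrand carries zero or two, so the net parity is always odd; the purely local terms $-R_0 R_0'$ and $-v v'$ are (even)$\times$(odd), also odd; and the mixed terms of $T[v]$, namely $v'(x)R_0(x)$, $v(x)R_0'(x)$, the integrals $\int(R_0(y)C[v] - R_0'(y)S[v])\,dy$, $\int(-R_0'(y)C[v] - R_0(y)S[v])\,dy$, $\int(-R_0(x)S[v] + R_0'(x)C[v])v(y)\,dy$ and $\int(-R_0'(x)S[v] - R_0(x)C[v])v'(y)\,dy$, are handled identically. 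Hence $T[v], E[v], N[v]$ are odd.

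The argument is entirely mechanical; the only point requiring care is the sign bookkeeping across all the terms of $T[v]$ — in particular the asymmetry between $C[v]$ (even under joint reflection) and $S[v]$ (odd) — and noting that the logarithmic singularity of $C[v], S[v]$ along $x = y$ is integrable, so that the changes of variables in the integrals are legitimate. I expect that sign bookkeeping to be the only place where attention is needed.
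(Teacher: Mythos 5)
Your proposal is correct and follows essentially the same route as the paper's proof: establish the translation and reflection symmetries of $A[v]$ (and hence of $C[v], S[v]$), then substitute $x\mapsto x+\tfrac{2\pi}{m}$ or $x\mapsto -x$ and absorb the shift via a change of variables in the $y$-integral, tracking parities of the remaining $R_0, R_0', v, v'$ factors. The paper phrases the $A[v]$ symmetries slightly differently (shifting or reflecting only the $y$-slot, as in Lemma \ref{Asimmetries}) and abstracts all of $T, E, N$ into a single template with coefficient matrices before doing the bookkeeping, but this is the same argument presented in a more uniform notation rather than a genuinely different approach.
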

\begin{proof}
By definition \eqref{OPv}, the three functions $T[v], E[v], N[v]$ can be written in the following form
\begin{eqnarray*}
    \sum_{i,j = 0}^1 a_{i,j} f_i(x)f_j'(x) + \int_0^{2\pi} \log(A[v](x,y)) \left ( \cos(x-y) (b_{i,j}f_i'(x)f_j(y) + c_{i,j} f_i(x)f_j'(y))\right. \\
    \left.+ \sin(x-y) (d_{i,j}f_i(x)f_j(y) + e_{i,j} f_i'(x)f_j'(y))\right )dy,
\end{eqnarray*}
where $a, b, c, d, e$ are some matrices of coefficients and $f_0 = R_0, f_1 = v$. We are going to prove both items for functions of this form.

Since $R_0, v$ are $2\pi/m$ periodic, $f_i, f_i'$ are $2\pi/m$ periodic and $f_i$ is even and $f_i'$ is odd, implying that the terms $a_{i,j}f_i(x)f_j'(y)$ are $2\pi/m$ periodic, odd and moreover
\begin{align*}
    &f_i(x+2\pi/m)f_j'(x+ 2\pi/m) = f_i(x)f_j'(x), \\
    &f_i(-x)f_j'(-x) = f_i(x)f_j'(-x) = -f_i(x) f_j'(x).
\end{align*}
For the integral term let us define the following auxiliary functions
\begin{equation*}
    F_{i,j}(x,y) =  \cos(x-y) (b_{i,j}f_i'(x)f_j(y) + c_{i,j} f_i(x)f_j'(y)) + \sin(x-y) (d_{i,j}f_i(x)f_j(y) + e_{i,j} f_i'(x)f_j'(y))
\end{equation*}
Using the again the periodicity of $f_i, f_i'$ we have
\begin{align*}
    F_{i,j}(x+2\pi/m, y) &= \cos(x+ \frac{2\pi}{m}-y) \left (b_{i,j}f_i'(x+\frac{2\pi}{m})f_j(y) + c_{i,j} f_i(x+\frac{2\pi}{m})f_j'(y)\right )  \\
    &+\sin(x+\frac{2\pi}{m}-y) \left (d_{i,j}f_i(x+\frac{2\pi}{m})f_j(y) + e_{i,j} f_i'(x+\frac{2\pi}{m})f_j'(y)\right) \\
    &= \cos(x-(y- \frac{2\pi}{m})) \left (b_{i,j}f_i'(x)f_j(y- \frac{2\pi}{m}) + c_{i,j} f_i(x)f_j'(y- \frac{2\pi}{m}) \right )  \\
    &+\sin(x-(y- \frac{2\pi}{m})) \left (d_{i,j}f_i(x)f_j(y- \frac{2\pi}{m}) + e_{i,j} f_i'(x)f_j'(y- \frac{2\pi}{m})\right)  = F_{i,j}(x, y- \frac{2\pi}{m}),
\end{align*}
and using the evenness and oddness of $f_i, f_i'$ respectively
\begin{align*}
     F_{i,j}(-x,y) &=  \cos(-x-y) \left (b_{i,j}f_i'(-x)f_j(y) + c_{i,j} f_i(-x)f_j'(y) \right ) \\
     &+ \sin(-x-y)  \left ( d_{i,j}f_i(-x)f_j(y) + e_{i,j} f_i'(-x)f_j'(y) \right ) \\
     &=  \cos(x+y)) \left (-b_{i,j}f_i'(x)f_j(y) - c_{i,j} f_i(x)f_j'(-y) \right )  \\
     &-\sin(x+y)  \left ( d_{i,j}f_i(x)f_j(-y) + e_{i,j} f_i'(x)f_j'(-y) \right ) \\
     &= -F_{i,j}(x,-y).
\end{align*}
Using the properties we just proved for $F_{i,j}$ and the properties of Lemma \ref{Asimmetries} for $A[v]$, changing variables inside the integral we see that
\begin{align*}
    \sum_{i,j = 0}^1 \int_{0}^{2\pi} \log\left (A[v](x+2\pi/m,y)\right ) F_{i,j}\left (x+2\pi/m,y\right ) dy &= \sum_{i,j = 0}^1 \int_{0}^{2\pi} \log\left (A[v](x,y-2\pi/m)\right ) F_{i,j}\left (x,y-2\pi/m\right ) dy  \\
    &= \sum_{i,j = 0}^1 \int_{0}^{2\pi} \log\left (A[v](x,z)\right ) F_{i,j}\left (x,z \right ) dz, 
\end{align*}
and
\begin{align*}
    \sum_{i,j = 0}^1 \int_{0}^{2\pi} \log\left (A[v](-x,y)\right ) F_{i,j}\left (-x,y\right ) dy &= \sum_{i,j = 0}^1 \int_{0}^{2\pi} \log\left (A[v](x,-y)\right )(- F_{i,j}\left (x,-y\right ) )dy \\
    &= - \sum_{i,j = 0}^1 \int_{0}^{2\pi} \log\left (A[v](x,z)\right ) F_{i,j}\left (x,z\right ) dz.
\end{align*}
\end{proof}
In order to transform equation \eqref{eqv} into a fixed point equation, we will extract the linear terms. At this stage we have split the equation into the terms $T, E, N$, due to their different natures. In particular, $E[0]$ is the defect made by the approximate solution, and $N$ is purely nonlinear.
%The functional $E$ evaluated at $0$, $E[0]$, is the error of the approximate solution $R_0$. The functional $T$ is the "natural" linear terms we would expect from using the distributive property in $R= R_0+v$, and the functional $N$ contains only $o(v^2)$ terms. 
We now define the linearized operators $T_1, E_1$ as the Fréchet derivatives of $T, E$ at $0$.%, this is going to be clear when we do the non linear estimates $T-T_1$, $E-E[0]-E_1$ in Section \ref{Sec:Estimates}.

\begin{definition} \label{T1E1_def}
    The linearized operators $T_1, E_1$ are
    \begin{align*}
        T_1[v](x) &\coloneqq 
        v'(x)\left (  R_0(x) + \int_0^{2\pi} \left ( R_0(y)C[0] - R_0'(y)S[0] \right ) dy \right )\\
        &+
        v(x) \left ( R_0'(x) + \int_0^{2\pi} \left ( -R_0'(y)C[0] - R_0(y)S[0] \right ) dy \right ) \\
        &+ \int_0 ^{2\pi}\left ( -R_0(x)S[0] +R_0'(x)C[0] \right ) v(y)dy + \int_0^{2\pi} \left (- R_0'(x)S[0] - R_0(x)C[0] \right ) v'(y)dy, \\
        \\
        E_1[v](x) &:= %\int \left . \d_\lambda C[\lambda v] \right |_{\lambda = 0} (R_0(x)R_0'(y) - R_0(y)R_0'(x) )dy +   
            %\int \left . \d_\lambda S[\lambda v] \right |_{\lambda = 0}  (R_0(x)R_0(y) + R_0'(x)R_0'(y)) dy = \\
        \frac{1}{4\pi \Omega}\int_0^{2\pi}  \left [ \frac{\cos(x-y) \left ( R_0(x)R_0'(y) - R_0(y)R_0'(x) \right ) + \sin(x-y) \left ( R_0(x)R_0(y) + R_0'(x)R_0'(y) \right )}{A[0](x,y)}  \right ] \\
        & \times \left [ \left ( 2(R_0(x)-R_0(y)) + 4R_0(y) \sin^2\left ( \frac{x-y}{2}\right ) \right )v(x) \right. \\
        & \left.+ \left (-2(R_0(x)-R_0(y)) + 4R_0(x) \sin^2\left ( \frac{x-y}{2}\right ) \right )  v(y) \right ] dy \\
        &= : v(x) \int_0^{2\pi} -M_1(x,y) dy + \int_0^{2\pi} -M_2(x,y) v(y) dy.
    \end{align*}
    It is clear from the definition that if $v \in \Tilde{Y}_m^{even, \epsilon}$, then $T_1[v], E_1[v]$ are $2\pi/m$ periodic and odd.
\end{definition}

% \begin{corollary}\label{T1E1_symmetries}
%     If $v \in \Tilde{Y}_m^{even, \epsilon}$, then $T_1[v], E_1[v]$ are $2\pi/m$ periodic and odd.
% \end{corollary}
% \begin{proof}
%     This is a consequence of Proposition \ref{TEN_symmetries}. $T_1, E_1$ are the Fréchet derivatives of $T,E$ at $0$, so they inherit the oddness and $2\pi/m$ periodicity.
% \end{proof}

We can rewrite equation \eqref{eqv} into 
% \begin{equation}\label{eqvLin}
%     T_1[v]-E_1[v] = E[v]-E_1[v] + N[v] - (T[v]-T_1[v]).
% \end{equation}
% \color{red} new suggested split
\begin{equation}\label{eqvLin}
    T_1[v]-E_1[v] = (E[v]-E_1[v] - E[0]) + N[v] - (T[v]-T_1[v]) + E[0].
\end{equation}
Let us note that every summand on the RHS is nonlinear in $v$, with the exception of the last one which can be bounded by a small constant.
In order to invert the linear operator on the LHS we have to use the symmetries in $v$ that come from $\Tilde{u}$ and write it in terms of an operator acting on $u$. The choice of $u$ is motivated due to the need to mod out the dilating and rotating symmetries, which would yield multiple solutions, in a way that considering the problem in terms of $u$ there is only one solution near $u=0$. 
We now rewrite $T_1$ and $E_1$ taking into account the aforementioned symmetries.
\begin{proposition} \label{TEsym}
If $v \in \Tilde{Y}_m^{even, \epsilon}$ then 
\begin{align*}
    T_1[v](x) &= 
    v'(x)\left (  R_0(x) + \int_{0}^{2\pi} \left ( R_0(y)C[0] - R_0'(y)S[0] \right ) dy \right ) \\
    & +
    v(x) \left ( R_0'(x) + \int_{0}^{2\pi} \left ( -R_0'(y)C[0] - R_0(y)S[0] \right ) dy \right ) \\
    &+ \int_{0}^{\frac{\pi}{m}} \left ( -R_0(x)S^{s,e}(x,y) +R_0'(x)C^{s,e}(x,y) \right ) v(y)dy + \int_{0}^{\frac{\pi}{m}} \left (- R_0'(x)S^{s,o}(x,y) - R_0(x)C^{s,o}(x,y) \right ) v'(y)dy, \\
    \\
    E_1[v](x) &= v(x) \int_0^{2\pi} -M_1(x,y) dy + \int_{0}^{\frac{\pi}{m}} -M_2^{s,e}(x,y) v(y) dy,
\end{align*}
where
\begin{align*}
    C^{s,e}(x,y) &\coloneqq \sum_{j = 0}^{m-1} C[0](x,y+\frac{2\pi j}{m}) + C[0](x,-y-\frac{2\pi j}{m}), \\
    C^{s,o}(x,y) &\coloneqq \sum_{j = 0}^{m-1} C[0](x,y+\frac{2\pi j}{m}) - C[0](x,-y-\frac{2\pi j}{m}), \\
    S^{s,e}(x,y) &\coloneqq \sum_{j = 0}^{m-1} S[0](x,y+\frac{2\pi j}{m}) + S[0](x,-y-\frac{2\pi j}{m}), \\
    S^{s,o}(x,y) &\coloneqq \sum_{j = 0}^{m-1} S[0](x,y+\frac{2\pi j}{m}) - S[0](x,-y-\frac{2\pi j}{m}), \\    
    M_2^{s,e}(x,y) &\coloneqq \sum_{j = 0}^{m-1} M_2(x,y+\frac{2\pi j}{m}) + M_2(x,-y-\frac{2\pi j}{m}).
\end{align*}
The first superscript $s$ denotes the symmetrized kernel and the second one ${e,o}$ refers to the parity respect to the second variable: \{even, odd\}.
\end{proposition}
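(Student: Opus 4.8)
The plan is to derive both identities by folding, in the variable $y$, each integral in Definition~\ref{T1E1_def} that is tested against $v(y)$ or $v'(y)$ from the full circle $[0,2\pi]$ down to the fundamental domain $[0,\pi/m]$, in two stages: first using the $2\pi/m$-periodicity of $v$ and $v'$, then using the evenness of $v$ and the oddness of $v'$. The terms of $T_1$ that merely multiply $v(x)$ or $v'(x)$ by a number (the constants $\int_0^{2\pi}(R_0(y)C[0]-R_0'(y)S[0])\,dy$ and $\int_0^{2\pi}(-R_0'(y)C[0]-R_0(y)S[0])\,dy$) and the term $v(x)\int_0^{2\pi}-M_1(x,y)\,dy$ of $E_1$ involve no testing against $v$ and are therefore left as $\int_0^{2\pi}$, exactly matching the statement. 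Two facts will be used repeatedly: first, since $v=\int_0^x\tilde u$ with $\tilde u\in\tilde X_m^{odd,\epsilon}$, the function $v$ is even and $2\pi/m$-periodic while $v'=\tilde u$ is odd and $2\pi/m$-periodic; second, $C[0](x,\cdot)$, $S[0](x,\cdot)$ and $M_2(x,\cdot)$ are $2\pi$-periodic in their second slot, because by \eqref{defA2} they depend on $y$ only through the $2\pi$-periodic quantities $R_0(y)$, $\cos(x-y)$, $\sin(x-y)$ and $\sin^2(\tfrac{x-y}{2})$. The resulting identities hold for every $x$, since all manipulations are carried out in $y$.

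\emph{Step 1: periodic folding.} Let $K(x,y)$ stand for one of the kernels $-R_0(x)S[0]+R_0'(x)C[0]$, $-R_0'(x)S[0]-R_0(x)C[0]$ or $-M_2$, and let $w\in\{v,v'\}$. Writing $[0,2\pi)=\bigsqcup_{j=0}^{m-1}[\tfrac{2\pi j}{m},\tfrac{2\pi(j+1)}{m})$, substituting $y\mapsto y+\tfrac{2\pi j}{m}$ on the $j$-th piece, and using the $2\pi/m$-periodicity of $w$, we obtain
\begin{equation*}
\int_0^{2\pi}K(x,y)w(y)\,dy=\sum_{j=0}^{m-1}\int_0^{2\pi/m}K\Big(x,y+\tfrac{2\pi j}{m}\Big)w(y)\,dy.
\end{equation*}

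\emph{Step 2: parity folding.} Split each $\int_0^{2\pi/m}$ as $\int_0^{\pi/m}+\int_{\pi/m}^{2\pi/m}$ and substitute $y\mapsto\tfrac{2\pi}{m}-y$ in the second integral; by $2\pi/m$-periodicity $w(\tfrac{2\pi}{m}-y)=w(-y)$, which equals $v(y)$ if $w=v$ and $-v'(y)$ if $w=v'$. Summed over $j$, the second integral thus contributes $\pm\int_0^{\pi/m}\sum_{j=0}^{m-1}K(x,\tfrac{2\pi(j+1)}{m}-y)\,w(y)\,dy$; reindexing the sum and using the $2\pi$-periodicity of $K$ in $y$ to identify $\{\tfrac{2\pi l}{m}-y:l=0,\dots,m-1\}$ with $\{-y-\tfrac{2\pi j}{m}:j=0,\dots,m-1\}$ modulo $2\pi$, this equals $\pm\int_0^{\pi/m}\sum_{j=0}^{m-1}K(x,-y-\tfrac{2\pi j}{m})\,w(y)\,dy$. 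Hence
\begin{equation*}
\int_0^{2\pi}K(x,y)v(y)\,dy=\int_0^{\pi/m}\sum_{j=0}^{m-1}\Big(K(x,y+\tfrac{2\pi j}{m})+K(x,-y-\tfrac{2\pi j}{m})\Big)v(y)\,dy,
\end{equation*}
and the analogous identity for $v'$ carries a minus sign between the two copies of $K$. Substituting $K=-R_0(x)S[0]+R_0'(x)C[0]$ into the $v(y)$ term of $T_1$, $K=-R_0'(x)S[0]-R_0(x)C[0]$ into its $v'(y)$ term, and $K=-M_2$ into the $v(y)$ term of $E_1$, and using linearity of the $j$-sum, reproduces exactly the symmetrized kernels $C^{s,e},S^{s,e},M_2^{s,e}$ (even folding) and $C^{s,o},S^{s,o}$ (odd folding), which is the asserted form of $T_1$ and $E_1$.

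\emph{Main obstacle.} The computation is entirely elementary; the only point demanding care is the bookkeeping in Step 2, namely verifying that the shift $y\mapsto\tfrac{2\pi}{m}-y$ together with the $2\pi$-periodicity of $K$ in $y$ converts $\{\tfrac{2\pi(j+1)}{m}-y\}_{j=0}^{m-1}$ into $\{-y-\tfrac{2\pi j}{m}\}_{j=0}^{m-1}$, so that the sums defining $C^{s,e},C^{s,o},S^{s,e},S^{s,o},M_2^{s,e}$ appear with precisely the signs and arguments in the statement.
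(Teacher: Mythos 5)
Your proof is correct and follows the same route as the paper, which simply asserts that the identity "follows from the symmetries of $v,v'$ and their periodicity." You have carried out the periodic-then-parity folding carefully, including the reindexing argument that uses the $2\pi$-periodicity of $C[0]$, $S[0]$, $M_2$ in $y$ to match the sums $\sum_{l=1}^{m}K(x,\tfrac{2\pi l}{m}-y)$ and $\sum_{j=0}^{m-1}K(x,-y-\tfrac{2\pi j}{m})$; the signs and arguments all check out.
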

 \begin{proof}
 The proof follows from the symmetries of $v,v'$ and their periodicity.
%     The proof is just changing variables on the integrals and using the symmetries $v(-y) = v(y)$, $v'(-y) = -v'(y)$ and periodicity $v(y+2\pi/m) = v(y)$, $v'(y+2\pi/m) = v'(y)$.
 \end{proof}

We further simplify the notation of the linear part of equation \eqref{eqvLin} by writing, for $v \in \Tilde{Y}_m^{even, \epsilon}$:
\begin{align*}
   T_1[v](x) - E_1[v](x) = K_1(x)v'(x) + K_2(x)v(x) + \int_0^{\frac{\pi}{m}} K_3^{s,e}(x,y)v(y) dy + \int_0^{\frac{\pi}{m}} K_4^{s,o}(x,y)v'(y) dy,
\end{align*}
where 
\begin{align}
    K_1(x) &\coloneqq R_0(x) + \int_0^{2\pi} R_0(y)C[0](x,y) - R_0'(y)S[0](x,y) dy, \label{defK1} 
    \\
    K_2(x) &\coloneqq R_0'(x) + \int_0^{2\pi} -R_0'(y)C[0](x,y) - R_0(y)S[0](x,y) + M_1(x,y)dy, \label{defK2} 
    \\
    K_3^{s,e}(x,y) &\coloneqq -R_0(x)S^{s,e}(x,y) +R_0'(x)C^{s,e}(x,y) + M_2^{s,e}(x,y), \label{defK3} \\
    K_4^{s,o}(x,y) &\coloneqq - R_0'(x)S^{s,o}(x,y) - R_0(x)C^{s,o}(x,y). \label{defK4} 
\end{align}

% \begin{definition}\label{T1-E1def}
%     If $v \in \Tilde{Y}_m^{even, \epsilon}$, the linear part of the equation \eqref{eqvLin} can be written as 
%     \begin{align*}
%         \mathbf{(T_1-E_1)[v](x)} = K_1(x)v'(x) + K_2(x)v(x) + \int_0^{\frac{\pi}{m}} K_3^{s,e}(x,y)v(y) dy + \int_0^{\frac{\pi}{m}} K_4^{s,o}(x,y)v'(y) dy,
%     \end{align*}
%     with
%     \begin{align*}
%         K_1(x) &\coloneqq R_0(x) + \int R_0(y)C[0](x,y) - R_0'(y)S[0](x,y) dy, \\
%         \\
%         K_2(x) &\coloneqq R_0'(x) + \int -R_0'(y)C[0](x,y) - R_0(y)S[0](x,y) + M_1(x,y)dy, \\ 
%         \\
%         K_3^{s,e}(x,y) &\coloneqq -R_0(x)S^{s,e}(x,y) +R_0'(x)C^{s,e}(x,y) + M_2^{s,e}(x,y),\\
%         \\
%         K_4^{s,o}(x,y) &\coloneqq - R_0'(x)S^{s,o}(x,y) - R_0(x)C^{s,o}(x,y). \\
%     \end{align*}
% \end{definition}

%\color{blue} 
%The last step is to write the linear operator $T_1-E_1$ in terms of $u$ only, thanks to Proposition \eqref{TEsym} all the integration intervals where $\Tilde{u}$ appears are reduced to $[0, \pi/m]$ so we won't need the extension $\Tilde{u}$. The idea behind the definition of this operator $L$ is that $K_1 Lu  = (T_1-E_1)[\int_0^x \Tilde{u}]$ for $x \in [0, \pi/m]$. We are dividing by $K_1$ so we have the operator in the form Identity + Compact and we can invert it.

%\color{red} Suggested:

The last step is to write the linear operator in terms of $u$ (and not $\Tilde{u}$) only, by virtue of Proposition \ref{TEsym}. After performing this reduction, we finish off by dividing by $K_1$ to get a linear operator of the form Identity $+$ Compact and be able to invert it.

\color{black}
\color{black}

%We announce a Lemma proved later in \ref{K1bound} to emphasize that dividing by $K_1(x)$ in the following definitions  \ref{Lu_def} \ref{NL_def} is not a problem.

In order to perform the division by $K_1$, we have the following Lemma:

\begin{lemma}
\label{K1bound}
    The function $K_1(x)$  satisfies the bound $C_{K_1} < |K_1(x)|$, with $C_{K_1} := 0.1$.
\end{lemma}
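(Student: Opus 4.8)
The plan is to show $|K_1(x)| > C_{K_1} = 0.1$ by obtaining a lower bound on $K_1(x) = R_0(x) + \int_0^{2\pi} \left( R_0(y) C[0](x,y) - R_0'(y) S[0](x,y) \right) dy$. The leading term $R_0(x)$ is bounded below by $m_{R_0} = 0.941$ thanks to Lemma \ref{boundR0}, so the whole argument reduces to controlling the integral term and showing it cannot be more negative than roughly $-0.84$. First I would recall the definitions \eqref{defCS}: $C[0](x,y) = \frac{1}{4\pi\Omega}\cos(x-y)\log(A[0](x,y))$ and $S[0](x,y) = \frac{1}{4\pi\Omega}\sin(x-y)\log(A[0](x,y))$, with $A[0](x,y) = (R_0(x) - R_0(y))^2 + 4 R_0(x) R_0(y)\sin^2\left(\frac{x-y}{2}\right)$, which is precisely $|z_0(x) - z_0(y)|^2$ for the curve parametrized by $R_0$.

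The key step is a pointwise estimate of the integrand. Using Lemma \ref{boundR0} ($m_{R_0} < R_0 < M_{R_0}$) and Lemma \ref{MdR0} ($|R_0'| < M_{R_0'}$), I would bound $|R_0(y) C[0](x,y) - R_0'(y) S[0](x,y)| \leq \frac{1}{4\pi\Omega}(M_{R_0} + M_{R_0'}) \left| \log A[0](x,y) \right|$, so it remains to control $\int_0^{2\pi} \left| \log A[0](x,y) \right| dy$ uniformly in $x$. Here one splits $\log A[0] = \log^+ A[0] - \log^- A[0]$: the upper bound $A[0](x,y) \leq (M_{R_0} - m_{R_0})^2 + 4 M_{R_0}^2$ gives a trivial bound on the positive part, while the negative part is the delicate one since $A[0](x,y) \to 0$ as $y \to x$. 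Near the diagonal, $A[0](x,y) \geq c\,(x-y)^2$ for a constant $c$ depending on $m_{R_0}$ (from the $4 R_0(x) R_0(y) \sin^2\left(\frac{x-y}{2}\right)$ term), so $\left| \log A[0](x,y) \right| \lesssim \left| \log |x-y| \right| + C$, which is integrable with an explicit bound $\int_0^{2\pi} \left| \log |x-y| \right| dy \leq C'$ uniform in $x$. Combining these gives an explicit constant $B$ with $\left| \int_0^{2\pi} \left( R_0(y) C[0](x,y) - R_0'(y) S[0](x,y) \right) dy \right| \leq B$, and one checks $m_{R_0} - B > 0.1$.

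The main obstacle is getting the constant $B$ small enough: a crude integrable bound on $\int |\log|x-y||\,dy$ combined with the prefactor $\frac{M_{R_0}+M_{R_0'}}{4\pi\Omega}$ might overshoot the gap $0.941 - 0.1 = 0.841$, because the $\sin^2$ factor, while $\geq$ a quadratic near the diagonal, is genuinely small there and the logarithm genuinely large. For this reason I expect the clean analytic bound to serve only as a sanity check, and the actual proof to be computer-assisted: one rigorously encloses the integral $\int_0^{2\pi}\left(R_0(y)C[0](x,y) - R_0'(y) S[0](x,y)\right)dy$ as a function of $x$ using interval arithmetic (handling the logarithmic singularity at $y=x$ by an explicit splitting into a singular piece integrated by hand and a smooth remainder integrated numerically), evaluates $K_1(x)$ on a fine grid in $x \in [0, \pi/m]$ with rigorous error control, and verifies $|K_1(x)| > 0.1$ throughout — matching the pattern of the other computer-assisted lemmas in this section (cf. Lemma \ref{CE0}), with implementation details deferred to Appendix \ref{appendix:implementation}.
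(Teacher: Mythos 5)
Your fallback — that the proof is computer-assisted, enclosing $K_1$ rigorously on a fine grid in $[0,\pi/m]$ with the logarithmic singularity handled by splitting it into an analytically bounded near-diagonal piece and a numerically integrated remainder — is correct and is what the paper does. The implementation uses the enclosure \eqref{K1_enclosure} (splitting $\int_0^{2\pi}$ into $[-\delta,\delta]$ and the rest), bounds the singular piece via Lemma \ref{singularities_K1}, and then applies the subdivision method \texttt{bound\_function} to verify the inequality on subintervals of $[0,\pi/m]$. So the methodology you ultimately propose matches the paper.

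However, your opening analytic sketch carries a genuine misconception that is worth flagging. You treat $R_0(x) > m_{R_0} = 0.941$ as the dominant positive term and the integral as a perturbation to be shown "not more negative than $-0.84$." In fact $K_1$ is negative throughout $[0,\pi/m]$; the implementation explicitly checks $K_1([x_n,x_{n+1}]) < -C_{K_1}$, not $K_1 > C_{K_1}$. The reason is that the integral term is \emph{larger} in magnitude than $R_0(x)$: for $R_0 \equiv 1$, one has $\int_0^{2\pi}\cos(z)\log(4\sin^2(z/2))\,dz = -2\pi$, so $K_1 \approx 1 - \tfrac{1}{2\Omega}$, and with $\Omega = 1537/3750 \approx 0.41$ this gives $K_1 \approx -0.22$. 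So the bound comes from resolving a near-cancellation between two comparably sized terms of opposite sign, not from showing the integral is small. Your sanity-check analytic route ($m_{R_0} - B > 0.1$ for a crude upper bound $B$ on the integral) is not merely too lossy — it can never work, because $B$ is genuinely larger than $m_{R_0}$. This is precisely why the rigorous pointwise enclosure of $K_1$ via interval arithmetic is not a convenience but a necessity here.
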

\begin{proof}
The proof is computer-assisted and can be found in the supplementary material. We refer to Appendix \ref{appendix:implementation} for the implementation details.
\end{proof}

We finish this section with the rest of the operator notation, and the setup of the final equation for $v$.

\begin{definition} \label{Lu_def}
Given $u \in X_m$ we define $L u$ as
\begin{align*}
    Lu(x) = u(x) + \int_0^{\frac{\pi}{m}} K(x,y) u(y) dy,    
\end{align*}
where
\begin{align*}
    K(x,y) = \frac{1}{K_1(x)} \left (K_2(x) \mathbb{I}_{0\leq y \leq x} + K_3^{s,o}(x,y) + K_4^{s,o}(x,y) \right ),
\end{align*}
with 
\begin{align*}
        K_3^{s,o}(x,y) &= \int_y^{\frac{\pi}{m}} K_3^{s,e}(x,z) dz,
\end{align*}
for all $x \in [0, \pi/m]$.
\end{definition}

%The nonlinear part $N_L[u]$ of the equation for $u$ is defined through its extension.
%\color{red} don't understand what "extension" exactly the previous sentence talks about? \color{black} \color{green} It refers to $\Tilde{u}$, maybe change the sentence, not well explained \color{black}

\begin{definition}\label{NL_def}
Given $u \in X_m^\epsilon$ we define $N_L[u]$ as
\begin{align*}
    N_L[u](x) &\coloneqq \frac{1}{K_1(x)} \left ( T_1[v] - T[v] + (E[v]-E_1[v] - E[0]) + N[v]\right ), 
\end{align*}
with $\displaystyle v(x) = \int_0^x \Tilde{u}(y)dy$.
\end{definition}

Finally, the reduced problem is to find a $u \in X_m^\epsilon$ such that
\begin{equation}\label{equ}
    Lu (x) = N_L[u](x) + \frac{E[0](x)}{K_1(x)} \quad \forall x \in \left [0, \frac{\pi}{m} \right ].
\end{equation}

\section{Solving the Fixed Point equation}\label{Sec:Fixed_Point}
In this section we will prove through a fixed point theorem argument that there exists a (unique) solution to \eqref{equ}. We will also show that any solution $u$ to \eqref{equ} will yield a solution $R$ of \eqref{eqR} after setting $R = R_0 + \int_{0}^{x}\Tilde{u}(y) dy$.
The main theorem is divided int two separate parts: first we have to invert the linear operator $L$, and second we have to do nonlinear estimates in $N_L+ \frac{E[0]}{K_1}$ to show $L^{-1}(N_L[\cdot]+ \frac{E[0]}{K_1})$ is a contraction in an adequate ball. The main theorem of this section is the following and its proof will be postponed to the end of the section:

\begin{theorem}\label{Gcontractive}
    Let $G := L^{-1} (N_L[\cdot]+ \frac{E[0]}{K_1}) : X_m^{\epsilon} \rightarrow X_m^{ \epsilon}$, then
    \begin{enumerate}
        \item $G$ is well defined: $G(X_m^\epsilon) \subseteq X_m^\epsilon$.
        \item $G$ is contractive.
    \end{enumerate}
\end{theorem}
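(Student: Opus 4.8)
The plan is to verify the two Banach-fixed-point hypotheses for $G = L^{-1}\bigl(N_L[\cdot] + \tfrac{E[0]}{K_1}\bigr)$ on the ball $X_m^\epsilon$, with the invertibility of $L$ established first as a standalone input. For the invertibility: I would write $L = L_F + (L - L_F)$, where $L_F u(x) = u(x) + \int_0^{\pi/m} K_F(x,y) u(y)\,dy$ and $K_F$ is a finite-rank (e.g.\ truncated Fourier/Chebyshev) approximation of $K$. The finite-rank structure reduces $L_F^{-1}$ to inverting $\mathrm{Id} + (\text{finite matrix})$, whose eigenvalues are bounded away from $0$ by the certified computation in Lemma~\ref{I+Ainvert}, giving an explicit bound $\|L_F^{-1}\|_{2} \le C_{L_F}$. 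Then, provided the operator-norm estimate $\|L - L_F\|_{2} \le C_{L-L_F}$ from Lemma~\ref{L-L_Fbound} satisfies $C_{L_F} \cdot C_{L-L_F} < 1$, the Neumann series $L^{-1} = \sum_{k\ge 0} \bigl(-L_F^{-1}(L-L_F)\bigr)^k L_F^{-1}$ converges in operator norm on $X_m$, yielding $\|L^{-1}\|_2 \le C_{L_F}/(1 - C_{L_F} C_{L-L_F}) =: C_{L^{-1}}$.

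For part (1), that $G$ maps $X_m^\epsilon$ into itself: for $u \in X_m^\epsilon$ I would estimate
\begin{align*}
    \|G(u)\|_{X_m} \le \|L^{-1}\|_2 \left( \|N_L[u]\|_{X_m} + \left\| \tfrac{E[0]}{K_1} \right\|_{X_m} \right) \le C_{L^{-1}}\left( \|N_L[u]\|_{X_m} + \tfrac{C_{E_0}}{C_{K_1}} \right),
\end{align*}
using Lemmas~\ref{CE0} and \ref{K1bound} for the last term. The nonlinear piece $N_L$ (Definition~\ref{NL_def}) consists of $T_1[v]-T[v]$, $E[v]-E_1[v]-E[0]$, and $N[v]$; since $T_1, E_1$ are the Fréchet derivatives of $T, E$ at $0$, the first two are quadratic remainders, and $N$ is genuinely quadratic in $v$, so each is $O(\|v\|^2) = O(\epsilon^2)$ with $\|v\|_{L^\infty} \lesssim \epsilon\sqrt{\pi/m}$; the detailed bounds are the content of Section~\ref{Sec:Estimates}. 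Combining, $\|N_L[u]\|_{X_m} \le C_{N_L}\epsilon^2$ for an explicit $C_{N_L}$, so $\|G(u)\|_{X_m} \le C_{L^{-1}}(C_{N_L}\epsilon^2 + C_{E_0}/C_{K_1})$, and one checks numerically with $\epsilon = 2\cdot 10^{-5}$, $C_{E_0} = 3\cdot 10^{-8}$, $C_{K_1} = 0.1$ that this is $\le \epsilon$. For part (2), contractivity: for $u_1, u_2 \in X_m^\epsilon$,
\begin{align*}
    \|G(u_1) - G(u_2)\|_{X_m} \le C_{L^{-1}} \, \|N_L[u_1] - N_L[u_2]\|_{X_m} \le C_{L^{-1}} \, \mathrm{Lip}(N_L)\, \|u_1 - u_2\|_{X_m},
\end{align*}
where $\mathrm{Lip}(N_L) \lesssim \epsilon$ on the ball (the derivative of a quadratic map is linear, hence $O(\epsilon)$), and one verifies $C_{L^{-1}} \cdot \mathrm{Lip}(N_L) < 1$.

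The main obstacle is the nonlinear Lipschitz estimate: $N_L$ involves the logarithmic kernel $\log A[v]$ and its dependence on $v$ through both $v(x)-v(y)$ and the products $R_0 v$, so bounding $N[v_1]-N[v_2]$ and the Taylor remainders $E[v]-E_1[v]$ in $X_m = L^2$ requires carefully controlling the near-diagonal singularity of the kernel, differences of $\log A[v_1] - \log A[v_2]$ (using the lower bound $A[v] \gtrsim \sin^2(\tfrac{x-y}{2})$ from $m_R < R_\lambda < M_R$ in Lemma~\ref{boundR0}), and the fact that $\|v'\|_{L^2} = \|\tilde u\|_{L^2}$ is only an $L^2$ quantity while several terms pair $v'$ against kernels that are merely bounded. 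This is exactly where the bulk of Sections~\ref{Sec:Fixed_Point} and \ref{Sec:Estimates} goes, and where the explicit constants must be tracked so that the final inequalities with the chosen $\epsilon$ close.
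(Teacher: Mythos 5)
Your proposal follows the same route as the paper: invert $L$ by writing $L = L_F(I + L_F^{-1}(L-L_F))$ with the finite-rank $L_F$ controlled by Lemma \ref{I+Ainvert} and the error $L-L_F$ by Lemma \ref{L-L_Fbound}, giving the Neumann-series bound $\|L^{-1}\|_2 \le C_1$ (Proposition \ref{Linvertible}), and then combine the quadratic estimate $\|N_L[u]+\tfrac{E[0]}{K_1}\|_{X_m} \le \epsilon_0 + C_5\|u\|_{X_m}^2$ and the $O(\epsilon)$-Lipschitz estimate from Proposition \ref{Final_nonlinear_bounds_NL} with the numerical check of Lemma \ref{FixedPointConstantChecking} to close the Banach fixed-point inequalities. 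The decomposition, key lemmas, and order of estimates all match; the only difference is notational ($C_{L^{-1}}, C_{N_L}$ versus the paper's $C_1, C_5, C_6$).
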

Then, via Banach's fixed point theorem we have:

 \begin{corollary}
     There exists a unique $u \in X_m^\epsilon$ that satisfies $u = G[u]$.
 \end{corollary}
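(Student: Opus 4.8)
The plan is to prove Theorem \ref{Gcontractive} in two independent blocks, since once both are in hand the Corollary is an immediate consequence of the Banach fixed point theorem applied to $G$ on the complete metric space $X_m^\epsilon$ (a closed ball in a Hilbert space, hence complete).

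\textbf{Block 1: inverting $L$ and bounding $L^{-1}$.} Here I would first show $L$ is a bounded invertible operator on $X_m$ with an explicit bound on $\|L^{-1}\|$. Since $\|K\|_2 > 1$, a direct Neumann series on $K$ is unavailable. Instead, following the sketch in Section \ref{subsec:main_result}, I would introduce a finite-rank approximation $K_F$ of the kernel $K$ (for instance, a truncated expansion in the cosine/indicator basis adapted to the $m$-fold symmetry), and set $L_F u = u + \int_0^{\pi/m} K_F(x,y)u(y)\,dy$. The operator $L_F$ acts as identity plus finite rank, so its invertibility and an eigenvalue/singular-value bound on $L_F^{-1}$ reduce to a finite-dimensional linear algebra problem — this is exactly the certified eigenvalue computation of Lemma \ref{I+Ainvert}. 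Then I would bound the Hilbert--Schmidt (or $L^2 \to L^2$) norm of $L - L_F$, i.e. of the integral operator with kernel $K - K_F$, using Lemma \ref{L-L_Fbound}; provided $\|L_F^{-1}\|\,\|L-L_F\| < 1$, write $L = L_F(I + L_F^{-1}(L - L_F))$ and invert the second factor by a Neumann series, yielding $\|L^{-1}\| \le \|L_F^{-1}\|/(1 - \|L_F^{-1}\|\,\|L-L_F\|) =: C_{L^{-1}}$, an explicit constant. Along the way I must confirm $L$ maps $X_m$ into itself and that the kernel bounds are uniform, using Lemmas \ref{boundR0}, \ref{MdR0}, \ref{K1bound} to control $1/K_1$ and the $C[0], S[0], M_1, M_2$ kernels.

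\textbf{Block 2: the nonlinear map is a self-map and a contraction.} With $C_{L^{-1}}$ fixed, it suffices to show that $\Phi[u] := N_L[u] + E[0]/K_1$ satisfies (i) $\|\Phi[u]\|_{X_m} \le \epsilon / C_{L^{-1}}$ for all $u \in X_m^\epsilon$, and (ii) $\|\Phi[u_1] - \Phi[u_2]\|_{X_m} \le L_0 \|u_1 - u_2\|_{X_m}$ with $L_0 < 1/C_{L^{-1}}$. For (i), split $\Phi[u] = \frac{1}{K_1}\big( (T_1[v]-T[v]) + (E[v]-E_1[v]-E[0]) + N[v] \big) + \frac{E[0]}{K_1}$; the defect term is controlled by Lemma \ref{CE0} (it contributes $\le C_{E_0}/C_{K_1}$), and the three bracketed terms are at least quadratic in $v$ — they are the Taylor remainders of $T$ and $E$ past first order, plus the purely nonlinear $N$ — so they are $O(\epsilon^2)$ on the ball, with constants coming from the bounds on $R_0$ and its derivatives (Lemmas \ref{boundR0}--\ref{MddddR}) and from $\|v\|_{L^\infty}, \|v'\|_{L^2} \le \epsilon\sqrt{\pi/m}$. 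For (ii), the same decomposition gives Lipschitz bounds: each of $T_1 - T$, $E - E_1 - E[0]$, and $N$ is Lipschitz on $X_m^\epsilon$ with constant $O(\epsilon)$, again because these are (at least) quadratic functionals with smooth kernels; the $E[0]/K_1$ term is constant in $u$ and drops out. These estimates are precisely what Section \ref{Sec:Estimates} is set up to deliver; the key analytic inputs are that the logarithmic kernel $\log A[v]$ and its difference quotients in $v$ are $L^2$-bounded (using $m_R < R_\lambda < M_R$ from Lemma \ref{boundR0} to keep $A[v]$ bounded away from $0$), so no genuine singularity is ever encountered. Composing with $L^{-1}$ and using $\|L^{-1}\| \le C_{L^{-1}}$ converts (i) into $G(X_m^\epsilon)\subseteq X_m^\epsilon$ and (ii) into contractivity.

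\textbf{Main obstacle.} The delicate point is the interplay of constants: one needs $C_{L^{-1}}$ (itself requiring $\|L_F^{-1}\|\,\|L-L_F\| < 1$, hence a sufficiently good finite-rank approximation and a certified eigenvalue bound) to be small enough that the $O(\epsilon)$ Lipschitz constant of $\Phi$, multiplied by $C_{L^{-1}}$, is strictly below $1$, and simultaneously that $C_{L^{-1}}(C_{E_0}/C_{K_1} + O(\epsilon^2)) \le \epsilon$ with $\epsilon = 2\cdot 10^{-5}$. Making all of these numerically explicit and mutually compatible — in particular obtaining a genuinely sharp bound on $\|L-L_F\|$ and a rigorous spectral bound on the finite matrix $I + A$ — is where the computer-assisted part does the real work, and is the step I expect to be the crux. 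The rest is careful but routine multilinear estimation of integral functionals with bounded smooth kernels.
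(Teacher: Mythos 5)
Your proposal is correct and follows the paper's own route: the Corollary is exactly the Banach fixed point theorem applied to the contraction $G$ on the closed (hence complete) ball $X_m^\epsilon$, with Theorem \ref{Gcontractive} supplying the self-map and contraction properties. Your two-block sketch of Theorem \ref{Gcontractive} — finite-rank approximation $L_F$, certified matrix inversion, Neumann series for $L^{-1}$, then quadratic-in-$v$ estimates on $T-T_1$, $E-E_1-E[0]$, $N$ together with the defect bound — is precisely the structure of Sections \ref{Sec:Fixed_Point} and \ref{Sec:Estimates}.
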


\subsection{Inverting the linear operator}
The main idea to prove the invertibility of $L$ is to first approximate $L$ by $L_F = $ Identity + Finite Rank, then prove the invertibility of $L_F$ and finally prove that the approximation error $L-L_F$ is small enough, making $L$ invertible via a Neumann series.

\begin{definition}
    Let $\{e_n(x)\}_n$ be the normalized Fourier basis of $X_m = L^2([0, \frac{\pi}{m}])$, more precisely
    \begin{align*}
        e_1(x) = \sqrt{\frac{m}{\pi}}, \quad e_{2k}(x)  = \sqrt{\frac{2m}{\pi}} \sin(2mkx), \  k \geq 1, \quad e_{2k+1} (x) = \sqrt{\frac{2m}{\pi}} \cos(2mkx),  \ k \geq 1. 
    \end{align*}
    Let $N = 201$, and $E_N = \text{span} \{e_n\}_{n=1}^N$ be the subspace generated by the first $N$ vectors. Similarly, let $E_N^\perp$ be its orthogonal subspace. We will also define $L_F = I+ \mathcal{K}_F: X_m \mapsto X_m$ where $\mathcal{K}_F : E_N \rightarrow E_N$ is given by
\begin{equation}\label{LF_def}
        \mathcal{K}_F[u] = \int_0^\frac{\pi}{m} K_F(x,y) u(y) dy \ \text{ with } \ K_F(x,y) := \sum_{k,l=1}^{N} A_{k,l} e_k(x)e_l(y), 
    \end{equation}
    and $A_{k,l}$ is the matrix defined in Appendix \ref{Akl_coefs}.
\end{definition}

\begin{lemma}\label{I+Ainvert}
    The matrix $I+A$ is invertible and satisfies $\|(I+A)^{-1}\|_2 \leq C_2$, with $C_2 := 8.8$.
\end{lemma}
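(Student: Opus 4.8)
Since $A$ is an explicit $201\times 201$ real matrix (its entries listed in Appendix \ref{Akl_coefs}), this is a purely finite-dimensional statement, and I would establish it by a computer-assisted argument carried out in rigorous interval arithmetic, reading the tabulated entries of $A$ as narrow intervals and writing $B := \mathrm{Id}+A$. The plan is the standard approximate-inverse / Neumann-series scheme. First, compute in floating point (e.g.\ from an $LU$ factorization) an approximate inverse $R$ of $B$. Then, using directed rounding, form the residual matrix $C := \mathrm{Id} - RB$ and produce a rigorous upper bound $\alpha$ for its spectral norm. Since $\|C\|_2$ is not directly accessible, I would bound it by the elementary inequality
\[
\|C\|_2 \;\le\; \sqrt{\|C\|_1\,\|C\|_\infty},
\]
where the maximal absolute column sum $\|C\|_1$ and row sum $\|C\|_\infty$ are evaluated with outward rounding; if more sharpness is needed, a few rigorously controlled power-iteration steps on $C^{\top}C$ give a tighter $\alpha$.

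\textbf{Conclusion from $\alpha<1$.} Once $\alpha < 1$ is verified, $RB = \mathrm{Id} - C$ is invertible with $\|(RB)^{-1}\|_2 \le (1-\alpha)^{-1}$ by the Neumann series; in particular $\det(RB)\neq 0$, so $B$ is invertible, and $B^{-1} = (RB)^{-1}R$, whence
\[
\|B^{-1}\|_2 \;\le\; \frac{\|R\|_2}{1-\alpha}.
\]
It then remains to bound $\|R\|_2$ rigorously (again via $\sqrt{\|R\|_1\|R\|_\infty}$, or by power iteration) and to check numerically that $\|R\|_2/(1-\alpha) \le 8.8$. Every arithmetic operation in forming $RB$, the norms, and the final quotient is performed with the Arb library so that all rounding errors are tracked; the implementation details are in Appendix \ref{appendix:implementation}.

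\textbf{Main obstacle.} The only delicate point is the rigorous propagation of rounding error through the $O(N^3)$ products $RB$ (and $C^{\top}C$, if power iteration is used) and through the norm estimates. With $N=201$ this is computationally cheap, and the slack between the true value of $\|B^{-1}\|_2$ and the target $8.8$ is expected to be comfortable enough to absorb both the accumulated rounding error and the mild overestimation in $\sqrt{\|\cdot\|_1\|\cdot\|_\infty}$. Should the margin turn out to be too tight, an equivalent route is to certify $\sigma_{\min}(B)^2 = \lambda_{\min}(B^{\top}B) \ge (1/8.8)^2$ directly by forming $M := B^{\top}B - (1/8.8)^2\,\mathrm{Id}$ in interval arithmetic and verifying that $M$ is positive definite through a rounding-error-controlled Cholesky factorization; invertibility of $B$ and the bound $\|B^{-1}\|_2 \le 8.8$ follow immediately.
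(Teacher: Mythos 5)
Your proposal is correct and would establish the lemma, but it proceeds by a genuinely different computer-assisted route than the paper. Your main argument is the classical approximate-inverse/Neumann-series scheme: compute a floating-point $R\approx B^{-1}$, bound the residual $C=\mathrm{Id}-RB$ in spectral norm (via $\|C\|_2\le\sqrt{\|C\|_1\|C\|_\infty}$ with outward rounding), and deduce $\|B^{-1}\|_2\le\|R\|_2/(1-\|C\|_2)$. The paper instead works directly with the identity $\|B^{-1}\|_2=1/\sqrt{\lambda_{\min}(B^\top B)}$ and produces a rigorous lower bound for $\lambda_{\min}(B^\top B)$ by computing approximate eigenvectors of the symmetric matrix with LAPACK and then certifying eigenvalue enclosures around them via \cite[Lemma~2.4]{GomezSerrano-Orriols:negative-hearing-shape-triangle} (implemented in \texttt{matrix\_utils.cc}). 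The paper's method gives an essentially tight value for $\sigma_{\min}(B)$ at the cost of needing a certified eigenvalue-enclosure primitive; your Neumann approach is simpler to implement from scratch but incurs the slack of the $\sqrt{\|\cdot\|_1\|\cdot\|_\infty}$ overestimate and of how good $R$ is, which for $N=201$ is likely still comfortably within the target $8.8$. Interestingly, your stated fallback --- certify $B^\top B-(1/C_2^2)\mathrm{Id}\succ 0$ via a rounding-controlled Cholesky factorization --- is conceptually much closer to the paper's route, since it rests on the same observation about $\lambda_{\min}(B^\top B)$, differing only in the positive-definiteness certificate (Cholesky versus eigenvector-based enclosure).
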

\begin{proof}
The proof is computer-assisted and can be found in the supplementary material. We refer to Appendix \ref{appendix:implementation} for the implementation details.    
\end{proof}

\begin{lemma}
\label{LFinvertible}
    The operator $L_F$ is invertible and $\| L_F^{-1}\|_2 < C_2$ with $C_2 := 8.8$.
\end{lemma}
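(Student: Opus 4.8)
The plan is to exploit that $\mathcal{K}_F$ has finite rank with range inside $E_N$ and kernel containing $E_N^\perp$, so that $L_F$ is block-diagonal with respect to the orthogonal splitting $X_m = E_N \oplus E_N^\perp$ and the invertibility question reduces to the finite-dimensional matrix bound of Lemma \ref{I+Ainvert}.

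First I would record the algebraic structure of $\mathcal{K}_F$. For any $u \in X_m$, using the orthonormality of $\{e_n\}$,
\begin{align*}
    \mathcal{K}_F[u](x) = \int_0^{\frac{\pi}{m}}\sum_{k,l=1}^N A_{k,l}\, e_k(x) e_l(y)\, u(y)\, dy = \sum_{k=1}^N \Big(\sum_{l=1}^N A_{k,l}\langle e_l, u\rangle\Big) e_k(x),
\end{align*}
so $\mathcal{K}_F[u] \in E_N$ for every $u$, the map $\mathcal{K}_F$ depends only on $P_N u$ (the orthogonal projection of $u$ onto $E_N$), and in particular $\mathcal{K}_F$ annihilates $E_N^\perp$. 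Moreover, identifying $E_N$ with $\mathbb{R}^N$ through the orthonormal basis $\{e_n\}_{n=1}^N$, the restriction $\mathcal{K}_F|_{E_N}$ is precisely multiplication by the matrix $A = (A_{k,l})$, and under this identification the $L^2$-norm on $E_N$ agrees with the Euclidean norm on $\mathbb{R}^N$.

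Next I would assemble the block decomposition. Writing $u = u_N + u_N^\perp$ with $u_N := P_N u \in E_N$ and $u_N^\perp := u - u_N \in E_N^\perp$, the previous step gives $L_F u = (I_{E_N} + A)u_N + u_N^\perp$, an orthogonal direct sum. By Lemma \ref{I+Ainvert} the matrix $I+A$ is invertible on $E_N$, so for any $f = f_N + f_N^\perp \in X_m$ the equation $L_F u = f$ has the unique solution $u = (I+A)^{-1} f_N + f_N^\perp$; hence $L_F$ is invertible with $L_F^{-1} f = (I+A)^{-1} f_N + f_N^\perp$. Finally, using orthogonality and $C_2 \ge 1$,
\begin{align*}
    \Ltwo{L_F^{-1} f}^2 = \Ltwo{(I+A)^{-1} f_N}^2 + \Ltwo{f_N^\perp}^2 \le C_2^2\, \Ltwo{f_N}^2 + \Ltwo{f_N^\perp}^2 \le C_2^2\, \Ltwo{f}^2,
\end{align*}
which yields $\|L_F^{-1}\|_2 \le C_2 = 8.8$ (the strict inequality in the statement being inherited from the strict bound of Lemma \ref{I+Ainvert}).

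There is no genuine analytic obstacle here: the only substantive input is the certified finite-dimensional estimate $\|(I+A)^{-1}\|_2 \le C_2$ of Lemma \ref{I+Ainvert}, which I am free to assume. The only point requiring care is the verification of the block structure — namely that $\mathcal{K}_F$ kills $E_N^\perp$ and that its action on $E_N$, read in the orthonormal basis $\{e_n\}_{n \le N}$, is exactly the matrix $A$ — so that no contribution from the infinite-dimensional complement $E_N^\perp$ degrades the bound.
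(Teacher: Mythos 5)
Your proof is correct and follows essentially the same route as the paper: decompose $X_m = E_N \oplus E_N^\perp$, observe that $\mathcal{K}_F$ is block-diagonal (acting as the matrix $A$ on $E_N$ and vanishing on $E_N^\perp$), invert each block, and conclude via orthogonality and Lemma \ref{I+Ainvert}. Your Pythagoras step $\Ltwo{L_F^{-1}f}^2 = \Ltwo{(I+A)^{-1}f_N}^2 + \Ltwo{f_N^\perp}^2$ is in fact stated a bit more cleanly than the corresponding line in the paper.
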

\begin{proof}
Let $P_N$ be the projection operator onto $E_N$. Using that $\mathcal{K}_F = P_N \mathcal{K}_F = \mathcal{K}_F  P_N$, $L_F$ decouples in $E_N \bigoplus E_N^\perp$ as
\begin{align*}
    L_F = 
    \begin{pmatrix}
        I_{E_N} + A & \mathbf{0} \\
        \mathbf{0} & I_{E_N}^\perp
    \end{pmatrix}
\end{align*}
then as $E_N$ is a finite dimensional vector space, to invert $I_{E_N}+A$ we have to invert the corresponding matrix, and the identity in $E_N^\perp$ is trivially inverted, so
\begin{align*}
    L_F^{-1}f := (I_{E_N}+A)^{-1} P_N f + (I-P_N)f. 
\end{align*}
We can conclude that $L_F$ is invertible. Moreover
\begin{align*}
    \X{L_F^{-1} f}  = \X{P_N L_F^{-1} f } + \X{(I-P_N) L_F^{-1} f} &\leq \| (I_{E_N} + A)^{-1}\|_2 \X{P_N f} + \X{(I-P_{N})f} \\
    &\leq \max \{ \| (I_{E_N} + A)^{-1}\|_2, 1 \} \X{f},
\end{align*}
hence its norm is bounded by $C_2$ because by Lemma \ref{I+Ainvert}, $ \| (I+A)^{-1}\|_2 \leq C_2$ and $ 1 < C_2$.
\end{proof}

\begin{lemma}\label{L-L_Fbound}
The error of approximating the operator $L$ by $L_F$ satisfies $\| L-L_F\|_2 \leq C_3$, with $C_3 := 0.085$.
\end{lemma}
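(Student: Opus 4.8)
The plan is to bound $\|L - L_F\|_2$ by splitting the difference operator into a piece acting on $E_N$ (the span of the first $N = 201$ Fourier modes) and a piece acting on $E_N^\perp$, and to control each separately. Write $L - L_F = \mathcal{K} - \mathcal{K}_F$, where $\mathcal{K}$ has kernel $K(x,y)$ as in Definition \ref{Lu_def} and $\mathcal{K}_F$ has the finite-rank kernel $K_F(x,y) = \sum_{k,l=1}^N A_{k,l} e_k(x) e_l(y)$. Since $\mathcal{K}_F = P_N \mathcal{K}_F P_N$ with $P_N$ the orthogonal projection onto $E_N$, for any $u \in X_m$ we have $(\mathcal{K} - \mathcal{K}_F)u = P_N(\mathcal{K} - \mathcal{K}_F) P_N u + P_N \mathcal{K}(I - P_N) u + (I - P_N)\mathcal{K} u$. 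The three contributions are: first, the error between the $N \times N$ matrix $A$ and the Galerkin matrix of $\mathcal{K}$ on $E_N$, which is controlled by the accuracy with which the coefficients $A_{k,l}$ were computed (the entries are given in Appendix \ref{Akl_coefs} and come with rigorous enclosures); second, the ``tail in $y$'' term $P_N \mathcal{K}(I - P_N)$, which is governed by how well $K(x,\cdot)$ is resolved by its first $N$ Fourier modes in $y$ for each fixed $x$; and third, the ``tail in $x$'' term $(I - P_N)\mathcal{K}$, governed by the decay of the Fourier coefficients of $K(\cdot,y)$ in $x$. A convenient way to package the last two is via a Hilbert--Schmidt bound: $\|\mathcal{K} - P_N \mathcal{K} P_N\|_2^2 \le \|\mathcal{K} - P_N\mathcal{K}P_N\|_{HS}^2 = \|K - K_{F,\mathrm{true}}\|_{L^2([0,\pi/m]^2)}^2$ where $K_{F,\mathrm{true}}$ is the exact truncated Galerkin kernel, plus the matrix error term.

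Concretely, the steps I would carry out are: (i) decompose $\|L - L_F\|_2 \le \|A - A_{\mathrm{exact}}\|_2 + \|\mathcal{K} - P_N\mathcal{K}P_N\|_2$ where $A_{\mathrm{exact}}$ is the true Galerkin matrix; (ii) bound the second term by the Hilbert--Schmidt norm of the off-diagonal and tail blocks of $K$'s double Fourier expansion, which reduces to a computer-assisted evaluation of a double integral of an explicit (if complicated) function built from $R_0$, its derivatives, the kernels $C[0], S[0]$, and their symmetrized versions $C^{s,e}, S^{s,o}$, etc., appearing in $K_1, K_2, K_3^{s,e}, K_4^{s,o}$ (Definition \ref{Lu_def}); (iii) the bound $|K_1(x)| > C_{K_1} = 0.1$ from Lemma \ref{K1bound} is used throughout to control the $1/K_1(x)$ prefactor in $K(x,y)$, and the bounds in Lemmas \ref{boundR0}, \ref{MdR0}, \ref{MddR0} on $R_0$ and its derivatives are used to bound the smooth parts of the kernel; (iv) for the logarithmic singularity in $C[0], S[0]$ along the diagonal $x = y$, one uses the explicit form of $A[0](x,y)$, which near the diagonal behaves like $R_0(x)^2 (x-y)^2$ up to controlled corrections (since $A[0](x,x) = 0$ and the leading term is positive), so $\log A[0]$ has an integrable singularity whose contribution to the Fourier tail decays at a computable rate; (v) collect all constants and verify numerically, with rigorous interval arithmetic in Arb, that the total is $\le C_3 = 0.085$.

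The main obstacle I expect is step (ii)/(iv): obtaining a genuinely rigorous and sufficiently sharp bound on the Hilbert--Schmidt norm of the tail of the double Fourier series of $K$, because $K$ inherits a logarithmic singularity on the diagonal from $\log A[0]$, so its Fourier coefficients decay only polynomially, and one needs enough decay (and a small enough remainder after $N = 201$ modes) to beat $0.085$. This forces a careful splitting of $K$ into a singular part (handled analytically, e.g.\ by comparison with $\log|2\sin((x-y)/2)|$ whose Fourier decay is explicit) and a smooth remainder (handled by a crude $C^k$ bound giving $\ell^2$ tail decay like $N^{-k+1/2}$, using Lemma \ref{MdddR_MddddR}). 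Getting the singular part's symmetrization over the $m$ shifts $y \mapsto y + 2\pi j/m$ and reflections to combine into something whose Fourier tail is explicitly summable is the delicate bookkeeping; once that is set up, the remaining estimate is a finite, if lengthy, certified computation.
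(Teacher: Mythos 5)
You take a genuinely different route from the paper, and the difference matters. The paper does \emph{not} decompose $L-L_F$ into a Galerkin-error piece plus a Fourier-tail piece, and it does \emph{not} pass through the Hilbert--Schmidt norm. Instead it applies the generalized Young/Schur inequality directly to the kernel difference,
\[
\|L-L_F\|_2 \;\le\; \max\Bigl\{\|K-K_F\|_{L_x^1 L_y^\infty},\; \|K-K_F\|_{L_y^1 L_x^\infty}\Bigr\},
\]
and then certifies this mixed-norm bound by evaluating $K-K_F$ on a rigorous $5888\times 5888$ interval grid, with the logarithmic singularity on the diagonal handled by cutting off $A$ at scale $\delta$ and adding back an explicit analytic bound on the excised singular integral (Lemma \ref{singularities_K34}). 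This is why your anticipated obstacle never arises in the paper: no decay estimate for the double Fourier coefficients of $K$ is needed, because nothing is expanded in Fourier at all; one only needs $K-K_F$ pointwise on intervals away from the diagonal plus the local singularity bound.

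Your Hilbert--Schmidt decomposition is conceptually natural, but two points deserve caution. First, the paper's matrix $A$ is simply a list of floating-point numbers defining $K_F$; there is no ``exact Galerkin matrix'' to compare against, so the term $\|A - A_{\mathrm{exact}}\|_2$ is not a quantity the paper ever forms, and introducing it requires you to also control the exact Galerkin coefficients of the singular kernel, which is itself a nontrivial computation. Second, and more seriously, you correctly identify that the log singularity makes the double Fourier coefficients of $K$ decay only polynomially, so it is genuinely unclear whether $\|K - P_N K P_N\|_{HS}$ with $N=201$ can be pushed below $0.085$; the Hilbert--Schmidt bound is typically looser than the Schur bound for singular kernels (HS squares the kernel and integrates both variables, which weights the singular region more heavily than an $L^1$ in one variable does). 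Your plan is not wrong in principle, but as written it replaces the paper's finite certified computation with a harder analytic estimate whose feasibility at $N=201$ you would still have to demonstrate; the paper's Schur-plus-grid approach is precisely designed to sidestep that question.
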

\begin{proof}
The proof is computer-assisted and can be found in the supplementary material. We refer to Appendix \ref{appendix:implementation} for the implementation details.    
\end{proof}

We can now state and prove the main result of this section

\begin{proposition}\label{Linvertible}
    The linear operator $L: X_m \rightarrow X_m$ is invertible. Moreover  $\|L^{-1}\|_2 \leq C_1 = 35$.
\end{proposition}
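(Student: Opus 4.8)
The plan is to write $L = L_F + (L - L_F)$ and invert $L$ by a Neumann series, using that $L_F$ is boundedly invertible (Lemma \ref{LFinvertible}) and that the perturbation $L - L_F$ is small (Lemma \ref{L-L_Fbound}). Concretely, I would first observe that
\[
    L = L_F\bigl(I + L_F^{-1}(L - L_F)\bigr),
\]
so it suffices to show $I + L_F^{-1}(L - L_F)$ is invertible on $X_m$. Since $L_F^{-1}$ is bounded by $C_2 = 8.8$ and $\|L - L_F\|_2 \le C_3 = 0.085$, the operator norm of $L_F^{-1}(L - L_F)$ is at most $C_2 C_3 = 0.748 < 1$. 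Therefore the Neumann series $\sum_{k \ge 0} (-1)^k \bigl(L_F^{-1}(L - L_F)\bigr)^k$ converges in operator norm and provides a bounded inverse for $I + L_F^{-1}(L - L_F)$, with norm at most $(1 - C_2 C_3)^{-1}$.

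Composing, $L^{-1} = \bigl(I + L_F^{-1}(L - L_F)\bigr)^{-1} L_F^{-1}$ exists and is bounded, with
\[
    \|L^{-1}\|_2 \le \frac{\|L_F^{-1}\|_2}{1 - \|L_F^{-1}\|_2\,\|L - L_F\|_2} \le \frac{C_2}{1 - C_2 C_3} = \frac{8.8}{1 - 0.748} = \frac{8.8}{0.252} \approx 34.92,
\]
which is below the claimed constant $C_1 = 35$. One should double-check the arithmetic is carried out with rigorous (interval) bounds so that the final inequality $\|L^{-1}\|_2 \le 35$ is genuinely certified; since all the inputs $C_2, C_3$ are already rigorous upper bounds from the computer-assisted lemmas, this is just a finite rational computation and presents no real difficulty.

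The only point requiring a little care — and the main (mild) obstacle — is the bookkeeping: one must make sure that $L$, $L_F$, and $L - L_F$ are all understood as bounded operators on the same space $X_m = L^2([0,\pi/m])$ (Lemma \ref{L-L_Fbound} gives exactly the $L^2 \to L^2$ operator norm), that $L - L_F$ need not be finite rank or compact for the argument — boundedness and smallness suffice — and that $L_F^{-1}$ from Lemma \ref{LFinvertible} is the genuine two-sided inverse on all of $X_m$ (which it is, by the block-diagonal decomposition there). Once these are in place the estimate closes immediately, and the proposition follows.
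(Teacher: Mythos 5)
Your proof is correct and follows essentially the same route as the paper: the decomposition $L = L_F(I + L_F^{-1}(L-L_F))$, the Neumann series with $\|L_F^{-1}(L-L_F)\|_2 \leq C_2 C_3 < 1$, and the resulting bound $\|L^{-1}\|_2 \leq C_2/(1-C_2C_3) = 8.8/0.252 < 35$. The extra remarks about operator domains and two-sidedness of $L_F^{-1}$ are sound but implicit in the paper's treatment.
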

\begin{proof}
    Using that $L_F$ is invertible, we can write
    \begin{equation*}
        L = L_F(I + L_F^{-1}(L-L_F)).
    \end{equation*}
    We can then invert $I+L_F^{-1}(L-L_F)$ using a Neumann series because due to Lemmas \ref{LFinvertible}, \ref{L-L_Fbound} we have that $\|L_F^{-1}(L-L_F)\|_2 \leq C_2 C_3 < 1$. As $L_F$ is also invertible, we can conclude that $L$ is invertible and
    \begin{align*}
        \| L^{-1}\|_2 = \| (I+ L_F^{-1}(L-L_F))^{-1} L_F^{-1}\|_2 \leq \frac{C_2}{1-C_2C_3}  = \frac{8.8}{0.252}< 35 =C_1. 
    \end{align*}
    %By definition $C_3 = \frac{1}{2C_2}$ so $C_1 = \frac{C_2}{1-C_2C_3}= 2C_2$.
\end{proof}

\subsection{Nonlinear terms}

In this subsection we will perform the nonlinear estimates on the full operator, provided we have them on each of the individual pieces. We defer those individual estimates to Section \ref{Sec:Estimates}.

\begin{proposition}\label{Final_nonlinear_bounds_NL}
    The map $(N_L[\cdot]+ \frac{E[0]}{K_1}): X_m^\epsilon \rightarrow X_m$  satisfies:
    \begin{enumerate}
        \item $\Xm{(N_L[u]+ \frac{E[0]}{K_1})} \leq \epsilon_0 + C_5 \Xm{u}^2$,
        \item $\Xm{(N_L[u_1]+ \frac{E[0]}{K_1})  - (N_L[u_0]+ \frac{E[0]}{K_1})} \leq \epsilon C_6  \Xm{u_1-u_0}$, 
    \end{enumerate}
    with $\epsilon_0 := \frac{C_{E_0}}{C_{K_1}}, C_5 := \frac{C_T+C_E+C_N}{C_{K_1}}$ and $C_6:= 2\frac{C_N'+C_T'+C_E'}{C_{K_1}}$. 
    The constants $C_T, C_E, C_N, C_N', C_T', C_E'$ will be defined in Propositions \ref{T-T_1 v}, \ref{Ev-E0-E1v}, \ref{Nv}, \ref{Nv1Nv2bound}, \ref{T-T_1v1v2}, \ref{Ev1-E1v1-Ev2+E1v2}, $C_{E_0}$ is defined in Proposition \ref{CE0} and $C_{K_1}$ is defined in  Proposition \ref{K1bound}.
\end{proposition}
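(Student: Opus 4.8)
The plan is to reduce both items of Proposition~\ref{Final_nonlinear_bounds_NL} to the individual estimates on $T-T_1$, $E-E_1-E[0]$, and $N$ that are collected in Section~\ref{Sec:Estimates}, together with the pointwise lower bound $C_{K_1} < |K_1(x)|$ from Lemma~\ref{K1bound}. First I would record the elementary fact that multiplication by $1/K_1$ is a bounded operator on $X_m$ with norm at most $1/C_{K_1}$; since $K_1$ depends only on $R_0$ and not on $u$, this factor simply pulls out of every estimate. Thus $\Xm{N_L[u] + \frac{E[0]}{K_1}} \leq \frac{1}{C_{K_1}}\Xm{T_1[v]-T[v] + (E[v]-E_1[v]-E[0]) + N[v] + E[0]}$, and by the triangle inequality this is bounded by $\frac{1}{C_{K_1}}\big(\Xm{T[v]-T_1[v]} + \Xm{E[v]-E_1[v]-E[0]} + \Xm{N[v]} + \Xm{E[0]}\big)$.

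For item (1), I would invoke Propositions~\ref{T-T_1 v}, \ref{Ev-E0-E1v}, \ref{Nv} to bound the first three summands by $C_T\Xm{u}^2$, $C_E\Xm{u}^2$, $C_N\Xm{u}^2$ respectively (these are the quadratic-in-$u$ bounds, valid on the ball $X_m^\epsilon$, which hold because each of $T-T_1$, $E-E_1-E[0]$, $N$ vanishes to second order at $u=0$ — $T_1,E_1$ being the Fréchet derivatives and $N$ being purely nonlinear), while the last summand $\Xm{E[0]}$ is bounded by $C_{E_0}$ via Lemma~\ref{CE0}. Here one must be slightly careful that the individual propositions are stated in terms of $\Xm{u}$ and not $\Xm{v}$; the passage from $v = \int_0^x \tilde u$ back to $u$ is built into those propositions, so this is transparent. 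Collecting terms gives $\Xm{N_L[u]+\frac{E[0]}{K_1}} \leq \frac{C_{E_0}}{C_{K_1}} + \frac{C_T+C_E+C_N}{C_{K_1}}\Xm{u}^2 = \epsilon_0 + C_5\Xm{u}^2$, exactly as claimed.

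For item (2), the $E[0]/K_1$ terms cancel, so $N_L[u_1] - N_L[u_0] = \frac{1}{K_1}\big((T_1-T)[v_1]-(T_1-T)[v_0] + (E[v_1]-E_1[v_1]) - (E[v_0]-E_1[v_0]) + N[v_1]-N[v_0]\big)$, where $v_i = \int_0^x \tilde u_i$. I would then apply the Lipschitz-type estimates of Propositions~\ref{Nv1Nv2bound}, \ref{T-T_1v1v2}, \ref{Ev1-E1v1-Ev2+E1v2}, each of which (since the relevant functionals are quadratic, their difference over the $\epsilon$-ball is Lipschitz with constant $O(\epsilon)$) gives a bound of the form $2\epsilon C_\bullet' \Xm{u_1-u_0}$ — the factor $2$ presumably coming from crude symmetrization $\Xm{u_1}+\Xm{u_0}\le 2\epsilon$ in the quadratic difference. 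Summing and dividing by $C_{K_1}$ yields $\Xm{(N_L[u_1]+\tfrac{E[0]}{K_1}) - (N_L[u_0]+\tfrac{E[0]}{K_1})} \leq \frac{2(C_N'+C_T'+C_E')}{C_{K_1}}\,\epsilon\,\Xm{u_1-u_0} = \epsilon C_6 \Xm{u_1-u_0}$.

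The only real subtlety — and the step I expect to require the most care — is bookkeeping the precise form of the constants and making sure the quadratic/Lipschitz bounds in Section~\ref{Sec:Estimates} are stated with respect to the correct norm ($\Xm{u}$ on $[0,\pi/m]$, not $\Xm{v}$ or $\|\tilde u\|_{L^2(\mathbb{T})}$), since a misplaced factor of $\sqrt{2m}$ or $\sqrt{\pi/m}$ would break the contraction. Everything else is the triangle inequality and pulling out the bounded multiplier $1/K_1$; no genuinely new analytic input is needed here, as all the work has been front-loaded into the individual propositions and the computer-assisted lemmas.
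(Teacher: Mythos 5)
Your proposal is correct and follows essentially the same route as the paper: pull out the bounded multiplier $1/K_1$ using the lower bound $|K_1|\geq C_{K_1}$, apply the triangle inequality, invoke Propositions \ref{T-T_1 v}, \ref{Ev-E0-E1v}, \ref{Nv} (resp.\ \ref{Nv1Nv2bound}, \ref{T-T_1v1v2}, \ref{Ev1-E1v1-Ev2+E1v2} for the Lipschitz bound), and use $\Xm{v'}=\Xm{u}$ together with $\Xm{v_i'}\le\epsilon$ to recover the stated constants. Your reading of the factor $2$ in $C_6$ as arising from $\Xm{v_1'}+\Xm{v_0'}\le 2\epsilon$ matches the paper exactly.
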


\begin{proof}

    \textbf{Proof of (1): }
    Writing $v = \int_0^x \Tilde{u}$ we get
    \begin{align}
        \Xm{N_L[u]} &\leq \Y{\frac{1}{K_1}}\left (  \Xm{(T-T_1)  [v]} + \Xm{(E-E[0]-E_1)[v]} + \Xm{N[v]} \right ) \nonumber\\
        &\leq  \frac{C_T+C_E+C_N}{C_{K_1}} \Xm{u}^2 \label{boundNL}, 
    \end{align}
    where we have used that for all $v$ in $\Tilde{Y}_m^{even,\epsilon}$ we have the bounds  
    \begin{align*}
        \Xm{N[v]} \leq C_N \Xm{v'}^2, \quad 
        \Xm{E[v]-E[0]-E_1[v]} \leq C_{E}\Xm{v'}^2, \quad  \Xm{(T-T_1) [v]} \leq C_{T}\Xm{v'}^2,
    \end{align*}
    proved in Propositions \ref{Nv}, \ref{T-T_1 v}, \ref{Ev-E0-E1v}, and we have also used
    \begin{align}\label{boundE0_2}
        \Xm{E[0]} \leq C_{E_0} \quad
        |K_1(x)| \geq C_{K_1}   
    \end{align}
    proved in Lemmas \ref{CE0} and \ref{K1bound}. Combining \eqref{boundNL} and \eqref{boundE0_2} gives the desired bound.
    \\
    \\
    \textbf{Proof of (2): }
    For all $v_0, v_1$ in $\Tilde{Y}_m^{even,\epsilon}$ we have
    \begin{align*}
        \Xm{N[v_1] - N[v_0]} &\leq C'_N 2\epsilon \Xm{v_1'-v_0'}, 
        \\
        \Xm{(T-T_1) [v_1] - (T-T_1) [v_0]} &\leq C'_{T} 2\epsilon \Xm{v_1'-v_0'},
        \\
        \Xm{(E[v_1]-E[0]-E_1[v_1]) - (E[v_0]-E[0]-E_1[v_0])} &\leq C'_{E}2\epsilon\Xm{v_1'-v_0'} 
    \end{align*}
    by Propositions \ref{Nv1Nv2bound}, \ref{T-T_1v1v2}, \ref{Ev1-E1v1-Ev2+E1v2}, using $\|v_i\|_2 \leq \epsilon$ appropriately. 
    Adding the three inequalities concludes the proof.
\end{proof}

\subsection{Proof of Fixed Point Existence}
Now we prove that $G$ is contractive, Theorem \ref{Gcontractive}:
\begin{proof}[Proof of Theorem \ref{Gcontractive}]\, \\ \\
    \textbf{Proof of (1):}
    We estimate the norm of $Gu$: 
    \begin{align*}
        \Xm{Gu} \leq \Xm{L^{-1}} \Xm{(N_L+ \frac{E[0]}{K_1}) u} \leq C_1 \epsilon_0 + C_1 C_5 \Xm{u}^2.
    \end{align*}
    We need to see that $\Xm{Gu} \leq \epsilon$, which is true iff
    \begin{align*}
        C_1 \epsilon_0 + C_1 C_5 \epsilon^2 \leq \epsilon 
        \iff \frac{1-\sqrt{1-4C_1C_5C_1\epsilon_0}}{2C_1C_5} \leq \epsilon \leq \frac{1+\sqrt{1-4C_1C_5C_1\epsilon_0}}{2C_1C_5}.
    \end{align*}
    Both inequalities are satisfied because of Lemma \ref{FixedPointConstantChecking}.
    \\
    \\
    \textbf{Proof of (2): }
    We estimate $Gu_1 - Gu_2$:
    \begin{align*}
        \Xm{Gu_1 - Gu_2}  &= \Xm{L^{-1} ((N_L+ \frac{E[0]}{K_1}) u_1 - (N_L+ \frac{E[0]}{K_1}) u_2)} \leq \Xm{L^{-1}} \Xm{N_L u_1 - N_L u_2}  \\
        &\leq C_1 C_6 \epsilon \Xm{u_1 - u_2}.
    \end{align*}
    We want the Lipschitz constant to be less than 1, which is true iff
    \begin{equation*}
        C_1 C_6 \epsilon  < 1 \iff \epsilon < \frac{1}{C_1 C_6}. 
    \end{equation*}
    Our choice of $\epsilon$ also satisfies this by Lemma \ref{FixedPointConstantChecking}, so we can conclude that $G: X_m^\epsilon \rightarrow X_m^\epsilon$ is a contraction.
\end{proof}

\subsection{Unfolding the solution}

Let $\displaystyle R(x) = R_0(x) + \int_0^x \Tilde{u}$ where $u \in X_m^\epsilon$ is a solution to $u = G[u]$.
The aim of this section is to prove that $R$ is an $H^1$ solution of \eqref{eqR}.

\begin{proposition}\label{Unfolding_u_to_R}
    If $u$ is the fixed point of $G$ given by Theorem \ref{Gcontractive}, then $R = R_0+ \int_0^x \Tilde{u}$ is a solution of the main equation \eqref{eqR}.
\end{proposition}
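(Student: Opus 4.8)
The plan is to run the reduction of Section \ref{Sec:Prelim} in reverse. Starting from the fixed point $u = G[u] = L^{-1}(N_L[u] + E[0]/K_1)$, apply $L$ to both sides to get the identity $Lu(x) = N_L[u](x) + E[0](x)/K_1(x)$ for all $x \in [0, \pi/m]$, which is exactly equation \eqref{equ}. Multiplying through by $K_1(x)$ (which is nonvanishing by Lemma \ref{K1bound}) and unwinding Definitions \ref{Lu_def} and \ref{NL_def} together with the formula $T_1[v] - E_1[v] = K_1 v' + K_2 v + \int K_3^{s,e} v + \int K_4^{s,o} v'$, this becomes
\begin{equation*}
    T_1[v](x) - E_1[v](x) = (E[v](x) - E_1[v](x) - E[0](x)) + N[v](x) - (T[v](x) - T_1[v](x)) + E[0](x)
\end{equation*}
for $v = \int_0^x \tilde u$ and all $x \in [0, \pi/m]$; here I must be careful that the passage from the integral of $K_3^{s,e}$ against $v$ over $[0,\pi/m]$ back to the integral of $K_3^{s,e}$ against $v'$ (i.e.\ the definition of $K_3^{s,o}$ in Definition \ref{Lu_def}) matches, which is an integration by parts using $v(0) = 0$. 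Cancelling the $-E_1[v]$ and rearranging recovers $T[v] = E[v] + N[v]$, i.e.\ equation \eqref{eqv}, on $[0, \pi/m]$.

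Next I would extend this identity from $[0, \pi/m]$ to the whole torus $\mathbb T$. By construction $v \in \tilde Y_m^{even,\epsilon}$, so by Proposition \ref{TEN_symmetries} each of $T[v], E[v], N[v]$ is $2\pi/m$-periodic and odd; hence the equation $T[v] - E[v] - N[v] = 0$, known to hold on $[0,\pi/m]$, holds on $[-\pi/m, \pi/m]$ by oddness and then on all of $\mathbb T$ by $2\pi/m$-periodicity. Therefore $v$ solves \eqref{eqv} on $\mathbb T$. Unwinding the definitions \eqref{OPv} of $T, E, N$ — which were obtained precisely by substituting $R = R_0 + v$ into \eqref{eqR} and grouping terms by their order in $v$ — shows that $R = R_0 + v$ satisfies $R R' = F[R]$ in the $L^2(\mathbb T)$ (equivalently, a.e.) sense, which is \eqref{eqR}. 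Since $u \in X_m^\epsilon \subset L^2$, $v \in H^1(\mathbb T)$ and hence $R \in H^1(\mathbb T)$; this is the asserted regularity at this stage.

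The main obstacle — really the only non-bookkeeping point — is the bijective correspondence between the $u$-formulation on $[0,\pi/m]$ and the $v$-formulation on $\mathbb T$: one must check that the symmetrization operators $C^{s,e}, C^{s,o}, S^{s,e}, S^{s,o}, M_2^{s,e}$ and the folding $K_3^{s,o}(x,y) = \int_y^{\pi/m} K_3^{s,e}(x,z)\,dz$ exactly reproduce the original (unfolded) integrals against $v$ and $v'$ over $[0,2\pi]$ when $v$ has the evenness and $2\pi/m$-periodicity of an element of $\tilde Y_m^{even,\epsilon}$ and $v' = \tilde u$ has the corresponding oddness. This is the content of Proposition \ref{TEsym} and the discussion preceding Definition \ref{Lu_def}, so the argument here is to invoke those identities in the reverse direction, taking care that no solutions are gained or lost: the map $u \mapsto v = \int_0^x \tilde u$ is injective from $X_m^\epsilon$ into $\tilde Y_m^{even,\epsilon}$, and every step above is an equivalence, so the fixed point $u$ produces exactly one $R$ and it solves \eqref{eqR}. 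I expect the write-up to consist mostly of carefully citing Propositions \ref{TEN_symmetries} and \ref{TEsym}, Definitions \ref{Lu_def} and \ref{NL_def}, and Lemma \ref{K1bound}, with the one genuine computation being the integration by parts relating $K_3^{s,e}$ and $K_3^{s,o}$.
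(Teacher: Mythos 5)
Your proposal is correct and follows essentially the same route as the paper: apply $L$ to the fixed-point identity, multiply by $K_1$, integrate by parts to pass from $K_3^{s,o}$ against $u$ to $K_3^{s,e}$ against $v$ (using $v(0)=0$ for the boundary term, while the other boundary term vanishes since $K_3^{s,o}(x,\pi/m)=0$), cancel the $E_1[v]$ terms to get $T[v]=E[v]+N[v]$ on $[0,\pi/m]$, extend to $\mathbb{T}$ via Proposition \ref{TEN_symmetries}, and recognize that $T-E-N$ was constructed as $RR'-F[R]$ under the substitution $R=R_0+v$. The brief remarks about injectivity of $u\mapsto v$ are not needed for this one-directional implication, but they do not detract from the argument.
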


\begin{proof}

We start by proving that if $u = Gu$ (i.e. if $Lu = N_L[u] + \frac{E[0]}{K_1(x)}$), then $v = \int_0^x \Tilde{u}$ satisfies equation \eqref{eqv}: $$T[v] = E[v] + N[v].$$

By \eqref{NL_def}
    \begin{align*}
        K_1(x)N_L[u](x) = (T_1 - T + E- E[0]-E_1 + N)[v](x) \quad \forall x \in \left [ 0, \frac{\pi}{m}\right ].
    \end{align*}
    We will prove that $K_1(x)Lu(x) = (T_1-E_1)[v](x)$ for all $x \in [0, \pi/m]$. Indeed, by \eqref{Lu_def} and integrating by parts the third term 
    \begin{align*}
        K_1(x)Lu(x) &= K_1(x)u(x) + \int_0^\frac{\pi}{m} K_2(x) \mathbbm{1}_{0\leq y\leq x} u(y) dy + \int_0^\frac{\pi}{m} \left ( \int_y^\frac{\pi}{m} K_3^{s,e}(x,z) dz\right) u(y) dy + \int_0^\frac{\pi}{m} K_4^{s,o}(x,y) u(y) dy \\
        &= K_1(x)u(x) + K_2(x)\int_0^x u(y) dy + \left ( \int_y^\frac{\pi}{m} K_3^{s,e}(x,z) dz\right) \left ( \int_0^y u(z) dz \right ) \bigg \rvert_{y=0}^{y=\frac{\pi}{m}}  \\
        &-\int_0^\frac{\pi}{m}\left (- K_3^{s,e}(x,y) \right ) \left (\int_0^y u(z) \right ) dz dy + \int_0^\frac{\pi}{m} K_4^{s,o}(x,y) u(y) dy \\
        &= K_1(x) \Tilde{u}(x) + K_2(x)\int_0^y \Tilde{u}(y) dy + \int_0^\frac{\pi}{m} K_3^{s,e}(x,y) \left (\int_0^y \Tilde{u}(z) \right ) dy + \int_0^\frac{\pi}{m} K_4^{s,o}(x,y) u(y) dy \\
        &= (T_1-E_1)[v](x) \quad \forall x \in \left [ 0, \frac{\pi}{m}\right],
    \end{align*}
    where we have used that $u(y) = \Tilde{u}(y)$ for all $y$ in $[0, \pi/m]$ and that $u$ is only evaluated within this interval in the above integrals.

    Using what we just proved
    \begin{equation*}
        (T_1-E_1)[v](x) = K_1(x)Lu(x) = K_1(x) N_L[u](x) + E[0](x) = (T_1-T + E-  E_1 + N)[v](x) \quad \forall x \in \left [ 0, \frac{\pi}{m}\right],
    \end{equation*}
    which implies
    \begin{equation*}
        T[v] = E[v] + N[v] \quad \forall x \in \left [ 0, \frac{\pi}{m} \right ],
    \end{equation*}
    and since $v$ belongs to $\Tilde{Y}_m^{even, \epsilon}$, by Proposition \ref{TEN_symmetries}, this equality can be extended to all $x$ in $\mathbb{T}$. Setting $R = R_0 + v$, a straightforward computation yields
    \begin{align*}
    RR'-F_1[R]-F_2[R] &= - E[v] + T[v] - N[v].
    \end{align*}
    Therefore, if $T[v]- E[v]-N[v] = 0$ for all $x \in \mathbb{T}$ then $RR'-F_1[R]-F_2[R] = 0$ for all $x \in \mathbb{T}$, and this concludes the proof.

%    By definition of $G$, $u = Gu$ iff $Lu = N_L[u]$. Then by Proposition \ref{Unfolding1} we know that $v = \int_0^x \Tilde{u}$ satisfies equation \eqref{eqv} $T[v] = E[v] + N[v]$. Then by Proposition \ref{Unfolding2} $R = R_0 + v$ is a solution of the original equation \ref{eqR}.
\end{proof}

\section{Nonlinear Estimates}\label{Sec:Estimates}

In this section we are going to prove the nonlinear estimates used in the previous section to close the fixed point.
\begin{lemma}\label{vLinftyv'Ltwobound}
    For all $v \in \Tilde{Y}_m^{even, \epsilon}$ we have $\Y{v} \leq \frac{\sqrt{\pi}}{m \sqrt{2}} \X{v'}$.
\end{lemma}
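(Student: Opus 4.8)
The plan is to prove the pointwise estimate $\Y{v} \leq \frac{\sqrt\pi}{m\sqrt 2}\X{v'}$ for any $v \in \Tilde Y_m^{even,\epsilon}$ by exploiting the structure $v(x) = \int_0^x \Tilde u(y)\,dy$ with $\Tilde u$ odd and $2\pi/m$-periodic. First I would observe that since $\Tilde u$ is odd, $v$ is even and $2\pi/m$-periodic, so it suffices to bound $|v(x)|$ for $x \in [0, \pi/m]$; in fact the extremal behaviour is controlled on the half-period. The key point is that $\Tilde u$ being odd and $\pi/m$-antiperiodic-type symmetric forces $v$ to have mean-zero-type cancellation: writing $v(x) = \int_0^x \Tilde u$ and using $\int_0^{\pi/m} \Tilde u(y)\,dy = v(\pi/m)$, together with the symmetry $\Tilde u(\pi/m + t) = \Tilde u(\pi/m - t)$ (which follows from oddness plus $2\pi/m$-periodicity, since $\Tilde u(\pi/m+t) = \Tilde u(t - \pi/m) = -\Tilde u(\pi/m - t)$... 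I need to check the exact symmetry), one gets that $v$ attains its maximum and the relevant integral can be split.

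Concretely, I would argue as follows. For $x \in [0,\pi/m]$, by Cauchy--Schwarz,
\begin{align*}
|v(x)| = \left| \int_0^x \Tilde u(y)\,dy \right| \leq \sqrt{x}\left( \int_0^x \Tilde u(y)^2\,dy\right)^{1/2} \leq \sqrt{x}\, \X{\Tilde u}_{L^2([0,\pi/m])}.
\end{align*}
This already gives a bound but with the wrong constant. To sharpen it, I would instead use that $v(0) = 0$ and exploit a second anchor point. The symmetry of $\Tilde u$ implies $v(2\pi/m) = 0$ as well (since $\int_0^{2\pi/m}\Tilde u = 0$ by oddness and periodicity), and more usefully that $v$ restricted to $[0,\pi/m]$, combined with reflection, gives a function on $[0, 2\pi/m]$ vanishing at both endpoints; but the cleanest route is to note $v(\pi/m) = \int_0^{\pi/m}\Tilde u$ and split $v(x) = \frac{x\cdot m}{\pi} v(\pi/m) + (\text{mean-zero remainder})$, then apply Cauchy--Schwarz carefully. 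Alternatively — and I suspect this is what the authors do — one bounds $2|v(x)| = |v(x) - v(-x)| = |\int_{-x}^x \Tilde u| $ no, that's $0$... rather $|v(x)| + |v(x)|$ using $v(x) = \int_0^x \Tilde u = -\int_x^{2\pi/m - 0}$... The genuinely slick version: since $\Tilde u$ has a sign-cancellation structure, $\int_0^{\pi/m} |\Tilde u|$ controls $\|v\|_\infty$ and then Cauchy--Schwarz on $[0,\pi/m]$ of length $\pi/m$ gives $\X{v'}_{L^2([0,\pi/m])}\sqrt{\pi/m}$, and finally converting the $L^2$ norm over the full torus via $\frac{1}{\sqrt{2m}}\|w\|_{L^2(\TT)}$ definitions introduces the remaining factor, yielding the stated constant $\frac{\sqrt\pi}{m\sqrt 2}$.

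The main obstacle is getting the constant exactly right: one must track whether $\X{\cdot}$ in the statement denotes the $L^2([0,\pi/m])$ norm (it does, by the macro and the space $X_m$) and carefully use the symmetries of $\Tilde u$ so that the Cauchy--Schwarz step is applied over the interval of length $\pi/m$ and not $2\pi/m$, while also using that $v$ vanishes at $0$ and (by the odd+periodic structure) has enough cancellation that $\sup|v|$ is attained with only half the ``mass'' contributing. I would therefore spend most of the effort verifying: (i) $\Tilde u$ restricted to $[0,\pi/m]$ equals $u$, so $\X{v'} = \X{u} = \X{\Tilde u}_{L^2([0,\pi/m])}$; (ii) for $x\in[0,\pi/m]$, $|v(x)| \leq \int_0^x |\Tilde u| \leq \sqrt{x}\,\X{v'} \leq \sqrt{\pi/m}\,\X{v'}$, and then improve $\sqrt{\pi/m}$ to $\frac{\sqrt\pi}{m\sqrt2} = \sqrt{\pi/(2m^2)}$ by the symmetry argument — i.e. the extra factor $\frac{1}{\sqrt{2m}}\cdot\sqrt m = \frac{1}{\sqrt2}$ relative to the naive bound comes from the reflection symmetry of $\Tilde u$ about $\pi/m$ forcing $v$ to ``turn around'', so that effectively $|v(x)|^2 \leq \frac{x(\pi/m - x)}{\pi/m}\cdot(\cdots)$ type improvement, whose maximum over $[0,\pi/m]$ is $\frac{\pi}{4m}$, not $\frac{\pi}{m}$. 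Pinning down that reflection identity for $\Tilde u$ and plugging into Cauchy--Schwarz is the crux.
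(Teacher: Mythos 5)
Your proposal contains the correct argument buried in the middle, but your final diagnosis is wrong, and the reason is a misreading of the norm in the statement. You assert that $\X{v'}$ ``denotes the $L^2([0,\pi/m])$ norm (it does, by the macro and the space $X_m$)'' --- it does not. The paper's macro $\X{\cdot}$ is the plain $L^2$ norm, taken over the full torus $\mathbb T$, while the macro $\Xm{\cdot}$ is the one that denotes $L^2([0,\pi/m])$; the lemma uses the former. (This is confirmed, e.g., in the proof of Lemma \ref{bound_v(x)-v(y)}, where the paper explicitly writes $\Xm{v'} = \frac{1}{\sqrt{2m}}\X{v'}$.) With that reading, your first Cauchy--Schwarz step already finishes the job: for $x\in[0,\pi/m]$, $|v(x)| \le \sqrt{x}\,\|v'\|_{L^2([0,\pi/m])} \le \sqrt{\pi/m}\,\Xm{v'}$; the evenness and $2\pi/m$-periodicity of $v$ give $\Y{v}=\|v\|_{L^\infty([0,\pi/m])}$; and since $v'$ is odd and $2\pi/m$-periodic, $\|v'\|_{L^2(\mathbb T)}^2 = 2m\,\|v'\|_{L^2([0,\pi/m])}^2$, i.e.\ $\Xm{v'} = \frac{1}{\sqrt{2m}}\X{v'}$. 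Chaining gives $\Y{v}\le \sqrt{\pi/m}\cdot\frac{1}{\sqrt{2m}}\X{v'} = \frac{\sqrt\pi}{m\sqrt2}\X{v'}$, which is exactly the claim. You in fact say this (``converting the $L^2$ norm over the full torus via $\frac{1}{\sqrt{2m}}\|w\|_{L^2(\TT)}$ introduces the remaining factor'') but then abandon it.

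The reflection argument you then pursue would not close the gap and would in fact be false. Your arithmetic is off: the gap between $\sqrt{\pi/m}$ and $\frac{\sqrt\pi}{m\sqrt2}$ is a factor of $\sqrt{2m}$, not $\sqrt 2$, and that full factor comes from the $[0,\pi/m]\to\mathbb T$ conversion, leaving nothing to be gained from symmetry. Moreover, the symmetry improvement you propose does not exist: $v$ vanishes at $0$ but \emph{not} at $\pi/m$ (the oddness of $v'=\tilde u$ about $\pi/m$ forces $v'(\pi/m)=0$, i.e.\ a critical point of $v$, not $v(\pi/m)=0$), so there is no two-sided-Dirichlet Poincar\'e improvement of the $\frac{x(\pi/m-x)}{\pi/m}$ type. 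The constant $\sqrt{\pi/m}$ in Cauchy--Schwarz on $[0,\pi/m]$ with only $v(0)=0$ imposed is sharp (take $v'$ constant), so no such sharpening is available. The fix is simply to read $\X{\cdot}$ correctly; everything else in your first Cauchy--Schwarz computation is already the paper's proof.
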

\begin{proof}
    Using that the $L^\infty$ norm of $v$ in $[-\pi, \pi]$ is the same as the one in $[0, \pi/m]$, because $v$ is even and $2\pi/m$ periodic: 
    \begin{equation*}
        \| v\|_{L^\infty} \leq \sqrt{\frac{\pi}{m}} \Xm{v'} = \frac{\sqrt{\pi}}{m\sqrt{2}} \Ltwo{v'}.
    \end{equation*}
\end{proof}

\begin{definition}\label{pm_def}
    For any $x \in \mathbb{R}$ let $p_m(x) := \text{dist} \left (x, \{ \frac{2\pi}{m}k\}_{k\in \mathbb{Z}} \right )$. 
\end{definition}
%\begin{rmk}\label{pm_def2}
%    Another equivalent definition is $p_m(x) := \text{dist} \left (x, \{ \frac{2\pi}{m}k\}_{k\in \mathbb{Z}} \right )$.
%\end{rmk}

\begin{lemma}\label{bound_v(x)-v(y)}
    For all $v \in \Tilde{Y}_m^{even, \epsilon}$ we have $|v(x)-v(y)| \leq \frac{\X{v'}}{\sqrt{2m}} \sqrt{|p_m(x)-p_m(y)|}$.
\end{lemma}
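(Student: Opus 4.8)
The plan is to bound $|v(x)-v(y)|$ by controlling $v' = \Tilde u$ on the fundamental interval and exploiting the odd, $2\pi/m$-periodic structure of $\Tilde u$ together with the Cauchy--Schwarz inequality. Since $v(x) = \int_0^x \Tilde u(s)\,ds$, we have $v(x)-v(y) = \int_y^x \Tilde u(s)\,ds$, so by Cauchy--Schwarz $|v(x)-v(y)| \le \|\Tilde u\|_{L^2([y,x])}\sqrt{|x-y|}$ for $x,y$ in any common fundamental domain. The subtlety is that $x,y$ range over all of $\RR$ (or $\TT$), so the naive estimate $\sqrt{|x-y|}$ is both too large (it can exceed the diameter of a fundamental cell) and fails to respect the periodicity; this is where $p_m$ enters.

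First I would observe, using the symmetries recorded in Lemma \ref{Asimmetries} and Proposition \ref{TEN_symmetries} (that $v$ is even and $2\pi/m$-periodic), that $v(x)$ depends only on $p_m(x)$: indeed on $[0,\pi/m]$ one has $v(x) = \int_0^{p_m(x)}\Tilde u$, and evenness plus periodicity propagate this to all $x$. Hence $v(x)-v(y) = \int_{p_m(y)}^{p_m(x)} \Tilde u(s)\,ds$ with both endpoints lying in $[0,\pi/m]$, where $\Tilde u = u$. Applying Cauchy--Schwarz on this sub-interval gives
\[
|v(x)-v(y)| \le \left(\int_{[p_m(y),p_m(x)]} |u(s)|^2\,ds\right)^{1/2} |p_m(x)-p_m(y)|^{1/2} \le \Xm{u}\,\sqrt{|p_m(x)-p_m(y)|}.
\]
Finally I would convert $\Xm{u}$ into $\X{v'}/\sqrt{2m}$: since $v' = \Tilde u$ is the odd $2\pi/m$-periodic extension of $u$, $\|\Tilde u\|_{L^2(\TT)}^2 = 2m\|u\|_{L^2([0,\pi/m])}^2$, i.e. $\Xm{u} = \X{v'}/\sqrt{2m}$, which yields the claimed inequality.

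The main obstacle is purely bookkeeping: one must check carefully that replacing $x,y$ by $p_m(x),p_m(y)$ genuinely does not change $v(x),v(y)$, handling the cases where $x$ and $y$ lie in different fundamental cells or on opposite sides of a point $\tfrac{2\pi}{m}k$ (where $v$ achieves a local extremum because $\Tilde u$ is odd there). In each such case the evenness of $v$ about the lattice points $\tfrac{2\pi}{m}k$ reduces the computation to the interval $[0,\pi/m]$, so no genuine analytic difficulty arises — only a short case analysis — and the Cauchy--Schwarz step together with the normalization constant completes the proof.
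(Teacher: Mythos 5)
Your proof is correct and takes essentially the same route as the paper: use evenness and $2\pi/m$-periodicity of $v$ to write $v(x)=v(p_m(x))$, apply Cauchy--Schwarz on the interval between $p_m(x),p_m(y)\in[0,\pi/m]$, and convert $\Xm{v'}$ to $\X{v'}/\sqrt{2m}$. The paper's proof is terser but the underlying argument is identical.
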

\begin{proof}
    By definition, $v$ is $2\pi/m$ periodic and even, so $v(x) =v(p_m(x))$. Then
    \begin{align*}
        |v(x)-v(y)| = |v(p_m(x)) - v(p_m(y))| \leq \Xm{v'} \sqrt{|p_m(x)-p_m(y)|} = \frac{\X{v'}}{\sqrt{2m}} \sqrt{|p_m(x)-p_m(y)|}
    \end{align*}
    where we have used that $p_m(x), p_m(y) \in [0, \pi/m]$.
\end{proof}

\begin{lemma}\label{bound2_v(x)-v(y)}
    For all $v \in \Tilde{Y}_m^{even, \epsilon}$ we have $|v(x)-v(y)| \leq \frac{\X{v'}}{\sqrt{2m}} \sqrt{|p_m(x-y)|}$.
\end{lemma}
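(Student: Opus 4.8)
The plan is to deduce this from Lemma \ref{bound_v(x)-v(y)}, which already gives $|v(x)-v(y)| \leq \frac{\X{v'}}{\sqrt{2m}} \sqrt{|p_m(x)-p_m(y)|}$. So it suffices to prove the pointwise inequality
\[
    |p_m(x) - p_m(y)| \leq p_m(x-y) \qquad \text{for all } x,y \in \mathbb{R},
\]
since then $\sqrt{|p_m(x)-p_m(y)|} \leq \sqrt{p_m(x-y)}$ and the monotonicity of $\sqrt{\cdot}$ closes the argument. (Note $p_m(x-y) \geq 0$, so the square root makes sense.)

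To prove $|p_m(x)-p_m(y)| \leq p_m(x-y)$, first I would record two elementary facts about $p_m$ from Definition \ref{pm_def}: it is $2\pi/m$-periodic, even, and $1$-Lipschitz (being the distance to a closed set), and it satisfies $p_m(a) \leq |a|$ for all $a$. The key observation is a triangle-type inequality: for any $a,b$, one has $p_m(a) \leq p_m(b) + p_m(a-b)$, because $\mathrm{dist}(a, \tfrac{2\pi}{m}\mathbb Z) \leq \mathrm{dist}(b, \tfrac{2\pi}{m}\mathbb Z) + |a-b|$ when $|a-b|$ is replaced by its representative of smallest absolute value modulo $2\pi/m$ — concretely, pick the lattice point $\tfrac{2\pi}{m}k$ nearest to $b$ and the one $\tfrac{2\pi}{m}\ell$ with $|(a-b) - \tfrac{2\pi}{m}\ell| = p_m(a-b)$; then $\tfrac{2\pi}{m}(k+\ell)$ is a lattice point within $p_m(b) + p_m(a-b)$ of $a$. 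Applying this with $(a,b) = (x,y)$ gives $p_m(x) - p_m(y) \leq p_m(x-y)$, and by symmetry (swapping $x,y$ and using evenness of $p_m$) also $p_m(y) - p_m(x) \leq p_m(y-x) = p_m(x-y)$, hence $|p_m(x)-p_m(y)| \leq p_m(x-y)$.

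I do not expect any real obstacle here; the only thing to be slightly careful about is justifying the triangle inequality for $p_m$ cleanly, i.e. that combining nearest-lattice-point choices for $y$ and for $x-y$ yields a lattice point close to $x$. Once that is in hand, the lemma is immediate by composing with Lemma \ref{bound_v(x)-v(y)} and monotonicity of the square root.
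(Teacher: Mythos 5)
Your proposal is correct and follows the same overall reduction as the paper: apply Lemma \ref{bound_v(x)-v(y)} and then establish $|p_m(x)-p_m(y)| \leq p_m(x-y)$. The only difference is in how that last inequality is proved: the paper normalizes $x,y$ to $[-\pi/m,\pi/m]$ and runs a case analysis on the signs, while you deduce it from the general triangle inequality $p_m(a) \leq p_m(b) + p_m(a-b)$ (valid because $\tfrac{2\pi}{m}\mathbb Z$ is a group, so the sum of nearest lattice points to $b$ and to $a-b$ is a lattice point near $a$), applied twice and combined with evenness of $p_m$. Your version is cleaner and avoids the casework; both are correct.
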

\begin{proof}
    Using Lemma \ref{bound_v(x)-v(y)} we only need to prove $|p_m(x)-p_m(y)|\leq p_m(x-y)$. Since $p_m$ is $2\pi/m$-periodic, we can assume that $x, y$ belong to $[-\pi/m, \pi/m]$. Then $|p_m(x)-p_m(y)| = |\sign(x)x-\sign(y)y|$. We split the argument in two cases
    \begin{enumerate}
        \item If $\sign(x) = \sign(y)$, then $|p_m(x)-p_m(y)| = |x-y| = p_m(x-y)$ because $x-y \in [-\pi/m, \pi/m]$.
        \item If $\sign(x) \neq \sign(y)$, we may assume $x \geq 0$ and $y \leq 0$. Then
        \begin{align*}
            p_m(x-y) &= \min\{x-y, y+2\pi/m - x\} = \min \{|x| + |y|, |y+\pi/m| + |\pi/m - x|\} \geq \\
            &\geq |x+y| = |p_m(x)-p_m(y)|
        \end{align*}
        where we have used the sign of $x, y$, that $y + \pi/m \geq 0$ and that $\pi/m- x \geq 0$.
    \end{enumerate}
\end{proof}

\begin{lemma} \label{f(x)-f(y)}
    For all functions $f$ which are $2\pi/m$-periodic and belonging to $W^{1,\infty}$ we have $|f(x)-f(y)| \leq \Y{f'} p_m(x-y)$.
\end{lemma}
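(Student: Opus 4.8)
The plan is to prove the elementary Lipschitz-type bound $|f(x)-f(y)| \leq \Y{f'}\, p_m(x-y)$ for $2\pi/m$-periodic functions $f \in W^{1,\infty}$ by reducing to the case where $x,y$ are close, exactly as in the proof of Lemma \ref{bound2_v(x)-v(y)}. First I would note that both sides are unchanged if we shift $x$ or $y$ by an integer multiple of $2\pi/m$, since $f$ is $2\pi/m$-periodic and $p_m$ is $2\pi/m$-periodic. Hence, after such a shift, we may assume without loss of generality that $|x-y| \leq \pi/m$, in which case $p_m(x-y) = |x-y|$.

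Once we are in that regime, the bound is just the fundamental theorem of calculus: writing $f(x)-f(y) = \int_y^x f'(t)\,dt$ (valid since $f \in W^{1,\infty}$ is Lipschitz, hence absolutely continuous), we get
\begin{align*}
    |f(x)-f(y)| = \left| \int_y^x f'(t)\,dt \right| \leq \Y{f'}\,|x-y| = \Y{f'}\, p_m(x-y).
\end{align*}
This finishes the argument.

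The only minor point requiring care is the reduction step: shifting $x$ alone by $2\pi j/m$ changes $x-y$ by $2\pi j/m$, and since $p_m$ has period $2\pi/m$ this leaves $p_m(x-y)$ invariant, while $f(x)$ is unchanged by periodicity; and we can always choose $j$ so that the representative of $x-y$ lies in $[-\pi/m, \pi/m]$, where $p_m$ agrees with absolute value. I do not anticipate any real obstacle here — this is a routine lemma of the same flavor as the preceding ones, included for later use in estimating differences of $R_0$-dependent kernels; the substance is entirely in the periodicity bookkeeping, which mirrors case~(1) of Lemma \ref{bound2_v(x)-v(y)}.
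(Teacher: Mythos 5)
Your proposal is correct and follows essentially the same route as the paper: both arguments reduce to the case $|x-y|\le \pi/m$ by periodicity (choosing representatives $x',y'$ with $|x'-y'|=p_m(x-y)$) and then apply the fundamental theorem of calculus / Lipschitz bound. The paper compresses the shift step into the existence of $x',y'$, but the content and justification are the same.
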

\begin{proof}
    Since $f$ is $2\pi/m$ periodic, for all $x,y$ there exists $x', y'$ such that $|x'-y'| \leq \frac{\pi}{m}$, $f(x)=f(x'), f(y)=f(y')$ and $|x'-y'| = p_m(x-y)$. Then
    \begin{align*}
        |f(x)-f(y)| =|f(x')-f(y)'| \leq \Y{f'} |x'-y'| = \Y{f'} p_m(x-y).
    \end{align*}
\end{proof}

\subsection{$L^2$ estimates of the functional}

In this subsection we will focus on proving the Lemmas and Propositions leading to the estimates of the first half of Proposition \ref{Final_nonlinear_bounds_NL}. Let us start by recalling the definitions of $C[v](x,y)$ and $S[v](x,y)$:

\begin{align*}
        C[v](x,y) &\coloneqq \frac{1}{4\pi\Omega} \cos(x-y) \log (A[v](x,y)), \quad
        S[v](x,y) \coloneqq \frac{1}{4\pi\Omega} \sin(x-y) \log (A[v](x,y)).
\end{align*}

where
\begin{align*}
        A[v](x,y) &\coloneqq (R_0(x)-R_0(y) + v(x)-v(y))^2 + 4(R_0(x)+v(x))(R(y)+v(y))\sin^2 \left ( \frac{x-y}{2}\right ) \\
        & = (R_0(x)+v(x))^2 + (R_0(y)+v(y))^2 - 2(R_0(x)+v(x))(R(y)+v(y))  \cos(x-y). 
\end{align*}

We have the following Lemma:

\begin{lemma}\label{CSbounds}
    For all $v \in \Tilde{Y}_m^{even, \epsilon}$ we have the following bounds on $C[v], S[v]$:
    %\color{red} specify which space is in which variable. Perhaps better to write $\sup_{x} \|C[v](x,y)\|_{L^2_y}$ ? 
 \color{black}
    \begin{enumerate}
        \item \label{CL2bound} $\sup_x \|{C[v]}(x,y)\|_{L_y^2} \leq b_1 :=  \frac{1}{4\pi \Omega} \sqrt{ 16 t_1 + 8\log^2(2M_R) \left ( \frac{\pi}{2} - \frac{\sin(2t_2) + 2t_2}{4} \right )} $, 
        \item \label{SLinfbound}$\sup_x \|S[v](x,y)\|_{L_y^\infty } \leq   b_2 := \max \left \{\frac{1}{\Omega e2^{5/2}m_R} , \frac{1}{4\pi \Omega}\log \left ((M_R-m_R)^2+3M_R^2 \right ) ,  \frac{\sqrt{3}}{8\pi\Omega} \log \left ((M_R-m_R)^2+4M_R^2\right ) \right \} $, 
    \end{enumerate}
    where $t_1 := 2 \arcsin{\left (\frac{1}{2 m_R}\right )}$ and $t_2 := 2 \arcsin{\left (\frac{\sqrt{1-(M_R-m_R)^2}}{2 M_R}\right )}$. 
  % Note $\|C[v]\|_{L_x^2 L_y^\infty} = \|C[v]\|_{L_y^2 L_x^\infty} \leq b_1$.
\end{lemma}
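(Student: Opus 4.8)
\textbf{Proof plan for Lemma \ref{CSbounds}.}

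The plan is to bound $C[v]$ and $S[v]$ by first controlling the logarithmic kernel $\log(A[v](x,y))$, using that $A[v]$ is comparable to $p_m(x-y)^2$ near the diagonal and bounded above and below away from it. Throughout I will use Lemma \ref{boundR0}, which gives $m_R < R_0 + v < M_R$ for $v \in \Tilde{Y}_m^{even,\epsilon}$, applied to $R_\lambda$ with $\lambda = 1$; note $R(y) + v(y)$ in the definition of $A[v]$ is exactly $R_0(y) + v(y)$, so the same two-sided bound applies to both factors in \eqref{defA2}. The first step is to establish pointwise estimates on $A[v]$. From the form \eqref{defA}, writing $a := R_0(x)+v(x)$, $b := R_0(y)+v(y)$, we have $A[v](x,y) = (a-b)^2 + 4ab\sin^2\left(\tfrac{x-y}{2}\right) \geq 4ab\sin^2\left(\tfrac{x-y}{2}\right) \geq 4 m_R^2 \sin^2\left(\tfrac{x-y}{2}\right)$, and since $|\sin(\theta/2)| \geq \tfrac{1}{\pi}|p_m\text{-type argument}|$ on the relevant range this gives a lower bound $\gtrsim p_m(x-y)^2$. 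For the upper bound, $A[v](x,y) \leq (M_R-m_R)^2 + 4M_R^2 \sin^2\left(\tfrac{x-y}{2}\right) \leq (M_R-m_R)^2 + 4M_R^2$, but for the $L^2$ estimate one wants the sharper $A[v] \leq (a-b)^2 + 4M_R^2\sin^2\left(\tfrac{x-y}{2}\right)$ and to split the $y$-integral according to whether $\sin^2\left(\tfrac{x-y}{2}\right)$ is small (where $\log A[v] \sim \log(\text{something}\cdot(x-y)^2)$ and $\log^2$ is integrable) or bounded below (where $\log A[v]$ is bounded by $\log(2M_R)$-type constants); this is where the constants $t_1, t_2$ and the explicit integral $\int \log^2$ producing the $\tfrac{\pi}{2} - \tfrac{\sin(2t_2)+2t_2}{4}$ term come from.

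For part (1), I would write $\|C[v](x,\cdot)\|_{L^2_y}^2 = \tfrac{1}{(4\pi\Omega)^2}\int_0^{2\pi}\cos^2(x-y)\log^2(A[v](x,y))\,dy \leq \tfrac{1}{(4\pi\Omega)^2}\int_0^{2\pi}\log^2(A[v](x,y))\,dy$. Split the domain into the set where $A[v] < 1$ (small-diagonal region, using $A[v] \leq (R_0(x)-R_0(y)+v(x)-v(y))^2 + 4M_R^2\sin^2\left(\tfrac{x-y}{2}\right)$ and comparing with a quadratic so that $\int \log^2$ is bounded by the explicit $16 t_1$-type term) and the complement where $1 \leq A[v] \leq (M_R-m_R)^2 + 4M_R^2$ so $\log^2 A[v] \leq \log^2(2M_R)$ roughly, integrated over a set of measure $\leq \pi - \tfrac{\sin(2t_2)+2t_2}{2}$ or so. Adding these two contributions and taking square roots yields $b_1$. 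For part (2), the $L^\infty$ bound on $S[v]$, I would use $|S[v](x,y)| \leq \tfrac{1}{4\pi\Omega}|\sin(x-y)|\,|\log A[v](x,y)|$ and bound this in three regimes: (a) when $A[v]$ is very small, use $|\sin(x-y)| \lesssim |x-y|$ against $|\log A[v]| \lesssim |\log(x-y)|$ and optimize $t\mapsto t|\log t|$, which has maximum $\tfrac{1}{e}$, giving the $\tfrac{1}{\Omega e 2^{5/2} m_R}$ term; (b) when $A[v] \geq 1$, bound $|\log A[v]| \leq \log((M_R-m_R)^2 + 3M_R^2)$ while $|\sin(x-y)| \leq 1$; (c) an intermediate regime accounting for the $\tfrac{\sqrt3}{8\pi\Omega}$ factor where $|\sin(x-y)| \leq \tfrac{\sqrt3}{2}$ and $A[v] \leq (M_R-m_R)^2 + 4M_R^2$. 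Taking the maximum over the three regimes gives $b_2$.

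The main obstacle I anticipate is \emph{getting the constants exactly right rather than merely up to absolute factors} — in particular, carefully matching the diagonal behavior of $A[v]$ to the quadratic $\sin^2\left(\tfrac{x-y}{2}\right)$ so that the cutoffs $t_1 = 2\arcsin\left(\tfrac{1}{2m_R}\right)$ and $t_2 = 2\arcsin\left(\tfrac{\sqrt{1-(M_R-m_R)^2}}{2M_R}\right)$ emerge as the precise thresholds where $A[v]$ crosses the value $1$, and then evaluating $\int \log^2$ of the relevant quadratic in closed form. The thresholds $t_1, t_2$ are exactly the values of $|x-y|$ at which $4m_R^2\sin^2\left(\tfrac{x-y}{2}\right) = 1$ and $(M_R - m_R)^2 + 4M_R^2 \sin^2\left(\tfrac{x-y}{2}\right) = 1$ respectively, so solving these and tracking which bound ($(a-b)^2 \geq 0$ discarded below, or the full upper bound above) is used on each piece is the delicate bookkeeping. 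The rest is elementary: monotonicity of $\log$, the elementary inequality $\sup_{0<t\leq 1} t|\log t| = 1/e$, and the trigonometric identity $\int \log^2(\sin^2(\theta/2))$-type integrals reducing to $\tfrac{\pi}{2} - \tfrac{\sin(2t_2)+2t_2}{4}$ after the substitution.
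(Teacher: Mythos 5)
Your plan is essentially the paper's: split according to whether $A[v]$ is below or above $1$, control $|\log A[v]|$ by the two-sided pointwise bounds $4m_R^2\sin^2\bigl(\tfrac{x-y}{2}\bigr)\leq A[v]\leq (M_R-m_R)^2+4M_R^2\sin^2\bigl(\tfrac{x-y}{2}\bigr)$, and then evaluate the resulting integrals and suprema explicitly, with $t_1,t_2$ arising exactly as the crossing points you describe. Two slips to fix in the execution: on the small-diagonal set $\{A[v]<1\}$ you need the \emph{lower} bound $A[v]\geq 4m_R^2\sin^2\bigl(\tfrac{x-y}{2}\bigr)$ to control $-\log A[v]$ (not the upper bound you cite there — the upper bound is what yields $\{A[v]\geq 1\}\subseteq\{|z|\geq t_2\}$); and in part (2) the three terms of $b_2$ come from the sign split of $\log A[v]$ followed by a split of $[0,\pi]$ at $z=2\pi/3$ (so that $\sin^2(z/2)\leq 3/4$ on $[0,2\pi/3]$ and $\sin z\leq \sqrt{3}/2$ on $[2\pi/3,\pi]$), not from a three-way regime of $A[v]$ itself — as written, your regime-(b) claim $|\log A[v]|\leq \log\bigl((M_R-m_R)^2+3M_R^2\bigr)$ for all $A[v]\geq 1$ fails when $z>2\pi/3$.
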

\begin{proof}
    Using the definition of $A[v]$, we get
    \begin{equation}\label{logAbounds}
        |\log(A[v](x,y))| \leq 
        \begin{cases}
            -\log \left (4m_R^2\sin^2 \left (\frac{x-y}{2}\right ) \right ) & 
            \text{ if } A[v](x,y) \leq 1 \\
            \min \{ \log \left ((M_R-m_R)^2 + 4M_R^2\sin^2 \left (\frac{x-y}{2}\right ) \right ), \log(4M_R^2)\} & \text{ if } A[v](x,y) \geq 1
        \end{cases}
    \end{equation}
    where we have used that if $m_A < A < M_A$ then $|\log(A)| \leq -\log(m_A)$ when $A \leq 1$ and $|\log(A)| \leq \log(M_A)$ when $A\geq 1$. 

    We now use the new variable $z = x-y$. The objective is to find a bound independent of $x$ so then we can drop the $\sup_x$ part of the norm in each estimate due to $L^2_y = L^2_z$ and $L^\infty_y = L^\infty_z$.
    
    \textbf{Proof of \eqref{CL2bound}:} 
    First we find estimates of the sets of $z \in [-\pi, \pi]$ such that $A[v](x,x-z)$ is bigger or less than 1. Let us consider the following regions:
    \begin{align*}
        I_1& =\left \{ z : A[v](x,x-z) \leq 1\right \} \subseteq \left \{ z : 4m_R^2\sin^2 \left (\frac{z}{2}\right ) \leq 1\right \} = \left \{ |z| \leq t_1 := 2 \arcsin{\left (\frac{1}{2 m_R}\right )}\right\}, \\
        I_2 &= \left \{ z : A[v](x,x-z) \geq 1\right \} \subseteq \left \{ z : (M_R-m_R)^2 + 4M_R^2\sin^2 \left (\frac{z}{2}\right ) \geq 1\right \} \\
        & = \left \{ \pi \geq |z| \geq t_2 := 2 \arcsin{\left (\frac{\sqrt{1-(M_R-m_R)^2}}{2 M_R}\right )}\right\}. \\
    \end{align*}
    We will also use that 
    \begin{equation*}
        \sin\left (\frac{z}{2} \right ) \geq \frac{\sin\left (\frac{t_1}{2} \right )}{t_1}z  = \frac{z}{2m_R t_1}
    \end{equation*}
    whenever $z \in [0,t_1]$ because $\sin(z/2)$ is concave in $[0, \pi]$. 
    We now bound the integral:
    \begin{align*}
        \int_{-\pi}^\pi \cos(z)^2 \log^2 (A[v](x,x-z)) dz 
        &\leq \int_{I_1} \cos(z)^2 \log^2 \left (4m_R^2\sin^2 \left (\frac{z}{2}\right ) \right )dz  + \int_{I_2} \cos^2(z) \log^2(4M_R^2)dz   
        \\
        &\leq 2\int_{0}^{t_1} \cos(z)^2 \log^2 \left (4m_R^2\sin^2 \left (\frac{z}{2}\right ) \right )dz  + 2\int_{t_2}^\pi \cos^2(z) \log^2(4M_R^2)dz 
        \\
        &\leq  2\int_{0}^{t_1} \log^2 \left (\frac{z^2}{t_1^2}\right )dz  + 2\log^2(4M_R^2)\int_{t_2}^\pi \cos^2(z)dz  \\
        &\leq 2t_1 \int_0^1 \log^2(y^2) dy + 2\log^2(4M_R^2) \left ( \frac{\pi}{2} - \frac{\sin(2t_2) + 2t_2}{4} \right )dz \\
        &\leq 16 t_1 + 8\log^2(2M_R) \left ( \frac{\pi}{2} - \frac{\sin(2t_2) + 2t_2}{4} \right ),
    \end{align*}
    
    where we have integrated by explicitly $\log^2(y^2)$ and $\cos^2(z)$. We can conclude %\color{red} fix notation norms? \color{black}
    \begin{equation*}
        \sup_x\|C[v]\|_{L_y^2} \leq \frac{1}{4\pi \Omega} \sqrt{ 16 t_1 + 8\log^2(2M_R) \left ( \frac{\pi}{2} - \frac{\sin(2t_2) + 2t_2}{4} \right )} = b_1.
    \end{equation*}

    \textbf{Proof of \eqref{SLinfbound}:} From the bounds in \eqref{logAbounds} we can deduce
    \begin{equation*}
        |\log(A[v](x,y))| \leq \max \left \{-\log \left (4m_R^2\sin^2 \left (\frac{x-y}{2}\right ) \right ),  \log \left ((M_R-m_R)^2 + 4M_R^2\sin^2 \left (\frac{x-y}{2}\right ) \right ) \right \}.
    \end{equation*}
    Also, by periodicity and symmetry of $S[v]$,  $\| S[v](x,y)\|_{L_y^\infty} = \| S[v](x,x-z)\|_{L_z^\infty[0, \pi]}$. Combining these, we have reduced our problem to computing the maximum of two one-variable functions in $[0,\pi]$:
    
    \begin{align*}
        4\pi \Omega \Linf{S[v]} & \leq  \max \left \{ \max_{z \in [0, \pi]} \left \{ -\sin(z) \log \left (4m_R^2\sin^2 \left (\frac{z}{2}\right ) \right )\right \} ,\right. \\ & \left.\max_{z \in [0, \pi]} \left \{ \sin(z) \log \left ((M_R-m_R)^2+4M_R^2\sin^2 \left (\frac{z}{2}\right ) \right )\right \} \right \}.
    \end{align*}
    Starting with the first one, we realize that the maximum over $[0, \pi]$ is actually the same as the maximum as $[0, \pi/2]$ because for all $z \in [\pi/2, \pi]$ the function is negative as $4m_R^2 \sin^2(z/2) \geq 4m_R^2 \sin^2(\pi/4) \geq 1$ makes the $\log$ change sign. This last inequality is satisfied because $m_R \geq 1/\sqrt{2}$. Also
    \begin{align*}
        \max_{z \in [0, \pi/2]} \left \{ -2\sin(z) \log \left (2m_R\sin \left (\frac{z}{2}\right ) \right )\right \} \leq \max_{z \in [0, \pi/2]} \left \{ -2 z \log \left (2m_R \frac{\sqrt{2}}{\pi} z\right ) \right \} = \frac{\pi}{e\sqrt{2}m_R},
    \end{align*}
    where in the last step we have explicitly calculated the maximum.

    For the second function we split the interval at $2\pi/3$ and we bound the maximum in $[0, 2\pi/3]$ and in $[2\pi/3, \pi]$, using that the function inside the logarithm is increasing and the $\log$ is a monotone function and that $\sin$ is decreasing in $[2\pi/3, \pi]$ we get:
    \begin{align*}
        &\max_{z \in [0, \pi]} \left \{ \sin(z) \log \left ((M_R-m_R)^2+4M_R^2\sin^2 \left (\frac{z}{2}\right ) \right )\right \}  \\
        &\leq \max \left \{\log \left ((M_R-m_R)^2+4M_R^2\sin^2 \left (\frac{2\pi/3}{2}\right ) \right ) , \sin(2\pi/3) \log \left ((M_R-m_R)^2+4M_R^2\sin^2 \left (\frac{\pi}{2}\right ) \right ) \right \}  \\
        &= \max \left \{\log \left ((M_R-m_R)^2+3M_R^2 \right ) ,  \frac{\sqrt{3}}{2} \log \left ((M_R-m_R)^2+4M_R^2\right ) \right \},
    \end{align*}

    where the first term of the first inequality corresponds to the bound for whenever $z \in [0,2\pi/3]$ and the second term to $z \in [2\pi/3,\pi]$. 
    We can conclude 
    \begin{align*}
        & \Linf{S[v]} 
        \leq \frac{1}{4\pi \Omega}\max \left \{\frac{\pi}{e\sqrt{2}m_R} , \log \left ((M_R-m_R)^2+3M_R^2 \right ) ,  \frac{\sqrt{3}}{2} \log \left ((M_R-m_R)^2+4M_R^2\right ) \right \}.
    \end{align*}
    
\end{proof}

\begin{proposition} \label{Nv}
    If $v \in \Tilde{Y}_m^{even, \epsilon}$ then
    \begin{align*}
        \Xm{N [v]} \leq C_N \Xm{v'}^2
    \end{align*}
    with $C_N := \sqrt{\frac{\pi}{m}}  + \frac{2^{3/2}\pi }{\sqrt{m}} b_1+ \left ( \frac{2 \pi^{5/2}}{m^{3/2}}+ 2\sqrt{\pi m} \right ) b_2 $. 
\end{proposition}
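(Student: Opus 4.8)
The plan is to bound the $X_m$-norm of each of the three pieces defining $N[v]$ separately — the local term $-vv'$, the $C[v]$-integral, and the $S[v]$-integral — and then add the bounds. Throughout I will write $a:=\Xm{v'}$ and repeatedly invoke the elementary facts: $\Y v\le\sqrt{\pi/m}\,a$ (Lemma \ref{vLinftyv'Ltwobound}, which already exploits $v(0)=0$); $\Ltwo{v'}=\sqrt{2m}\,a$ since $v'$ is even and $2\pi/m$-periodic on $\mathbb T$; $\Ltwo v\le\sqrt{2\pi}\,\Y v$, $\Lone v\le2\pi\,\Y v$, $\Lone{v'}\le\sqrt{2\pi}\,\Ltwo{v'}$ by Hölder on $\mathbb T$ (total mass $2\pi$); and $\Xm v\le\sqrt{\pi/m}\,\Y v$. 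Here $\Ltwo{\cdot}$ and $\Lone{\cdot}$ are norms on the full torus $\mathbb T$, whereas $\Xm{\cdot}$ is the $L^2$ norm on $[0,\pi/m]$. I will also use the kernel bounds $\sup_x\|C[v](x,\cdot)\|_{L^2_y(\mathbb T)}\le b_1$ and $\sup_x\|S[v](x,\cdot)\|_{L^\infty_y(\mathbb T)}\le b_2$ from Lemma \ref{CSbounds}.

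For the local term, $\Xm{vv'}\le\Y v\,\Xm{v'}\le\sqrt{\pi/m}\,a^2$, which contributes the summand $\sqrt{\pi/m}$ to $C_N$. For the $C[v]$-integral I split it as $v(x)\int_0^{2\pi}C[v](x,y)v'(y)\,dy-v'(x)\int_0^{2\pi}C[v](x,y)v(y)\,dy$. In the first piece, Cauchy–Schwarz in $y$ with the bound $b_1$ gives $\bigl|\int_0^{2\pi}C[v](x,y)v'(y)\,dy\bigr|\le b_1\Ltwo{v'}=b_1\sqrt{2m}\,a$ uniformly in $x$; multiplying by $v(x)$ and taking $\Xm{\cdot}$ yields $\le\Xm v\cdot b_1\sqrt{2m}\,a\le\sqrt{\pi/m}\,\Y v\cdot b_1\sqrt{2m}\,a\le\tfrac{\sqrt2\pi}{\sqrt m}\,b_1\,a^2$. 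The second piece is bounded the same way, now via $\bigl|\int_0^{2\pi}C[v](x,y)v(y)\,dy\bigr|\le b_1\Ltwo v\le b_1\sqrt{2\pi}\,\Y v$, and multiplying by $v'(x)$ gives $\le\Xm{v'}\cdot b_1\sqrt{2\pi}\,\Y v\le\tfrac{\sqrt2\pi}{\sqrt m}\,b_1\,a^2$ as well. Adding the two pieces contributes $\tfrac{2^{3/2}\pi}{\sqrt m}\,b_1$ to $C_N$.

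For the $S[v]$-integral I split it as $v(x)\int_0^{2\pi}S[v](x,y)v(y)\,dy+v'(x)\int_0^{2\pi}S[v](x,y)v'(y)\,dy$ and use Hölder in $y$ ($L^\infty$–$L^1$) with the bound $b_2$. The first piece is bounded by $\Xm v\cdot b_2\Lone v\le\sqrt{\pi/m}\,\Y v\cdot b_2\cdot2\pi\,\Y v\le\tfrac{2\pi^{5/2}}{m^{3/2}}\,b_2\,a^2$; the second by $\Xm{v'}\cdot b_2\Lone{v'}\le a\cdot b_2\cdot\sqrt{2\pi}\,\sqrt{2m}\,a=2\sqrt{\pi m}\,b_2\,a^2$. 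These contribute $\bigl(\tfrac{2\pi^{5/2}}{m^{3/2}}+2\sqrt{\pi m}\bigr)b_2$ to $C_N$. Summing the three contributions gives exactly $\Xm{N[v]}\le C_N\,a^2=C_N\Xm{v'}^2$ with the stated constant.

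The argument is entirely routine; the only point needing care is the bookkeeping of domains of integration — the kernel estimates $b_1,b_2$ of Lemma \ref{CSbounds} and the norms of $v,v'$ paired against them all live on the full torus $\mathbb T$, while the final norm is taken on $[0,\pi/m]$ — together with the conversion $\Ltwo{v'}=\sqrt{2m}\,\Xm{v'}$. I do not anticipate a genuine obstacle.
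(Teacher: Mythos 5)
Your proposal is correct and follows essentially the same approach as the paper's proof: triangle inequality splitting $N[v]$ into its five pieces, the $L^\infty$–$\dot H^1$ bound of Lemma \ref{vLinftyv'Ltwobound}, and the kernel bounds $b_1,b_2$ from Lemma \ref{CSbounds}, arriving at the same constant $C_N$. The only difference is bookkeeping — you work with $\Xm$ norms directly and insert the conversion $\Ltwo{v'}=\sqrt{2m}\,\Xm{v'}$ locally, whereas the paper first reduces to the bound $\Ltwo{N[v]}\le \frac{C_N}{\sqrt{2m}}\Ltwo{v'}^2$ on $\mathbb T$ and converts once at the end (incidentally, $v'$ is odd rather than even, but the norm identity holds either way).
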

\begin{proof}
    By periodicity and oddness of $N[v]$ due to Proposition \ref{TEN_symmetries}, it is enough to show that $\Ltwo{N [v]} \leq \frac{C_N}{\sqrt{2m}} \Ltwo{v'}^2$
    Recalling \eqref{OPv}, using the triangle inequality, and using the $L^\infty-\dot{H}^1$ inequality of Lemma \ref{vLinftyv'Ltwobound} we get the desired estimate
    
    \begin{equation}\label{Nbound}
    \begin{aligned} 
        &\Ltwo{N[v]} \\
        &\leq \Ltwo{vv'} + \Ltwo{\int_0^{2\pi} C[v] v(x)v'(y) dy} + \Ltwo{\int_0^{2\pi} C[v] v(y)v'(x)dy} \\
        & + \Ltwo{\int_0^{2\pi} S[v] v(x)v(y) dy} +\Ltwo{\int_0^{2\pi} S[v] v'(x)v'(y) dy} \\
        &\leq \Y v \Ltwo{v'} + \Y v \Ltwo{v'} \Ltwoinf{C[v]} \Ltwo 1 + \Y v \Ltwo{v'} \Loneinf{C[v]} + \\
        &+ \Y v ^2 \Linf{S[v]} \Lone 1 \Ltwo 1 + \Ltwo{v'}^2 \Ltwo 1 \Linf{S[v]} \\
        &\leq \Ltwo{v'}^2 \left ( \frac{\sqrt{\pi}}{m\sqrt{2}} + \frac{\sqrt{\pi}}{m\sqrt{2}} b_1 \sqrt{2\pi}  + \frac{\sqrt{\pi}}{m\sqrt{2}} \sqrt{2\pi} b_1 + \frac{\pi}{2m^2} b_2 2\pi \sqrt{2\pi} + \sqrt{2\pi} b_2\right )  \\
        &= \Ltwo{v'}^2 \left ( \frac{\sqrt{\pi}}{m\sqrt{2}}  + \frac{2\pi}{m} b_1 + \left ( \frac{\sqrt{2} \pi^{5/2}}{m^2} + \sqrt{2\pi} \right ) b_2\right ) \leq \frac{C_N}{\sqrt{2m}} \Ltwo{v'}^2,
    \end{aligned}
    \end{equation}
    where we have also used that $\Ltwoinf{C[v]} \leq b_1$ and $\Linf{S[v]} \leq b_2$ as proved in Lemma \ref{CSbounds}.
\end{proof}

%\color{red} domain bounds on integral everywhere \color{black}

\begin{lemma} \label{CSvCS0linbound}
    For all $v \in \Tilde{Y}_m^{even, \epsilon}$ and $w \in L^2(\mathbb{T})$ we have the following bounds on the linearization error of $C[v], S[v]$ : 
    \begin{enumerate}
        \item \label{Cvlinboundy} $\Ltwo{\int_0^{2\pi} |(C[v]-C[0]) w(y)| dy } \leq    b_3^C \Ltwo{v'} \Ltwo{w}$,
        \item \label{Cvlinboundx}$\Ltwo{\int_0^{2\pi} |(C[v]-C[0]) w(x)| dy } \leq  b_3^C \Ltwo{v'} \Ltwo{w}$,
        \item \label{Svlinboundy} $\Ltwo{\int_0^{2\pi} |(S[v]-S[0]) w(y)| dy } \leq  b_3^S \Ltwo{v'} \Ltwo{w}$,
        \item \label{Svlinboundx}$\Ltwo{\int_0^{2\pi} |(S[v]-S[0]) w(x)| dy } \leq b_3^S \Ltwo{v'} \Ltwo{w}$,
    \end{enumerate} 
     with $b_3^C := \frac{1}{\Omega} \left ( \frac{I_1^C(m)}{2^{7/2} \sqrt{\pi} m_R \sqrt{m}} + \frac{\sqrt{2}}{\sqrt{\pi}m_R m}\right )$  and $  b_3^S := \frac{1}{\Omega} \left ( \frac{I_1^S(m)}{2^{7/2} \sqrt{\pi} m_R \sqrt{m}} + \frac{\sqrt{2}}{\sqrt{\pi}m_R m}\right ) $ and $I_1^C, I_1^S$ are explicit integral bounds defined in Lemma \ref{Integralbound1}:
     \begin{align*}
         \int_{-\pi}^\pi \frac{|\cos(z)|\sqrt{p_m(z)}}{|\sin\left (\frac{z}{2}\right )|}dz  \leq I_1^C(m), \quad \int_{-\pi}^\pi \frac{|\sin(z)|\sqrt{p_m(z)}}{|\sin\left (\frac{z}{2}\right )|}dz  \leq I_1^S(m) 
     \end{align*}
    
\end{lemma}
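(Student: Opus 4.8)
The plan is to reduce all four estimates to a single pointwise bound on $\log A[v](x,y)-\log A[0](x,y)$, using that
\[
C[v]-C[0]=\frac{\cos(x-y)}{4\pi\Omega}\bigl(\log A[v]-\log A[0]\bigr),\qquad
S[v]-S[0]=\frac{\sin(x-y)}{4\pi\Omega}\bigl(\log A[v]-\log A[0]\bigr).
\]
To control the logarithm I would interpolate in the perturbation: for $x-y\notin 2\pi\mathbb{Z}$, writing $r_x:=R_0(x)+tv(x)$ and $r_y:=R_0(y)+tv(y)$,
\[
\log A[v](x,y)-\log A[0](x,y)=\int_0^1\frac{\partial_t A[tv](x,y)}{A[tv](x,y)}\,dt,
\]
which is legitimate because Lemma \ref{boundR0} (applied with $\lambda=t\in[0,1]$) gives $m_R<r_x,r_y<M_R$, hence $A[tv](x,y)=(r_x-r_y)^2+4r_xr_y\sin^2(\tfrac{x-y}{2})\ge 4m_R^2\sin^2(\tfrac{x-y}{2})>0$ for every $t\in[0,1]$. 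Differentiating the polar form \eqref{defA2} and regrouping gives
\[
\partial_t A[tv](x,y)=2(r_x-r_y)(v(x)-v(y))+2\bigl(1-\cos(x-y)\bigr)\bigl(v(x)r_y+v(y)r_x\bigr),
\]
and then two applications of AM--GM to $A[tv]=(r_x-r_y)^2+4r_xr_y\sin^2(\tfrac{x-y}{2})$, namely $A[tv]\ge 4|r_x-r_y|\sqrt{r_xr_y}\,|\sin(\tfrac{x-y}{2})|$ and $A[tv]\ge 4r_xr_y\sin^2(\tfrac{x-y}{2})$, together with $1-\cos(x-y)=2\sin^2(\tfrac{x-y}{2})$ and $m_R<r_x,r_y$, yield the $t$-independent pointwise bound
\[
\bigl|\log A[v](x,y)-\log A[0](x,y)\bigr|\;\le\;\frac{|v(x)-v(y)|}{2m_R\,|\sin(\tfrac{x-y}{2})|}+\frac{|v(x)|+|v(y)|}{m_R}.
\]

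With this in hand, each of the four estimates reduces to multiplying by $|\cos(x-y)|/(4\pi\Omega)$ (for \eqref{Cvlinboundy}, \eqref{Cvlinboundx}) or by $|\sin(x-y)|/(4\pi\Omega)$ (for \eqref{Svlinboundy}, \eqref{Svlinboundx}), integrating against $|w(y)|$ or $|w(x)|$, and taking $\X{\cdot}$ in $x$. I would treat the singular piece via Lemma \ref{bound2_v(x)-v(y)}, $|v(x)-v(y)|\le\frac{\X{v'}}{\sqrt{2m}}\sqrt{p_m(x-y)}$: this turns the relevant kernel into the $2\pi$-periodic function $g^C(z):=\frac{|\cos z|\sqrt{p_m(z)}}{|\sin(z/2)|}$, resp.\ $g^S(z):=\frac{|\sin z|\sqrt{p_m(z)}}{|\sin(z/2)|}\le 2\sqrt{p_m(z)}$, and Lemma \ref{Integralbound1} supplies precisely its $L^1$ norm, $\|g^C\|_{L^1}=I_1^C(m)$ and $\|g^S\|_{L^1}=I_1^S(m)$. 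For the estimates \eqref{Cvlinboundy} and \eqref{Svlinboundy} against $|w(y)|$, Young's convolution inequality on $\mathbb{T}$ gives $\bigl\|\int_0^{2\pi}g(x-y)|w(y)|\,dy\bigr\|_{L^2_x}\le\|g\|_{L^1}\X{w}$; for \eqref{Cvlinboundx} and \eqref{Svlinboundx} against $|w(x)|$ one instead pulls $|w(x)|$ out of the $L^2_x$ norm and uses $\int_0^{2\pi}g(x-y)\,dy=\|g\|_{L^1}$ for every $x$. The non-singular piece $\frac{|v(x)|+|v(y)|}{m_R}$ I would bound by Cauchy--Schwarz together with $\int_0^{2\pi}\cos^2=\int_0^{2\pi}\sin^2=\pi$ and the inequality $\Y{v}\le\frac{\sqrt\pi}{m\sqrt2}\X{v'}$ of Lemma \ref{vLinftyv'Ltwobound}. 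Collecting the constants through these steps produces $b_3^C$ and $b_3^S$.

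The delicate point, and the reason for the interpolation device, is the very first estimate: $A$ degenerates quadratically in $\sin(\tfrac{x-y}{2})$ on the diagonal, so a crude bound of $\log A[v]-\log A[0]$ would leave a non-integrable $1/\sin^2$ kernel. The identity for $\partial_t A[tv]$ isolates a factor $v(x)-v(y)$ (which vanishes like $\sqrt{p_m(x-y)}$ on the diagonal by Lemma \ref{bound2_v(x)-v(y)}) from a factor $1-\cos(x-y)=2\sin^2(\tfrac{x-y}{2})$ (which is absorbed by the lower bound $A[tv]\gtrsim\sin^2$), and the AM--GM lower bound $A[tv]\gtrsim|r_x-r_y|\,|\sin(\tfrac{x-y}{2})|$ converts the remaining single power of $\sin$ in the denominator of the first term into an integrable, square-root singularity. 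One must also be slightly careful that the perturbation $tv$ stays admissible (so that the lower bound $m_R$ is uniform in $t$), which is exactly the hypothesis $|\lambda|\le1$ in Lemma \ref{boundR0}. Everything downstream is routine bookkeeping with the explicit constants from Lemmas \ref{boundR0}, \ref{bound2_v(x)-v(y)}, \ref{vLinftyv'Ltwobound} and \ref{Integralbound1}.
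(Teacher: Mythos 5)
Your proposal follows essentially the same route as the paper's proof: the mean-value/interpolation identity $\log A[v]-\log A[0]=\int_0^1 \partial_t A[tv]/A[tv]\,dt$, the same splitting of $\partial_t A[tv]$ into a difference piece $2(r_x-r_y)(v(x)-v(y))$ and a $4\sin^2$ piece, the same AM--GM lower bounds on $A[tv]$ to cancel the $(r_x-r_y)$ factor and to absorb the $\sin^2$, the bound $|v(x)-v(y)|\lesssim \X{v'}\sqrt{p_m(x-y)}$ from Lemma \ref{bound2_v(x)-v(y)}, Lemma \ref{Integralbound1} for the singular kernel, and a Young/Minkowski estimate at the end (the paper phrases it as Minkowski's integral inequality, which is the same thing here).

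One small inaccuracy: for the non-singular piece you propose Cauchy--Schwarz with $\int\cos^2=\pi$, whereas the paper uses $\|\cos\|_{L^1(\mathbb T)}=4$ inside the Minkowski estimate. Your version yields $\pi\sqrt 2$ where the paper's yields $4$, so "collecting the constants" as you describe would give a slightly larger value than the stated $b_3^C$, $b_3^S$ (the second summand would come out as $\frac{\sqrt\pi}{2\Omega m_R m}$ rather than $\frac{\sqrt 2}{\Omega\sqrt\pi m_R m}$). This is purely cosmetic and does not affect the validity of the argument; just replace Cauchy--Schwarz by the $L^1$ norm of the cosine kernel (as in the paper's display \eqref{prooflogAvlogA0}) to land on the exact constants.
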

\begin{proof}
    By definition $C[v]-C[0] = \cos(x-y) (\log(A[v](x,y))-\log(A[0](x,y))$, and similarly for $S$ changing $\cos(x-y)$ by $\sin(x-y)$. It is then enough to control the difference of $\log(A)$. Using the mean value theorem
    \begin{equation}\label{logAvlogA0}
        \log(A[v](x,y))-\log(A[0](x,y)) = \int_0^1 \left (\frac{d}{d \lambda} \log(A[\lambda v](x,y)) \right ) d\lambda = \int_0^1 \left (\frac{\frac{d}{d \lambda}  A[\lambda v](x,y) }{A[\lambda v](x,y)} \right ) d\lambda  
    \end{equation} 
    Computing the derivative
    %\color{red} careful here since $\lambda^*$ depends on $x,y$. 
    
    %Sorry not a big fan of this proof. Can we just write $\log(A[v]/A[0]) = \log(1+blah)$ and bound it by $|blah|$? I think this is exactly what you're doing. Let's talk about it.\color{black}
    \begin{equation}\label{d_lambdaA}
    \begin{aligned}
        \frac{d}{d \lambda} A[\lambda v]|(x,y) &= 2\left (R_0(x)-R_0(y) +  \lambda(v(x)-v(y))\right )(v(x)-v(y)) + \\
        &+ 4\sin^2\left (\frac{x-y}{2}\right ) \left ((R_0(x)+\lambda v(x))v(y) + (R_0(y)+\lambda v(y))v(x)\right ) =: a_1 + a_2,
    \end{aligned}
    \end{equation}
    we will bound both terms $a_1/A, a_2/A$ separately for every $\lambda \in [0,1]$.  

    For the second one, we bound $A$ from below. We cancel terms with $a_2$ and only then we use $R_0+\lambda v > m_R$, which is true because of Lemma \ref{boundR0}, getting 
    \begin{equation}\label{a2/A}
    \begin{aligned}
        \left |\frac{a_2}{A[\lambda v]}\right | &\leq \frac{a_2(x,y)}{4 \sin^2\left (\frac{x-y}{2}\right ) ((R_0(x)+\lambda v(x))(R_0(y)+\lambda v(y))} \leq \frac{1}{m_R} (v(y)+ v(x)) \leq \frac{2}{m_R} \| v \|_{L^\infty} \leq  \frac{\sqrt{2\pi}}{m_R m} \Ltwo{v'}.
    \end{aligned}
    \end{equation}
    For the first term $a_1$ we have to use another lower bound for $A$. As before, we first cancel terms with $a_1$ and then use $R_0+ \lambda v > m_R$. Moreover, we set $y = x-z$ because we are going to change variables on the integral. We get
    \begin{equation}\label{a1/A}
    \begin{aligned}
         \left |\frac{a_1}{A[\lambda v]}\right | &\leq \left |\frac{2\left (R_0(x)-R_0(x-z) +  \lambda(v(x)-v(x-z))\right )(v(x)-v(x-z))}{4m_R\left (R_0(x)-R_0(x-z) +  \lambda(v(x)-v(x-z))\right ) \sin\left (\frac{z}{2}\right )}\right |  \\
         &\leq \frac{\left |v(x)-v(x-z) \right | }{2m_R\left | \sin\left (\frac{z}{2}\right )\right  |}  \\
         &\leq \frac{\X{v'} \frac{1}{\sqrt{2m}} \sqrt{|p_m(x)-p_m(x-z)|}}{2m_R |\sin\left (\frac{z}{2}\right )|}   \\
         &\leq \frac{\X{v'}\sqrt{\pi}}{2^{3/2} m_R\sqrt{m} } \frac{\sqrt{p_m(z)}}{|\sin\left (\frac{z}{2}\right )|},
    \end{aligned}
    \end{equation}
    where in the last inequality step we used Lemmas \ref{bound_v(x)-v(y)} and \ref{bound2_v(x)-v(y)} to bound $|v(x)-v(y)|$. Going back to \eqref{logAvlogA0} we get
    \begin{equation}\label{dlambdaA/A_bound}
        |\log(A[v]) - \log(A[0])|(x,x-z) \leq  \int_0^1 \left |\left (\frac{\frac{d}{d \lambda}  A[\lambda v](x,x-z) }{A[\lambda v](x,x-z)} \right ) \right | d\lambda  \leq  \frac{\Ltwo{v'} \sqrt{2\pi}}{m_R m}  + \frac{\X{v'}\sqrt{\pi}}{2^{3/2} m_R\sqrt{m} } \frac{\sqrt{p_m(z)}}{|\sin\left (\frac{z}{2}\right )|}. 
    \end{equation}
    
    We will only prove items \ref{Cvlinboundy} and \ref{Svlinboundy}, since \ref{Cvlinboundx} and \ref{Svlinboundx} are analogous. We start by \ref{Cvlinboundy}, changing variables in the integral $z = x-y$, using the inequality we just proved and Minkowski's inequality, we obtain
    \begin{equation} \label{prooflogAvlogA0}
    \begin{aligned}
        &\Ltwo{\int_0^{2\pi} 4\pi \Omega|(C[v]-C[0]) w(y)| dy } = \Ltwox{\int_{-\pi}^\pi |\cos(z)(\log(A[v](x,x-z))-\log(A[0](x,x-z))) w(x-z)| dz }  \\
        &\leq \Ltwox{\int_{-\pi}^\pi \left (\frac{\X{v'}\sqrt{\pi}}{2^{3/2} m_R\sqrt{m} } \frac{|\cos(z)|\sqrt{p_m(z)}}{|\sin\left (\frac{z}{2}\right )|}  +   \frac{|\cos(z)|\sqrt{2\pi}}{m_R m} \Ltwo{v'} \right ) |w(x-z)| dz }  \\
        &\leq \int_{-\pi}^\pi \left (\frac{\sqrt{\pi}}{2^{3/2} m_R\sqrt{m} } \frac{|\cos(z)|\sqrt{p_m(z)}}{|\sin\left (\frac{z}{2}\right )|}  +  \frac{|\cos(z)|\sqrt{2\pi}}{m_R m}  \right ) \X{v'} \Ltwo w dz  \\
        &\leq  \left (\frac{\sqrt{\pi}}{2^{3/2} m_R\sqrt{m} } \int_{-\pi}^\pi \frac{|\cos(z)|\sqrt{p_m(z)}}{|\sin\left (\frac{z}{2}\right )|}dz  +  \frac{2^{5/2}\sqrt{\pi}}{m_R m}  \right ) \X{v'} \Ltwo w  \\
        & \leq 4\pi \Omega \ b_3^{C} \Ltwo{v'}\Ltwo w,
    \end{aligned}
    \end{equation}
    where in the last line we have used Lemma \ref{Integralbound1} to bound the integral.
    Following the same procedure but with a $\sin(z)$ we get the same bound but with the constant $I_1^S(m)$ bounding the integral instead of $I_1^C(m)$.
\end{proof}

%\color{red} The previous proposition needs some reworking \color{black}

\begin{proposition} \label{T-T_1 v}
    If $v \in \Tilde{Y}_m^{even, \epsilon}$ then 
    \begin{align*}
        \Xm{(T-T_1) [v]} \leq C_T \Xm{v'}^2
    \end{align*}
    with $C_T := (b_3^C+b_3^S)(M_{R_0}+ M_{R_0'}) (\sqrt{2m} + \frac{\pi \sqrt{2}}{\sqrt{m}} )$ and $M_{R_0'}$ is defined in Lemma \ref{MdR0}.
\end{proposition}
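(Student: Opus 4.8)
The plan is to treat Proposition \ref{T-T_1 v} as an assembly estimate sitting on top of Lemma \ref{CSvCS0linbound}. First I would subtract the definitions: by Definition \ref{T1E1_def}, $T_1[v]$ is obtained from $T[v]$ in \eqref{OPv} by replacing $C[v],S[v]$ by $C[0],S[0]$, so in the difference the $v$-independent coefficient $R_0(x)$ of $v'(x)$ (and the coefficient $R_0'(x)$ of $v(x)$) cancels, and $(T-T_1)[v]$ becomes a finite sum of terms of the schematic shape
\[
\big(R_0\ \text{or}\ R_0'\big)(\cdot)\int_0^{2\pi}\big(C[v]-C[0]\ \text{or}\ S[v]-S[0]\big)(x,y)\,\big(v\ \text{or}\ v'\big)(\cdot)\,dy ,
\]
where each slot $(\cdot)$ is either $x$ or $y$. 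Concretely there are four terms whose ``free'' factor is $v$ (those that in \eqref{OPv} multiply $v(x)$ or $v(y)$) and four whose free factor is $v'$ (those multiplying $v'(x)$ or $v'(y)$), each of the eight further split into a $C$-difference piece and an $S$-difference piece.

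Since $v\in\Tilde{Y}_m^{even,\epsilon}$, Proposition \ref{TEN_symmetries} gives that $(T-T_1)[v]$ is odd and $2\pi/m$-periodic, so $\Xm{(T-T_1)[v]}=\tfrac1{\sqrt{2m}}\Ltwo{(T-T_1)[v]}$ and $\Xm{v'}=\tfrac1{\sqrt{2m}}\Ltwo{v'}$; hence it suffices to prove $\Ltwo{(T-T_1)[v]}\le\tfrac{C_T}{\sqrt{2m}}\Ltwo{v'}^2$, i.e. to work with $L^2(\TT)$ norms. I would then apply the triangle inequality to pass to the eight scalar terms and bound each by: (i) pulling the $R_0$ or $R_0'$ prefactor out in $L^\infty$, using $M_{R_0}$ from Lemma \ref{boundR0} and $M_{R_0'}$ from Lemma \ref{MdR0}; (ii) recognizing what remains as exactly the object estimated in Lemma \ref{CSvCS0linbound}, applied with $w$ the surviving $v$- or $v'$-factor — items \ref{Cvlinboundy} and \ref{Svlinboundy} when that factor is evaluated at the integration variable $y$, items \ref{Cvlinboundx} and \ref{Svlinboundx} when it is evaluated at the outer variable $x$ — which yields a bound $b_3^C\Ltwo{v'}\Ltwo{w}$ or $b_3^S\Ltwo{v'}\Ltwo{w}$; (iii) for the four terms with $w=v$, inserting finally $\Ltwo{v}\le\sqrt{2\pi}\,\Y{v}\le\tfrac{\pi}{m}\Ltwo{v'}$ via Lemma \ref{vLinftyv'Ltwobound}, whereas for the four with $w=v'$ one already has $\Ltwo{w}=\Ltwo{v'}$.

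Collecting the pieces, the four $v'$-terms sum to a constant multiple of $(M_{R_0}+M_{R_0'})(b_3^C+b_3^S)\Ltwo{v'}^2$ and the four $v$-terms to $\tfrac{\pi}{m}$ times the same expression; dividing by $\sqrt{2m}$ and using $\Ltwo{v'}^2=2m\,\Xm{v'}^2$ turns the $v'$-part into the $\sqrt{2m}$ summand of $C_T$ and the $v$-part into the $\tfrac{\pi\sqrt2}{\sqrt m}$ summand, so that $\Xm{(T-T_1)[v]}\le (b_3^C+b_3^S)(M_{R_0}+M_{R_0'})\big(\sqrt{2m}+\tfrac{\pi\sqrt2}{\sqrt m}\big)\Xm{v'}^2=C_T\Xm{v'}^2$, as claimed.

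The genuine analytic content — control of the singular kernel $\tfrac{\sqrt{p_m(z)}}{|\sin(z/2)|}$ arising from $\partial_\lambda\log A[\lambda v]$ — has already been absorbed into Lemma \ref{CSvCS0linbound}, so the present proof is an assembly and the one place demanding care is the bookkeeping: correctly matching each of the eight sub-terms to the appropriate item of Lemma \ref{CSvCS0linbound} (keeping straight whether the remaining $v$ or $v'$ factor sits at $x$ or at $y$), and carrying through the periodization/parity constant $\sqrt{2m}$ together with the $L^\infty$–$\dot H^1$ constant $\tfrac{\sqrt\pi}{m\sqrt2}$ so that the eight contributions add up to exactly the stated $C_T$.
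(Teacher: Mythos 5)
Your proposal is correct and follows essentially the same route as the paper's own proof: first use Proposition \ref{TEN_symmetries} to reduce the $X_m$ estimate to an $L^2(\TT)$ estimate with the factor $1/\sqrt{2m}$; then split $(T-T_1)[v]$ by the triangle inequality into the eight terms obtained by replacing $C[v],S[v]$ with their differences $C[v]-C[0],S[v]-S[0]$ in \eqref{OPv}; pull the $R_0$ or $R_0'$ prefactor out in $L^\infty$ via Lemmas \ref{boundR0} and \ref{MdR0}; apply the appropriate item of Lemma \ref{CSvCS0linbound} depending on whether the free $v$ or $v'$ sits at $x$ or $y$; and convert $\Ltwo{v}\le \sqrt{2\pi}\,\Y{v}\le \frac{\pi}{m}\Ltwo{v'}$ via Lemma \ref{vLinftyv'Ltwobound} for the four terms carrying $v$ rather than $v'$, which is exactly how the factor $\frac{\pi\sqrt 2}{\sqrt m}$ arises next to $\sqrt{2m}$ in $C_T$. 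The only place where you are slightly vaguer than ideal is the phrase ``a constant multiple of $(M_{R_0}+M_{R_0'})(b_3^C+b_3^S)$'' — the eight concrete terms actually sum to $2(M_{R_0}b_3^C+M_{R_0'}b_3^S)\Ltwo{v'}^2+2(M_{R_0'}b_3^C+M_{R_0}b_3^S)\Ltwo{v}\Ltwo{v'}$, and matching this against the symmetric product $(M_{R_0}+M_{R_0'})(b_3^C+b_3^S)$ requires pairing the $C$- and $S$-kernels with the appropriate $R_0$ vs.\ $R_0'$ factor; but this is precisely the bookkeeping the paper itself performs, so the proposal is faithful to the published argument.
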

\begin{proof}
    By periodicity and oddness of $T,T_1$, it is enough to show that $\Ltwo{(T-T_1) [v]} \leq \frac{C_T}{\sqrt{2m}} \Ltwo{v'}^2 $.
    Recalling Definitions \ref{OPv} and \ref{T1E1_def} we get
    \begin{align*}
         \Ltwo{(T-T_1) [v]} &\leq \Ltwo{\int_0^{2\pi}  v'(x)R_0(y)(C[v]-C[0])  dy} +  \Ltwo{\int_0^{2\pi}  v'(x)R_0'(y)(S[v]-S[0])  dy}  \\
         &+ \Ltwo{\int_0^{2\pi} v(x)R_0'(y)(C[v]-C[0]) dy} + \Ltwo{\int_0^{2\pi}  v(x)R_0(y)(S[v]-S[0])  dy} \\
         &+ \Ltwo{\int_0^{2\pi}  v(y)R_0(x)(S[v]-S[0])  dy} + \Ltwo{\int_0^{2\pi}  v(y)R_0'(x)(C[v]-C[0])  dy}  \\
         &+ \Ltwo{\int_0^{2\pi}  v'(y)R_0'(x)(S[v]-S[0])  dy} + \Ltwo{\int_0^{2\pi}  v'(y)R_0(x)(C[v]-C[0])  dy}. 
    \end{align*}
    
    Using $L^\infty$ bounds on $R,R'$ and using Lemma  \ref{CSvCS0linbound}:
    \begin{align*}
        \Ltwo{(T-T_1) [v]} 
        &\leq \left ( M_{R_0} \Ltwo{v'} + M_{R_0'} \Ltwo{v'} + M_{R_0'} \Ltwo{v} + M_{R_0} \Ltwo{v} \right ) (b_3^C+b_3^S) \Ltwo{v'}  \\
        &\leq (b_3^C+b_3^S) \left (  M_{R_0} + M_{R_0'} + M_{R_0'} \Ltwo 1 \frac{\sqrt{\pi}}{m\sqrt{2}} + M_{R_0} \Ltwo 1 \frac{\sqrt{\pi}}{m\sqrt{2}} \right ) \Ltwo{v'}^2  \\
        &= (b_3^C+b_3^S)(M_{R_0}+ M_{R_0'}) (1 + \frac{ \pi}{m})\Ltwo{v'}^2 \leq \frac{C_T}{\sqrt{2m}} \Ltwo{v'}.
    \end{align*}
\end{proof}

%\color{red} Same issues as in 4.9 \color{black}

\begin{lemma}\label{CSvCS0secondorderbound}
    For all $v \in \Tilde{Y}_m^{even, \epsilon}$ we have

    \begin{equation*}
        \Ltwo{\int_{-\pi}^\pi \left |\left (\log(A[v])-\log(A[0]) - \frac{d}{d\lambda} \log(A[\lambda v])|_{\lambda = 0} \right )(x,x-z) h_\alpha(z) \right | dz  } \leq 4 \pi \Omega \ b_4^\alpha \Ltwo{v'}^2 
    \end{equation*}
    with $h_C(z) :=  \cos(z) p_m(z)$, $h_S(z) := \sin(z)$. And
    \begin{equation*}
        b_4^\alpha := \frac{1}{8\Omega} \left ( \frac{1/\sqrt{\pi} + \sqrt{\pi}/2}{2^{3/2}mm_R^2} I_2^\alpha(m) + \frac{3\sqrt{2\pi}}{m^2m_R^2} I_3^\alpha(m)+ \frac{\sqrt{2\pi}}{m_R^2m^{3/2}} I_4^\alpha(m) \right )
    \end{equation*}
    with $I_2^\alpha, I_3^\alpha, I_4^\alpha$ defined in Lemmas \ref{Integralbound2}, \ref{Integralbound3}, \ref{Integralbound4}.
\end{lemma}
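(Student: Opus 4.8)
The plan is to express the second-order remainder using Taylor's theorem with integral remainder and then repeat the estimates of Lemma~\ref{CSvCS0linbound} one order higher. Since $\lambda\mapsto A[\lambda v](x,y)$ is a quadratic polynomial in $\lambda$ (clear from \eqref{defA}), we have the exact identity
\begin{equation*}
    \log(A[v]) - \log(A[0]) - \frac{d}{d\lambda}\log(A[\lambda v])\Big|_{\lambda=0} = \int_0^1 (1-\lambda)\,\frac{d^2}{d\lambda^2}\log(A[\lambda v])\,d\lambda,
\end{equation*}
and, differentiating $\frac{d}{d\lambda}\log(A[\lambda v]) = \partial_\lambda A[\lambda v]/A[\lambda v]$ once more,
\begin{equation*}
    \frac{d^2}{d\lambda^2}\log(A[\lambda v]) = \frac{\partial_\lambda^2 A[\lambda v]}{A[\lambda v]} - \left(\frac{\partial_\lambda A[\lambda v]}{A[\lambda v]}\right)^2,
\end{equation*}
where $\partial_\lambda^2 A[\lambda v](x,y) = 2(v(x)-v(y))^2 + 8\,v(x)v(y)\sin^2\big(\tfrac{x-y}{2}\big)$ is independent of $\lambda$, and $\partial_\lambda A[\lambda v] = a_1 + a_2$ with $a_1,a_2$ exactly as in \eqref{d_lambdaA}.

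The second step is to bound the three resulting pieces pointwise, uniformly in $x$, in the variable $z=x-y$. For $\partial_\lambda^2 A/A$ I would cancel first and then use Lemma~\ref{boundR0}, exactly as in the derivation of \eqref{a2/A}: the contribution of $8\,v(x)v(y)\sin^2(\tfrac z2)$ is then bounded by $\tfrac{2}{m_R^2}\Y{v}^2$, and, using in addition $A\ge 4m_R^2\sin^2(\tfrac z2)$ together with Lemmas~\ref{bound_v(x)-v(y)}--\ref{bound2_v(x)-v(y)}, the contribution of $2(v(x)-v(y))^2$ is bounded by a constant multiple of $\tfrac{\Ltwo{v'}^2\,p_m(z)}{m\,m_R^2\sin^2(z/2)}$. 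For the square I would write $(\partial_\lambda A/A)^2\le(|a_1|/A+|a_2|/A)^2$ and reuse verbatim the pointwise bounds \eqref{a1/A}, \eqref{a2/A} already established inside the proof of Lemma~\ref{CSvCS0linbound}, namely that $|a_1|/A$ is bounded by a multiple of $\tfrac{\Ltwo{v'}}{m_R\sqrt m}\tfrac{\sqrt{p_m(z)}}{|\sin(z/2)|}$ and $|a_2|/A$ by a multiple of $\tfrac{\Ltwo{v'}}{m_R m}$. Squaring and expanding $(|a_1|+|a_2|)^2$, the three terms $a_1^2/A^2$, $2|a_1||a_2|/A^2$, $a_2^2/A^2$ carry respectively the $z$-weights $\tfrac{p_m(z)}{\sin^2(z/2)}$, $\tfrac{\sqrt{p_m(z)}}{|\sin(z/2)|}$, $1$, each with a prefactor proportional to $\Ltwo{v'}^2$ and a definite power of $m$ and $m_R$.

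The third step is bookkeeping: multiply the pointwise bound by $|h_\alpha(z)|$, pull out $\int_0^1(1-\lambda)\,d\lambda=\tfrac12$, integrate over $z\in[-\pi,\pi]$, take the outer $L^2$-norm in $x$ (which, since the pointwise bound does not depend on $x$, only contributes a fixed multiplicative constant), and use Lemma~\ref{vLinftyv'Ltwobound} to convert the surviving $\Y{v}^2$ into a multiple of $\tfrac1{m^2}\Ltwo{v'}^2$. Grouping terms by their $z$-weight reduces everything to the three integrals $\int_{-\pi}^\pi|h_\alpha(z)|\tfrac{p_m(z)}{\sin^2(z/2)}\,dz$, $\int_{-\pi}^\pi|h_\alpha(z)|\tfrac{\sqrt{p_m(z)}}{|\sin(z/2)|}\,dz$ and $\int_{-\pi}^\pi|h_\alpha(z)|\,dz$, which are precisely the quantities bounded by $I_2^\alpha(m)$, $I_4^\alpha(m)$ and $I_3^\alpha(m)$ in Lemmas~\ref{Integralbound2}, \ref{Integralbound3}, \ref{Integralbound4}; collecting all the constants then yields the stated value of $b_4^\alpha$.

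The delicate point --- and the reason the lemma is phrased with the particular weights $h_C(z)=\cos(z)p_m(z)$ and $h_S(z)=\sin(z)$ --- is that the $z$-weights $\tfrac{p_m(z)}{\sin^2(z/2)}$ and $\tfrac{\sqrt{p_m(z)}}{|\sin(z/2)|}$ blow up at $z=0$, so finiteness of $I_2^\alpha$ and $I_4^\alpha$ relies on the linear vanishing of $h_\alpha$ there (supplied by the factor $p_m$ when $\alpha=C$ and by $\sin(z)\sim z$ when $\alpha=S$), which exactly kills the $z^{-1}$ growth. Verifying these integral bounds is the main obstacle; it is carried out in Lemmas~\ref{Integralbound2}, \ref{Integralbound3}, \ref{Integralbound4}, and once they are available the estimate above closes.
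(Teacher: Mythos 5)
Your proposal is correct and follows essentially the same route as the paper: second-order Taylor remainder for $\lambda\mapsto\log(A[\lambda v])$, the identity $\frac{d^2}{d\lambda^2}\log A = \partial_\lambda^2 A/A - (\partial_\lambda A/A)^2$, reuse of the pointwise bounds \eqref{a1/A}--\eqref{a2/A} for the squared term, analogous bounds for $\partial_\lambda^2 A/A$ via Lemma~\ref{boundR0} and Lemmas~\ref{bound_v(x)-v(y)}--\ref{bound2_v(x)-v(y)}, and then reduction to the integrals $I_2^\alpha, I_3^\alpha, I_4^\alpha$. The only cosmetic difference is that you write the remainder as $\int_0^1(1-\lambda)\frac{d^2}{d\lambda^2}\log A\,d\lambda$ while the paper writes the equivalent double integral $\int_0^1\int_0^{\lambda'}\frac{d^2}{d\lambda^2}\log A\,d\lambda\,d\lambda'$; both contribute the same factor $\tfrac12$.
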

\begin{proof}
    We begin by computing the error of linearizing
    \begin{equation}\label{logAvlogA0dlogAv}
        \left(\log(A[v])-\log(A[0]) - \frac{d}{d\lambda} \log(A[\lambda v])|_{\lambda = 0} \right)(x,y) = \int_0^1 \int_0^{\lambda'}\frac{d^2}{d\lambda^2} \log(A[\lambda v](x,y)) d\lambda d\lambda'. 
    \end{equation}
    Computing the derivative
    \begin{equation}\label{TaylorlogA}
        \frac{d^2}{d\lambda^2} \log(A[\lambda v]) = \frac{\frac{d^2}{d\lambda^2} A[\lambda v] }{A[\lambda v]} - \left (\frac{\frac{d}{d\lambda} A[\lambda v]}{A[\lambda v]}\right )^2.
    \end{equation}
    We will bound both terms separately. Starting at the second term we can use the bounds \eqref{a2/A}, \eqref{a1/A} to get
    \begin{align*}
        \left (\frac{\frac{d}{d\lambda} A[\lambda v] }{A[\lambda v]} \right )^2 (x,x-z) \leq \left (\frac{\X{v'}\sqrt{\pi}}{2^{3/2} m_R\sqrt{m} } \frac{\sqrt{p_m(z)}}{|\sin\left (\frac{z}{2}\right )|}  +  \frac{\X{v'}\sqrt{2\pi}}{m_R m}  \right )^2
    \end{align*}

    Now we will proceed to the first term, for which we have to compute a further derivative of the first derivative we computed in \eqref{d_lambdaA}:
    \begin{align*}
        \frac{d^2}{d\lambda^2} A[\lambda v]  = 2(v(x)-v(y))^2 + 8 v(x)v(y) \sin^2 \left ( \frac{x-y}{2}\right ).
    \end{align*}
    Then we use the bounds from Lemma \ref{bound2_v(x)-v(y)} for $|v(x)-v(x-z)|$, and the bounds from Lemma \ref{boundR0}, yielding 
    \begin{align*}
        \left | \frac{\frac{d^2}{d\lambda^2} A[\lambda v] }{A[\lambda v]}  (x,x-z)\right | &\leq \left  | \frac{2(v(x)-v(x-z))^2}{A[\lambda v](x,x-z)} \right | + \left | \frac{8 v(x)v(x-z) \sin^2 \left ( \frac{z}{2
        }\right )}{A[\lambda^*v](x,x-z)}  \right |\\
        &\leq \left | \frac{\X{v'}^2 p_m(z)}{4mm_R^2\sin^2 \left ( \frac{z}{2} \right ) } \right | + \left | \frac{8 \Y{v}^2 \sin^2 \left ( \frac{z}{2}\right )}{4m_R^2\sin^2 \left ( \frac{z}{2} \right ) }  \right |.
    \end{align*}
    Putting both bounds together and recalling equality \eqref{logAvlogA0dlogAv} we get the following bound:
    \begin{align*}
        &\left |\log(A[v])-\log(A[0]) - \frac{d}{d\lambda} \log(A[\lambda v])|_{\lambda = 0} \right |(x,x-z) \leq \\
        &\leq \int_0^1 \int_0^{\lambda'} \left | \frac{\frac{d^2}{d\lambda^2} A[\lambda v] }{A[\lambda v]} (x,x-z)\right | + \left |\frac{\frac{d}{d\lambda} A[\lambda v] }{A[\lambda v]}(x,x-z) \right |^2  d\lambda d\lambda' \\
        &\leq \left ( \left (\frac{\X{v'}\sqrt{\pi}}{2^{3/2} m_R\sqrt{m} } \frac{\sqrt{p_m(z)}}{|\sin\left (\frac{z}{2}\right )|}  +  \frac{\X{v'}\sqrt{2\pi}}{m_R m}  \right )^2 +  \frac{\X{v'}^2 \left | p_m(z)\right |}{4mm_R^2\sin^2 \left ( \frac{z}{2} \right ) }  +  \frac{8 \Y{v}^2 \sin^2 \left ( \frac{z}{2}\right )}{4m_R^2\sin^2 \left ( \frac{z}{2} \right ) }\right ) \int_0^1 \int_0^{\lambda'} 1 d\lambda d\lambda' . 
    \end{align*}
    Using that $\int_0^1 \int_0^{\lambda'} 1 d\lambda d\lambda' = \frac{1}{2}$ we can proceed with the final bound

    \begin{align*}
        &\Ltwox{\int_{-\pi}^\pi \left |\left (\log(A[v])-\log(A[0]) - \frac{d}{d\lambda} \log(A[\lambda v])|_{\lambda = 0} \right )(x,x-z) h_\alpha(z) \right | dz  } \leq \\
        &\leq  \Ltwox{\frac{1}{2}\int_{-\pi}^{\pi} \left (\frac{\X{v'}\sqrt{\pi}}{2^{3/2} m_R\sqrt{m} } \frac{\sqrt{p_m(z)}}{|\sin\left (\frac{z}{2}\right )|}  +  \frac{\X{v'}\sqrt{2\pi}}{m_R m}  \right )^2 |h_\alpha(z)| dz } \\
        &+ \Ltwox{\frac{1}{2} \int_{-\pi}^\pi \left | \frac{\X{v'}^2 p_m(z)}{4mm_R^2\sin^2 \left ( \frac{z}{2} \right ) } \right | |h_\alpha(z)| dz } + \Ltwox{\frac{1}{2} \int_{-\pi}^\pi \left | \frac{8 \Y{v}^2 \sin^2 \left ( \frac{z}{2}\right )}{4m_R^2\sin^2 \left ( \frac{z}{2} \right ) }  \right | |h_\alpha(z)| dz } \\
        &\leq  \frac{1}{2} \X{v'}^2 \sqrt{2\pi} \left (\frac{\pi}{2^3 m_R^2m } \int_{-\pi}^{\pi} \frac{p_m(z)|h_\alpha(z)|}{\sin^2\left (\frac{z}{2}\right )} dz  +   \frac{2\pi}{m_R^2 m^2} \int_{-\pi}^\pi |h_\alpha(z)| dz+ \frac{\pi}{m_R^2 m^{3/2}} \int_{-\pi}^\pi \frac{|h_\alpha(z)|\sqrt{p_m(z)}}{|\sin\left (\frac{z}{2}\right )|} dz   \right ) \\
        &+ \frac{\X{v'}^2\X{1}}{8mm_R^2} \int_{-\pi}^\pi \frac{p_m(z) |h_\alpha(z)|}{\sin^2 \left ( \frac{z}{2} \right )} dz +  \frac{\pi \X{v'}^2 \X{1} }{2m^2 m_R^2} \int_{-\pi}^\pi |h_\alpha(z)| dz\\
        &\leq \frac{1}{2}\left (\frac{\pi^{3/2}}{2^{5/2} m_R^2m } I_2^\alpha(m)  +   \frac{(2\pi)^{3/2}}{m_R^2 m^2} I_3^\alpha(m)+ \frac{\sqrt{2}\pi^{3/2}}{m_R^2 m^{3/2}} I_4^\alpha(m)   \right ) \X{v'}^2  \\
        &+ \frac{1}{2}\left ( \frac{\sqrt{\pi}}{2^{3/2}m m_R^2} I_2^\alpha(m) + \frac{\sqrt{2} \pi^{3/2}}{m^2m_R^2} I_3^\alpha(m)\right ) \X{v'}^2 \\
        &= 4\pi \Omega  \left ( \frac{1/\sqrt{\pi} + \sqrt{\pi}/2}{2^{3/2}mm_R^2} I_2^\alpha(m) + \frac{3\sqrt{2\pi}}{m^2m_R^2} I_3^\alpha(m)+ \frac{\sqrt{2\pi}}{m_R^2m^{3/2}} I_4^\alpha(m) \right ) \frac{1}{8\Omega} \Ltwo{v'}^2 = 4\pi \Omega \ b_4^\alpha \Ltwo{v'}^2,
    \end{align*}
    where we have used Lemmas \ref{Integralbound2}, \ref{Integralbound3}, \ref{Integralbound4} to bound the integrals in terms of $I_i^\alpha$.

    \end{proof}

\begin{proposition} \label{Ev-E0-E1v}
    If $v \in \Tilde{Y}_m^{even, \epsilon}$ then
    \begin{align*}
        \Xm{E[v]- E[0]-E_1[v]} \leq C_E \Xm{v'}^2
    \end{align*}
    with $C_E := \sqrt{2m} \left ( M_{R_0}M_{R_0''} + M_{R_0'}^2 \right ) b_4^C+ \sqrt{2m} \left (M_{R_0'}^2+M_{R_0}^2 \right )  b_4^S $ and $M_{R_0'}, M_{R_0''}$ are defined in Lemmas \ref{MdR0}, \ref{MddR0}.
\end{proposition}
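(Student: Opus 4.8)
The plan is to recognize $E[v]-E[0]-E_1[v]$ as a single integral operator acting on the \emph{second-order Taylor remainder} of the map $\lambda\mapsto\log A[\lambda v]$, and then to invoke Lemma~\ref{CSvCS0secondorderbound} directly. First I would subtract the defining formula \eqref{OPv}, recalling \eqref{defCS}, which gives
\[
(E[v]-E[0])(x)=\frac{1}{4\pi\Omega}\int_0^{2\pi}\left(\cos(x-y)\,q_C(x,y)+\sin(x-y)\,q_S(x,y)\right)\left(\log A[v]-\log A[0]\right)(x,y)\,dy,
\]
where $q_C(x,y):=R_0(x)R_0'(y)-R_0(y)R_0'(x)$ and $q_S(x,y):=R_0(x)R_0(y)+R_0'(x)R_0'(y)$. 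Next I would check, using \eqref{d_lambdaA} at $\lambda=0$, that the bracket multiplying the kernel in Definition~\ref{T1E1_def} equals $\tfrac{d}{d\lambda}A[\lambda v](x,y)\big|_{\lambda=0}$; combined with the factor $1/A[0](x,y)$ in the prefactor, this shows $E_1[v]$ is the same integral with $\log A[v]-\log A[0]$ replaced by $\tfrac{d}{d\lambda}\log A[\lambda v]\big|_{\lambda=0}$. Subtracting,
\[
(E[v]-E[0]-E_1[v])(x)=\frac{1}{4\pi\Omega}\int_0^{2\pi}\left(\cos(x-y)\,q_C(x,y)+\sin(x-y)\,q_S(x,y)\right)\mathcal{R}(x,y)\,dy,
\]
with $\mathcal{R}(x,y):=\log A[v]-\log A[0]-\tfrac{d}{d\lambda}\log A[\lambda v]\big|_{\lambda=0}$, which is precisely the quantity controlled by Lemma~\ref{CSvCS0secondorderbound}. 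As in Proposition~\ref{Nv}, since $E[v]$, $E[0]$ and $E_1[v]$ are $2\pi/m$-periodic and odd (Proposition~\ref{TEN_symmetries} and Definition~\ref{T1E1_def}), it is enough to prove $\Ltwo{E[v]-E[0]-E_1[v]}\le\tfrac{C_E}{\sqrt{2m}}\Ltwo{v'}^2$.

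The second step is to absorb the polynomial coefficients into the weights $h_C(z)=\cos(z)p_m(z)$ and $h_S(z)=\sin(z)$ of Lemma~\ref{CSvCS0secondorderbound}. I would write $q_C(x,y)=R_0(x)\left(R_0'(y)-R_0'(x)\right)+R_0'(x)\left(R_0(x)-R_0(y)\right)$ and apply Lemma~\ref{f(x)-f(y)} (valid since $R_0$ and $R_0'$ are $2\pi/m$-periodic and smooth) together with the $L^\infty$ bounds of Lemmas~\ref{boundR0},~\ref{MdR0},~\ref{MddR0}, obtaining $|q_C(x,y)|\le(M_{R_0}M_{R_0''}+M_{R_0'}^2)\,p_m(x-y)$. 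The structural point is that $q_C$ is antisymmetric, hence vanishes on the diagonal $x=y$, which is exactly what supplies the extra factor $p_m(x-y)$ matching $h_C$; for $q_S$ no such gain is needed, because $\sin(x-y)=h_S(x-y)$ already carries the diagonal vanishing, and one just uses $|q_S(x,y)|\le M_{R_0}^2+M_{R_0'}^2$. Hence $|\cos(x-y)\,q_C(x,y)|\le(M_{R_0}M_{R_0''}+M_{R_0'}^2)\,|h_C(x-y)|$ and $|\sin(x-y)\,q_S(x,y)|\le(M_{R_0}^2+M_{R_0'}^2)\,|h_S(x-y)|$.

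The final step is assembly: taking absolute values inside the integral, changing variables $z=x-y$, and then taking $L^2$ in $x$ gives
\[
\Ltwo{E[v]-E[0]-E_1[v]}\le\frac{M_{R_0}M_{R_0''}+M_{R_0'}^2}{4\pi\Omega}\Ltwox{\int_{-\pi}^{\pi}|\mathcal{R}(x,x-z)|\,|h_C(z)|\,dz}+\frac{M_{R_0}^2+M_{R_0'}^2}{4\pi\Omega}\Ltwox{\int_{-\pi}^{\pi}|\mathcal{R}(x,x-z)|\,|h_S(z)|\,dz},
\]
and Lemma~\ref{CSvCS0secondorderbound} bounds the two bracketed $L^2_x$-norms by $4\pi\Omega\,b_4^C\Ltwo{v'}^2$ and $4\pi\Omega\,b_4^S\Ltwo{v'}^2$ respectively. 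Simplifying, $\Ltwo{E[v]-E[0]-E_1[v]}\le\left((M_{R_0}M_{R_0''}+M_{R_0'}^2)\,b_4^C+(M_{R_0}^2+M_{R_0'}^2)\,b_4^S\right)\Ltwo{v'}^2=\tfrac{C_E}{\sqrt{2m}}\Ltwo{v'}^2$. Finally, recalling that $\Xm{w}=\tfrac{1}{\sqrt{2m}}\Ltwo{w}$ for $2\pi/m$-periodic $w$ and $\Ltwo{v'}^2=2m\,\Xm{v'}^2$ yields $\Xm{E[v]-E[0]-E_1[v]}\le C_E\,\Xm{v'}^2$, as claimed. The only step that needs genuine care — the rest being assembly of already-established estimates — is the first one: correctly identifying $E_1[v]$ as the first-order term of the $\lambda$-expansion of the log-kernel so that $\mathcal{R}$ appears, and pairing the diagonal vanishing of $q_C$ (respectively of $\sin(x-y)$) with the precise weight $h_C$ (respectively $h_S$) under which Lemma~\ref{CSvCS0secondorderbound} is stated.
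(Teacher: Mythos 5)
Your proposal is correct and follows essentially the same route as the paper's proof: reduce to an $L^2(\mathbb{T})$ estimate by periodicity/oddness, identify $E[v]-E[0]-E_1[v]$ as the integral operator acting on the second-order Taylor remainder of $\log A[\lambda v]$, exploit the diagonal vanishing of the antisymmetric coefficient $R_0(x)R_0'(y)-R_0(y)R_0'(x)$ via Lemma~\ref{f(x)-f(y)} to produce the factor $p_m(x-y)$ matching $h_C$, use $L^\infty$ bounds for the symmetric coefficient paired with $h_S(z)=\sin(z)$, and then invoke Lemma~\ref{CSvCS0secondorderbound} with both choices of $h_\alpha$. Your step of explicitly verifying that the bracket in Definition~\ref{T1E1_def} coincides with $\tfrac{d}{d\lambda}A[\lambda v]\big|_{\lambda=0}$ divided by $A[0]$ makes the identification of $E_1$ as the first-order term fully explicit, which the paper leaves implicit, but the argument is the same.
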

\begin{proof}
    By periodicity and oddness of $E, E_1, v$, it is enough to prove $\Ltwo{E[v]- E[0]-E_1[v]} \leq \frac{C_E}{\sqrt{2m}} \Ltwo{v'} ^2$. Recalling Definitions \ref{OPv} and \ref{T1E1_def}:
    \begin{align*}
        &\Ltwo{E[v]- E[0]-E_1[v]} \\
        &\leq \Ltwo{\int_{-\pi}^\pi (C[v] - C[0] - \frac{d}{d\lambda} C[\lambda v] |_{\lambda = 0} )(x,x-z) (R_0(x)R_0'(x-z) - R_0(x-z)R_0'(x) )dz}  +\\
        &+ \Ltwo{\int_{-\pi}^\pi (S[v] - S[0] - \frac{d}{d\lambda} S[\lambda v] |_{\lambda = 0} )(x,x-z) (R_0(x)R_0(x-z) + R_0'(x)R_0'(x-z)) dz}.  
    \end{align*}
    First we get an extra cancellation for the first term
    \begin{align*}
        |R_0(x)R_0'(x-z) - R_0(x-z)R_0'(x)| 
        &\leq |R_0(x)| |R_0'(x-z) - R_0'(x)| + |R_0'(x)| |R_0(x-z) - R_0(x)| \\
        &\leq M_{R_0} M_{R_0''} p_m(z) + M_{R_0'}^2 p_m(z),
    \end{align*}
    where we have used that both $R,R'$ are $2\pi/m$ periodic, even and odd respectively and have its derivatives bounded so we can apply Lemma \ref{f(x)-f(y)}. On the second term we can directly apply $L^\infty$ bounds to $R_0, R_0'$.
    
    Applying Lemma \ref{CSvCS0secondorderbound} with $h_\alpha(z) = \cos(z)p_m(z)$ and $h_\alpha =\sin(z)$ we get
    \begin{align*}
        &\Ltwo{E[v]- E[0]-E_1[v]}\\ 
        &\leq \frac{1}{4\pi \Omega}\Ltwo{\int_{-\pi}^\pi \left |\left (\log(A[v])-\log(A[0]) - \frac{d}{d\lambda} \log(A[\lambda v])|_{\lambda^* = 0} \right )(x,x-z) \cos(z)p_m(z) \right | dz  } \left ( M_{R_0}M_{R_0''} + M_{R_0'}^2 \right ) \\
        &+ \frac{1}{4\pi \Omega}\Ltwo{\int_{-\pi}^\pi \left |\left (\log(A[v])-\log(A[0]) - \frac{d}{d\lambda} \log(A[\lambda v])|_{\lambda^* = 0} \right )(x,x-z) \sin(z) \right | dz  } \left (M_{R_0}^2 + M_{R_0'}^2 \right )  \\
        &\leq \left ( M_{R_0}M_{R_0''} + M_{R_0'}^2 \right ) b_4^C \X{v'}^2+ \left (M_{R_0}^2 + M_{R_0'}^2 \right ) b_4^S \X{v'}^2
        \leq \frac{C_E}{\sqrt{2m}} \Ltwo v ^2. 
    \end{align*}
\end{proof}

\subsection{Lipschitz estimates of the functional}

This subsection is devoted to perform Lipschitz estimates of the functional. We start with the following Lemma:

\begin{lemma} \label{Sv1Sv2Linfbound}
    If $v_0,v_1 \in \Tilde{Y}_m^{even, \epsilon}$ then
    \begin{equation}
        | S[v_0](x,y) - S[v_1](x,y)| \leq b_5 \Ltwo{v_0'-v_1'},
    \end{equation}
    where 
    $b_5:=\frac{1}{\Omega m_R m 2^{5/2}} \left ( 1+ \frac{2}{\sqrt{\pi}}\right ) $. 
\end{lemma}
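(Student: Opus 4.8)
The plan is to reduce the pointwise estimate to a bound on $\log A[v_0]-\log A[v_1]$, and then to use crucially that the kernel $S[v]$ carries the factor $\sin(x-y)$. Set $w:=v_0-v_1$ and, for $\lambda\in[0,1]$, $v_\lambda:=v_1+\lambda w$. Since $\tilde{Y}_m^{even,\epsilon}$ is convex, $v_\lambda\in\tilde{Y}_m^{even,\epsilon}$ for every $\lambda$, so Lemma~\ref{boundR0} gives $m_R<R_0+v_\lambda<M_R$ uniformly in $\lambda$. Writing $\rho_\lambda:=R_0+v_\lambda$ and $B:=\rho_\lambda(x)-\rho_\lambda(y)$, and using that $A[v_\lambda](x,y)>0$ for $x\ne y$ (the case $x=y$ being trivial since then $\sin(x-y)=0$), the fundamental theorem of calculus gives
\begin{equation*}
S[v_0](x,y)-S[v_1](x,y)=\frac{\sin(x-y)}{4\pi\Omega}\bigl(\log A[v_0]-\log A[v_1]\bigr)(x,y)=\frac{\sin(x-y)}{4\pi\Omega}\int_0^1\frac{\partial_\lambda A[v_\lambda](x,y)}{A[v_\lambda](x,y)}\,d\lambda .
\end{equation*}

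The next step is to differentiate $A[v_\lambda]$ exactly as in \eqref{d_lambdaA} (now in the direction $w$ about the base point $v_\lambda$),
\begin{equation*}
\partial_\lambda A[v_\lambda]=\underbrace{2B(w(x)-w(y))}_{a_1}+\underbrace{4\sin^2\!\bigl(\tfrac{x-y}{2}\bigr)\bigl(\rho_\lambda(y)w(x)+\rho_\lambda(x)w(y)\bigr)}_{a_2},
\end{equation*}
and to bound the two ratios exactly as in the proof of Lemma~\ref{CSvCS0linbound} (cf. \eqref{a2/A} and \eqref{a1/A}): from $A[v_\lambda]\ge 4\rho_\lambda(x)\rho_\lambda(y)\sin^2(\tfrac{x-y}{2})$ one gets $|a_2|/A[v_\lambda]\le(|w(x)|+|w(y)|)/m_R$, and from the AM--GM lower bound $A[v_\lambda]=B^2+4\rho_\lambda(x)\rho_\lambda(y)\sin^2(\tfrac{x-y}{2})\ge 4|B|m_R|\sin(\tfrac{x-y}{2})|$ one gets $|a_1|/A[v_\lambda]\le|w(x)-w(y)|/\bigl(2m_R|\sin(\tfrac{x-y}{2})|\bigr)$ (if $B=0$ then $a_1=0$). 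Both estimates are independent of $\lambda$.

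The key point is then to multiply through by $|\sin(x-y)|$: since $|\sin(x-y)|=2|\sin(\tfrac{x-y}{2})||\cos(\tfrac{x-y}{2})|\le 2|\sin(\tfrac{x-y}{2})|$, this factor exactly absorbs the singular denominator of the $a_1$ term, giving $|\sin(x-y)|\,|a_1|/A[v_\lambda]\le|w(x)-w(y)|/m_R$, while $|\sin(x-y)|\,|a_2|/A[v_\lambda]\le(|w(x)|+|w(y)|)/m_R$ (using $|\sin(x-y)|\le 1$). Integrating in $\lambda$ yields
\begin{equation*}
|S[v_0](x,y)-S[v_1](x,y)|\le\frac{1}{4\pi\Omega m_R}\bigl(|w(x)-w(y)|+|w(x)|+|w(y)|\bigr),
\end{equation*}
and to finish I would bound $|w(x)-w(y)|$ (by $2\|w\|_{L^\infty}$, or more sharply via Lemma~\ref{bound_v(x)-v(y)}) together with $|w(x)|,|w(y)|\le\|w\|_{L^\infty}\le\frac{\sqrt\pi}{m\sqrt2}\Ltwo{w'}$ from Lemma~\ref{vLinftyv'Ltwobound}; keeping track of the constants with a suitable grouping gives the claimed inequality with $b_5=\frac{1}{\Omega m_R m\,2^{5/2}}(1+\tfrac{2}{\sqrt\pi})$.

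I expect the only genuine obstacle to be the singularity of $|a_1|/A[v_\lambda]\sim|w(x)-w(y)|/|\sin(\tfrac{x-y}{2})|$ as $y\to x$: since $w\in H^1$ is only $\tfrac12$-Hölder continuous, this ratio is not bounded by itself, so a genuine pointwise bound for $S[v_0]-S[v_1]$ is available only thanks to the compensating factor $\sin(x-y)$ in the kernel $S[v]$ — this is exactly why Lemma~\ref{CSvCS0linbound} provides only integrated bounds for $C[v]$ but a pointwise bound for $S[v]$. A minor point to verify along the way is that the interpolants $v_\lambda$ all remain in $\tilde{Y}_m^{even,\epsilon}$, which is what legitimizes using the uniform lower bound $R_0+v_\lambda>m_R$.
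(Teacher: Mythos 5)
Your proposal follows essentially the same route as the paper's proof: write $\log A[v_0]-\log A[v_1]$ via the fundamental theorem of calculus along $v_\lambda$, split $\partial_\lambda A[v_\lambda]$ into $a_1+a_2$ exactly as in \eqref{d_lambdaA}, bound each ratio by lower bounds on $A[v_\lambda]$, and use the $\sin(x-y)$ factor in $S$ to absorb the $1/|\sin((x-y)/2)|$ singularity coming from $a_1$. The paper does precisely this (it also reuses \eqref{a1/A}, \eqref{a2/A} and the bound $|\sin z/\sin(z/2)|\le 2$).

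One point to be careful about: the crude bound $|w(x)-w(y)|\le 2\Y{w}$ does \emph{not} give $b_5$. With that estimate your pointwise bound becomes $\frac{4\Y{w}}{4\pi\Omega m_R}\le\frac{1}{\sqrt{2\pi}\,\Omega m_R m}\Ltwo{w'}$, and $\frac{1}{\sqrt{2\pi}}=\frac{4/\sqrt{\pi}}{2^{5/2}}>\frac{1+2/\sqrt{\pi}}{2^{5/2}}=b_5\cdot\Omega m_R m$, so the constant is strictly too large. The argument closes only with the sharper bound you mention as an alternative, namely Lemma~\ref{bound2_v(x)-v(y)} combined with $p_m\le\pi/m$, which gives $|w(x)-w(y)|\le\frac{\sqrt{\pi}}{m\sqrt{2}}\Ltwo{w'}$ (one unit of $\Y w$, not two). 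That yields $\frac{3}{4\sqrt{2\pi}\,\Omega m_R m}\Ltwo{w'}$, which is below $b_5$. This is also the bound the paper uses (it keeps $\sqrt{p_m(z)}$ until the end rather than canceling first, but the outcome is the same). So the plan is right; just make the "sharper" option the default one, since the rough one would not establish the stated $b_5$.
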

\begin{proof}
    The proof is similar to Lemma \ref{CSvCS0linbound}. As $4 \pi \Omega S[v](x,y) = \sin(x-y)\log(A[v](x,y))$ we want to estimate $\log(A[v_1])- \log(A[v_0])$. Defining $v_\lambda = (1-\lambda)v_0 + \lambda v_1$, we can write 
    \begin{equation} \label{logAv1logAv2}
        \left (\log(A[v_1])-\log(A[v_0]) \right )(x,y)= \int_0^1\frac{d}{d\lambda}  \log(A[v_\lambda])(x,y) d\lambda = \int_0^1\frac{\frac{d}{d\lambda}  A[ v_\lambda]}{A[ v_\lambda]}(x,y) d\lambda.
    \end{equation}

    %We begin by introducing $v_\lambda(x) := v_1(x) + \lambda (v_2(x)-v_1(x))$ with $\lambda \in [0,1]$, which belongs to $\Tilde{Y}_m^{odd, \epsilon}$ because it is odd and $2\pi/m$ periodic, as it is a linear combination of $v_1,v_2$, also $v_\lambda (x) = \int_0^x v_\lambda'$ (in other words, $v_\lambda(0) = 0$) and the only thing left to see is the $\epsilon$ bound on its derivative
    %\begin{equation*}
     %   v_\lambda' = (1-\lambda)v_1' + \lambda v_2' \implies \Xm{v_\lambda'} \leq (|1-\lambda| + \lambda )\epsilon = \epsilon.
    %\end{equation*}
    %As a consequence, defining $R_\lambda := R_0+v_\lambda$, we then can use the bounds $m_R < R_\lambda(x) < M_R$.
    %\color{red} this is false as stated \color{black}
    
    %Now using the mean value theorem
    %\begin{align*}
    %\log(A[v_2])-\log(A[v_1]) = \frac{d}{d\lambda} \left. \log(A[v_\lambda]) \right |_{\lambda^*} = \frac{\frac{d}{d\lambda} \left . A[v_\lambda] \right |_{\lambda^*}}{A[ v_{\lambda^*}]}  \quad \lambda^* \in [0,1].
    %\end{align*}
        Since the space $\Tilde{Y}_m^{ even, \epsilon}$ is convex, for all $\lambda \in [0,1]$ we have $v_\lambda \in \Tilde{Y}_m^{even, \epsilon}$. Defining $R_\lambda := R_0 + v_\lambda$, we compute the derivative as in \eqref{d_lambdaA}
    \begin{equation}\label{d_lambdaAv1v2}
    \begin{aligned}
        \frac{d}{d\lambda} A[v_\lambda]|_\lambda(x,y) &= 2\left (R_\lambda(x)-R_\lambda(y)\right )(v_1(x)-v_1(y) + v_0(y)-v_0(x)) + \\
        &+ 4\sin^2\left (\frac{x-y}{2}\right ) \left (R_\lambda(x)(v_1(y)-v_0(y)) + R_\lambda(y)(v_1(x)-v_0(x))\right ) =: a_1 + a_2.
    \end{aligned}
    \end{equation}

    For all $\lambda \in [0,1]$, we proceed to bound $|\sin(x-y)|| a_1/A[v_{\lambda}]|$ and $|\sin(x-y) ||a_2/A[v_{\lambda}]|$ separately, using the exact same bounds as in \eqref{a2/A} via Lemma \ref{boundR0}:
    \begin{align*}
        |\sin(x-y)| \left |\frac{a_2}{A[v_{\lambda}]} \right | \leq \frac{\sqrt{2\pi}}{m m_R} \X{v_1'-v_0'}. 
    \end{align*}
    For $a_1$ we need to use the cancellation of $\sin(x-y)$. Changing variables to $z = x-y$ and using the bound \eqref{a1/A}:
    \begin{align*}
        \left |\sin(z) \frac{a_1}{A[v_{\lambda}]} \right | \ \leq \ \X{v_1'-v_0'} \frac{\sqrt{\pi}}{2^{3/2}m_R\sqrt{m}} \left |\frac{\sin(z)\sqrt{p_m(z)}}{\sin(\frac{z}{2})} \right |
        \ \leq \ \frac{\pi}{m_R m \sqrt{2}} \X{v_1'-v_0'}, 
    \end{align*}
    where we have used $p_m(z) \leq \pi/m$. Recalling \eqref{logAv1logAv2}, adding up both of the bounds we got and taking into account the $4\pi \Omega$ factor dividing we get
    \begin{equation*}
        \left |S[v_1](x,y)-S[v_0](x,y)\right | =  \left |\int_0^1 \frac{\sin(x-y)}{4\pi\Omega}\frac{\frac{d}{d\lambda}  A[ v_\lambda]}{A[ v_\lambda]}(x,y) d\lambda \right |\leq \frac{1}{\Omega m_R m 2^{5/2}} \left ( 1+ \frac{2}{\sqrt{\pi}}\right ) \X{v_1'-v_0'}.  
    \end{equation*}
\end{proof}

\begin{proposition}\label{Nv1Nv2bound}
    If $v_0,v_1 \in \Tilde{Y}_m^{even, \epsilon}$ then
    \begin{align*}
        \Xm{N[v_1]-N[v_0]} &\leq C'_{N}(\Xm{v_1'} + \Xm{v_0'})\Xm{v_1'-v_0'}
    \end{align*}
    with $C_N' := 2 \left (\sqrt{2\pi} b_3^C \epsilon + \frac{\pi \sqrt{2}}{\sqrt{m}} b_1 \right ) + 2\sqrt{m\pi}\left (1+ \frac{\pi^2}{m^2} \right ) \left ( b_5 \sqrt{2m}\epsilon + b_2 \right ) + \sqrt{\frac{\pi}{m}}$. 
\end{proposition}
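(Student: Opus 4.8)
The plan is to use the symmetries of $N[\cdot]$ to pass to the full circle, and then to telescope every product occurring in $N[v_1]-N[v_0]$ so that exactly one factor carries a difference $v_1-v_0$ (or $v_1'-v_0'$), while each remaining factor is controlled either by the radius $\epsilon$ of the ball $\Tilde{Y}_m^{even,\epsilon}$ or by one of the kernel bounds already available.

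\textbf{Reduction.} By Proposition \ref{TEN_symmetries}, $N[v_1]-N[v_0]$ is odd and $2\pi/m$-periodic, so it is enough to show $\Ltwo{N[v_1]-N[v_0]}\le\tfrac{C_N'}{\sqrt{2m}}\left(\Ltwo{v_1'}+\Ltwo{v_0'}\right)\Ltwo{v_1'-v_0'}$ on $\mathbb{T}$. Recalling \eqref{OPv}, $N[v]$ splits into the local term $-v(x)v'(x)$, the $C$-integral $\int_0^{2\pi}C[v]\,(v(x)v'(y)-v(y)v'(x))\,dy$, and the $S$-integral $\int_0^{2\pi}S[v]\,(v(x)v(y)+v'(x)v'(y))\,dy$. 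For the local term I would write $v_1v_1'-v_0v_0'=(v_1-v_0)v_1'+v_0(v_1'-v_0')$ and bound $\Y{v_1-v_0},\Y{v_0}$ by the $L^\infty$--$\dot H^1$ embedding of Lemma \ref{vLinftyv'Ltwobound}; this produces the $\sqrt{\pi/m}$ summand of $C_N'$.

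\textbf{Telescoping the integral terms.} Writing $P_i:=v_i(x)v_i'(y)-v_i(y)v_i'(x)$ and $Q_i:=v_i(x)v_i(y)+v_i'(x)v_i'(y)$, I would use
\begin{align*}
    C[v_1]P_1-C[v_0]P_0 &= C[v_1](P_1-P_0)+(C[v_1]-C[v_0])P_0, \\
    S[v_1]Q_1-S[v_0]Q_0 &= S[v_1](Q_1-Q_0)+(S[v_1]-S[v_0])Q_0,
\end{align*}
and further telescope every bilinear monomial inside $P_1-P_0$ and $Q_1-Q_0$ via $ab-cd=(a-c)b+c(b-d)$, each time spending one factor on $\Ltwo{v_1'-v_0'}$ and the others on $\Ltwo{v_i'}$ (at most $\sqrt{2m}\,\epsilon$) or on $\Y{v_i}\le\sqrt{\pi/m}\,\epsilon$ through Lemma \ref{vLinftyv'Ltwobound}. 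The terms with the fixed kernels $C[v_1]$, $S[v_1]$ multiplying $P_1-P_0$ and $Q_1-Q_0$ are then estimated exactly as in the proof of Proposition \ref{Nv}, using $\Ltwoinf{C[v_1]}\le b_1$ (hence $\Loneinf{C[v_1]}\le\sqrt{2\pi}\,b_1$) and $\Linf{S[v_1]}\le b_2$ from Lemma \ref{CSbounds}; this yields the $b_1$- and $b_2$-proportional summands of $C_N'$, with the $1+\pi^2/m^2$ reflecting the two pieces $v_1'(x)v_1'(y)$ and $v_1(x)v_1(y)$ of $Q_1-Q_0$.

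\textbf{Kernel differences and main obstacle.} For $(S[v_1]-S[v_0])Q_0$ I would invoke Lemma \ref{Sv1Sv2Linfbound} directly, $\abs{S[v_1]-S[v_0]}\le b_5\,\Ltwo{v_1'-v_0'}$, combined with $\Y{v_0}\le\sqrt{\pi/m}\,\epsilon$ and $\Ltwo{v_0'}\le\sqrt{2m}\,\epsilon$. For $(C[v_1]-C[v_0])P_0$ the missing ingredient — and the one step that is \emph{not} a routine repetition of Proposition \ref{Nv} — is a Lipschitz analogue of Lemma \ref{CSvCS0linbound}, namely $\Ltwo{\int_0^{2\pi}\abs{(C[v_1]-C[v_0])\,w}\,dy}\le b_3^C\,\Ltwo{v_1'-v_0'}\,\Ltwo{w}$ with the \emph{same} constant $b_3^C$. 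I would establish it by running the proof of Lemma \ref{CSvCS0linbound} along the segment $v_\lambda:=(1-\lambda)v_0+\lambda v_1$: since $\Tilde{Y}_m^{even,\epsilon}$ is convex, $v_\lambda$ stays in the ball and $R_0+v_\lambda>m_R$ by Lemma \ref{boundR0}, while $\tfrac{d}{d\lambda}A[v_\lambda]$ has exactly the structure of \eqref{d_lambdaAv1v2} with $(v_1-v_0)(x)-(v_1-v_0)(y)$, $(v_1-v_0)(x)$, $(v_1-v_0)(y)$ playing the roles of $v(x)-v(y)$, $v(x)$, $v(y)$, so the bounds \eqref{a2/A}--\eqref{a1/A} reproduce verbatim. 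Together with $\Y{v_0}\le\sqrt{\pi/m}\,\epsilon$ this gives the $b_3^C\epsilon$-proportional summand. The delicate point to watch is precisely that the path $v_\lambda$ remains in the ball and keeps the uniform lower bound $m_R$ on $R_0+v_\lambda$, so that the emerging constant is literally $b_3^C$ and not something larger; the remainder is just keeping the powers of $\Y{v_i}$ versus $\Ltwo{v_i'}$ consistent through the telescoping and collecting all contributions into $C_N'$.
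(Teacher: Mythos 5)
Your proposal is correct and follows essentially the same route as the paper: reduce to $L^2(\mathbb{T})$ by the symmetries of $N$, telescope each product so exactly one factor carries a difference, bound the fixed kernels $C[v_i]$, $S[v_i]$ via Lemma \ref{CSbounds}, bound $S[v_1]-S[v_0]$ via Lemma \ref{Sv1Sv2Linfbound}, and extend Lemma \ref{CSvCS0linbound} to the difference $C[v_1]-C[v_0]$ by differentiating along $v_\lambda=(1-\lambda)v_0+\lambda v_1$, using convexity of the ball and $R_0+v_\lambda>m_R$ from Lemma \ref{boundR0}. The only cosmetic deviation is that you attach the kernel difference to the $v_0$-monomials rather than the $v_1$-monomials as in the paper, which is symmetric and yields the same constant $C_N'$.
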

\begin{proof}
    By periodicity and oddness of $N[v_i]$, it is enough to prove $$\X{N[v_1]-N[v_0]} \leq \frac{C_N'}{\sqrt{2m}} (\X{v_1'} + \X{v_0'})\X{v_1'-v_0'}.$$ We start by splitting the integral as in the proof of Proposition \ref{Nv}:
    \begin{equation}\label{Nineq}
    \begin{aligned}
        &\Ltwo{N[v_1]-N[v_0]} \leq \Ltwo{\int_0^{2\pi} C[v_1] v_1(x)v_1'(y)- C[v_0] v_0(x)v_0'(y)dy} \\
        & + \Ltwo{\int_0^{2\pi} C[v_1] v_1(y)v_1'(x) - C[v_0] v_0(y)v_0'(x)dy} + \Ltwo{\int_0^{2\pi} S[v_1] v_1(x)v_1(y)-S[v_0] v_0(x)v_0(y) dy} \\
        & + \Ltwo{\int_0^{2\pi} S[v_1] v_1'(x)v_1'(y)-S[v_0] v_0'(x)v_0'(y) dy} + \Ltwo{v_1v_1'-v_0v_0'}. 
    \end{aligned}
    \end{equation}
    We now bound each term one by one, starting with the last one
    \begin{equation}\label{Nineq1}
        \Ltwo{v_1v_1'-v_0v_0'} \leq \Y{v_1} \Ltwo{v_1'-v_0'} + \Y{v_1-v_0}\Ltwo{v_0'} \leq \frac{\sqrt{\pi}}{m\sqrt{2}} (\Ltwo{v_1'}+ \Ltwo{v_0'}) \Ltwo{v_1'-v_0'}.
    \end{equation}
    Now we proceed to the first and second term, which are bounded the same way.

    \begin{align*}
        &\Ltwo{\int_0^{2\pi} C[v_1] v_1(x)v_1'(y)- C[v_0] v_0(x)v_0'(y)dy} \\
        &\leq \Ltwo{\int_0^{2\pi} ( C[v_1] - C[v_0])v_1(x)v_1'(y) dy} + \Ltwo{\int_0^{2\pi} C[v_0](v_1(x)-v_0(x))v_1'(y)dy} \\
        & + 
        \Ltwo{\int_0^{2\pi} C[v_0]v_0(x)(v_1'(y)-v_0'(y))dy} \\
        &\leq \sqrt{\frac{\pi}{m}} b_3^C \epsilon (\Ltwo{v_1'}+ \Ltwo{v_0'})\Ltwo{v_0'-v_1'}+ \frac{\pi}{m} b_1 \Ltwo{v_1'-v_0'} \Ltwo{v_1'}  + \frac{\pi}{m} b_1 \Ltwo{v_0'} \Ltwo{v_1'-v_0'} \\
        &\leq \left (\sqrt{\frac{\pi}{m}} b_3^C \epsilon + \frac{\pi}{m} b_1 \right )(\Ltwo{v_1'}+ \Ltwo{v_0'})\Ltwo{v_1'-v_0'}.
    \end{align*}
    For the second and the third we used the bounds of the second term in \eqref{Nbound} but changing $v$ to $v_1-v_0, v_0$ respectively and $v'$ to $v_1', v_1'-v_0'$ respectively. For the first term we have used a modified version of Lemma \ref{CSvCS0linbound}, instead of having $C[v]-C[0]$ we have $C[v_1]-C[v_0]$ so using $v_\lambda = (1-\lambda)v_0 + \lambda v_1$ and following the same procedure as in Lemma \ref{Sv1Sv2Linfbound} we obtain the same bounds than in Lemma \ref{CSvCS0linbound} but with $\X{v_1'-v_0'}$ instead of $\X{v'}$:
    
    \begin{equation}\label{Cv1Cv2bound}
    \begin{aligned}
        \Ltwo{\int_0^{2\pi} ( C[v_1] - C[v_0])v_1(x)v_1'(y) dy} \ & \leq \ \Y{v_1} b_3^C \X{v_1'-v_0'} \X{v_1'} \ \\
        &\leq \ \sqrt{\frac{\pi}{m}}b_3^C \epsilon \left ( \X{v_1'}+\X{v_0'}\right )\X{v_1'-v_0'},
    \end{aligned}
    \end{equation}
    where we have used Lemma \ref{vLinftyv'Ltwobound} for the $L^\infty-\dot{H}^1$ bound, that $\X{v_1'} \leq \sqrt{2m}\epsilon$ and that $\X{v_1'} \leq {\X{v_1'}+\X{v_0'}}$.

    The second term of the original inequality \eqref{Nineq} is bounded by the same value in a very similar way.
    
    Now we have to bound the third and fourth term of \eqref{Nineq}, we are going to bound the fourth term and after use the same inequality for the third adding the necessary constants. By triangular inequality:
    \begin{align*}
   &\Ltwo{\int_0^{2\pi} S[v_1] v_1'(x)v_1'(y)- S[v_0] v_0'(x)v_0'(y)dy} \\ 
        &\leq \Ltwo{\int_0^{2\pi} (S[v_1] - S[v_0])v_1'(x)v_1'(y) dy} + \Ltwo{\int_0^{2\pi} S[v_0](v_1'(x)-v_0'(x))v_1'(y)dy} \\
        & + \Ltwo{\int_0^{2\pi} S[v_0]v_0'(x)(v_1'(y)-v_0'(y))dy} \\
        &\leq b_5  \Ltwo{v_1'-v_0'} \Ltwo 1 \Ltwo{v_1'}^2 + b_2 \Ltwo 1 \Ltwo{v_1'-v_0'} \Ltwo{v_1'}   + b_2 \Ltwo 1 \Ltwo{v_0'} \Ltwo{v_1'-v_0'} \\
        &\leq \sqrt{2\pi} \left ( b_5 \sqrt{2m}\epsilon + b_2  \right ) \left ( \Ltwo{v_1'} + \Ltwo{v_0'} \right ) \Ltwo{v_1'-v_0'}.
    \end{align*}   For the first term we used the uniform bound from Lemma \ref{Sv1Sv2Linfbound}. For the second and third bound we used the $L^\infty$ bound on $S[v_i]$ found in Lemma \ref{CSbounds}. We have also used again that $\X{v_1'} \leq \sqrt{2m} \epsilon$ and that $\X{v_1'}\leq \X{v_1'}+\X{v_0'}$.

    For the third term in \eqref{Nineq} we use the same bound but end up getting $v_i$ instead of $v_i'$. Using that 
    \begin{equation*}
        \left ( \Ltwo{v_1} + \Ltwo{v_0} \right ) \Ltwo{v_1-v_0} \leq 2 \pi \left ( \Y{v_1} + \Y{v_0} \right ) \Y{v_1-v_0} \leq \frac{\pi^2}{m^2} \left ( \Ltwo{v_1'} + \Ltwo{v_0'} \right ) \Ltwo{v_1'-v_0'}
    \end{equation*}
    we get the bound for the third term.

    Adding all the constants together 
    \begin{align*}
        &\Ltwo{N[v_1]-N[v_0]}  \\
        &\leq \left ( 2\left (\sqrt{\frac{\pi}{m}} b_3^C \epsilon + \frac{\pi}{m} b_1 \right ) + \sqrt{2\pi} \left ( b_5 \sqrt{2m} \epsilon + b_2 \right ) + \frac{\pi^2}{m^2} \sqrt{2\pi}\left ( b_5 \sqrt{2m}\epsilon + b_2  \right ) + \frac{\sqrt{\pi}}{m \sqrt{2}}\right ) \\
        & \times \left ( \Ltwo{v_1'} + \Ltwo{v_0'} \right ) \Ltwo{v_1'-v_0'}   \\
        &\leq \frac{C_N'}{\sqrt{2m}} \left ( \Ltwo{v_1'} + \Ltwo{v_0'} \right ) \Ltwo{v_1'-v_0'}, 
    \end{align*}

    as we wanted to prove.
\end{proof}

\begin{proposition}\label{T-T_1v1v2}
    If $v_0,v_1 \in \Tilde{Y}_m^{even, \epsilon}$ then
    \begin{align*}
        \Xm{(T-T_1) [v_1] - (T-T_1) [v_0]} &\leq C'_{T}(\Xm{v_1'} + \Xm{v_0'}) \Xm{v_1'-v_0'}
    \end{align*}
    with $ C'_{T} := 2( b_3^C+b_3^S) \left (\sqrt{2m}M_{R_0} + M_{R_0'} \frac{\pi\sqrt{2}}{\sqrt{m}} \right ) $ and $M_{R_0'}$ is defined in \ref{MdR0}.
\end{proposition}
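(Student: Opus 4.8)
The plan is to imitate the proof of Proposition~\ref{T-T_1 v}, inserting a telescoping step that manufactures the Lipschitz factor. By Definitions~\ref{OPv} and~\ref{T1E1_def}, $(T-T_1)[v]$ is the sum of the eight integral terms already written out in that proof, each of the schematic form $\int_0^{2\pi} P(x)\,Q(y)\,(K[v](x,y)-K[0](x,y))\,dy$, where $K\in\{C,S\}$ and $\{P,Q\}$ consists of one ``$v$-factor'' from $\{v,v'\}$ and one factor from $\{R_0,R_0'\}$, one of them evaluated at the outer variable and the other at the inner variable. As in that proof, periodicity and oddness of $T,T_1,v$ reduce the problem to bounding the $L^2(\mathbb{T})$-norm. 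Writing $A_i$ for the $v$-factor evaluated at $v_i$, I use
\[
A_1\bigl(K[v_1]-K[0]\bigr)-A_0\bigl(K[v_0]-K[0]\bigr) = (A_1-A_0)\bigl(K[v_1]-K[0]\bigr) + A_0\bigl(K[v_1]-K[v_0]\bigr),
\]
splitting each of the eight differences into two summands, with the $R_0$ or $R_0'$ factor carried along unchanged.

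For the first summand, $A_1-A_0$ is either $v_1'-v_0'$, contributing $\Ltwo{v_1'-v_0'}$, or $v_1-v_0$, contributing $\le\frac{\pi}{m}\Ltwo{v_1'-v_0'}$ by Lemma~\ref{vLinftyv'Ltwobound}; the kernel difference $K[v_1]-K[0]$ is controlled by the appropriate item of Lemma~\ref{CSvCS0linbound} (chosen according to whether the remaining $v$-factor sits at the inner or the outer variable), producing a constant $b_3^C\Ltwo{v_1'}$ or $b_3^S\Ltwo{v_1'}$; and the $R_0$/$R_0'$ factor is absorbed into the $L^\infty$ bounds $M_{R_0},M_{R_0'}$ of Lemmas~\ref{boundR0} and~\ref{MdR0}. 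For the second summand, the one new ingredient is an estimate for $K[v_1]-K[v_0]$: this is the variant of Lemma~\ref{CSvCS0linbound} obtained by running the mean value theorem along the segment $v_\lambda=(1-\lambda)v_0+\lambda v_1\in\Tilde{Y}_m^{even,\epsilon}$ (using convexity of the ball), exactly as in the proof of Lemma~\ref{Sv1Sv2Linfbound} and as already invoked inside the proof of Proposition~\ref{Nv1Nv2bound}; it yields the same bound as Lemma~\ref{CSvCS0linbound} with $\Ltwo{v_1'-v_0'}$ in place of $\Ltwo{v'}$ and the same constants $b_3^C,b_3^S$. The remaining factor $A_0$ then contributes $\Ltwo{v_0'}$ (or $\frac{\pi}{m}\Ltwo{v_0'}$), and $R_0,R_0'$ are again absorbed into $M_{R_0},M_{R_0'}$.

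Adding the eight contributions, each now of the form $(\mathrm{const})(\Ltwo{v_1'}+\Ltwo{v_0'})\Ltwo{v_1'-v_0'}$ (via $\Ltwo{v_i'}\le\Ltwo{v_1'}+\Ltwo{v_0'}$), and aggregating the constants exactly as in Proposition~\ref{T-T_1 v} — restoring the $\sqrt{2m}$ from the $X_m$-to-$L^2(\mathbb{T})$ conversion and bounding the $R_0,R_0'$ coefficients generously — gives the stated inequality with $C_T'=2(b_3^C+b_3^S)\bigl(\sqrt{2m}\,M_{R_0}+M_{R_0'}\frac{\pi\sqrt2}{\sqrt m}\bigr)$; heuristically, $\sqrt{2m}\,M_{R_0}$ collects the four terms whose $v$-factor carries a derivative (no $\pi/m$ loss from Lemma~\ref{vLinftyv'Ltwobound}) and the second term collects the four undifferentiated ones.

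I do not expect a genuine obstacle: the scheme is structurally identical to that of Proposition~\ref{T-T_1 v}, and the only slightly delicate point — the $K[v_1]-K[v_0]$ form of Lemma~\ref{CSvCS0linbound} — is already available from the computation in Lemma~\ref{Sv1Sv2Linfbound}. The remaining work is the routine bookkeeping of the eight terms and the collection of constants.
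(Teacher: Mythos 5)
Your proposal is correct and matches the paper's proof essentially verbatim: the same telescoping split $A_1(K[v_1]-K[0])-A_0(K[v_0]-K[0])=(A_1-A_0)(K[v_1]-K[0])+A_0(K[v_1]-K[v_0])$, the same invocation of Lemma~\ref{CSvCS0linbound} for the first piece, the same $v_\lambda$-segment variant of Lemma~\ref{CSvCS0linbound} (as in Lemma~\ref{Sv1Sv2Linfbound} and Proposition~\ref{Nv1Nv2bound}) for the second, the same $\Ltwo{v}\leq\tfrac{\pi}{m}\Ltwo{v'}$ conversion for the undifferentiated $v$-factor, and the same (slightly generous) aggregation into the stated $C_T'$. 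No gap.
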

\begin{proof}
    By periodicity and oddness of $T,T_1, v_i'$, it is enough to prove that $\Ltwo{(T-T_1) [v_1]-(T-T_1) [v_0]} \leq \frac{C_T}{\sqrt{2m}} \Ltwo{v_1'-v_0'}^2 $ . By definition and triangular inequality
    \begin{align*}
         &\Ltwo{(T-T_1) [v_1] - (T-T_1) [v_2]} \leq \Ltwo{\int_0^{2\pi}  v_1'(y)R_0(x)(C[v_1]-C[0]) -  v_0'(y)R_0(x)(C[v_0]-C[0]) dy}  \\
         &+\Ltwo{\int_0^{2\pi}  v_1'(x)R_0(y)(C[v_1]-C[0]) -  v_0'(x)R_0(y)(C[v_0]-C[0]) dy} +  6\text{ more terms}.\\
    \end{align*}
    From the "6 more terms" two terms come from changing $v_i' R_0$ by $v_i R_0'$, and the other four come from changing $C$ by $S$. We are going to bound one term, and all other seven terms are analogous with the same bounds except for the use of $M_{R_0}$ or $M_{R_0'}$, as well as taking into account that to go from $\Ltwo{v}$ to $\Ltwo{v'}$ we have some constants to add and use $b_3^C$ or $b_3^S$ accordingly. 

    We start bounding the first term, adding and subtracting $R_0(x)C[v_1]v_0'(y)$ 
    \begin{align*}
         \text{First term} \leq \Ltwo{\int_0^{2\pi}  R_0(x)(C[v_1]-C[0])(v_1'(y)-v_0'(y)) dy} + \Ltwo{\int_0^{2\pi}   v_0'(y)R_0(x)(C[v_0]-C[v_1]) dy}.
    \end{align*}
    In order to bound the first summand we can apply Lemma \ref{CSvCS0linbound} to bound
    \begin{equation}\label{Cv1Cv0_v1'-v0'}
        \Ltwo{\int_0^{2\pi}  R_0(x)(C[v_1]-C[0])(v_1'(y)-v_0'(y)) dy} \leq M_{R_0} b_3^C \Ltwo{v_1'} \Ltwo{v_1'-v_0'}. 
    \end{equation}
    For the second summand we would need to compute the difference $$\log(A[v_1])-\log(A[v_0]) = \int_0^1 \frac{d}{d\lambda}  \log(A[v_0 + \lambda (v_1 - v_0)])  d\lambda,$$ but we have done this in equation \eqref{Cv1Cv2bound} of Proposition \ref{Nv1Nv2bound}. The only difference is that we have $R_0(x)$ instead of $v_1(x)$, so we use $M_{R_0}$ to do the $L^\infty$ bound

    \begin{equation}\label{v_0'R0_Cv0-Cv1}
        \Ltwo{\int_0^{2\pi}   v_0'(y)R_0(x)(C[v_0]-C[v_1]) dy} \leq M_{R_0} b_3^C \Ltwo{v_0'-v_1'} \Ltwo{v_0'}. 
    \end{equation}
    Adding both inequalities, the total bound of the first term is:
    \begin{equation*}
        \Ltwo{\int_0^{2\pi}  v_1'(y)R_0(x)(C[v_1]-C[0]) -  v_0'(y)R_0(x)(C[v_0]-C[0]) dy} \leq M_{R_0} b_3^C (\Ltwo{v_1'} + \Ltwo{v_0'})\Ltwo{v_1'-v_0'}. 
    \end{equation*}
    For the other seven terms we can follow the same exact procedure, but if we have a $v_i(x)R_0'(y)$ factor instead of $v_i'(x) R_0(y)$ the bound we get is $b_3^\alpha M_{R_0'} \frac{\pi}{m}$ instead of $b_3^\alpha M_{R_0}$, because in these cases we have
    \[
        \Ltwo{v} \leq \frac{\pi}{m} \Ltwo{v'}
    \]
    for $v = v_1-v_0$ in \eqref{Cv1Cv0_v1'-v0'} and $v = v_0$ in \eqref{v_0'R0_Cv0-Cv1}.
    
    %\begin{align*}
    %    &b_3^\alpha M_{R_0'} \Ltwo{v_1'} \Ltwo{v_1-v_0} \leq  b_3^\alpha M_{R_0'} \frac{\pi}{m} \Ltwo{v_1'} \Ltwo{v_1'-v_0'}, \\
    %    &b_3^\alpha M_{R_0'} \Ltwo{v_1} \Ltwo{v_1'-v_0'} \leq  b_3^\alpha M_{R_0'} \frac{\pi}{m} \Ltwo{v_1'} \Ltwo{v_1'-v_0'},
    %\end{align*}
    We have two terms of each type, so the final bound is
    \begin{align*}
        \Ltwo{(T-T_1) [v_1] - (T-T_1) [v_0]} &\leq  2\left ( b_3^C+b_3^S \right )\left (M_{R_0} + M_{R_0'} \frac{\pi}{m} \right )  \left (\Ltwo{v_1'} + \Ltwo{v_0'}\right ) \Ltwo{v_1'-v_0'} \\
        &\leq \frac{C_T'}{\sqrt{2m}} \left (\Ltwo{v_1'} + \Ltwo{v_0'}\right ) \Ltwo{v_1'-v_0'}.
    \end{align*}
\end{proof}

\begin{lemma}\label{CSv1CSv2secondorderbound}
    For all $v_0,v_1 \in \Tilde{Y}_m^{even, \epsilon}$ we have
    \begin{equation*} 
    \begin{aligned}
        &\Ltwo{\int_{-\pi}^{\pi} \left |\left ( \log(A[v_1])- \frac{d}{d\lambda} \log(A[\lambda v_1])|_{\lambda=0} - \left (\log(A[v_0])   - \frac{d}{d\lambda}\log(A[\lambda v_0])|_{\lambda = 0} \right )\right )(x,x-z) h^\alpha(z) \right | dz  } \\
        &\leq 4 \pi \Omega \ b_6^\alpha (\X{v_0'} + \X{v_1'})\Ltwo{v_1'-v_0'},  
    \end{aligned}
    \end{equation*}
    with $h^C(z) :=  \cos(z) p_m(z)$, $h^S(z) := \sin(z)$, and
    %\begin{equation*}
     %   b_6^\alpha := \left ( \frac{1}{2^{9/2}\sqrt{\pi} m^{3/2} m_R^2} + \frac{M_R^2 }{2^{5/2}m^{3/2}m_R^4}\right ) I_5^\alpha(m) +\frac{M_R}{2^{7/2}m^{3/2}m_R^3} I_4^\alpha(m) + \frac{\sqrt{\pi}}{2^{3/2} m^2m_R^2}\left ( 1 + \frac{M_R^2}{m_R^2} \right) I_3^\alpha(m)
    %\end{equation*}   
    \begin{equation*}
        b_6^\alpha := \left ( \left ( \frac{1}{2^{11/2} \sqrt{\pi}m m_R^2} + \frac{M_R^2 }{2^{7/2}m^{3/2}m_R^4}\right )I_5^\alpha(m) + \frac{\sqrt{\pi}(m_R^2+M_R^2)}{2^{5/2}m^2 m_R^4} I_3^\alpha(m) + \frac{M_R}{2^{9/2}m^{3/2}m_R^3} I_4^\alpha(m) \right ),
    \end{equation*}
    where $I_3^\alpha, I_4^\alpha, I_5^\alpha$ are defined in Lemmas \ref{Integralbound3}, \ref{Integralbound4}, \ref{Integralbound5}.
\end{lemma}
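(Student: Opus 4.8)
\section*{Proof proposal}

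The plan is to run the argument in the proof of Lemma~\ref{CSvCS0secondorderbound} one notch down: replace the single second-order Taylor remainder there by a \emph{difference} of two such remainders, exactly in the way Lemma~\ref{Sv1Sv2Linfbound} refines Lemma~\ref{CSvCS0linbound}. Write $Q[v](x,y):=\log A[v]-\log A[0]-\frac{d}{d\lambda}\log A[\lambda v]\big|_{\lambda=0}$; the bracket in the statement equals $Q[v_1](x,x-z)-Q[v_0](x,x-z)$, since the term $\log A[0]$ cancels. By the representation \eqref{logAvlogA0dlogAv},
\[
Q[v_1]-Q[v_0]=\int_0^1\!\!\int_0^{\lambda'}\Bigl(\tfrac{d^2}{d\lambda^2}\log A[\lambda v_1]-\tfrac{d^2}{d\lambda^2}\log A[\lambda v_0]\Bigr)\,d\lambda\,d\lambda',
\]
so, pulling absolute values inside and using $\int_0^1\!\int_0^{\lambda'}1=\tfrac12$, the whole estimate reduces to a pointwise bound, uniform in $\lambda\in[0,1]$, of $\bigl|\tfrac{d^2}{d\lambda^2}\log A[\lambda v_1]-\tfrac{d^2}{d\lambda^2}\log A[\lambda v_0]\bigr|(x,x-z)$ by a constant times $\bigl(\Ltwo{v_0'}+\Ltwo{v_1'}\bigr)\Ltwo{v_1'-v_0'}$ times an admissible $z$-profile. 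After the change of variables $z=x-y$ and Minkowski's integral inequality (exactly as in \eqref{prooflogAvlogA0}), these profiles, paired with $h^\alpha(z)$, turn into the integral bounds $I_3^\alpha(m)$, $I_4^\alpha(m)$, $I_5^\alpha(m)$ of Lemmas~\ref{Integralbound3}, \ref{Integralbound4}, \ref{Integralbound5}.

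For the pointwise bound I would start from \eqref{TaylorlogA}, $\frac{d^2}{d\lambda^2}\log A[\lambda v]=\frac{\partial_\lambda^2 A[\lambda v]}{A[\lambda v]}-\bigl(\frac{\partial_\lambda A[\lambda v]}{A[\lambda v]}\bigr)^2$, subtract the $v_1$ and $v_0$ versions, and open every difference of quotients with $\frac{P_1}{A_1}-\frac{P_0}{A_0}=\frac{P_1-P_0}{A_1}-\frac{P_0(A_1-A_0)}{A_1A_0}$ and $X_1^2-X_0^2=(X_1-X_0)(X_1+X_0)$, where $X_i=\partial_\lambda A[\lambda v_i]/A[\lambda v_i]$ and $X_1-X_0$ is itself expanded once more by the first rule. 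This reduces everything to ingredients already available: the bounds \eqref{a1/A} and \eqref{a2/A} on $\partial_\lambda A[\lambda v]/A[\lambda v]$ and the bound on $\partial_\lambda^2 A[\lambda v]/A[\lambda v]$ from the proof of Lemma~\ref{CSvCS0secondorderbound} (which control $X_1+X_0$ and the surviving $P_0/A_0$ factors by $(\Ltwo{v_0'}+\Ltwo{v_1'})$ times admissible profiles); the lower bound $A[\lambda v_i]\ge 4m_R^2\sin^2(z/2)$ of Lemma~\ref{boundR0}; and the explicit formulas $\partial_\lambda^2 A[\lambda v]=2(v(x)-v(y))^2+8v(x)v(y)\sin^2(\tfrac{x-y}{2})$ and $\partial_\lambda A[\lambda v]$ of \eqref{d_lambdaA}. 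The genuinely new inputs are the difference identities $\partial_\lambda^2 A[\lambda v_1]-\partial_\lambda^2 A[\lambda v_0]=2\bigl((v_1(x)-v_1(y))^2-(v_0(x)-v_0(y))^2\bigr)+8\bigl(v_1(x)v_1(y)-v_0(x)v_0(y)\bigr)\sin^2(\tfrac{x-y}{2})$ and $A[\lambda v_1]-A[\lambda v_0]$; in both, each difference of products is written as $(\text{sum})\cdot(\text{difference})$, and Lemma~\ref{bound2_v(x)-v(y)} (applied to $|v_i(x)-v_i(y)|$ and to $|(v_1-v_0)(x)-(v_1-v_0)(y)|$), Lemma~\ref{vLinftyv'Ltwobound} (for $\Y{v_i}$, $\Y{v_1-v_0}$), and Lemma~\ref{f(x)-f(y)} (for $|R_0(x)-R_0(y)|$) supply precisely one factor $\Ltwo{v_1'-v_0'}$ and, in the cross terms, one factor $\Ltwo{v_0'}+\Ltwo{v_1'}$. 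The handful of contributions that come out genuinely cubic (or quartic) in the perturbation — where the quadratic-in-$v_\lambda$ pieces of $A[\lambda v_1]-A[\lambda v_0]$ multiply a $\partial_\lambda^2 A[\lambda v_0]/A[\lambda v_0]$ or $(\partial_\lambda A[\lambda v_0]/A[\lambda v_0])^2$ factor — are handled identically, using in addition $\Ltwo{v_i'}\le\sqrt{2m}\,\epsilon$ on $\Tilde{Y}_m^{even,\epsilon}$ to bring the order back down to the claimed one.

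The main obstacle is the bookkeeping. The two rounds of "difference of quotients" produce on the order of a dozen terms, and for each one must check: (i) the $z$-singularity is no worse than the admissible profiles — here the crucial fact is that $p_m(z)/|\sin(z/2)|$ is bounded on $[-\pi,\pi]$, which lets the various powers of $p_m(z)$ and $\sin(z/2)$ that appear collapse onto the three profiles feeding $I_3^\alpha,I_4^\alpha,I_5^\alpha$, after which the vanishing of $h^\alpha(z)$ (which carries a factor $p_m(z)$ or $\sin z$) keeps the resulting $z$-integral finite; (ii) each term carries exactly the bilinear structure $(\Ltwo{v_0'}+\Ltwo{v_1'})\Ltwo{v_1'-v_0'}$; and (iii) the constants $m,m_R,M_R$ thread through every term so that, after the change of variables and Minkowski, they reassemble into the precise constant $b_6^\alpha$. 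None of the individual estimates is harder than those already carried out in Lemmas~\ref{CSvCS0linbound}, \ref{Sv1Sv2Linfbound} and \ref{CSvCS0secondorderbound}; the difficulty is purely the volume and the exact matching of constants.
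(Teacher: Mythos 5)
Your high-level plan — reduce to a pointwise bound on the difference of two second-order remainders, then change variables $z=x-y$, apply Minkowski's inequality, and feed the resulting $z$-profiles into $I_3^\alpha, I_4^\alpha, I_5^\alpha$ — is sound, and you correctly observe that the $\log A[0]$ terms cancel. But your decomposition of $Q[v_1]-Q[v_0]$ is genuinely different from the paper's, and the difference matters for the claimed constant.

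The paper does \emph{not} subtract two double Taylor integrals in $\lambda$. Instead it interpolates $v_\mu=(1-\mu)v_0+\mu v_1$ and applies the fundamental theorem of calculus in the new parameter $\mu$ to the \emph{entire} second-order remainder functional:
\begin{equation*}
Q[v_1]-Q[v_0]=\int_0^1 \frac{d}{d\mu}\Bigl(\log A[v_\mu]-\tfrac{d}{d\lambda}\log A[\lambda v_\mu]\big|_{\lambda=0}\Bigr)d\mu
=\int_0^1 \frac{A[0]\,\tfrac{d}{d\mu}A[v_\mu]-A[v_\mu]\,\tfrac{d^2}{d\mu\,d\lambda}A[\lambda v_\mu]\big|_{\lambda=0}}{A[v_\mu]\,A[0]}\,d\mu,
\end{equation*}
and then regroups the numerator into three combinations $S_1,S_2,S_3$, each carrying exactly one factor of $v_1-v_0$ type and exactly one factor of $v_\mu$ type. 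The bilinear output $\bigl(\X{v_0'}+\X{v_1'}\bigr)\X{v_1'-v_0'}$ arises from $\int_0^1\X{v_\mu'}\,d\mu\le\tfrac12\bigl(\X{v_0'}+\X{v_1'}\bigr)$. There is no $\epsilon$ anywhere in the computation, which is why $b_6^\alpha$ is $\epsilon$-free.

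Your route — subtract the $\lambda$-Taylor remainders and telescope the quotients $\frac{\partial_\lambda^2 A[\lambda v_i]}{A[\lambda v_i]}$ and $\bigl(\frac{\partial_\lambda A[\lambda v_i]}{A[\lambda v_i]}\bigr)^2$ — inevitably produces terms such as $\frac{\partial_\lambda^2 A[\lambda v_0]}{A[\lambda v_0]}\cdot\frac{A[\lambda v_1]-A[\lambda v_0]}{A[\lambda v_1]}$, which are cubic in the data, of order $\X{v_0'}^2\X{v_1'-v_0'}$. You flag these yourself and propose to reduce them using $\X{v_i'}\le\sqrt{2m}\,\epsilon$. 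That would still yield \emph{a} Lipschitz estimate of the right shape, so the approach is not wrong; but it is not "purely bookkeeping" either. The cubic spillover produces extra $\epsilon$-dependent contributions, so your final constant cannot be exactly the $\epsilon$-free $b_6^\alpha$ in the statement — and that constant is what Lemma~\ref{FixedPointConstantChecking} verifies numerically. To reproduce $b_6^\alpha$ you must switch to the $\mu$-interpolation decomposition, which is precisely the device the paper uses to ensure no cubic terms ever arise.
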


\begin{proof}
    We begin by computing the pointwise difference error of the linearization, defining $v_\mu = (1-\mu)v_0 + \mu v_1$ then
    \begin{equation}\label{logAv1dlogAv1-logAv2dlogAv2}
    \begin{aligned}
        &\left (\log(A[v_1])- \frac{d}{d\lambda} \log(A[\lambda v_1])|_{\lambda=0} \right )(x,y) - \left (\log(A[v_0])   - \frac{d}{d\lambda}\log(A[\lambda v_0])|_{\lambda = 0} \right )(x,y) =\\
        &=  \int_0^1 \frac{d}{d\mu} \left ( \log(A[v_\mu])- \frac{d}{d\lambda} \log(A[\lambda v_\mu])|_{\lambda=0} \right )(x,y) d\mu \\
        &= \int_0^1 \left (\frac{\frac{d}{d\mu} A[v_\mu]|_\mu}{A[v_\mu]} - \frac{\frac{d^2}{d\mu d\lambda}A[\lambda v_\mu] |_{\lambda = 0, \mu = \mu}}{A[0]}\right ) (x,y)d\mu \\
        &= \int_0^1 \left (\frac{A[0]\frac{d}{d\mu} A[v_\mu] |_{\mu} - A[v_\mu] \frac{d^2}{d\mu d\lambda} A[\lambda v_\mu]|_{0, \mu}}{A[v_\mu] A[0]} \right )(x,y) d\mu.
    \end{aligned}
    \end{equation}
We proceed to compute the corresponding derivatives of $A$. Doing the same calculation as in \eqref{d_lambdaAv1v2} and changing $\lambda$ by $\mu$ we have 
\begin{equation}
    \begin{aligned}
        \frac{d}{d\mu} A[v_\mu]|_\mu &= 2\left (R_\mu(x)-R_\mu(y)\right )(v_1(x)-v_1(y) + v_0(y)-v_0(x)) + \\
        &+ 4\sin^2\left (\frac{x-y}{2}\right ) \left (R_\mu(x)(v_1(y)-v_0(y)) + R_\mu(y)(v_1(x)-v_0(x))\right ).
    \end{aligned}
\end{equation}
For the term $\frac{d^2}{d\mu d\lambda} A[\lambda v_\mu]|_{0,\mu}$, the first derivative $\frac{d}{d\lambda} A[\lambda v_\mu]|_0$ is the expression obtained in \eqref{d_lambdaA} changing $v$ by $v_\mu$ and $R_\lambda$ by $R_0$. Deriving the resulting expression of $\frac{d}{d\lambda} A[\lambda v_\mu]|_0$ with respect to $\mu$, we get
\begin{equation}
    \begin{aligned}
        \frac{d^2}{d\mu d\lambda} A[\lambda v_\mu] |_{0, \mu} &= 2\left (R_0(x)-R_0(y)\right )(v_1(x)-v_1(y) + v_0(y)-v_0(x)) + \\
        &+ 4\sin^2\left (\frac{x-y}{2}\right ) \left (R_0(x)(v_1(y)-v_0(y)) + R_0(y)(v_1(x)-v_0(x))\right ),
    \end{aligned}
\end{equation}
where we have used that $\frac{d}{d\mu} v_\mu = v_1 -v_0$. Putting it together
\begin{align*}
    &\left (A[0]\frac{d}{d\mu} A[v_\mu]|_\mu  - A[v_\mu] \frac{d^2}{d\mu d\lambda} A[\lambda v_\mu] |_{0, \mu}\right ) (x,y) = \\
    &= 2(v_1(x)-v_1(y)-v_0(x)+v_0(y)) \underbrace{\left ( A[0](R_\mu(x)-R_\mu(y) )-A[v_\mu](R_0(x)-R_0(y))\right )}_{S_1} + \\
    &+ 4\sin^2 \left ( \frac{x-y}{2}\right ) \left  [ (v_1(y)-v_0(y)) \underbrace{(A[0]R_\mu(x)-A[v_\mu]R_0(x))}_{S_2} + (v_1(x)-v_0(x))\underbrace{(A[0]R_\mu(y)-A[v_\mu]R_0(y))}_{S_3} \right ].
\end{align*}
For every $\mu \in [0,1]$ we are going to begin by bounding $S_1, S_2, S_3$ separately. Starting by the first one, arranging the terms appropriately
\begin{align*}
    |S_1(x,y)| &= \left | 2(R_0(x)-R_0(y))(R_\mu(x)-R_\mu(y))[R_\mu(x)-R_\mu(y) -R_0(x)+R_0(y) ] \right .+\\
    &+ \left . 4 \sin^2\left (\frac{x-y}{2}\right ) \left [R_\mu(y)R_0(y)(R_0(x)-R_\mu(x)) + R_\mu(x)R_0(x)(R_\mu(y)-R_0(y))\right ] \right |\\
    &\leq 2|R_0(x)-R_0(y)||R_\mu(x)-R_\mu(y)|\frac{\X{v_\mu'}}{\sqrt{2m}} \sqrt{p_m(x-y)} +  4 \sin^2\left (\frac{x-y}{2}\right ) M_R^2 \frac{\sqrt{2\pi}}{m} \X{v_\mu'}\sqrt{p_m(x-y)},
\end{align*}
where we have used that $R_\mu-R_0 = v_\mu$, the bound from Lemma \ref{bound2_v(x)-v(y)} applied to $v_\mu(x)-v_\mu(y)$ and the $L^\infty-\dot{H}^1$ bound from Lemma \ref{vLinftyv'Ltwobound} applied to $v_\mu$. We have also used that $R_\mu$ satisfies $m_R < R_\mu < M_R$ via Lemma \ref{boundR0}.

Dividing by $A[v_\mu] A[0]$ and lower bounding it, we get
\begin{equation}\label{S1/A_bound}
    \frac{|S_1|}{A[v_\mu]A[0]}(x,y) \leq \frac{\X{v_\mu'}\sqrt{p_m(x-y)}}{\sqrt{2m}8m_R^2 \sin^2\left (\frac{x-y}{2}\right )} + \frac{M_R^2 \X{v_\mu'} \sqrt{2\pi} \sqrt{p_m(x-y)}}{m 4 m_R^4 \sin^2\left (\frac{x-y}{2}\right ) }.
\end{equation}

Rewriting $S_2$ appropriately
\begin{align*}
    |S_2(x,y)| &= \left |v_\mu(x)(R_0(x)-R_0(y))^2 + R_0(x)(R_0(x)-R_0(y)+R_\mu(x)-R_\mu(y))(R_0(x)-R_0(y)-R_\mu(x)+R_\mu(y)) \right . \\
    &+ \left .4 \sin^2\left (\frac{x-y}{2}\right )  R_0(x)R_\mu(x)\left (R_\mu(y)-R_0(y)\right ) \right | \\
    &\leq \frac{\sqrt{\pi}}{m\sqrt{2}}\X{v_\mu'} (R_0(x)-R_0(y))^2 + M_R(|R_0(x)-R_0(y)|+|R_\mu(x)-R_\mu(y)|) \frac{\X{v_\mu'}}{\sqrt{2m}} \sqrt{p_m(x-y)} + \\
    &+4 \sin^2\left (\frac{x-y}{2}\right ) M_R^2 \frac{\sqrt{\pi}}{m\sqrt{2}} \X{v_\mu'}.
\end{align*}
Dividing by $A[v_\mu] A[0]$ and lower bounding it, we get
\begin{equation}\label{S2/A_bound}
    \frac{|S_2|}{A[v_\mu]A[0]}(x,y) \leq \frac{\sqrt{\pi} \X{v_\mu'}}{m\sqrt{2} 4 m_R^2 \sin^2\left (\frac{x-y}{2}\right )} + \frac{M_R \X{v_\mu'} \sqrt{p_m(x-y)}}{\sqrt{2m} 16 m_R^3 \sin^3\left (\frac{x-y}{2}\right )} + \frac{M_R^2 \sqrt{\pi} \X{v_\mu'}}{m  4 \sqrt{2} m_R^4 \sin^2\left (\frac{x-y}{2}\right )}.
\end{equation}
The bound for $|S_3|/(A[v_\mu]A[0])$ is the same as the one for $S_2$ because it only changes one factor $R_0(x), R_\mu(x)$ by $R_0(y),R_\mu(y)$ and we bound both by $M_R$.

Putting together the equality \eqref{logAv1dlogAv1-logAv2dlogAv2} with the bounds \eqref{S1/A_bound}, \eqref{S2/A_bound} and also using Lemmas \ref{bound2_v(x)-v(y)}, \ref{vLinftyv'Ltwobound} for $v_1-v_0$ appropriately, we get
\begin{equation}\label{logAv1dlogAv1-logAv0dlogAv0_pointwise}
\begin{aligned}
    &\left (\log(A[v_1])- \frac{d}{d\lambda} \log(A[\lambda v_1])|_{\lambda=0} \right )(x,x-z) - \left (\log(A[v_0])   - \frac{d}{d\lambda}\log(A[\lambda v_0])|_{\lambda = 0} \right )(x,x-z) \\
    &\leq \int_0^1 \left( \frac{2 \X{v_1'-v_0'}}{\sqrt{2m}} \sqrt{p_m(z)} \frac{|S_1|}{A[v_\mu]A[0]} + \frac{\sqrt{\pi}}{m\sqrt{2}} \X{v_1'-v_0'} 4 \sin^2 \left ( \frac{z}{2}\right ) \left ( \frac{|S_2|+  |S_3|}{A[v_\mu]A[0]}\right )\right )(x,x-z) d\mu \\
    &\leq   \left ( \frac{p_m(z)}{\sin^2\left ( \frac{z}{2}\right )} \left ( \frac{1}{2^3 m m_R^2} + \frac{M_R^2 \sqrt{\pi}}{2m^{3/2}m_R^4}\right )  + \frac{\pi(m_R^2+M_R^2)}{m^2 m_R^4} + \frac{\sqrt{p_m(z)}}{\left |\sin\left ( \frac{z}{2}\right )\right |}\frac{M_R\sqrt{\pi}}{4m^{3/2}m_R^3}\right )\Ltwo{v_1'-v_0'} \int_0^1\Ltwo{v_\mu'}  d\mu.
\end{aligned}
\end{equation}
We can then use that $\Ltwo{v_\mu'} \leq (1-\mu) \Ltwo{v_0'} + \mu \Ltwo{v_1'}$ to get
\[
    \int_0^1 \Ltwo{v_\mu'} d\mu \leq \Ltwo{v_0'} \int_0^1 (1-\mu) d\mu + \Ltwo{v_1'} \int_0^1 \mu d\mu  = \frac{1}{2} (\Ltwo{v_0'} + \Ltwo{v_1'}) .
\]

Returning to the bound we wanted to prove, using the obtained bound \eqref{logAv1dlogAv1-logAv0dlogAv0_pointwise}\begin{align*}
    &\Ltwo{\int_{-\pi}^\pi \left |\left ( \frac{A[0]\frac{d}{d\mu} A[v_\mu] |_{\mu} - A[v_\mu] \frac{d^2}{d\mu d\lambda} A[\lambda v_\mu]|_{0, \mu}}{A[v_\mu] A[0]}\right )(x,x-z) h^\alpha(z) \right | dz  } \\
    &\leq  \left \|\left ( \frac{1}{2^3 m m_R^2} + \frac{M_R^2 \sqrt{\pi}}{2m^{3/2}m_R^4}\right ) \int_{-\pi}^{\pi} \frac{p_m(z)|h^\alpha(z)|}{\sin^2\left ( \frac{z}{2}\right )} dz + \frac{\pi(m_R^2+M_R^2)}{m^2 m_R^4}\int_{-\pi}^\pi |h^\alpha(z)|dz  \right .\\
    & \left .+ \frac{M_R\sqrt{\pi}}{4m^{3/2}m_R^3} \int_{-\pi}^\pi \frac{h^\alpha(z)\sqrt{p_m(z)}}{\left | \sin \left ( \frac{z}{2}\right )\right |} dz  \right \|_{L^2}  \frac{1}{2} (\X{v_0'}+\X{v_1'})\X{v_1'-v_0'}\\
    &\leq \left ( \left ( \frac{\sqrt{\pi}}{2^{7/2} m m_R^2} + \frac{M_R^2 \pi}{2^{3/2}m^{3/2}m_R^4}\right )I_5^\alpha(m) + \frac{\pi^{3/2}(m_R^2+M_R^2)}{\sqrt{2}m^2 m_R^4} I_3^\alpha(m) + \frac{M_R\pi}{2^{5/2}m^{3/2}m_R^3} I_4^\alpha(m) \right ) \\
    & \times (\X{v_0'} + \X{v_1'}) \X{v_1'-v_0'} \\
    &= 4\pi \Omega b_6^\alpha
\end{align*}
where we used the bounds for the integrals from Lemmas \ref{Integralbound3}, \ref{Integralbound4}, \ref{Integralbound5}. \end{proof}

\begin{proposition} \label{Ev1-E1v1-Ev2+E1v2}
    If $v_0,v_1 \in \Tilde{Y}_m^{even, \epsilon}$ then
    \begin{align*}
        \Xm{E[v_1]-E_1[v_1]-E[v_0]+E_1[v_0]} \leq C_E' ( \Xm{v_0'} + \Xm{v_1'}) \Xm{v_1'-v_0'}
    \end{align*}
    with $C_E' := \sqrt{2m} \left ( M_{R_0}M_{R_0''} + M_{R_0'}^2 \right ) b_6^C+ \sqrt{2m} \left (M_{R_0'}^2+M_{R_0}^2 \right )  b_6^S $ and $M_{R_0'}, M_{R_0''}$ are defined in Lemmas \ref{MdR0}, \ref{MddR0}.
\end{proposition}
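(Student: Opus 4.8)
The plan is to run the argument of Proposition~\ref{Ev-E0-E1v} almost verbatim, with the second-order Taylor estimate of Lemma~\ref{CSvCS0secondorderbound} replaced by its Lipschitz analog, Lemma~\ref{CSv1CSv2secondorderbound}. By Proposition~\ref{TEN_symmetries} and Definition~\ref{T1E1_def}, for $v\in\Tilde{Y}_m^{even,\epsilon}$ the functions $E[v]$ and $E_1[v]$ are $2\pi/m$-periodic and odd, so, exactly as in the proof of Proposition~\ref{Ev-E0-E1v}, it suffices to establish
\begin{equation*}
\Ltwo{E[v_1]-E_1[v_1]-E[v_0]+E_1[v_0]}\leq \frac{C_E'}{\sqrt{2m}}\,(\X{v_0'}+\X{v_1'})\,\X{v_1'-v_0'}.
\end{equation*}

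First I would expand using Definitions~\ref{OPv} and~\ref{T1E1_def}. Modulo the $\tfrac{1}{4\pi\Omega}$ prefactor, $E[v]-E[0]-E_1[v]$ is the integral over $z$ (after the substitution $y=x-z$) of $(\log A[v]-\log A[0]-\tfrac{d}{d\lambda}\log A[\lambda v]|_{\lambda=0})(x,x-z)$ weighted by $\cos(z)(R_0(x)R_0'(x-z)-R_0(x-z)R_0'(x))$, plus the same second-order Taylor integrand weighted by $\sin(z)(R_0(x)R_0(x-z)+R_0'(x)R_0'(x-z))$; in particular the $v(x)\int(-M_1)$ piece of $E_1$ is already contained in $\tfrac{d}{d\lambda}\log A[\lambda v]|_{\lambda=0}$. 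Since those two $R_0$-weights do not depend on $v$, subtracting the $v_0$-version from the $v_1$-version leaves precisely the difference of the two second-order Taylor errors of $\log A$ multiplied by the same fixed weights.

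Next I would insert the weight bounds used in Proposition~\ref{Ev-E0-E1v}: the extra cancellation $|R_0(x)R_0'(x-z)-R_0(x-z)R_0'(x)|\leq (M_{R_0}M_{R_0''}+M_{R_0'}^2)\,p_m(z)$, which follows from Lemma~\ref{f(x)-f(y)} applied to $R_0$ and to $R_0'$ together with Lemmas~\ref{MdR0} and~\ref{MddR0}, and the plain $L^\infty$ bound $|R_0(x)R_0(x-z)+R_0'(x)R_0'(x-z)|\leq M_{R_0}^2+M_{R_0'}^2$. After these substitutions the two integrals are exactly of the form controlled by Lemma~\ref{CSv1CSv2secondorderbound} with $h^C(z)=\cos(z)p_m(z)$ and $h^S(z)=\sin(z)$; applying it and cancelling the $4\pi\Omega$ it produces against the $\tfrac{1}{4\pi\Omega}$ in front gives the bound with constant $(M_{R_0}M_{R_0''}+M_{R_0'}^2)b_6^C+(M_{R_0'}^2+M_{R_0}^2)b_6^S$, which is $\tfrac{C_E'}{\sqrt{2m}}$ by the definition of $C_E'$; converting back to the $X_m$-norm completes the argument.

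The only genuine difficulty, namely the pointwise control of the difference of the two second-order linearization errors of $\log A$ — which requires the algebraic rearrangement of $A[0]\tfrac{d}{d\mu}A[v_\mu]-A[v_\mu]\tfrac{d^2}{d\mu\,d\lambda}A[\lambda v_\mu]$ into the pieces $S_1,S_2,S_3$ and then division by $A[v_\mu]A[0]$ with the $\sin^2$ lower bounds — is already packaged into Lemma~\ref{CSv1CSv2secondorderbound}. Consequently the present proposition is essentially bookkeeping: the main thing to be careful about is matching each of the roughly eight terms produced by the triangle inequality with the correct $R_0$- versus $R_0'$-weight and the correct $h^\alpha$, and I do not expect any estimate beyond Lemmas~\ref{f(x)-f(y)},~\ref{CSv1CSv2secondorderbound},~\ref{MdR0},~\ref{MddR0} to be needed.
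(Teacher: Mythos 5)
Your proposal is correct and takes essentially the same approach as the paper: reduce to the $L^2([-\pi,\pi])$ estimate by periodicity and oddness, split $E-E_1$ into the $\cos$- and $\sin$-weighted integrals of the second-order Taylor remainder of $\log A$, gain the extra $p_m(z)$ factor on the first weight via Lemma~\ref{f(x)-f(y)}, and then apply Lemma~\ref{CSv1CSv2secondorderbound} with $h^C(z)=\cos(z)p_m(z)$ and $h^S(z)=\sin(z)$. One minor inaccuracy: the triangle inequality here produces only two terms (the $\cos$- and $\sin$-weighted pieces), not eight; the eight-term bookkeeping belongs to the $T-T_1$ Lipschitz estimate, not to this one, but this does not affect the argument.
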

\begin{proof}
    By periodicity and oddness of $E, E_1, v_i'$, it is enough to prove $\X{E[v_1]-E_1[v_1]-E[v_0]+E_1[v_0]} \leq \frac{C_E'}{\sqrt{2m}} ( \X{v_0'} + \X{v_1'}) \X{v_1'-v_0'}$. Recalling Definitions  \ref{OPv} and \ref{T1E1_def}, and using the triangle inequality
    \begin{align*}
        &\Ltwo{E[v_1]-E_1[v]+E[v_0]-E_1[v_0]} \\
        &\leq \Ltwo{\int (C[v_1] - \frac{d}{d\lambda} C[\lambda v_1] |_{\lambda = 0} -C[v_0] + \frac{d}{d\lambda} C[\lambda v_0] |_{\lambda = 0} )(x,x-z) (R_0(x)R_0'(x-z) - R_0(x-z)R_0'(x) )dz} \\
        &+ \Ltwo{\int (S[v_1] - \frac{d}{d\lambda} S[\lambda v_1] |_{\lambda = 0}  -S[v_0] + \frac{d}{d\lambda} S[\lambda v_0] |_{\lambda = 0})(x,x-z) (R_0(x)R_0(x-z) + R_0'(x)R_0'(x-z)) dz} . 
    \end{align*}
    We are going to proceed in the same way as in Proposition \ref{Ev1-E1v1-Ev2+E1v2}. First we get an extra cancellation in the first term:
    \begin{align*}
        |R_0(x)R_0'(x-z) - R_0(x-z)R_0'(x)| 
        &\leq |R_0(x)| |R_0'(x-z) - R_0'(x)| + |R_0'(x)| |R_0(x-z) - R_0(x)| \\
        &\leq M_{R_0} M_{R_0''} p_m(z) + M_{R_0'}^2 p_m(z),
    \end{align*}
    where we have used that both $R,R'$ are $2\pi/m$ periodic, even and odd respectively and have their derivatives bounded in order to apply Lemma \ref{f(x)-f(y)}. %On the second term we can directly apply $L^\infty$ bounds to $R, R'$.
    
    Applying Lemma \ref{CSv1CSv2secondorderbound} with $h_\alpha(z) = \cos(z)p_m(z)$ and $h_\alpha =\sin(z)$ we get
    \begin{align*}
        &\Ltwo{E[v_1]-E_1[v_1]-E[v_0]+E_1[v_0]} \\ 
        &\leq \frac{1}{4\pi \Omega}\Ltwo{\int \left |\left (\log(A[v_1])-  \frac{d}{d\lambda} \log(A[\lambda v_1])|_{\lambda = 0} -\log(A[v_0]) + \frac{d}{d\lambda} \log(A[\lambda v_0])|_{\lambda = 0} \right )(x,x-z) \cos(z)p_m(z) \right | dz  } \\
        & \times \left ( M_{R_0}M_{R_0''} + M_{R_0'}^2 \right ) \\
        &+ \frac{1}{4\pi \Omega}\Ltwo{\int \left |\left (\log(A[v_1]) - \frac{d}{d\lambda} \log(A[\lambda v_1])|_{\lambda = 0} - \log(A[v_0]) + \frac{d}{d\lambda} \log(A[\lambda v_0])|_{\lambda = 0} \right )(x,x-z) \sin(z) \right | dz  } \\
        & \times \left (M_{R_0}^2 + M_{R_0'}^2 \right )  \\
        &\leq \left (\left ( M_{R_0}M_{R_0''} + M_{R_0'}^2 \right ) b_6^C + \left (M_{R_0}^2 + M_{R_0'}^2 \right ) b_6^S \right )(\X{v_0'}+\X{v_1'})\X{v_1'-v_0'}
        \\
        &\leq \frac{C_E'}{\sqrt{2m}} \left(\X{v_0'}+\X{v_1'} \right ) \X{v_1'-v_0'}. 
    \end{align*}
\end{proof}

\section{Further regularity and non-convexity of the solution}
\label{Sec:Regularity}

In this section we are going to prove that our solution is in fact analytic. First we will to use a bootstrapping argument in 2 steps to prove $C^\infty$ regularity. %We are going to see that $\Y{R'}$ is bounded and then that $\Y{\partial^k R}$ is bounded for $k \geq 2$.
To prove analyticity we will use the free boundary elliptic problem formulation, where it follows that if the boundary is $C^2$ then it is analytic.

Throughout this section, let $R = R_0 + v$ be the solution to the equation \eqref{eqR} such that $v \in \Tilde{Y}_{m}^{even, \epsilon}$.

\subsection{Regularity up to $C^\infty$}
We will start by proving some lemmas involving the coefficient in front of the highest derivative in the bootstrap.

\begin{lemma}
    We have the following bounds
    \begin{enumerate}
        \item \label{CvC0infbound} $\Y{\int (C[v]-C[0])(x,y) R(y) dy} \leq  \epsilon b_3^C \sqrt{2m} M_R$,
        \item \label{SvS0infbound}$\Y{\int (S[v]-S[0])(x,y) R'(y) dy} \leq \epsilon\frac{\sqrt{\pi}+2}{\Omega 4 m_R \sqrt{m\pi}}  \left ( 2\pi M_{R_0'} + \epsilon2\sqrt{\pi m} \right )$,
    \end{enumerate}
\end{lemma}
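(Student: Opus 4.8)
For \eqref{CvC0infbound} the idea is to bound the integral pointwise in $x$ (rather than in $L^2_x$), which is possible because $R$ is uniformly bounded. Write $C[v]-C[0]=\frac{1}{4\pi\Omega}\cos(x-y)\bigl(\log A[v]-\log A[0]\bigr)$ from \eqref{defCS}, use $|R(y)|\le M_R$ from Lemma \ref{boundR0}, change variables $y=x-z$, and plug in the pointwise estimate \eqref{dlambdaA/A_bound} already proved inside Lemma \ref{CSvCS0linbound}, i.e. $|\log A[v]-\log A[0]|(x,x-z)\le \frac{\sqrt{2\pi}}{m_R m}\X{v'}+\frac{\sqrt\pi}{2^{3/2}m_R\sqrt m}\X{v'}\frac{\sqrt{p_m(z)}}{|\sin(z/2)|}$. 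This gives $\Y{\int (C[v]-C[0])(x,y)R(y)\,dy}\le \frac{M_R\X{v'}}{4\pi\Omega}\bigl(\frac{\sqrt{2\pi}}{m_R m}\int_{-\pi}^{\pi}|\cos z|\,dz+\frac{\sqrt\pi}{2^{3/2}m_R\sqrt m}\int_{-\pi}^{\pi}\frac{|\cos z|\sqrt{p_m(z)}}{|\sin(z/2)|}\,dz\bigr)$. Using $\int_{-\pi}^{\pi}|\cos z|\,dz=4$ and the integral bound $I_1^C(m)$ from Lemma \ref{Integralbound1}, a direct arithmetic check (via $4\sqrt2=2^{5/2}$ and $4\cdot2^{3/2}=2^{7/2}$) shows the prefactor equals exactly $M_R b_3^C$; since $\X{v'}=\|v'\|_{L^2(\mathbb T)}\le\epsilon\sqrt{2m}$ because $v'=\Tilde{u}\in\Tilde{X}_m^{odd,\epsilon}$, we land on $\epsilon b_3^C\sqrt{2m}M_R$.

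For \eqref{SvS0infbound} the plan is instead to use the \emph{uniform} (in both variables) bound on $S[v]-S[0]$, which is available here because the factor $\sin(x-y)$ cancels the singularity of $\log A$ (this is the trigonometric identity $\sin z/\sin(z/2)=2\cos(z/2)$ exploited in Lemma \ref{Sv1Sv2Linfbound}). Applying Lemma \ref{Sv1Sv2Linfbound} with $v_0=0$, $v_1=v$ (legitimate since $0\in\Tilde{Y}_m^{even,\epsilon}$) gives $|S[v](x,y)-S[0](x,y)|\le b_5\X{v'}$ pointwise, hence $\Y{\int (S[v]-S[0])(x,y)R'(y)\,dy}\le b_5\X{v'}\,\|R'\|_{L^1(\mathbb T)}$. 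I would then split $R'=R_0'+v'$ and estimate $\|R_0'\|_{L^1(\mathbb T)}\le 2\pi M_{R_0'}$ by Lemma \ref{MdR0} and $\|v'\|_{L^1(\mathbb T)}\le\sqrt{2\pi}\,\|v'\|_{L^2(\mathbb T)}\le\sqrt{2\pi}\,\epsilon\sqrt{2m}=2\epsilon\sqrt{\pi m}$ by Cauchy--Schwarz, so $\|R'\|_{L^1(\mathbb T)}\le 2\pi M_{R_0'}+2\epsilon\sqrt{\pi m}$. Combining with $\X{v'}\le\epsilon\sqrt{2m}$ and simplifying $b_5\sqrt{2m}=\frac{\sqrt\pi+2}{4\Omega m_R\sqrt{m\pi}}$ (recall $b_5=\frac{1}{\Omega m_R m\,2^{5/2}}(1+\frac{2}{\sqrt\pi})$ and $\sqrt{2m}\cdot2^{-5/2}=(4\sqrt m)^{-1}$) yields precisely the stated right-hand side.

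Both items are bookkeeping layered on estimates already established in the paper, so no genuinely new difficulty arises. The only point requiring care in \eqref{CvC0infbound} is that, in contrast to the $L^2_x$ estimates of Lemma \ref{CSvCS0linbound}, where Minkowski's inequality keeps $w(x-z)$ inside the $L^2$ norm, here one genuinely pulls $R$ out in $L^\infty$, so the singular kernel $\sqrt{p_m(z)}/|\sin(z/2)|$ must be integrated \emph{directly} against $|\cos z|$ — this is exactly what $I_1^C(m)$ supplies. In \eqref{SvS0infbound} the corresponding subtlety is purely cosmetic: one must retain the $\epsilon^2$ contribution coming from the $v'$ part of $\|R'\|_{L^1(\mathbb T)}$, which is what produces the second summand inside the parentheses.
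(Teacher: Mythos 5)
Your proposal is correct and takes essentially the same route as the paper: for \eqref{CvC0infbound} the paper likewise reruns \eqref{prooflogAvlogA0} in $L^\infty_x$ rather than $L^2_x$, pulling $R$ out via $\|R\|_{L^\infty}\le M_R$, and for \eqref{SvS0infbound} it applies Lemma \ref{Sv1Sv2Linfbound} with $v_0=0$ and bounds $\|R'\|_{L^1(\mathbb{T})}$ by Cauchy--Schwarz. Your way of estimating $\|R'\|_{L^1(\mathbb{T})}$ (splitting $R'=R_0'+v'$ before applying Cauchy--Schwarz only to the $v'$ piece) is cosmetically different from the paper's $\|1\|_{L^2}\|R'\|_{L^2}$ but yields the identical constant $2\pi M_{R_0'}+2\epsilon\sqrt{\pi m}$.
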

where $b_3^C$ is defined in Lemma \ref{CSvCS0linbound} and $M_{R_0'}$ in \ref{MdR0}.
\begin{proof}
    First we prove \ref{CvC0infbound}. Using the exact same procedure as in the inequalities \eqref{prooflogAvlogA0} from Lemma \ref{CSvCS0linbound} but using $L^{\infty}_x$ estimates instead of $L_x^2$ we get the same inequality
    \begin{equation*}
        \Y{\int (C[v]-C[0])(x,y) R(y) dy} \leq b_3^C \X{v'} \Y{R} \leq \epsilon b_3^C \sqrt{2m} M_R.
    \end{equation*}
    %where we have used also that $\X{v'} \leq \epsilon\sqrt{2m}$ and that $\Y{R} \leq M_R$.

    To prove \ref{SvS0infbound} we cannot use the same argument because we don't know yet that $R'$ is in $L^\infty$. Using the pointwise estimate from Lemma \ref{Sv1Sv2Linfbound} with $v_0=0$  we get
    \begin{align*}
        \left | S[v](x,x-z)-S[0](x,x-z)\right | &\leq b_5 \X{v'} \leq \epsilon \frac{\sqrt{\pi}+2}{4 \Omega m_R \sqrt{m\pi}}, 
    \end{align*}
    Where we have used $\X{v'} \leq \epsilon \sqrt{2m}$ and the definition of $b_5$. Now we bound the integral
    \begin{align*}
        \Y{\int_{-\pi}^\pi (S[v]-S[0])(x,y) R'(y) dy} &\leq \Y{S[v]-S[0]} \X{1} \X{R'} \\
        &\leq \epsilon\frac{\sqrt{\pi}+2}{4 \Omega m_R \sqrt{m\pi}} \sqrt{2\pi} \left ( \sqrt{2\pi} M_{R_0'} + \X{v'} \right ) \\
        &\leq \epsilon\frac{\sqrt{\pi}+2}{4 \Omega m_R \sqrt{m\pi}}  \left ( 2\pi M_{R_0'} + \epsilon2\sqrt{\pi m} \right ),
    \end{align*}
    where we have used that $\Y{R_0} \leq M_{R_0'}$.
\end{proof}

\begin{lemma}
    We have the following bound
    \begin{equation*}
        \left | v(x)+ \int C[v]v(y)- S[v]v'(y) dy\right | \leq \epsilon \left (b_1\frac{\pi}{\sqrt{m}} + b_2 2\sqrt{m\pi} + \sqrt{\frac{\pi}{m}} \right )
    \end{equation*}
    where $b_1, b_2$ are defined in Lemma \ref{CSbounds}.
\end{lemma}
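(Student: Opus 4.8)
The plan is to estimate, uniformly in $x$, the three summands of the left‑hand side separately, the integrals being understood over $[0,2\pi]$ exactly as in the definition of $N[v]$ in \eqref{OPv}; recall that $v\in\tilde{Y}_m^{even,\epsilon}$ gives $\|v'\|_{L^2(\mathbb{T})}\le\sqrt{2m}\,\epsilon$ and $v(0)=0$. For the summand $v(x)$, Lemma \ref{vLinftyv'Ltwobound} yields
\[
|v(x)|\le\|v\|_{L^\infty}\le\frac{\sqrt{\pi}}{m\sqrt{2}}\,\|v'\|_{L^2(\mathbb{T})}\le\sqrt{\frac{\pi}{m}}\,\epsilon,
\]
which is precisely the last term of the claimed bound. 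For the summand $\int_0^{2\pi}S[v](x,y)v'(y)\,dy$, Hölder in $y$ with part (2) of Lemma \ref{CSbounds} followed by Cauchy--Schwarz gives
\[
\left|\int_0^{2\pi}S[v](x,y)\,v'(y)\,dy\right|\le\Big(\sup_y|S[v](x,y)|\Big)\,\|v'\|_{L^1([0,2\pi])}\le b_2\,\sqrt{2\pi}\,\|v'\|_{L^2(\mathbb{T})}\le 2\,b_2\,\sqrt{\pi m}\,\epsilon,
\]
which is the second term of the claimed bound.

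The remaining summand $\int_0^{2\pi}C[v](x,y)v(y)\,dy$ is the only one needing a little care. By Cauchy--Schwarz in $y$ and part (1) of Lemma \ref{CSbounds},
\[
\left|\int_0^{2\pi}C[v](x,y)\,v(y)\,dy\right|\le\Big(\sup_x\|C[v](x,\cdot)\|_{L^2([0,2\pi])}\Big)\,\|v\|_{L^2([0,2\pi])}\le b_1\,\|v\|_{L^2([0,2\pi])},
\]
so it remains to bound $\|v\|_{L^2([0,2\pi])}$ sharply: the naive chain $\|v\|_{L^2([0,2\pi])}\le\sqrt{2\pi}\,\|v\|_{L^\infty}$ loses a factor $\sqrt{2}$ and would overshoot the claimed constant $b_1\pi/\sqrt{m}$. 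Instead I would invoke Lemma \ref{bound_v(x)-v(y)} at the point $y=0$: since $v(0)=0$ and $p_m(0)=0$ it gives $|v(x)|^2\le\frac{\|v'\|_{L^2(\mathbb{T})}^2}{2m}\,p_m(x)$, hence
\[
\|v\|_{L^2([0,2\pi])}^2\le\frac{\|v'\|_{L^2(\mathbb{T})}^2}{2m}\int_0^{2\pi}p_m(x)\,dx=\frac{\|v'\|_{L^2(\mathbb{T})}^2}{2m}\cdot\frac{\pi^2}{m}\le\frac{\pi^2}{m}\,\epsilon^2,
\]
using that $\int_0^{2\pi}p_m(x)\,dx=\pi^2/m$ because on each of the $m$ periods $p_m$ is a tent of base $2\pi/m$ and height $\pi/m$, i.e. of area $\pi^2/m^2$. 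Thus this summand is at most $b_1\pi/\sqrt{m}\cdot\epsilon$.

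Adding the three estimates gives the stated inequality. There is no genuine obstacle in this lemma; the only point to get right is to avoid the lossy $L^2\hookrightarrow L^\infty$ embedding in the $C[v]$ term and to instead pair the pointwise square‑root bound of Lemma \ref{bound_v(x)-v(y)} with the explicit value $\int_0^{2\pi}p_m=\pi^2/m$.
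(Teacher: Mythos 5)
Your proof is correct, and for the term $\int C[v]v(y)\,dy$ you take a genuinely different and sharper route than the paper. The paper bounds this summand as $\Ltwoinf{C}\,\X{1}\,\Y{v}$, i.e.\ it uses the lossy chain $\|v\|_{L^2([0,2\pi])}\le\sqrt{2\pi}\,\|v\|_{L^\infty}$ after Cauchy--Schwarz; substituting $\Y{v}\le\sqrt{\pi/m}\,\epsilon$ this gives $b_1\cdot\sqrt{2}\,\pi/\sqrt{m}\cdot\epsilon$, which exceeds by a factor $\sqrt{2}$ the constant $b_1\pi/\sqrt{m}$ actually stated in the lemma. Your alternative — pair Lemma \ref{bound_v(x)-v(y)} at $y=0$ with $\int_0^{2\pi}p_m=\pi^2/m$ to get $\|v\|_{L^2([0,2\pi])}\le\pi\epsilon/\sqrt{m}$ directly — recovers the stated constant exactly. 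For the other two summands ($v(x)$ and the $S[v]$ integral) your estimates coincide with the paper's. So you noticed, and fixed, an apparent $\sqrt{2}$ slip in the paper's intermediate step: either the paper should have written the tent-function bound for $\|v\|_{L^2}$ (as you do), or the constant $C_{\tilde K_1}$ in Lemma \ref{CTildeKone} was intended to be checked with $b_1\pi\sqrt{2}/\sqrt{m}$ in place of $b_1\pi/\sqrt{m}$.
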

\begin{proof}
    Using the bounds $\Ltwoinf{C}, \Linf{S}$ bounds from Lemma \ref{CSbounds}, the $L^\infty-\dot{H}^1$ bound from Lemma \ref{vLinftyv'Ltwobound} and that $\X{v'} \leq \epsilon\sqrt{2m}$
    \begin{align*}
        \left | v(x)+ \int C[v]v(y)- S[v]v'(y) dy\right | &\leq \left (  \Y{v} + \Ltwoinf{C}\X{1}\Y{v} + \Linf{S} \X{1} \X{v'} \right ) \\
        &\leq \epsilon \left (b_1\frac{\pi}{\sqrt{m}} + b_2 2\sqrt{m\pi} + \sqrt{\frac{\pi}{m}} \right ).
    \end{align*}
\end{proof}

We have the following bound:
\begin{lemma}
\label{CTildeKone}
    The constant $C_{\Tilde{K}_1}$ 
    \begin{equation*}
            C_{\Tilde{K}_1} := C_{K_1} - \epsilon \left (b_1\frac{\pi}{\sqrt{m}} + b_2 2\sqrt{m\pi} + \sqrt{\frac{\pi}{m}} + b_3^C \sqrt{2m} M_R + \frac{\sqrt{\pi}+2}{\Omega 4 m_R \sqrt{m\pi}}  \left ( 2\pi M_{R_0'} + \epsilon2\sqrt{\pi m} \right )\right ) > 0,
    \end{equation*}
    where $C_{K_1}$ is defined in Lemma \ref{K1bound}.
\end{lemma}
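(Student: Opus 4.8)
The plan is to reduce this to a single finite numerical verification. Every ingredient in the definition of $C_{\Tilde{K}_1}$ is explicit: from Section~\ref{Sec:Prelim} we have $m=6$, $\Omega=\tfrac{1537}{3750}$ and $\epsilon=2\cdot10^{-5}$; Lemmas~\ref{boundR0} and~\ref{MdR0} give $m_R=0.94$, $M_R=1.1$ and $M_{R_0'}=0.52$; $b_1$ and $b_2$ are the closed-form expressions of Lemma~\ref{CSbounds}; $b_3^C$ is the closed-form expression of Lemma~\ref{CSvCS0linbound}, which in turn depends only on $m,\Omega,m_R$ and on the explicit integral bound $I_1^C(m)$ furnished by Lemma~\ref{Integralbound1}; and $C_{K_1}=0.1$ by Lemma~\ref{K1bound}. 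Thus $C_{\Tilde{K}_1}>0$ is exactly the inequality
\[
\epsilon\left(b_1\frac{\pi}{\sqrt m}+2b_2\sqrt{m\pi}+\sqrt{\frac{\pi}{m}}+b_3^C\sqrt{2m}\,M_R+\frac{\sqrt\pi+2}{4\Omega m_R\sqrt{m\pi}}\bigl(2\pi M_{R_0'}+2\epsilon\sqrt{\pi m}\bigr)\right)<C_{K_1},
\]
and I would simply substitute the numbers and check it.

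First I would bound each of the five summands in the parenthesis by an explicit constant, using $\Omega>\tfrac25$, $m_R>\tfrac9{10}$, $M_R<\tfrac{11}{10}$, $M_{R_0'}<\tfrac{13}{25}$, the trivial bound $\arcsin\!\bigl(\tfrac1{2m_R}\bigr)<1$ (which handles the quantity $t_1$ appearing inside $b_1$), and the crude estimate $I_1^C(6)=O(1)$ coming from Lemma~\ref{Integralbound1}, the relevant integrand being $O(|z|^{-1/2})$ near $z=0$ and bounded away from it. This shows $b_1,b_2,b_3^C$ are all explicit $O(1)$ constants, so the whole parenthesis is an absolute constant of size $O(10)$; multiplying by $\epsilon=2\cdot10^{-5}$ produces a quantity of size $O(10^{-4})$, two orders of magnitude below $C_{K_1}=0.1$. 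Hence $C_{\Tilde{K}_1}>0$, in fact with a large margin (indeed $C_{\Tilde{K}_1}>0.099$).

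There is really no obstacle here: the gap $C_{K_1}-(\text{subtracted term})$ is essentially $C_{K_1}$ itself, so no hidden cancellation can spoil it. In the rigorous write-up the cleanest implementation is to evaluate the closed-form expression for $C_{\Tilde{K}_1}$ in interval arithmetic, exactly as for the other constant checks (cf.\ Appendix~\ref{appendix:implementation}), and read off that the resulting interval lies in $(0,\infty)$; the hand estimate above is only meant to make transparent that the inequality is comfortable. If anything deserves a moment of attention, it is keeping the inequality orientations consistent when bounding $b_2$, whose defining maximum includes the term $\tfrac1{4\pi\Omega}\log\!\bigl((M_R-m_R)^2+3M_R^2\bigr)$ --- one must use a lower bound for $\Omega$ together with the monotonicity of $\log$ --- but that is routine.
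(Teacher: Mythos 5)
Your proposal is correct and follows the same approach as the paper: since every constant appearing in $C_{\Tilde K_1}$ is an explicit closed-form expression in $m,\Omega,\epsilon,m_R,M_R,M_{R_0'},I_1^C(m)$, etc., the verification reduces to a finite numerical check, and the paper indeed does exactly what you propose at the end, namely an interval-arithmetic evaluation in the supplementary code (cf.\ Appendix~\ref{appendix:implementation}). Your hand estimates correctly identify that the subtracted term is of size roughly $10^{-4}$ against $C_{K_1}=0.1$, so the inequality holds comfortably.
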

\begin{proof}
The proof is computer-assisted and can be found in the supplementary material. We refer to Appendix \ref{appendix:implementation} for the implementation details.
\end{proof}

Building up on the previous Lemma, we can prove the following:

\begin{lemma}\label{K1tilde}
    Let us define $\Tilde{K}_1$ as 
    \begin{equation*}
        \Tilde{K}_1(x) = R(x)+ \int C[v]R(y)- S[v]R'(y) dy.
    \end{equation*}
    Then, we have $|\Tilde{K}_1(x)| \geq C_{\Tilde{K}_1} > 0$.
\end{lemma}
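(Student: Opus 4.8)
The plan is to compare $\tilde K_1$ with the function $K_1$ of \eqref{defK1}, for which Lemma \ref{K1bound} already provides the lower bound $|K_1(x)| \ge C_{K_1}$. Substituting $R = R_0 + v$, $R' = R_0' + v'$ into the definition of $\tilde K_1$, splitting $C[v] = C[0] + (C[v]-C[0])$ and $S[v] = S[0] + (S[v]-S[0])$, and adding and subtracting $C[0](x,y)R(y)$ and $S[0](x,y)R'(y)$ inside the integrals, one is led to the identity
\begin{align*}
    \tilde K_1(x) = K_1(x) &+ \left( v(x) + \int_0^{2\pi} C[0](x,y) v(y)\,dy - \int_0^{2\pi} S[0](x,y) v'(y)\,dy \right) \\
    &+ \int_0^{2\pi} (C[v]-C[0])(x,y) R(y)\,dy - \int_0^{2\pi} (S[v]-S[0])(x,y) R'(y)\,dy,
\end{align*}
which I would verify term by term using the definition \eqref{defK1} of $K_1$ (the pieces $C[0](x,y)R_0(y)$ and $S[0](x,y)R_0'(y)$ reconstructing $K_1$, and the leftover $C[0]v(y)$, $S[0]v'(y)$ going into the middle parenthesis).

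Next I would apply the reverse triangle inequality to bound $|\tilde K_1(x)|$ below by $|K_1(x)|$ minus the moduli of the three remaining groups. For the first group, the proof of the Lemma immediately preceding Lemma \ref{CTildeKone} only uses $\sup_x\|C[0](x,\cdot)\|_{L^2_y} \le b_1$ and $\|S[0]\|_{L^\infty} \le b_2$ (which are the $v = 0$ instances of Lemma \ref{CSbounds}), so that argument applies verbatim with $C[v], S[v]$ replaced by $C[0], S[0]$ and yields $\bigl| v(x) + \int_0^{2\pi} C[0]v(y) - S[0]v'(y)\,dy \bigr| \le \epsilon\bigl(b_1\tfrac{\pi}{\sqrt m} + 2b_2\sqrt{m\pi} + \sqrt{\tfrac{\pi}{m}}\bigr)$. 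The second and third groups are precisely the quantities estimated in items \ref{CvC0infbound} and \ref{SvS0infbound}, giving $\epsilon\, b_3^C \sqrt{2m}\, M_R$ and $\epsilon\tfrac{\sqrt\pi + 2}{4\Omega m_R\sqrt{m\pi}}\bigl(2\pi M_{R_0'} + 2\epsilon\sqrt{\pi m}\bigr)$ respectively.

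Summing these three bounds and subtracting from $C_{K_1}$ produces exactly $|\tilde K_1(x)| \ge C_{\tilde K_1}$, and Lemma \ref{CTildeKone} guarantees $C_{\tilde K_1} > 0$, which closes the argument. There is no genuine analytic difficulty here; the only point that needs care is arranging the algebraic decomposition of $\tilde K_1 - K_1$ so that the two kernel-difference integrals carry the weights $R$ and $R'$ (matching the forms in items \ref{CvC0infbound}--\ref{SvS0infbound}) rather than $R_0$ and $R_0'$, together with the observation that the purely $v$-dependent estimate transfers unchanged from $C[v], S[v]$ to $C[0], S[0]$. Both are routine.
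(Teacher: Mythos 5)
Your proof is correct and follows essentially the same strategy as the paper: compare $\Tilde{K}_1$ to $K_1$, apply the reverse triangle inequality, bound the difference using the two preceding (unnamed) lemmas, and then invoke Lemma~\ref{CTildeKone}. In fact, your algebraic decomposition of $\Tilde{K}_1 - K_1$ is slightly more careful than the one displayed in the paper. The exact identity is, as you wrote,
\begin{equation*}
\Tilde{K}_1(x) - K_1(x) = \left( v(x) + \int C[0]\,v(y) - S[0]\,v'(y)\,dy \right) + \int (C[v]-C[0])\,R(y)\,dy - \int (S[v]-S[0])\,R'(y)\,dy,
\end{equation*}
with $C[0],S[0]$ in the first group and $R,R'$ in the kernel-difference integrals, whereas the paper's displayed inequality keeps $C[v],S[v]$ in the first group while simultaneously using $R,R'$ in the second — an over-determination that does not exactly decompose $\Tilde{K}_1-K_1$. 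This is harmless because the bounds $b_1,b_2$ from Lemma~\ref{CSbounds} hold for any $v\in\Tilde{Y}^{even,\epsilon}_m$, including $v=0$, so the unnamed lemma's estimate transfers verbatim to the $C[0],S[0]$ version and yields the same constant $\epsilon\bigl(b_1\tfrac{\pi}{\sqrt m}+2b_2\sqrt{m\pi}+\sqrt{\tfrac{\pi}{m}}\bigr)$. Your observation that the sign between the two kernel-difference integrals is irrelevant (since each is bounded in absolute value) is also correct. Nothing is missing; this is the paper's argument written out more precisely.
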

\begin{proof}
    The main idea is to use that $\Tilde{K}_1$ is very close to $K_1$, where $K_1$ was defined in \eqref{defK1}.
    \begin{align*}
        &\left |R(x)+ \int C[v]R(y)- S[v]R'(y) dy \right | \geq \\
        &\geq \left |K_1(x)\right | -\left | v(x)+ \int C[v]v(y)- S[v]v'(y) dy\right |  - \left | \int (C[v]-C[0])R(y) + (S[v]-S[0]) R'(y) dy \right | \\
        &\geq C_{K_1} - \epsilon \left (b_1\frac{\pi}{\sqrt{m}} + b_2 2\sqrt{m\pi} + \sqrt{\frac{\pi}{m}} + b_3^C \sqrt{2m} M_R + \frac{\sqrt{\pi}+2}{\Omega 4 m_R \sqrt{m\pi}}  \left ( 2\pi M_{R_0'} + \epsilon2\sqrt{\pi m} \right )\right ) = C_{\Tilde{K}_1} > 0, 
    \end{align*}
    which follows from Lemma \ref{CTildeKone}.
\end{proof}

\begin{proposition}\label{R_Cinfty}
    $R(x) \in C^\infty$.
\end{proposition}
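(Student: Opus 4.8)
# Proof Proposal for Proposition \ref{R_Cinfty}

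The plan is to run a bootstrap argument on the equation satisfied by $v$ (equivalently $R = R_0 + v$), starting from the known regularity $v \in H^1(\mathbb{T})$ and successively gaining derivatives. The key structural observation is that the equation \eqref{eqv}, namely $T[v] = E[v] + N[v]$, can be solved for $v'$ by isolating the coefficient of $v'(x)$: collecting all terms in \eqref{OPv} that multiply $v'(x)$, one obtains precisely the quantity $\tilde{K}_1(x) = R(x) + \int C[v] R(y)\, dy - \int S[v] R'(y)\, dy$ (after recombining $R_0$-pieces and $v$-pieces into $R$-pieces). By Lemma \ref{K1tilde} we have $|\tilde{K}_1(x)| \geq C_{\tilde{K}_1} > 0$, so we may write
\begin{equation*}
    v'(x) = \frac{1}{\tilde{K}_1(x)}\Big( \text{(terms involving } v, \text{ integrals of } v, v', \text{ and the smooth function } R_0\big) \Big).
\end{equation*}
The right-hand side here is built from $R_0$ (which is a trigonometric polynomial, hence $C^\infty$), from $v$ itself, and from integral operators whose kernels involve $\log(A[v](x,y))$, $\cos(x-y)$, $\sin(x-y)$, and the factor $1/A[v](x,y)$. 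Since $A[v]$ is bounded below by $(m_R - M_R)^2 \cdot \mathbbm{1} + \dots$ away from the diagonal and behaves like $m_R^2(x-y)^2$ near the diagonal (by Lemma \ref{boundR0} and the expression \eqref{defA2}), the singular integral operators appearing are of logarithmic/weakly-singular type and are smoothing: they map $C^{k,\alpha}$ (or $H^s$) into something at least as regular, and the $x$-dependence of the kernel inherits the regularity of $R_0, R$.

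\textbf{Step 1 (from $H^1$ to $C^{0,\alpha}$ and then $C^1$).} Since $v \in H^1(\mathbb{T}) \hookrightarrow C^{0,1/2}(\mathbb{T})$, the kernels $C[v], S[v]$ are continuous off the diagonal with an integrable logarithmic singularity, and the integral terms on the right-hand side of the formula for $v'$ define continuous functions of $x$. Hence $v' \in C^0$, so $v \in C^1$, and therefore $R \in C^1$. With $R \in C^1$ in hand, the kernels improve: $A[v](x,y)$ is now $C^1$ in $x$ off the diagonal, and a direct estimate on difference quotients (using the Hölder bounds of Lemmas \ref{bound2_v(x)-v(y)}, \ref{f(x)-f(y)} already established in the paper) upgrades the integral terms to $C^{0,\alpha}$, giving $v' \in C^{0,\alpha}$, i.e. $R \in C^{1,\alpha}$.

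\textbf{Step 2 (inductive step: $C^{k,\alpha} \to C^{k+1,\alpha}$).} Suppose $R \in C^{k,\alpha}$ for some $k \geq 1$. Differentiate the formula $v'(x) = \tilde{K}_1(x)^{-1} (\cdots)$ in $x$. Each differentiation either hits $R_0$ (still smooth), hits $v$ or $v'$ (costing one derivative, available by induction since $v \in C^{k,\alpha} \subset C^{k}$), hits the factor $1/\tilde{K}_1(x)$ (which is $C^{k,\alpha}$ since it is built from $R \in C^{k,\alpha}$ via smoothing integral operators, and bounded below), or differentiates under the integral sign. When $\partial_x$ falls on $\log A[v](x,y)$ or $1/A[v](x,y)$, it produces a factor $\partial_x A[v](x,y) / A[v](x,y)$, which near the diagonal is $O(1/|x-y|)$ — a Cauchy-type kernel, still yielding a bounded operator on Hölder spaces (principal value / standard Calderón–Zygmund or the elementary Hölder-kernel estimates) — while the remaining $\log$ or lower-order pieces stay weakly singular. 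One must handle the boundary term when integrating by parts in the $\mathbbm{1}_{0 \le y \le x}$ piece of the definition of $L$ (as in Proposition \ref{Unfolding_u_to_R}), but this only contributes $K_2(x) v(x)$-type terms which are $C^{k,\alpha}$ by induction. The net effect is that $v'(x) \in C^{k,\alpha}$, hence $v \in C^{k+1,\alpha}$ and $R \in C^{k+1,\alpha}$. Iterating over all $k$ gives $R \in C^\infty$.

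\textbf{Main obstacle.} The delicate point is controlling the singularity of the kernels after differentiation: each $\partial_x$ worsens the diagonal behavior of $1/A[v]$ by one power of $|x-y|$, so after differentiating $j$ times one confronts kernels as singular as $|x-y|^{-j}$ (times oscillatory/smooth factors), and naively these are not integrable for $j \ge 1$. The resolution is that the \emph{full} integrand retains cancellation: the combinations actually appearing (e.g. $\partial_x(\cos(x-y)\log A[v])$ multiplied against $R(x)R'(y) - R(y)R'(x)$ and similar antisymmetric factors that vanish on the diagonal) always pair a high-order singularity with a factor vanishing to matching order at $x = y$, exactly as exploited in the $E_1$ linearization \eqref{T1E1_def} and in the cancellation estimates of Lemma \ref{CSvCS0secondorderbound} and Proposition \ref{Ev-E0-E1v}. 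Making this bookkeeping precise — tracking which antisymmetric/diagonal-vanishing factors accompany which singular kernels at each order of differentiation, and invoking the Hölder-continuity estimates for weakly singular and Cauchy-type integral operators — is the technical heart of the argument; everything else is the routine product-and-chain-rule expansion of \eqref{OPv}.
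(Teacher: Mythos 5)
Your overall strategy matches the paper's: divide the equation through by $\tilde K_1(x)$ (bounded below by Lemma \ref{K1tilde}) to isolate the highest derivative, then bootstrap. The paper, however, works in $W^{k,\infty}$ rather than $C^{k,\alpha}$, and its inductive step is an explicit elementary estimate rather than an appeal to Calder\'on--Zygmund/H\"older-kernel theory.

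The more serious issue is that you misdiagnose the behavior of the singularity under differentiation, and as a consequence you invoke a cancellation mechanism that is not the one actually needed. You assert that ``each $\partial_x$ worsens the diagonal behavior of $1/A[v]$ by one power of $|x-y|$, so after differentiating $j$ times one confronts kernels as singular as $|x-y|^{-j}$,'' and then propose to compensate by pairing each singular kernel with a diagonal-vanishing antisymmetric factor. But this growing singularity does not occur if, before differentiating, one changes variables to $z=x-y$ inside the integral (the paper flags this explicitly: ``the first step is to change to the variable $z=x-y$ inside the integrals as to not generate singularities''). Once you write everything in terms of $F(x,x-z)$ and differentiate only in $x$, the $z$-scaling near $z=0$ is preserved: the paper's estimate on $\frac{d^j}{dx^j}\bigl(\partial_x A/A\bigr)(x,x-z)$ yields a bound $\lesssim (1+\Y{\partial^{j+1}R}^{2^{j+1}})/|z|$ for every $j$ --- still only a $1/|z|$ singularity, regardless of how many derivatives are taken. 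What then remains is a \emph{fixed-order} $1/|z|$ kernel paired with the MVT factor $|z|$ from $g(x,x-z)$, and the paper controls this by splitting $[-\pi,\pi]=(-\delta,\delta)\cup\{|z|\ge\delta\}$: in the near-diagonal region the MVT factor introduces one extra derivative $\Y{\partial^{k+2}R}$ multiplied by $\delta$, which is absorbed into the left side by taking $\delta$ small, while the far region gives a $1/\delta$ contribution involving only $\Y{\partial^{k+1}R}$. Your sketch contains none of this absorption mechanism, and the ``cancellation bookkeeping'' you defer to the reader is precisely the part that must be carried out. Without the change of variables, your inductive step as stated would collapse, and with it, the absorption trick --- not a generic CZ boundedness theorem --- is what closes the iteration. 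You should also note that your Step~1 (going from $H^1$ to $C^0$) needs the paper's explicit estimate $C_{\tilde K_1}|R'(x)|\lesssim 1+\X{R'}^2$, rather than merely continuity of the integral terms, since $R'$ itself appears under the integral.
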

\begin{proof}
    \textbf{First Step:}
    We can rewrite the equation \eqref{eqR} as
    \begin{align*}
        &\left (R(x)+ \int C[v]R(y)- S[v]R'(y) dy \right ) R'(x) =R(x)\int C[v]R'(y)-S[v]R(y) dy.
    \end{align*}
    Estimating the RHS we get
    \begin{align*}
        \left |R(x)\int C[v]R'(y)-S[v]R(y) dy \right |&\leq \Y R \left ( \Ltwoinf{C}\X{1} \X{R'} + \Linf{S}\Lone{1} \Y{R} \right ) \\
        &\lesssim 1+\X{R'}^2,
    \end{align*}
    then combining it with Lemma \ref{K1tilde} we have
    \begin{align*}
        C_{\Tilde{K}_1} |R'(x)| \lesssim 1+ \X{R'}^2.
    \end{align*}
    We can conclude that $R'$ is bounded in $L^\infty$.
    
    \textbf{Second Step:} Differentiating \eqref{eqR} once and arranging the terms properly, we have
    \begin{equation}\label{Reg2ndStepEq}
    \begin{aligned}
        \Tilde{K}_1(x) R''(x) &= \underbrace{R'(x) \left (-R'(x) + \int C[v]R'(y)-S[v]R(y) dy \right ) + \int C[v]h(x,y) - S[v]g(x,y) dy}_{Q_1(x)} + \\
        &+ \underbrace{\int \frac{\sin(x-y) \partial_x A(x,y)}{A(x,y)}h(x,y) dy}_{Q_2(x)} + \underbrace{\int \frac{\cos(x-y) \partial_x A(x,y)}{A(x,y)}g(x,y) dy}_{Q_3(x)}
    \end{aligned}
    \end{equation}
    where 
    \begin{align*}
        g(x,y) := R(x)R'(y) - R'(x)R(y), \\
        h(x,y) := R(x)R(y) + R'(x)R'(y).
    \end{align*}

    In this step we prove that if $\partial^{k+1} R \in L^\infty$ then $\partial^{k+2} R \in L^\infty$ for $k \geq 0$. The first step is to change to the variable $z = x-y$ inside the integrals as to not generate singularities. We are also going to repeatedly use the embedding $\Y{\partial^j R} \lesssim 1+\Y{\partial^k R}$ for $j\leq k$ all the time without specifying.

    First we prove some useful estimates for the derivatives of $h(x,x-z), g(x,x-z)$:
    
    \begin{equation}\label{hgReg_estimates}
    \begin{aligned}
        \left | \frac{d^j}{dx^j} h(x,x-z) \right | &= \left |\sum_{l= 0}^{j} \binom{j}{l} \left (\partial^l R(x) \partial^{j-l} R(x-z) + \partial^{l+1} R(x) \partial^{j-l+1} R(x-z) \right )\right  | \lesssim 1+\Y{\partial^{k+1} R}^2 \quad \text{ if } j \leq k, \\
        \left |\frac{d^j}{dx^j} g(x,x-z) \right  | &= \left |\sum_{l= 0}^{j} \binom{j}{l}\left(\partial^l R(x) \partial^{j-l+1} R(x-z) + \partial^{l+1} R(x) \partial^{j-l} R(x-z) \right )\right  | \lesssim 1+\Y{\partial^{k+1} R}^2 \quad \text{ if } j \leq k, \\
        \left |\frac{d^j}{dx^j} g(x,x-z) \right  | &= \left |\sum_{l= 0}^{j} \binom{j}{l} \left (\partial^l R(x) (\partial^{j-l+1} R(x-z)- \partial^{j-l+1} R(x) )+ \partial^{l+1} R(x) (\partial^{j-l} R(x)-\partial^{j-l} R(x-z)) \right ) \right  | \\
        &\lesssim \left ( 1+\Y{\partial^{k+1} R}\right )  \left (1+\Y{\partial^{k+2} R} \right ) |z|\quad \text{ if } j \leq k, \\
    \end{aligned}
    \end{equation}
    where in the last inequality we have used the Mean Value Theorem to get the extra factor of $|z|$.

    Next, we estimate the derivatives of $A(x,x-z)$ 
    \begin{equation}\label{Areg_estimates}
    \begin{aligned}
        \left |\frac{d^j}{dx^j} A(x,x-z) \right |&=\left | \sum_{l=0}^j \binom{j}{l}\left [(\partial^l R(x)-\partial^{l}R(x-z))(\partial^{j-l} R(x)-\partial^{j-l}R(x-z)) \right . \right .\\
        &\left . \left .+ 4\partial^l R(x) \partial^{j-l} R(x-z) \sin^2 \left ( \frac{z}{2}\right ) \right ]\right | \lesssim \left (1+\Y{\partial^{k+1} R}^2 \right ) |z|^2 \quad \text{ if } j \leq k, \\
        \left |\frac{d^j}{dx^j} \left (\partial_x A \right)(x,x-z) \right |&= \left |\sum_{l=0}^j \binom{l}{j} \left [(\partial^l R(x)-\partial^{l}R(x-z))\partial^{j+1-l} R(x) + 4\partial^{l+1} R(x) \partial^{j-l} R(x-z) \sin^2 \left ( \frac{z}{2}\right ) \right .\right .\\
        &\left . \left . + 2 \partial^{l+1} R(x) \partial^{j-l}R(x-z) \sin(z)\right ]\right |\lesssim \left (1+\Y{\partial^{k+1} R}^2\right ) |z| \quad \text{ if } j \leq k.
    \end{aligned}
    \end{equation}
    We use the previous calculation to estimate the derivatives of $\log(A(x,x-z))$
    \begin{equation}\label{logA_reg_estimates}
        \begin{aligned}
            \left | \frac{d^j}{dx^j} \log (A(x,x-z)) \right | &= \left |\frac{d^{j-1}}{dx^{j-1}} \frac{\frac{d}{dx} A(x,x-z)}{A(x,x-z)} \right |= \left |\sum\limits_{
            \substack{ \alpha_0+...\alpha_j =2^{j-1}\\
            \alpha_i \geq 0}
            } c_{\alpha_1, ...,\alpha_t} \frac{\prod_{i=0}^j (\frac{d^i}{dx^i} A(x,x-z))^{\alpha_i}}{A(x,x-z)^{2^{j-1}}} \right |\\
            &  \lesssim   \sum\limits_{
            \substack{\alpha_0+...\alpha_j =2^{j-1}\\
            \alpha_i \geq 0}
            }  \prod_{i=0}^j \left ( 1+\Y{\partial^{i+1} R}\right )^{2\alpha_i} \ \lesssim \ 1+\Y{\partial^{j+1} R}^{2^j} \quad \text{ for } j \geq 1,
        \end{aligned}
    \end{equation}
    where we have used \eqref{Areg_estimates}, and the lower bound $A(x,x-z) \gtrsim z^2$. From this we can deduce the same bound for $C(x,x-z), S(x,x-z)$ derivatives since the factors of $\cos(z), \sin(z)$ multiplying in front don't interact with the derivative. 

    Lastly, we estimate the derivatives of $(\partial_x A / A) (x,x-z)$
    \begin{equation}\label{dxA/A_reg_estimates}
        \begin{aligned}
            \left |\frac{d^j}{dx^j} \frac{(\partial_x A)(x,x-z)}{A(x,x-z)} \right |&\leq \sum\limits_{
            \substack{ \ 0 \leq l \leq j \\
            \alpha_0+...\alpha_j =2^{j}-1\\
            \alpha_i \geq 0}
            } \left |c_{l,\alpha_1, ...,\alpha_t} \frac{\frac{d^l}{dx^l} (\partial_x A) (x,x-z)\prod_{i=0}^j (\frac{d^i}{dx^i} A(x,x-z))^{\alpha_i}}{A(x,x-z)^{2^{j}}} \right |\\
            &\lesssim \sum\limits_{
            \substack{ \ 0 \leq l \leq j \\
            \alpha_0+...\alpha_j =2^{j}-1\\
            \alpha_i \geq 0}
            } \frac{1}{|z|}\left  (1+\Y{\partial^{l+1} R}^2\right )\left ( 1 +\Y{\partial^{j+1}R}^{2^{j+1}-2}\right ) \ \lesssim \ \frac{1+\Y{\partial^{j+1}R}^{2^{j+1}}}{|z|},
        \end{aligned}
    \end{equation}
    where we have used the same estimates as in \eqref{logA_reg_estimates} as well as \eqref{Areg_estimates} to estimate derivatives of $\partial_x A(x,x-z)$.

    Finally we proceed to bound $\partial^{k+2} R$, we differentiate $k$ times the equation \eqref{Reg2ndStepEq} from  the second step. But now instead of directly differentiating respect to $x$ inside the integrals we first change variables $z = x-y$, using the notation $Q_1, Q_2, Q_3$ defined in \eqref{Reg2ndStepEq}, we have
    \begin{equation} \label{Reg3rdStepEq}
        \begin{aligned}
            \Tilde{K}_1(x) \partial^{k+2} R(x) = -\frac{d^{k-1}}{dx^{k-1}}\left (\partial^2 R(x)  \frac{d}{dx} \Tilde{K}_1(x) \right ) + \frac{d^k}{dx^k} Q_1(x) + \frac{d^k}{dx^k} Q_2(x) + \frac{d^k}{dx^k} Q_3(x) \quad \forall k \geq 1.
        \end{aligned}
    \end{equation}
    For the case $k = 0$ we have the same equation without the first term. The estimates for the other terms work exactly the same way.
    
    We are going to estimate the RHS term by term. Starting with the first one
    \begin{align*}
        &\left |\frac{d^{k-1}}{dx^{k-1}}\left (\partial^2 R(x)  \frac{d}{dx} \Tilde{K}_1(x) \right )\right | =\\
        &= \left |\sum_{j=1}^{k} \binom{k}{j}\partial^{k-j+2} R \left ( \partial^j R + \sum_{l=0}^j \binom{j}{l} \int \frac{d^l}{dx^l} C(x,x-z) \partial^{j-l}R(x-z) - \frac{d^l}{dx^l} S(x,x-z) \partial^{j-l+1}R(x-z) \right ) \right |\\
        &\lesssim \left (1+\Y{\partial^{k+1}R}\right ) \left ( 1+\Y{\partial^k R} + \left (1+\Y{\partial^{k+1} R} \right ) \left ( 1 + \sum_{l=1}^k \Y{\partial^{l+1} R}^{2^l}\right )\right ) \\
        &\lesssim 1 + \Y{\partial^{k+1}R}^{2^k+2},
    \end{align*}
    where we have used the estimates \eqref{logA_reg_estimates} for $l\geq 1$ for the derivatives of $C, S$ and for the case $l=0$ we used that $C, S$ are integrable because of Lemma \ref{CSbounds}. The second term $\frac{d^k}{dx^k} Q_1(x)$ is bounded analogously.

    We will now bound the fourth term splitting the region of integration $[-\pi, \pi]$ in $[-\pi, -\delta] \cup [\delta, \pi]$,  $(-\delta, \delta)$ and use two different bounds of the integrand in each region. For any $\delta \in (0,1)$ we get
    \begin{align*}
        \left |\frac{d^k}{dx^k} Q_3(x) \right |&= \left |\sum_{j=0}^k \binom{k}{j} \int_{-\pi}^\pi \cos(z) \frac{d^j}{dx^j} \frac{(\partial_x A)(x,x-z)}{A(x,x-z)} \frac{d^{k-j}}{dx^{k-j}} g(x,x-z)dz \right |\\
        &\lesssim \sum_{j=0}^k \int_{-\delta}^\delta \frac{1+\Y{\partial^{j+1} R}^{2^{j+1}}}{|z|} \left (1+\Y{\partial^{k+1}R}\right ) \left (1+\Y{\partial^{k+2}R}\right )|z| dz \\
        &+ \int_{|z|\geq \delta} \frac{1+\Y{\partial^{k+1} R}^{2^{j+1}}}{|z|} \left (1+\Y{\partial^{k+1}R}^2\right ) dz \\
        &\lesssim \left ( 1 + \Y{\partial^{k+1}R}^{2^{k+1}+1}  \right ) \left ( \delta \left (1+\Y{\partial^{k+2} R} \right )+ \frac{1+\Y{\partial^{k+1}R}}{\delta}\right ),
    \end{align*}
    where we have used \eqref{dxA/A_reg_estimates} for the derivatives of $\partial_x A/A$, \eqref{hgReg_estimates} for $g$ in the region $|z|\geq \delta$ and \eqref{hgReg_estimates} for $g$ in the region $|z|<\delta$. Recalling the definition of $Q_2(x)$ \eqref{hgReg_estimates}, to estimate the third term $\frac{d^k}{dx^k} Q_2(x)$ we follow the same procedure we just did for $\frac{d}{dx^k} Q_3(x)$ but we use the only estimate we have for the derivatives of $h(x,x-z)$ \eqref{hgReg_estimates}, which doesn't have any term like $|z|$ but we get this term from the $\sin(z)$ that we have (instead of the $\cos(z)$ we had in the case we have written). We end up getting
    \begin{align*}
        \left |\frac{d^k}{dx^k} Q_2(x)\right | \lesssim \left ( 1 + \Y{\partial^{k+1}R}^{2^{k+1}+2}  \right ).
    \end{align*}
    Notice that in contrast to the inequality for the derivatives of $Q_3(x)$ here the derivative we want to bound $\partial^{k+2}R$ does not appear on the bound. Putting these estimates together, and considering $\delta \ll 1$ to absorb the bounds of the third term into the one of the fourth term, we get that for all $\delta \in (0,1)$
    \begin{equation*}
        C_{\Tilde{K}_1} \Y{\partial^{k+2} R} \lesssim 1 + \Y{\partial^{k+1}R}^{2^k+2} + \left ( 1 + \Y{\partial^{k+1}R}^{2^{k+1}+1}  \right ) \left ( \delta \left(1+ \Y{\partial^{k+2} R} \right )+ \frac{1+\Y{\partial^{k+1}R}}{\delta}\right ),
    \end{equation*}
    where we have used the lower bound on $|\Tilde{K}_1(x)|$ given by Lemma \ref{K1tilde}. Since $C_{\Tilde{K}_1} > 0$, taking $\delta$ small enough we can put the term $\delta \Y{\partial^{k+2} R}$ in the LHS and the coefficient multiplying $\Y{\partial^{k+2} R}$ keeps being positive, we conclude that $\Y{\partial^{k+2}R}$ is bounded if $\Y{\partial^{k+1}R}$ is bounded, for all $k \geq 0$. By induction we have proven that $R(x) \in C^\infty$.
\end{proof}

\subsection{Analyticity}

Considering the stream function $\psi = \frac{1}{2\pi}\log(|\cdot|)\ast\mathbbm{1}_D$ associated to our vortex patch at time 0, $D = \{ x\in \mathbb{R}^2 : |x|^2 \leq R(\alpha(x))\}$, where $\alpha(x)$ is the angle associated to $x$ in polar coordinates,  \eqref{eqR} is equivalent to 
\begin{equation}\label{eq_psi}
    \nabla^{\perp} \psi(x)\cdot \overrightarrow{n}(x) = \Omega x^\perp \cdot \overrightarrow{n}(x) \quad \forall x \in \pa D, 
\end{equation}
where $\overrightarrow{n}(x)$ is a perpendicular vector to $\pa D$ at $x$. Moreover, if we consider the stream function in the rotating frame $\varphi(x) = \frac{1}{2\pi} \log(|\cdot|)\ast \mathbbm{1}_D - \frac{\Omega}{2} |x|^2$, it solves the following elliptic free boundary problem:
\begin{equation*}
    \left \{
    \begin{aligned}
        &\Delta \varphi = 1 - 2\Omega \quad &x \in D\\
        & \Delta \varphi = -2\Omega \quad &x \in \mathbb{R}^2\backslash D \\
        &\varphi = c \quad &x \in \partial D.
    \end{aligned}
    \right.
\end{equation*}

In this setting, we are ready to prove the analyticity of the boundary of $D$.
\begin{proposition}
\label{Ranalytic} 
    The velocity of the particles in the rotating frame $\nabla^\perp \varphi(x)$ is non zero for all $x$ on the boundary $\pa D$, and $\pa D$ is analytic.
\end{proposition}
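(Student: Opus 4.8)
The plan is to use the elliptic free boundary formulation recalled just above, establishing the non-degeneracy of $\nabla^\perp\varphi$ by a maximum-principle/Hopf-lemma argument and then the analyticity by a partial hodograph transform. By Proposition~\ref{R_Cinfty}, $R \in C^\infty(\mathbb{T})$, and since $m_R < R < M_R$ and $|\partial_x z_0(x)|^2 = R(x)^2 + R'(x)^2 \geq m_R^2 > 0$, the curve $\partial D$ is a $C^\infty$ embedded Jordan curve; hence $D$ is a bounded $C^\infty$ domain satisfying a uniform interior ball condition. Moreover $\psi = \frac{1}{2\pi}\log(|\cdot|)\ast \mathbbm{1}_D$ lies in $C^{1,\beta}_{\mathrm{loc}}$ for every $\beta < 1$ by Calderón--Zygmund estimates, so $\varphi = \psi - \frac{\Omega}{2}|\cdot|^2 \in C^{1,\beta}(\bar D)$; as $\varphi$ solves the Dirichlet problem $\Delta\varphi = 1 - 2\Omega$ in $D$, $\varphi \equiv c$ on $\partial D$, with $C^\infty$ right-hand side and boundary, Schauder estimates plus a bootstrap give $\varphi \in C^\infty(\bar D)$ (and likewise $\varphi$ is $C^\infty$ up to $\partial D$ from the exterior on bounded pieces). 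Since $\varphi$ is constant on $\partial D$, $\nabla\varphi$ is parallel to the outward unit normal $\vec n$ along $\partial D$ and $\nabla^\perp\varphi$ is tangent there; in particular $|\nabla^\perp\varphi| = |\nabla\varphi|$ on $\partial D$.

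Next I would show $\nabla^\perp\varphi \neq 0$ on $\partial D$, equivalently $\partial_{\vec n}\varphi \neq 0$. The key point is the sign $\Delta\varphi = 1 - 2\Omega = 1 - \tfrac{2\cdot 1537}{3750} = \tfrac{676}{3750} > 0$ in $D$ (this is exactly where the constraint $\Omega < \tfrac12$ enters), so $\varphi$ is strictly subharmonic in $D$; being continuous on $\bar D$ and equal to the constant $c$ on all of $\partial D$, the strong maximum principle forces $\varphi < c$ in $D$. The interior ball condition then lets me invoke Hopf's lemma at every $x_0 \in \partial D$, yielding $\partial_{\vec n}\varphi(x_0) > 0$ and hence $\nabla\varphi(x_0) \neq 0$, so $\nabla^\perp\varphi(x_0) \neq 0$. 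I expect this non-degeneracy to be the conceptual heart of the argument: it is precisely what turns the level set $\{\varphi = c\} = \partial D$ into a regular free boundary, smooth enough for the hodograph method, and the same mechanism explains why the hypothesis $\Omega < \tfrac12$ cannot be dropped.

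Finally, for analyticity I would run the classical partial hodograph (Legendre) transform of Kinderlehrer--Nirenberg. Fix $x_0 \in \partial D$ and rotate coordinates so that $\nabla\varphi(x_0)$ points in the $e_2$-direction; then $\partial_2\varphi(x_0) \neq 0$, and by continuity $\partial_2\varphi \neq 0$ on a one-sided neighborhood $U$ of $x_0$ in $\bar D$. The map $\Phi(x_1,x_2) := (x_1,\ \varphi(x_1,x_2) - c)$ is a $C^\infty$ diffeomorphism of $U$ onto a one-sided neighborhood $V$ of the origin in a half-plane, carrying $\partial D \cap U$ onto $\{y_2 = 0\}\cap V$; writing its inverse as $x_2 = w(y_1,y_2)$, a direct computation converts $\Delta\varphi = 1 - 2\Omega$ into the quasilinear equation
\begin{equation*}
 (1 + w_{y_1}^2)\, w_{y_2 y_2} - 2\, w_{y_1} w_{y_2}\, w_{y_1 y_2} + w_{y_2}^2\, w_{y_1 y_1} + (1 - 2\Omega)\, w_{y_2}^3 = 0,
\end{equation*}
whose coefficients are polynomials in $(w_{y_1}, w_{y_2})$, hence real-analytic, and which is uniformly elliptic on $V$ since the determinant of its principal part equals $w_{y_2}^2 = \varphi_{x_2}^{-2} \neq 0$ there. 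Because $w \in C^\infty(\bar V)$, the classical analytic regularity theorem for elliptic equations with analytic coefficients (Morrey; see also Kinderlehrer--Nirenberg and Friedman) yields that $w$ extends real-analytically across $\{y_2 = 0\}$; consequently $\partial D \cap U$, which in the rotated frame is the graph $y_1 \mapsto (y_1, w(y_1,0))$, is an analytic arc. Covering the compact curve $\partial D$ by finitely many such neighborhoods shows $\partial D$ is analytic, and composing with the analytic polar change of coordinates upgrades $R$ itself to an analytic function of $x$, which completes the proof.
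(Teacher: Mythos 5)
Your non-degeneracy argument is correct but takes a genuinely different route from the paper, while your analyticity argument has a real gap.

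For $\nabla^\perp\varphi \neq 0$: you observe that $\Delta\varphi = 1 - 2\Omega > 0$ in $D$ and $\varphi \equiv c$ on $\partial D$, so the strong maximum principle forces $\varphi < c$ in $D$ and Hopf's lemma gives $\partial_\nu\varphi > 0$ on $\partial D$. This is correct and elegant, and it explains cleanly why $\Omega < 1/2$ matters. The paper instead proves non-degeneracy by a direct algebraic identity, showing $\nabla^\perp\varphi \cdot \vec t = -\left(R + \tfrac{R'^2}{R}\right)\Tilde{K}_1$ along $\partial D$ and invoking the quantitative lower bound $|\Tilde{K}_1| \ge C_{\Tilde{K}_1} > 0$ from Lemma~\ref{K1tilde}. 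Your argument is softer (purely qualitative, no new constants needed) but, unlike the paper's, it does not give an explicit lower bound on $|\nabla^\perp\varphi|$; for this proposition the qualitative statement suffices, so both are fine.

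For analyticity of $\partial D$, however, your partial hodograph argument as written does not close. You flatten the boundary using only the interior problem $\Delta\varphi = 1-2\Omega$ in $D$ with $\varphi = c$ on $\partial D$, obtain a quasilinear analytic elliptic equation for $w$ on a one-sided neighborhood $V$ of $\{y_2 = 0\}$, and then assert that $w \in C^\infty(\bar V)$ together with analytic interior regularity implies $w$ is analytic up to $\{y_2 = 0\}$. This implication is false without a boundary condition: a solution of an analytic elliptic equation in a half-disk can be $C^\infty$ up to the flat boundary without being analytic there (take a harmonic function with smooth, non-analytic Dirichlet data). In fact the one-phase Dirichlet problem alone cannot possibly imply analyticity of $\partial D$: given any $C^\infty$ domain, the interior Dirichlet problem with $\varphi = c$ has a smooth solution with $\partial_\nu\varphi \neq 0$, and your transformed $w$ would be smooth — yet $\partial D$ is arbitrary. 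What forces the free boundary to be analytic here is the overdetermined two-phase structure: $\varphi$ also solves $\Delta\varphi = -2\Omega$ on the exterior, with $\varphi$ and $\nabla\varphi$ continuous across $\partial D$. After the (full, two-sided) hodograph transform this becomes a transmission problem with a fixed analytic interface $\{y_2 = 0\}$, piecewise-analytic coefficients, and $C^1$ matching; analyticity of $w$ up to the interface, hence of $\partial D$, then follows from the free-boundary regularity theory that handles exactly this setup. This is precisely what \cite[Theorem 3.1']{Kinderlehrer-Nirenberg-Spruck:regularity-elliptic-free-boundary}, which the paper cites, provides after verifying the non-degeneracy and $C^2$ boundary regularity. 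To repair your proof you should carry the hodograph transform on a two-sided neighborhood of $x_0$, record both transformed equations (one on each side of $\{y_2 = 0\}$) together with the $C^1$ transmission condition, and then apply the transmission-problem analyticity result rather than interior elliptic regularity.
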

\begin{proof}
    If we prove that $|\nabla^\perp \varphi (x)| > 0$ for all $x \in \pa D$, then the analyticity of $\partial D$ is an immediate consequence of \cite[Theorem 3.1']{Kinderlehrer-Nirenberg-Spruck:regularity-elliptic-free-boundary}, since we have already proven that the boundary of $D$ is $C^2$.

    We now proceed to bound $\nabla^\perp \varphi (x)$ from below. First we state a useful equality, rewriting $\eqref{eqR}$ we have that %\color{red} I believe there's no $\Omega$ here since the C's and the S's have an $\Omega$ dividing \color{black}
    \begin{equation}\label{useful_R_identity}
        -R(\alpha) \left (\int_0^{2\pi} (C[v]R'(y) + S[v]R(y)) dy \right ) = R'(\alpha) \left ( - R(\alpha) + \int_0^{2\pi} (S[v]R'(y)-C[v]R(y)) dy\right )
    \end{equation}
    for all $\alpha \in [0,2\pi]$. Considering $\overrightarrow{t}(x) := R(\alpha)(-\sin(\alpha), \cos(\alpha)) + R'(\alpha) (\cos(\alpha), \sin(\alpha))$, 
    a tangential vector to $\partial D$ at $x$, we have after a long but straightforward computation that
    \begin{equation}
        \begin{aligned}
            \nabla^\perp \varphi (x) \cdot \overrightarrow{t}(x) &= R(\alpha)\left ( - R(\alpha) + \int_0^{2\pi} S[v]R'(y) - C[v]R(y) dy\right ) - R'(\alpha)\left (\int_0^{2\pi} C[v]R'(y) + S[v]R'(y)dy\right)\\
            &= \left ( R(\alpha) + \frac{R'(\alpha)^2}{R(\alpha)}\right ) \left ( - R(\alpha) + \int_0^{2\pi} S[v] R'(y) - C[v]R(y) dy\right ) \\
            &= \left ( R(\alpha) + \frac{R'(\alpha)^2}{R(\alpha)}\right )\left (-\Tilde{K_1}(x)\right ) \neq 0, 
        \end{aligned}
    \end{equation}    
    for all $x\in \pa D$. %The first equality is obtained by using the same procedure that is used to derive the vortex patch equation from the Euler equations ('long' computations + divergence theorem). 
    The second equality is due to the identity \eqref{useful_R_identity}, the third one is by definition of $\Tilde{K_1}$, and the last line is non zero due to the strict positivity of the first factor and the lower bound of $|\Tilde{K_1}(x)|$ by Lemma  \ref{K1tilde}.
    %\color{red} I would rewrite slightly the last line of the chain of inequalities by saying due to the strict positivity of the first factor and the lower bound of $|\Tilde{K_1}(x)|$ by Lemma  \ref{K1tilde}.
    \color{black}
\end{proof}

\subsection{Non-convexity}

In this subsection we will propagate the quantitative properties of our approximate solution $R_0$ to our real solution $R$. We will enclose $R(x)$ between two explicit functions $R_{inf}, R_{sup}$, to get a hard control on $R$.

\begin{lemma}
    Let $R_{inf}(x) := R_0(x) - \epsilon \sqrt{p_m(x)}$ and $R_{sup}(x) := R_0(x) + \epsilon \sqrt{p_m(x)}$, where $p_m$ is given by Definition \ref{pm_def}.
    Then,  we have $R_{inf}(x) \leq R(x) \leq R_{sup}(x)$ for all $x$.
\end{lemma}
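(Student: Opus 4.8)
The statement is equivalent to the pointwise bound $|R(x)-R_0(x)| = |v(x)| \le \epsilon\sqrt{p_m(x)}$ for all $x$, where $v = \int_0^x \tilde u$ and $u = G[u] \in X_m^\epsilon$ is the fixed point produced by Theorem \ref{Gcontractive}. So the whole plan is to establish this single inequality; there is no genuine obstacle, it is a direct Cauchy--Schwarz estimate once we exploit the symmetries of $v$.

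First I would reduce to the fundamental domain. Since $v \in \Tilde{Y}_m^{even,\epsilon}$ it is even and $2\pi/m$-periodic, hence $v(x) = v(p_m(x))$ with $p_m(x) \in [0,\pi/m]$ (this is exactly the reduction already used in the proof of Lemma \ref{bound_v(x)-v(y)}). Therefore it suffices to bound $|v(t)|$ for $t \in [0,\pi/m]$. On this interval $\tilde u = u$, so $v(t) = \int_0^t u(y)\,dy$, and Cauchy--Schwarz gives
\begin{equation*}
    |v(t)| \;=\; \left|\int_0^t u(y)\,dy\right| \;\le\; \sqrt{t}\,\|u\|_{L^2([0,t])} \;\le\; \sqrt{t}\,\Xm{u} \;\le\; \epsilon\sqrt{t}.
\end{equation*}
Combining the two displays yields $|v(x)| = |v(p_m(x))| \le \epsilon\sqrt{p_m(x)}$ for every $x$, which is the claim $R_{inf}(x) \le R(x) \le R_{sup}(x)$.

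The only points requiring a word of care are: that $\Xm{u}\le\epsilon$ is precisely the membership $u\in X_m^\epsilon$ guaranteed by the fixed-point corollary; that the identity $v(x)=v(p_m(x))$ is legitimate for \emph{all} $x\in\mathbb{R}$ (not just the fundamental domain), which follows from evenness plus $2\pi/m$-periodicity; and that $R = R_0 + v$ by definition of the unfolded solution in Proposition \ref{Unfolding_u_to_R}. No further input — in particular nothing computer-assisted — is needed for this lemma; it is a clean analytic consequence of the $L^2$ bound on $u$, and it is exactly the quantitative control that will be fed into Proposition \ref{NonConvex} together with Lemma \ref{Ineq_NonConvexity}.
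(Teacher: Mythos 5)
Your proof is correct and follows essentially the same route as the paper's: the paper applies Lemma \ref{bound_v(x)-v(y)} with $y=0$ (using $v(0)=0$, $p_m(0)=0$), while you inline the Cauchy--Schwarz computation on the fundamental domain together with the reduction $v(x)=v(p_m(x))$ — which is precisely the content of that lemma's proof.
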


\begin{proof}
    Since $R = R_0 + v$ where $v \in \Tilde{Y}_m^{even,\epsilon}$, using Lemma \ref{bound_v(x)-v(y)} and that $v(0) = 0$:
    \begin{align*}
        |v(x)| = |v(x)-v(0)| \leq \Xm{v'} \sqrt{p_m(x)} \leq \epsilon \sqrt{p_m(x)}.
    \end{align*}
    We can then conclude 
    \begin{equation*}
        R_{inf}(x) \leq R_0(x) - |v(x)| \leq R(x) \leq R_0(x) + |v(x)| \leq R_{sup}(x).
    \end{equation*}
\end{proof}

We now proceed to prove the non-convexity of the patch.
\begin{proposition}\label{NonConvex}
    The vortex patch $D = \{ (x,y) \in \mathbb{R}^2 : x^2+y^2 \leq R(\alpha(x,y))\}$ is non-convex and $m$-fold symmetric (m = 6), where $\alpha(x,y)$ is the angle of the point $(x,y)$ in polar coordinates.
\end{proposition}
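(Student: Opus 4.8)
The plan is to exploit the quantitative enclosure $R_{\inf}(x)\le R(x)\le R_{\sup}(x)$ together with the non-convexity inequality \eqref{Ineq_NonConvexty_eq} satisfied by $R_0$. The $m$-fold symmetry is immediate: by construction $R_0$ is $2\pi/m$-periodic and even, $v=\int_0^x\tilde u$ with $\tilde u$ the odd $2\pi/m$-periodic extension of $u$, hence $v$ (and thus $R=R_0+v$) is also $2\pi/m$-periodic and even, so $D$ is invariant under rotation by $2\pi/m$. The heart of the matter is non-convexity, and for that I would compare the radial distance of the boundary point at angle $\alpha=\pi/m$ (a ``valley'' of the $m$-fold pattern) with the chord joining the two ``peaks'' at $\alpha=0$ and $\alpha=2\pi/m$. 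By the $m$-fold symmetry it suffices to work on $[0,2\pi/m]$.

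First I would record the relevant boundary points. The peak at $\alpha=0$ is the point $P_0=(R(0),0)$, and the peak at $\alpha=2\pi/m$ is $P_1=R(2\pi/m)(\cos(2\pi/m),\sin(2\pi/m))$; by evenness and periodicity $R(2\pi/m)=R(0)$, so $P_0$ and $P_1$ both lie on the circle of radius $R(0)$ and the chord $[P_0,P_1]$ has its midpoint at distance $R(0)\cos(\pi/m)$ from the origin, along the ray $\alpha=\pi/m$. The boundary point on that ray is $Q=R(\pi/m)(\cos(\pi/m),\sin(\pi/m))$, at distance $R(\pi/m)$ from the origin. If $R(\pi/m)<R(0)\cos(\pi/m)$, then $Q$ lies strictly inside the chord $[P_0,P_1]$ (on the origin side of it), so the point $Q\in\partial D$ lies strictly inside the segment joining the two boundary points $P_0,P_1\in\partial D$; since $Q\neq P_0,P_1$ this contradicts convexity of $D$. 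Hence it remains only to verify $R(\pi/m)<R(0)\cos(\pi/m)$.

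This last inequality follows from the enclosure and the computer-assisted Lemma \ref{Ineq_NonConvexity}. Indeed, using $R(\pi/m)\le R_{\sup}(\pi/m)=R_0(\pi/m)+\epsilon\sqrt{p_m(\pi/m)}=R_0(\pi/m)+\epsilon\sqrt{\pi/m}$ and $R(0)\ge R_{\inf}(0)=R_0(0)$ (since $p_m(0)=0$), we get
\begin{align*}
R(\pi/m) \;\le\; R_0(\pi/m)+\epsilon\sqrt{\pi/m} \;<\; R_0(0)\cos(\pi/m) \;\le\; R(0)\cos(\pi/m),
\end{align*}
where the strict middle inequality is exactly \eqref{Ineq_NonConvexty_eq}. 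This completes the argument.

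I expect the only genuinely delicate point to be the geometric reduction: making precise that ``a boundary point strictly inside a chord between two boundary points'' really does violate convexity, i.e. that the three points $P_0$, $Q$, $P_1$ are genuinely positioned so that $Q$ is in the open segment and interior to the convex hull argument goes through. Concretely one checks $Q = \tfrac12(P_0+P_1) \cdot \tfrac{R(\pi/m)}{R(0)\cos(\pi/m)}$ is a positive multiple $<1$ of the chord midpoint, hence lies strictly between the origin and the segment $[P_0,P_1]$; since $D$ contains a neighborhood of the origin along the ray and $Q\in\partial D$ while the segment $[P_0,P_1]\subset \overline D$ by convexity, the point just outside $Q$ along the ray would have to be both outside $D$ and inside the convex hull of $P_0,P_1$ and the origin — contradiction. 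Everything else is a direct substitution of the already-established bounds.
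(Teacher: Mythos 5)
Your proposal follows essentially the same route as the paper: both use that $R(\alpha_k)=R_0(0)$ at the ``peak'' angles $\alpha_k=2\pi k/m$ (because $v(0)=0$ and $v$ is $2\pi/m$-periodic), both bound $R(\pi/m)$ by $R_{\sup}(\pi/m)=R_0(\pi/m)+\epsilon\sqrt{\pi/m}$, and both invoke the computer-assisted inequality of Lemma \ref{Ineq_NonConvexity} to conclude $R(\pi/m)<R_0(0)\cos(\pi/m)$. The $m$-fold symmetry argument is also identical.

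One caveat: your first formulation of the geometric step is off. You write that $Q$ ``lies strictly inside the segment joining the two boundary points $P_0,P_1$'' and that ``this contradicts convexity.'' That statement is both geometrically inaccurate ($Q$ is on the origin side of the chord, not on it) and not a valid convexity violation in any case (a boundary point lying on a chord between two boundary points is perfectly compatible with convexity — e.g.\ a flat side of a polygon). Your follow-up ``delicate point'' paragraph then supplies a correct (if roundabout) fix, showing that a point slightly beyond $Q$ along the ray lies in the convex hull of $P_0,P_1,O$ yet outside $D$. The paper avoids this detour entirely by arguing directly that the midpoint $M=(P_0+P_1)/2$ has radial distance $R_0(0)\cos(\pi/m)>R(\pi/m)$, hence $M\notin D$ while $P_0,P_1\in D$, which is the standard and cleanest witness of non-convexity. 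You should replace your first phrasing with this direct ``midpoint escapes the patch'' statement; the rest of the argument is sound and matches the paper.
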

\begin{proof}
    The $m$-fold symmetry is straightforward since $R = R_0 + v$, with $v \in \Tilde{Y}_m^{even, \epsilon}$, and both $R_0$ and $v$ are $2\pi/m$-periodic.

    For the non-convexity we  will use that we know exactly the points on $\partial D$ at angles $\alpha_k = \frac{2\pi}{m} k$ for all $k \in \mathbb{Z}$. This is because by construction $v(x) = \int_0^x \Tilde{u}$ so $v(0)=0$ and $v$ is $2\pi/m$-periodic. Hence $R(\alpha_k) = R_0(0)$.
    Let $P_0, P_1 \in \partial D$, defined as
    \begin{align*}
        P_0 := R_0(0)(1, 0), \quad \quad P_1 := R_0(0)(\cos(\alpha_1), \sin(\alpha_1)).
    \end{align*}
    Then if we prove that the midpoint of the segment between $P_0, P_1$ doesn't belong to $D$ we have proven the non-convexity of $D$. We have to show that $\left | \frac{P_0+P_1}{2}\right | > R(\frac{\alpha_1}{2})$. Using that
    \begin{align*}
        R\left (\frac{\alpha_1}{2}\right ) \leq R_{sup}\left (\frac{\alpha_1}{2}\right ) = R_0\left (\frac{\pi}{m}\right ) + \epsilon \sqrt{\frac{\pi}{m}}, \quad
        \left | \frac{P_0+P_1}{2}\right | = R_0(0) \cos\left ( \frac{\pi}{m}\right ),
    \end{align*}
    the result follows from Lemma  \ref{Ineq_NonConvexity}.
    
    %is reduced to the inequality $R_0(\pi/m)+\epsilon \sqrt{\pi/m} < R_0(0) \cos(\pi/m)$ which is proved in Lemma \ref{Ineq_NonConvexity}. 
\end{proof}

\appendix
\section{Auxiliary Lemmas}

\begin{lemma}
\label{FixedPointConstantChecking}
Consider the constants $C_5, C_6$, $\varepsilon_0 =3\cdot 10^{-8}/0.1$ defined in Proposition \ref{Final_nonlinear_bounds_NL}. Then the following bounds are satisfied:  
\begin{equation}\label{numerical_bounds_constants}
    \varepsilon_0 = 3\cdot 10^{-7}, \quad 575 < C_5 < 580, \quad C_6 < 780. 
\end{equation}

Moreover, let us recall from \eqref{def_R0_m_N0_Omega_epsilon} that $\varepsilon = 2\cdot 10^{-5}$ and from Lemma \ref{Linvertible} that $C_1 = 35$. We have that
\begin{align*}\frac{1-\sqrt{1-4C_1C_5C_1\epsilon_0}}{2C_1C_5} \leq \epsilon \leq \frac{1+\sqrt{1-4C_1C_5C_1\epsilon_0}}{2C_1C_5}
    \end{align*}

    and

    \begin{align*}
        \varepsilon < \frac{1}{C_1 C_6}.
    \end{align*}
\end{lemma}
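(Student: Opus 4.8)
## Proof Proposal for Lemma \ref{FixedPointConstantChecking}

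The plan is to reduce this lemma to a finite collection of rigorous numerical inequalities, which can be verified either by hand with interval arithmetic or via the computer-assisted framework already in place. The lemma has three parts: (i) the numerical bounds \eqref{numerical_bounds_constants} on $\varepsilon_0$, $C_5$, $C_6$; (ii) the two-sided inequality sandwiching $\varepsilon$; and (iii) the Lipschitz bound $\varepsilon < 1/(C_1 C_6)$. Parts (ii) and (iii) follow mechanically from (i) together with the explicit values $\varepsilon = 2\cdot 10^{-5}$, $C_1 = 35$, and $\varepsilon_0 = 3\cdot 10^{-7}$, so the real content is establishing (i).

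First I would assemble the chain of constant definitions. Recall $C_5 = (C_T + C_E + C_N)/C_{K_1}$ and $C_6 = 2(C_N' + C_T' + C_E')/C_{K_1}$, with $C_{K_1} = 0.1$ by Lemma \ref{K1bound}. Each of $C_T, C_E, C_N, C_N', C_T', C_E'$ is given by an explicit closed-form expression in Propositions \ref{T-T_1 v}, \ref{Ev-E0-E1v}, \ref{Nv}, \ref{Nv1Nv2bound}, \ref{T-T_1v1v2}, \ref{Ev1-E1v1-Ev2+E1v2} respectively, built from the auxiliary constants $b_1, b_2, b_3^C, b_3^S, b_4^C, b_4^S, b_5, b_6^C, b_6^S$, the geometric bounds $m_R, M_R, M_{R_0}, M_{R_0'}, M_{R_0''}$ from Lemmas \ref{boundR0}, \ref{MdR0}, \ref{MddR0}, the integral bounds $I_1^\alpha, \dots, I_5^\alpha$ from Lemmas \ref{Integralbound1}--\ref{Integralbound5}, and the fixed parameters $m = 6$, $\Omega = 1537/3750$, $\varepsilon = 2\cdot 10^{-5}$. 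Since every one of these is either a rational number, an explicitly bounded integral, or an elementary function of such quantities (involving $\arcsin$, $\log$, square roots, $\pi$, $e$), the quantities $C_5$ and $C_6$ are computable to any desired precision by propagating rigorous enclosures through the arithmetic. I would therefore evaluate each $b_i$ to a validated interval, then each $C_\bullet$, then $C_5$ and $C_6$, and check numerically that $575 < C_5 < 580$ and $C_6 < 780$. The value $\varepsilon_0 = C_{E_0}/C_{K_1} = (3\cdot 10^{-8})/0.1 = 3\cdot 10^{-7}$ is immediate (note the statement's display says $3\cdot10^{-7}$, consistent with this; the ``$3\cdot10^{-8}$'' in the preamble line is a typo for $\varepsilon_0$'s numerator).

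For part (ii): with $C_1 = 35$, $C_5 < 580$, $\varepsilon_0 = 3\cdot 10^{-7}$, the discriminant $1 - 4C_1^2 C_5 \varepsilon_0$ satisfies $4 C_1^2 C_5 \varepsilon_0 < 4 \cdot 1225 \cdot 580 \cdot 3\cdot 10^{-7} < 0.86 < 1$, so the square root is real and the admissible interval for $\varepsilon$ is nonempty. The lower endpoint is $\frac{1 - \sqrt{1-4C_1^2C_5\varepsilon_0}}{2C_1C_5}$; using $1 - \sqrt{1-t} \le t$ for $t\in[0,1]$ with $t = 4C_1^2C_5\varepsilon_0$, this is $\le \frac{4C_1^2C_5\varepsilon_0}{2C_1C_5} = 2C_1\varepsilon_0 = 70\cdot 3\cdot 10^{-7} = 2.1\cdot 10^{-5}$ --- wait, this must be checked carefully against $\varepsilon = 2\cdot 10^{-5}$; I would instead use the sharper bound $1-\sqrt{1-t} = \frac{t}{1+\sqrt{1-t}} \le \frac{t}{1+\sqrt{1-4C_1^2 C_5\varepsilon_0}}$ and plug in $C_5 > 575$, $\varepsilon_0 = 3\cdot10^{-7}$ to get a value safely below $2\cdot 10^{-5}$, confirming the lower bound holds. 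The upper endpoint $\frac{1+\sqrt{\cdots}}{2C_1C_5}$ is at least $\frac{1}{2C_1C_5} > \frac{1}{2\cdot 35\cdot 580} > 2.4\cdot 10^{-5} > \varepsilon$, giving part (ii). For part (iii), $\frac{1}{C_1 C_6} > \frac{1}{35\cdot 780} > 3.6\cdot 10^{-5} > 2\cdot 10^{-5} = \varepsilon$.

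The main obstacle is purely computational bookkeeping: there is no conceptual difficulty, but one must faithfully track the definitions of roughly a dozen nested constants and propagate rigorous error bounds through expressions involving transcendental functions and the integral bounds $I_j^\alpha$, being careful that the inequalities $575 < C_5$ and $C_6 < 780$ are not merely plausible but provably enclosed. I would delegate this to the same \texttt{Arb}-based computer-assisted routine used elsewhere in the paper (cf. Appendix \ref{appendix:implementation}), which is exactly suited to evaluating such closed-form arithmetic expressions with validated enclosures; the remaining inequalities in (ii) and (iii) are then elementary consequences checked by hand as above.
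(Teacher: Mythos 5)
Your proposal matches the paper's approach: Lemma \ref{FixedPointConstantChecking} is proved by rigorously enclosing the nested constants $b_i, C_T, C_E, C_N, C_T', C_E', C_N'$ (and hence $C_5$, $C_6$) with the Arb-based interval-arithmetic routine (cf.\ Appendix \ref{appendix:implementation}), and the two inequalities involving $\varepsilon$ then follow by elementary arithmetic exactly as you outline. One small slip in your hand-check of the lower endpoint: since
\[
\frac{1-\sqrt{1-4C_1^2 C_5\varepsilon_0}}{2C_1C_5}=\frac{2C_1\varepsilon_0}{1+\sqrt{1-4C_1^2C_5\varepsilon_0}},
\]
you need an \emph{upper} bound on $C_5$ (namely $C_5<580$, giving $4C_1^2C_5\varepsilon_0<0.853$ and hence $\sqrt{1-4C_1^2C_5\varepsilon_0}>0.38$), not $C_5>575$ as written; with that correction the lower endpoint is below $1.6\cdot 10^{-5}<\varepsilon$, so the conclusion stands.
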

\begin{proof}
    The proof is computer-assisted and can be found in the supplementary material. We refer to Appendix \ref{appendix:implementation} for the implementation details.
\end{proof}

%Estimates of interesting integrals
\begin{lemma} \label{Integralbound1}
    Bounds for $I_1^\alpha(m)$
    \begin{itemize}
    \item Bound for $h_C(z) = \cos(z)$
    \begin{equation}\label{Integralbound1.1} \int_{-\pi}^\pi \frac{|\cos(z)|\sqrt{p_m(z)}}{|\sin\left (\frac{z}{2}\right )|}dz  \leq \sqrt{\frac{\pi}{m}}\left [ \frac{4\pi}{m \sin \left ( \frac{\pi}{2m}\right )} + \sqrt{2}\left (\pi \log \left ( \frac{m}{2}\right ) + 2\right )\right ] =: I_1^C(m) .
    \end{equation}
    
    \item Bound for $h_S(z) = \sin(z)$
    \begin{equation}\label{Integralbound1.2} \int_{-\pi}^\pi \frac{|\sin(z)|\sqrt{p_m(z)}}{|\sin\left (\frac{z}{2}\right )|}dz  \leq 8 \sqrt{\frac{\pi}{m}} =: I_1^S(m). 
    \end{equation}
    \end{itemize}
\end{lemma}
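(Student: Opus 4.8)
The plan is to treat the two inequalities separately. In both cases the integrand is even in $z$, so it suffices to bound $2\int_0^{\pi}(\cdots)\,dz$; the only delicate point is the integrable singularity at $z=0$, where $\sin(z/2)\sim z/2$ while $\sqrt{p_m(z)}=\sqrt z$ on the first fundamental interval $[0,\pi/m]$ (recall that $p_m(z)=\operatorname{dist}(z,\tfrac{2\pi}{m}\mathbb Z)=z$ for $z\in[0,\pi/m]$, and $p_m(z)\le \pi/m$ everywhere by Definition \ref{pm_def}). Throughout I would use the concavity of $\sin$ on $[0,\pi]$: if $0\le t\le b\le \pi$ then $\sin t$ lies above the chord through $(0,0)$ and $(b,\sin b)$, i.e.\ $\sin t\ge \tfrac{\sin b}{b}\,t$.

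For the bound $I_1^S(m)$ the key point is the exact cancellation $\frac{|\sin z|}{|\sin(z/2)|}=2\cos(z/2)\le 2\cos(z/2)$ on $(0,\pi)$, coming from $\sin z=2\sin(z/2)\cos(z/2)$ with $\sin(z/2)>0$. Using $\sqrt{p_m(z)}\le\sqrt{\pi/m}$, the integrand on $(0,\pi)$ is at most $2\cos(z/2)\sqrt{\pi/m}$, and $\int_0^\pi 2\cos(z/2)\,dz=4[\sin(z/2)]_0^{\pi}=4$. Doubling gives $\int_{-\pi}^\pi \frac{|\sin z|\sqrt{p_m(z)}}{|\sin(z/2)|}\,dz\le 8\sqrt{\pi/m}=I_1^S(m)$.

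For $I_1^C(m)$ I would split $[0,\pi]=[0,\pi/m]\cup[\pi/m,\pi/2]\cup[\pi/2,\pi]$ (legitimate for $m\ge 2$) and bound $\sin(z/2)$ from below on each piece, always using $|\cos z|\le 1$ and $\sqrt{p_m(z)}\le\sqrt{\pi/m}$. On $[0,\pi/m]$, where $p_m(z)=z$, the chord of $z\mapsto\sin(z/2)$ on $[0,\pi/m]$ gives $\sin(z/2)\ge \tfrac{m}{\pi}\sin(\tfrac{\pi}{2m})\,z$, so the integrand is at most $\tfrac{\pi}{m\sin(\pi/(2m))}z^{-1/2}$, whose integral over $[0,\pi/m]$ is $\tfrac{2\pi}{m\sin(\pi/(2m))}\sqrt{\pi/m}$. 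On $[\pi/m,\pi/2]$, the chord of $z\mapsto\sin(z/2)$ on $[0,\pi/2]$ gives $\sin(z/2)\ge \tfrac{\sqrt2}{\pi}z$ (since $\sin(\pi/4)=\tfrac1{\sqrt2}$), so the integrand is at most $\tfrac{\pi}{\sqrt2}\sqrt{\pi/m}\,z^{-1}$, whose integral over $[\pi/m,\pi/2]$ is $\tfrac{\pi}{\sqrt2}\log(m/2)\sqrt{\pi/m}$. On $[\pi/2,\pi]$, monotonicity of $\sin$ on $[\pi/4,\pi/2]$ gives $\sin(z/2)\ge\sin(\pi/4)=\tfrac1{\sqrt2}$, so the integrand is at most $\sqrt2\sqrt{\pi/m}\,|\cos z|$, whose integral over $[\pi/2,\pi]$ equals $\sqrt2\sqrt{\pi/m}$ because $\int_{\pi/2}^\pi|\cos z|\,dz=1$. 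Summing the three contributions and multiplying by $2$ yields exactly $\sqrt{\pi/m}\big[\tfrac{4\pi}{m\sin(\pi/(2m))}+\sqrt2\,\pi\log(m/2)+2\sqrt2\big]=I_1^C(m)$.

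Nothing here is deep: the main "obstacle" is purely bookkeeping — one must make sure each chord/monotonicity estimate is applied on an interval where $\sin(z/2)$ is genuinely concave (resp.\ increasing), and track constants precisely so that the answer lands on the stated $I_1^C(m)$ rather than a weaker $O(\log m)$ bound. In particular the choice of split points $\pi/m$ and $\pi/2$, and the specific chord on $[0,\pi/2]$ used on the middle interval, are exactly what produce the constants $\sqrt2\pi$ and $2\sqrt2$.
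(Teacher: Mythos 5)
Your proposal is correct and follows essentially the same argument as the paper: the identical split $[0,\pi/m]\cup[\pi/m,\pi/2]\cup[\pi/2,\pi]$ with concavity/chord lower bounds for $\sin(z/2)$ on each piece for $I_1^C$, and the identity $\sin z = 2\sin(z/2)\cos(z/2)$ together with $\sqrt{p_m(z)}\le\sqrt{\pi/m}$ for $I_1^S$, landing exactly on the stated constants.
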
 
\begin{proof}
    We start by proving \eqref{Integralbound1.1}. Recalling the definition \ref{pm_def} of $p_m(z) = \text{dist} \left(z, \left \{ \frac{2\pi}{m}k\right \}_{k \in \mathrm{Z}} \right )$, we have $p_m(z) \leq \frac{\pi}{m}$ for all $z$ in $\mathrm{R}$ and $p_m(z) = |z|$ for all $z$ in $[-\pi/m, \pi/m]$. The other inequality we are going to use is that, since $|\sin \left (\frac{z}{2}\right )|$ is concave in $[-\pi, \pi]$ we have that
    \begin{equation}
        \frac{1}{|\sin \left ( \frac{z}{2}\right )|} \leq \frac{a}{\sin \left ( \frac{a}{2}\right )} \frac{1}{|z|} \quad \forall a \in [0, \pi], \ \ \forall z \in [-a, a].
    \end{equation}
    We now proceed by splitting the integral and using different bounds in each interval
    \begin{align*}
        \int_{-\pi}^\pi \frac{|\cos (z)| \sqrt{p_m(z)}}{|\sin\left ( \frac{z}{2} \right )|} dz &\leq \int_{-\frac{\pi}{m}}^{\frac{\pi}{m}} \frac{\sqrt{|z|}}{|\sin\left ( \frac{z}{2}\right )|}dz 
        + 2 \int_{\frac{\pi}{m}}^{\frac{\pi}{2}} \frac{\sqrt{\frac{\pi}{m}}}{\sin\left ( \frac{z}{2}\right )}dz 
        + 2\int_{\frac{\pi}{2}}^\pi \frac{\sqrt{\frac{\pi}{m}}}{\sin\left ( \frac{\pi}{4}\right )} |\cos(z)| dz \\
        & \leq \frac{2\pi}{m|\sin\left(\frac{\pi}{2m}\right)|} \int_0^{\frac{\pi}{m}} \frac{1}{\sqrt{z}}dz + 
        \frac{2\sqrt{ \frac{\pi}{m}} \frac{\pi}{2}}{\sin \left ( \frac{\pi}{4} \right )} \int_{\frac{\pi}{m}}^\frac{\pi}{2} \frac{1}{z} 
        + 2\sqrt{\frac{2\pi}{m}}\int_{\frac{\pi}{2}}^\pi |\cos (z)|\\ 
        &= \frac{4\pi}{m \sin \left ( \frac{\pi}{2m}\right )} \sqrt{\frac{\pi}{m}} + \sqrt{\frac{2\pi}{m}} \pi \log \left ( \frac{m}{2} \right )  + 2\sqrt{\frac{2\pi}{m}} ,
    \end{align*}
    where we have used the previously mentioned inequalities and that $1/\sin(z/2)$ is decreasing in $[0, \pi]$.

    For the second inequality \eqref{Integralbound1.2} it is more straightforward
    \begin{align*}
        \int_{-\pi}^\pi \frac{|\sin(z)|\sqrt{p_m(z)}}{|\sin \left( \frac{z}{2}\right )|} dz =  \int_{-\pi}^\pi 2 |\cos \left(\frac{z}{2} \right )| \sqrt{p_m(z)} dz \leq 2\sqrt{\frac{\pi}{m}} 4.
    \end{align*}
\end{proof}

For the following Lemmas we will not write their proofs since they are very similar to the proof of last Lemma \ref{Integralbound1}.
%Estimates of interesting integrals
\begin{lemma} \label{Integralbound2}
    We have the following integral bounds $I_2^\alpha(m)$
    \begin{itemize}
    
    \item Bound for $h_C(z) = \cos(z) p_m(z)$
        \begin{equation*} 
            \int_{-\pi}^\pi \frac{|\cos(z)|p_m(z)^{3/2}}{\sin^2 \left (\frac{z}{2}\right )}dz  \leq \sqrt{\frac{\pi}{m}} \left [ \frac{4\pi^2}{m^2 \sin^2\left ( \frac{\pi}{2m}\right )} + \frac{\pi^2(m-2)}{m} + \frac{4\pi}{m}\right ]=: I_2^C(m). 
        \end{equation*}
    \item Bound for $h_S(z) = \sin(z)$
        \begin{equation*} 
            \int_{-\pi}^\pi \frac{|\sin(z)|\sqrt{p_m(z)}}{\sin^2 \left (\frac{z}{2}\right )}dz  \leq \sqrt{\frac{\pi}{m}} \left [ \frac{8\pi}{m \sin \left ( \frac{\pi}{2m}\right )} + 2\sqrt{2} \pi \log \left ( \frac{m}{2}\right ) + 8(\sqrt{2}-1)\right ] =: I_2^S(m). 
        \end{equation*}
    \end{itemize}
\end{lemma}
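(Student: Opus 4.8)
The plan is to follow the template of the proof of Lemma \ref{Integralbound1}, relying on the same three elementary facts: $p_m(z)\le \pi/m$ for every $z$, $p_m(z)=|z|$ for $z\in[-\pi/m,\pi/m]$, and the inequality $\tfrac{1}{|\sin(z/2)|}\le \tfrac{a}{\sin(a/2)}\tfrac{1}{|z|}$, valid for all $a\in(0,\pi]$ and $z\in[-a,a]$ as a consequence of the concavity of $|\sin(z/2)|$ on $[-\pi,\pi]$. In both cases the integrand behaves like $|z|^{-1/2}$ near $z=0$, hence is integrable there, and there is no other singularity on $[-\pi,\pi]$ since $\sin^2(z/2)$ vanishes only at $z=0$; so everything reduces to a finite sum of explicitly computable elementary integrals, and the work is purely in choosing the right splitting points and the right auxiliary parameter $a$ in each piece.

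For the bound $I_2^C(m)$, split $[-\pi,\pi]$ into $[-\pi/m,\pi/m]$, into $[\pi/m,\pi/2]$ together with its reflection, and into $[\pi/2,\pi]$ together with its reflection. On the first piece replace $p_m(z)^{3/2}$ by $|z|^{3/2}$ and apply the concavity inequality with $a=\pi/m$; this leaves $\int_{-\pi/m}^{\pi/m}|z|^{-1/2}\,dz=4\sqrt{\pi/m}$ and produces the term $\tfrac{4\pi^2}{m^2\sin^2(\pi/(2m))}\sqrt{\pi/m}$. On the middle piece bound $p_m(z)^{3/2}\le(\pi/m)^{3/2}$ and $|\cos z|\le1$, and apply the concavity inequality with $a=\pi/2$, so that $\sin^{-2}(z/2)\le \tfrac{\pi^2}{2}z^{-2}$; integrating $z^{-2}$ over $[\pi/m,\pi/2]$ yields $\tfrac{\pi^2(m-2)}{m}\sqrt{\pi/m}$. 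On the last piece bound $p_m(z)^{3/2}\le(\pi/m)^{3/2}$ and $\sin(z/2)\ge\sin(\pi/4)$, and use $\int_{\pi/2}^{\pi}|\cos z|\,dz=1$ to get $\tfrac{4\pi}{m}\sqrt{\pi/m}$. Summing the three contributions gives exactly $I_2^C(m)$.

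For the bound $I_2^S(m)$, first use $|\sin z|=2|\sin(z/2)||\cos(z/2)|$ to cancel one power of $\sin(z/2)$, rewriting the integrand as $\tfrac{2|\cos(z/2)|\sqrt{p_m(z)}}{|\sin(z/2)|}$; this is the shape treated in Lemma \ref{Integralbound1} with $\cos(z/2)$ in place of $\cos z$, and $\cos(z/2)$ again vanishes at $z=\pi$, so the same three-region split applies. On $[-\pi/m,\pi/m]$ bound $|\cos(z/2)|\le1$, set $p_m(z)=|z|$, use concavity with $a=\pi/m$, and integrate $|z|^{-1/2}$ to get $\tfrac{8\pi}{m\sin(\pi/(2m))}\sqrt{\pi/m}$. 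On $[\pi/m,\pi/2]$ and its reflection bound $|\cos(z/2)|\le1$ and $p_m\le\pi/m$, apply concavity with $a=\pi/2$, and integrate $z^{-1}$ to get $2\sqrt2\,\pi\log(m/2)\sqrt{\pi/m}$. On $[\pi/2,\pi]$ and its reflection bound $p_m\le\pi/m$ and $\sin(z/2)\ge\sin(\pi/4)$, and use $\int_{\pi/2}^{\pi}\cos(z/2)\,dz=2-\sqrt2$ to get $8(\sqrt2-1)\sqrt{\pi/m}$. Adding the pieces gives $I_2^S(m)$.

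There is no genuine analytic obstacle; the only points requiring care are (i) confirming integrability at $z=0$ in both cases (each integrand is $O(|z|^{-1/2})$ there) and (ii) matching constants, i.e.\ choosing in each subinterval the value of the parameter $a$ in the concavity inequality — $a=\pi/m$ near the origin, $a=\pi/2$ on the intermediate range, and the crude bound $\sin(z/2)\ge\sin(\pi/4)$ near $z=\pi$ — so that the resulting elementary integrals collapse precisely to the stated expressions. The remaining integral estimates (Lemmas \ref{Integralbound3}, \ref{Integralbound4}, \ref{Integralbound5}) are obtained by the same scheme, replacing $h$ by the appropriate weight and adjusting the powers of $p_m(z)$ and $\sin(z/2)$.
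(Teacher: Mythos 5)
Your argument is correct and is exactly the three-region split ($[0,\pi/m]$, $[\pi/m,\pi/2]$, $[\pi/2,\pi]$ plus reflections) with the concavity bound on $1/\sin(z/2)$ that the paper uses for Lemma~\ref{Integralbound1} and indicates, without writing out, is to be reused for Lemmas~\ref{Integralbound2}--\ref{Integralbound5}; the constants you obtain match $I_2^C(m)$ and $I_2^S(m)$ term by term.
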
 

\begin{lemma} \label{Integralbound3}
    We have the following integral bounds $I_3^\alpha(m)$
    \begin{itemize}
    \item Bound for $h_C(z) = \cos(z) p_m(z)$
        \begin{equation*} 
            \int_{-\pi}^\pi |\cos(z) p_m(z)| dz  \leq  \frac{4\pi}{m} =: I_3^C(m). 
        \end{equation*}
    \item Bound for $h_S(z) = \sin(z)$
        \begin{equation*} 
            \int_{-\pi}^\pi |\sin(z)|dz  = 4 =:  I_3^S(m). 
        \end{equation*}
    \end{itemize}
\end{lemma}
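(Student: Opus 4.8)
The plan is to reduce both bounds to the crude pointwise estimate $p_m(z) \le \tfrac{\pi}{m}$, valid for every $z \in \mathbb{R}$: by Definition \ref{pm_def}, $p_m(z)$ is the distance from $z$ to the lattice $\{\tfrac{2\pi}{m}k\}_{k\in\mathbb{Z}}$, whose consecutive points are spaced $\tfrac{2\pi}{m}$ apart, so every real number lies within $\tfrac{\pi}{m}$ of some lattice point. This is the only geometric input; the rest is the elementary computation of $\int_{-\pi}^{\pi}|\cos(z)|\,dz$ and $\int_{-\pi}^{\pi}|\sin(z)|\,dz$, exactly as in Lemma \ref{Integralbound1}.

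First I would dispose of the $h_S$ case, where no estimate is needed: $\int_{-\pi}^{\pi}|\sin(z)|\,dz = 2\int_0^{\pi}\sin(z)\,dz = 4$, which is the stated identity with $I_3^S(m)=4$.

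For the $h_C$ case I would pull out the bound $p_m(z)\le\tfrac{\pi}{m}$ and estimate
\begin{equation*}
\int_{-\pi}^{\pi}|\cos(z)|\,p_m(z)\,dz \;\le\; \frac{\pi}{m}\int_{-\pi}^{\pi}|\cos(z)|\,dz \;=\; \frac{\pi}{m}\cdot 4 \;=\; \frac{4\pi}{m},
\end{equation*}
using that $|\cos|$ is $\pi$-periodic with $\int_0^{\pi}|\cos(z)|\,dz = 2$, so its integral over the two periods contained in $[-\pi,\pi]$ equals $4$. This is precisely $I_3^C(m)$.

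There is essentially no obstacle here: the proof is a one-line application of $p_m\le\pi/m$ followed by a standard integral evaluation. The only points requiring minor care are the elementary bookkeeping that both $\int_{-\pi}^{\pi}|\sin|$ and $\int_{-\pi}^{\pi}|\cos|$ equal $4$ (not $2$), and the observation that — in contrast to the $h_S$ bound, which is an equality — the $h_C$ bound is not sharp, since $p_m$ is genuinely non-constant on $[-\pi,\pi]$.
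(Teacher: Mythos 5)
Your proof is correct and, since the paper explicitly omits the proof of this lemma citing its similarity to Lemma \ref{Integralbound1}, it matches the intended approach: bound $p_m(z) \le \pi/m$ and evaluate the elementary trigonometric integrals $\int_{-\pi}^{\pi}|\cos z|\,dz = \int_{-\pi}^{\pi}|\sin z|\,dz = 4$. Nothing more is needed here.
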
 

\begin{lemma} \label{Integralbound4}
    We have the following integral bounds $I_4^\alpha(m)$
    \begin{itemize}
    \item Bound for $h_C(z) = \cos(z) p_m(z)$
        \begin{equation*} 
            \int_{-\pi}^\pi \frac{|\cos(z)|p_m(z)^{3/2}}{|\sin\left (\frac{z}{2}\right )|}dz  \leq \left ( \frac{\pi}{m}\right )^{3/2} \left [ \frac{4 \pi}{3 m \sin \left ( \frac{\pi}{2m} \right )} + \sqrt{2}\left ( \pi \log \left( \frac{m}{2}\right )+ 2\right )\right ]  =: I_4^C(m). 
        \end{equation*}
    \item Bound for $h_S(z) = \sin(z)$
        \begin{equation*} 
            \int_{-\pi}^\pi \frac{|\sin(z)|\sqrt{p_m(z)}}{|\sin\left (\frac{z}{2}\right )|}dz  \leq I_4^S(m) := I_1^S(m) = 8 \sqrt{\frac{\pi}{m}}.
        \end{equation*}
    \end{itemize}
\end{lemma}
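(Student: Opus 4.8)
The plan is to follow the template of the proof of Lemma \ref{Integralbound1} essentially verbatim, since both integrands have the same structure: a trigonometric numerator ($|\cos z|$ or $|\sin z|$), a nonnegative power of $p_m(z)$, and the singular denominator $|\sin(z/2)|$. I will reuse the two elementary facts established there: first, $p_m(z)\le \pi/m$ for all $z$ and $p_m(z)=|z|$ on $[-\pi/m,\pi/m]$; second, the concavity bound $\tfrac{1}{|\sin(z/2)|}\le \tfrac{a}{\sin(a/2)}\tfrac{1}{|z|}$ valid for $z\in[-a,a]$ and $a\in(0,\pi]$. As in that lemma, I split $[-\pi,\pi]$ into the three regions $[-\pi/m,\pi/m]$, $\pm[\pi/m,\pi/2]$, and $\pm[\pi/2,\pi]$, and estimate the integrand separately on each.

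For the bound on $\int_{-\pi}^{\pi}\frac{|\cos z|\,p_m(z)^{3/2}}{|\sin(z/2)|}\,dz$: on the central region I use $p_m(z)=|z|$, $|\cos z|\le 1$, and the concavity bound with $a=\pi/m$ to reduce the integrand to $\frac{\pi/m}{\sin(\pi/(2m))}|z|^{1/2}$; since $\int_0^{\pi/m}z^{1/2}\,dz=\tfrac23(\pi/m)^{3/2}$, this region contributes $\frac{4\pi}{3m\sin(\pi/(2m))}(\pi/m)^{3/2}$. On $[\pi/m,\pi/2]$ I bound $p_m(z)^{3/2}\le(\pi/m)^{3/2}$, $|\cos z|\le 1$, and apply the concavity bound with $a=\pi/2$ (so $\tfrac{1}{\sin(z/2)}\le\tfrac{\pi/2}{\sin(\pi/4)}\tfrac1z=\tfrac{\pi}{\sqrt2\,z}$); using $\int_{\pi/m}^{\pi/2}\tfrac{dz}{z}=\log(m/2)$ this contributes $\sqrt2\,\pi\log(m/2)\,(\pi/m)^{3/2}$. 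On $[\pi/2,\pi]$ I use that $z/2\in[\pi/4,\pi/2]$, so $\sin(z/2)\ge\sin(\pi/4)=1/\sqrt2$, together with $\int_{\pi/2}^{\pi}|\cos z|\,dz=1$; this contributes $2\sqrt2\,(\pi/m)^{3/2}$. Summing the three contributions and accounting for the symmetric halves gives exactly $(\pi/m)^{3/2}\big[\tfrac{4\pi}{3m\sin(\pi/(2m))}+\sqrt2(\pi\log(m/2)+2)\big]=I_4^C(m)$.

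For the bound on $\int_{-\pi}^{\pi}\frac{|\sin z|\sqrt{p_m(z)}}{|\sin(z/2)|}\,dz$, I observe that this is literally the integral already treated in \eqref{Integralbound1.2}: writing $|\sin z|=2|\sin(z/2)\cos(z/2)|$ the singular denominator cancels, leaving $\int_{-\pi}^{\pi}2|\cos(z/2)|\sqrt{p_m(z)}\,dz\le 2\sqrt{\pi/m}\int_{-\pi}^{\pi}|\cos(z/2)|\,dz=8\sqrt{\pi/m}$, so $I_4^S(m)=I_1^S(m)$.

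There is no genuinely hard step here; the only care required is bookkeeping of constants and, in the $\cos$ case, making sure the concavity bound is invoked on each subinterval with $a$ equal to its right endpoint (so that the hypothesis $z\le a$ is satisfied) and that the lower bound $\sin(z/2)\ge\sin(\pi/4)$ is applied precisely on $[\pi/2,\pi]$. Everything else is a direct transcription of the argument for Lemma \ref{Integralbound1}, which is why the paper omits the written-out details.
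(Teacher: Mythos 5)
Your proof is correct and follows essentially the same three-region decomposition (with the same concavity estimate for $1/|\sin(z/2)|$ and the same bounds $p_m(z)\le\pi/m$, $p_m(z)=|z|$ near zero) that the paper uses for Lemma \ref{Integralbound1}, which is exactly why the paper omits the written-out details for Lemma \ref{Integralbound4}. The arithmetic checks out in each region, and the observation that the $\sin$ case reduces literally to \eqref{Integralbound1.2} matches the paper's definition $I_4^S:=I_1^S$.
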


\begin{lemma}\label{Integralbound5}
    We have the following integral bounds $I_5^\alpha(m)$
    \begin{itemize}
        \item Bound for $h^C(z) = \cos(z) p_m(z)$
        \begin{equation*}
            \int_{-\pi}^\pi \frac{\cos(z)p_m(z)^2}{\sin^2\left ( \frac{z}{2}\right )} dz \leq \frac{\pi}{m }\left [ 2 \left ( \frac{\pi/m}{\sin \left ( \frac{\pi}{2m}\right )}\right )^2 + \frac{\pi^2(m-2)}{m} + \frac{4\pi}{m}\right ]=: I_5^C(m).
        \end{equation*}
        \item Bound for $h^S(z) = \sin(z)$
        \begin{equation*}
            \int_{-\pi}^\pi \frac{\sin(z)p_m(z)}{\sin^2\left ( \frac{z}{2}\right )} dz \leq \frac{\pi}{m} \left [ \frac{4 \pi}{m \sin \left ( \frac{\pi}{2m}\right )} + 2\sqrt{2} \pi \log \left ( \frac{m}{2} \right ) + 8 (\sqrt{2}-1)\right ] =: I_5^S(m).
        \end{equation*}
    \end{itemize}
\end{lemma}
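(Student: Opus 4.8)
The plan is to repeat, essentially verbatim, the splitting argument used to prove Lemma~\ref{Integralbound1}. Since both integrands are even it suffices to estimate $\int_0^\pi$ and double, and in each case one partitions $[0,\pi]$ at the breakpoints $\pi/m$ and $\pi/2$, replacing the integrand on each subinterval by an explicitly integrable majorant. The only inputs are: $0\le p_m(z)\le\pi/m$ everywhere with $p_m(z)=z$ on $[0,\pi/m]$; the concavity of $|\sin(\cdot/2)|$ on $[0,\pi]$, which gives $\frac{1}{|\sin(z/2)|}\le\frac{a}{\sin(a/2)}\,\frac1z$ for $a\in(0,\pi]$ and $z\in(0,a]$ --- applied with $a=\pi/m$ on $[0,\pi/m]$ (bounding $1/|\sin(z/2)|$ by $\frac{\pi/m}{\sin(\pi/2m)}\frac1z$), with $a=\pi/2$ on $[0,\pi/2]$ (bounding $1/\sin^2(z/2)$ by $\frac{\pi^2}{2z^2}$, equivalently $1/|\sin(z/2)|$ by $\frac{\pi}{\sqrt2\,z}$), and the crude bound $1/\sin^2(z/2)\le1/\sin^2(\pi/4)=2$ on $[\pi/2,\pi]$; and the elementary evaluations $\int_{\pi/m}^{\pi/2}z^{-2}\,dz=\frac{m-2}{\pi}$, $\int_{\pi/m}^{\pi/2}z^{-1}\,dz=\log\frac m2$, $\int_0^{\pi/m}|\cos z|\,dz=\sin\frac\pi m\le\frac\pi m$, $\int_{\pi/2}^{\pi}|\cos z|\,dz=1$, $\int_{\pi/2}^{\pi}|\cos(z/2)|\,dz=2-\sqrt2$.

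For the bound on $h^C(z)=\cos(z)p_m(z)$ one uses the same three-interval split as for $I_2^C(m)$: on $[0,\pi/m]$ write $p_m(z)^2=z^2$, dominate $\frac{z^2}{\sin^2(z/2)}$ by $\big(\frac{\pi/m}{\sin(\pi/2m)}\big)^2$ and integrate $|\cos z|$, producing the term $2\big(\frac{\pi/m}{\sin(\pi/2m)}\big)^2$; on $[\pi/m,\pi/2]$ bound $p_m(z)^2\le(\pi/m)^2$ and $\frac1{\sin^2(z/2)}\le\frac{\pi^2}{2z^2}$, so the leftover $\int z^{-2}$ gives $\frac{\pi^2(m-2)}{m}$; on $[\pi/2,\pi]$ bound $p_m(z)^2\le(\pi/m)^2$, $\frac1{\sin^2(z/2)}\le2$ and integrate $|\cos z|$, giving $\frac{4\pi}m$. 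Summing the three contributions and doubling yields exactly $I_5^C(m)$.

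For the bound on $h^S(z)=\sin(z)$ the decisive move, identical to the one used for $I_2^S(m)$, is the trigonometric identity $\frac{\sin z}{\sin^2(z/2)}=\frac{2\cos(z/2)}{\sin(z/2)}$, which lowers the order of the singularity from two to one, so $\int_{-\pi}^\pi\frac{|\sin z|p_m(z)}{\sin^2(z/2)}\,dz=4\int_0^\pi\frac{|\cos(z/2)|\,p_m(z)}{|\sin(z/2)|}\,dz$. On $[0,\pi/m]$ use $p_m(z)=z$, $|\cos(z/2)|\le1$ and $\frac1{|\sin(z/2)|}\le\frac{\pi/m}{\sin(\pi/2m)}\frac1z$, so the integrand is bounded by a constant and one gets $\frac{4\pi}{m\sin(\pi/2m)}$; on $[\pi/m,\pi/2]$ use $p_m(z)\le\pi/m$, $|\cos(z/2)|\le1$ and the sharper $\frac1{|\sin(z/2)|}\le\frac{\pi}{\sqrt2\,z}$, reducing to $\int z^{-1}$ and producing $2\sqrt2\,\pi\log\frac m2$; on $[\pi/2,\pi]$ use $p_m(z)\le\pi/m$, $\frac1{|\sin(z/2)|}\le\sqrt2$ and $\int_{\pi/2}^\pi|\cos(z/2)|\,dz=2-\sqrt2$, giving $8(\sqrt2-1)$. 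Assembling these (with the overall $\pi/m$ coming from the $p_m\le\pi/m$ estimates) gives $I_5^S(m)$.

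There is no conceptual difficulty: the only two devices --- the concavity inequality for $|\sin(z/2)|$, now also invoked with $a=\pi/2$, and the identity lowering the singularity order in the $\sin$ case --- are precisely those already exploited for Lemma~\ref{Integralbound1}. The only real work is arithmetic bookkeeping: choosing the breakpoints $\pi/m,\pi/2$ and tracking constants so that the three elementary pieces assemble into exactly the stated $I_5^C(m)$ and $I_5^S(m)$. As with Lemmas~\ref{Integralbound2}--\ref{Integralbound4}, this is routine and the details can be omitted.
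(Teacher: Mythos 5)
Your proof is correct and follows exactly the template the paper itself prescribes: the authors state that Lemmas~\ref{Integralbound2}--\ref{Integralbound5} are proved ``very similar to the proof of last Lemma \ref{Integralbound1}'' and omit the details, which is precisely the three-interval splitting at $\pi/m$ and $\pi/2$ with the concavity chord bound for $1/|\sin(z/2)|$ and, in the $\sin$ case, the order-lowering identity $\sin z/\sin^2(z/2)=2\cos(z/2)/\sin(z/2)$. I checked each of the six pieces (three per bound, doubled, or multiplied by $4$ after evenness) and they assemble to exactly $I_5^C(m)$ and $I_5^S(m)$ as stated.
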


The following lemma gives bounds of integrals around their singular points.
\begin{lemma}\label{singularities_E0}
    Let $F_K(x,z)$ be
    \begin{align*}
        F_K(x, z) &:= \log(A(x,x-z)) \left [\cos (z) (R_0(x) R_0'(x-z) - R_0'(x)R_0(x-z))  \right .\\
        &\left .+ \sin(z) (R_0(x)R_0(x-z) + R_0'(x)R_0'(x-z))\right ].
    \end{align*}
    Notice that $F[0](x) = \int_{-\pi}^\pi F_K(x,z) dz$ where $F[0]$ is defined in \eqref{eqR}. 
    Then for all $0 \leq \delta < 1/2$, we have the three following bounds:
    \begin{align*}
         \int_{-\delta}^\delta \left | F_K(x,z) \right |dz  &\leq \delta^2  \cdot (M_{R_0}M_{R_0''} + 2M_{R_0'}^2 + M_{R_0}^2)  \left (1-2\log \left (\frac{2m_{R_0}\delta}{\pi}\right )  \right ), 
    \end{align*}
    as well as
    \begin{align*}
         \int_{-\delta}^\delta  \left | \frac{\partial}{\partial x}F_K(x,z) \right | dz  &\leq \delta^2 \cdot\left( M_{R_0} M_{R_0'''} + 2 M_{R_0} M_{R_0'} + 3 M_{R_0'} M_{R_0''} \right)  
         \left(1-2\log\left(\frac{2 m_{R_0} \delta}{\pi}\right) \right) \\
        &+ \delta^2 \cdot \left( M_{R_0} M_{R_0''} + 2 M_{R_0'}^2 + M_{R_0}^2 \right) \frac{1}{2m_{R_0}}\left ( M_{R_0''}\pi + 4 M_{R_0'}\right )
    \end{align*}
    and 
    \begin{align*}
        & \int_{-\delta}^\delta \left | \frac{\partial^2}{\partial^2 x}F_K(x,z) \right | dz  \leq 
        \delta^2 \cdot \left[ M_{R_0} M_{R_0''''} + 2 \left( M_{R_0} M_{R_0''} + M_{R_0'}^2 \right) + 3 M_{R_0''}^2 + 4 M_{R_0'} M_{R_0'''} \right] \left(1 -2\log\left( \frac{2 m_{R_0}\delta}{\pi} \right)\right) \\
        &+ \delta^2 \cdot \left( M_{R_0} M_{R_0'''} + 2 M_{R_0} M_{R_0'} + 3 M_{R_0'} M_{R_0''} \right) \frac{1}{m_{R_0}}\left ( M_{R_0''}\pi + 4 M_{R_0'}\right )  \\
        &+ \left( M_{R_0} M_{R_0''} + 2 M_{R_0'}^2 + M_{R_0}^2 \right) \frac{\delta^2}{m_{R_0}^2} \left[ \frac{\pi^2 \left( M_{R_0''}^2 + M_{R_0'} M_{R_0'''} \right)}{2} + 2 \left( M_{R_0''} M_{R_0} + M_{R_0'}^2 \right)  + \left ( \frac{M_{R_0''} \pi}{2} + 2M_{R_0'}\right )^2  \right] 
    \end{align*}
\end{lemma}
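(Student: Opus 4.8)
The plan is to bound $F_K$, $\partial_x F_K$ and $\partial_x^2 F_K$ pointwise on $[-\delta,\delta]$ and then integrate explicitly; on this range the only singularity is at $z=0$. Write $A = A(x,x-z) = (R_0(x)-R_0(x-z))^2 + 4R_0(x)R_0(x-z)\sin^2(z/2)$ and $B = B(x,z) = \cos(z)(R_0(x)R_0'(x-z) - R_0'(x)R_0(x-z)) + \sin(z)(R_0(x)R_0(x-z) + R_0'(x)R_0'(x-z))$, so that $F_K = \log(A)\,B$. Three elementary facts about $A$ are used repeatedly: (i) $A \ge 4m_{R_0}^2\sin^2(z/2) \ge (2m_{R_0}|z|/\pi)^2$, using $|\sin(z/2)| \ge |z|/\pi$ on $[-\pi,\pi]$ and Lemma \ref{boundR0}; (ii) since $R_0(x)R_0(x-z)\ge m_{R_0}^2$, the AM--GM inequality applied to the summands $(R_0(x)-R_0(x-z))^2$ and $4m_{R_0}^2\sin^2(z/2)$ gives $A \ge 4m_{R_0}|R_0(x)-R_0(x-z)|\,|\sin(z/2)|$; (iii) for $\delta < 1/2$ the explicit form of $A$ together with $|R_0(x)-R_0(x-z)| \le M_{R_0'}|z|$ (Lemma \ref{MdR0}) and the bounds on $R_0$ shows $A < 1$ on $[-\delta,\delta]$, hence $|\log A| = -\log A \le -2\log\!\big(\frac{2m_{R_0}|z|}{\pi}\big)$ there.

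The structural point is that $B$, $\partial_x B$ and $\partial_x^2 B$ all vanish at $z=0$ and so carry a factor $|z|$. Writing $B = \cos(z)P + \sin(z)Q$ with $P = R_0(x)R_0'(x-z) - R_0'(x)R_0(x-z) = R_0(x)(R_0'(x-z)-R_0'(x)) - R_0'(x)(R_0(x-z)-R_0(x))$, the mean value theorem and the derivative bounds of Lemmas \ref{MdR0}--\ref{MdddR_MddddR} give $|B| \le (M_{R_0}M_{R_0''}+2M_{R_0'}^2+M_{R_0}^2)|z|$. Differentiating in $x$, the diagonal terms $R_0'(x)R_0'(x-z)$ cancel, leaving $\partial_x P = R_0(x)R_0''(x-z) - R_0''(x)R_0(x-z)$ and $\partial_x^2 P = [R_0'(x)R_0''(x-z)-R_0''(x)R_0'(x-z)] + [R_0(x)R_0'''(x-z)-R_0'''(x)R_0(x-z)]$, each still an antisymmetric combination vanishing at $z=0$; the same argument yields $|\partial_x B| \le (M_{R_0}M_{R_0'''}+2M_{R_0}M_{R_0'}+3M_{R_0'}M_{R_0''})|z|$ and $|\partial_x^2 B| \le (M_{R_0}M_{R_0''''}+2(M_{R_0}M_{R_0''}+M_{R_0'}^2)+3M_{R_0''}^2+4M_{R_0'}M_{R_0'''})|z|$. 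The first bound of the lemma follows at once: $|F_K| \le (M_{R_0}M_{R_0''}+2M_{R_0'}^2+M_{R_0}^2)|z|\big(-2\log(\frac{2m_{R_0}|z|}{\pi})\big)$, and since $\int_0^\delta z\log(cz)\,dz = \frac{\delta^2}{2}\log(c\delta) - \frac{\delta^2}{4}$ with $c = 2m_{R_0}/\pi$, integrating over $[-\delta,\delta]$ gives exactly $\delta^2\big(1-2\log(\frac{2m_{R_0}\delta}{\pi})\big)$ times the bracket constant.

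For the derivatives expand $\partial_x F_K = \frac{\partial_x A}{A}B + \log(A)\,\partial_x B$ and $\partial_x^2 F_K = \big(\frac{\partial_x^2 A}{A} - (\frac{\partial_x A}{A})^2\big)B + 2\frac{\partial_x A}{A}\partial_x B + \log(A)\,\partial_x^2 B$. The essential observation is that $\partial_x A/A$ is \emph{bounded}: from $\partial_x A = 2(R_0(x)-R_0(x-z))(R_0'(x)-R_0'(x-z)) + 4(R_0'(x)R_0(x-z)+R_0(x)R_0'(x-z))\sin^2(z/2)$, the first summand over $A$ is controlled via (ii) and $|z|/|\sin(z/2)| \le \pi$ by $\frac{M_{R_0''}\pi}{2m_{R_0}}$, and the second over $A \ge 4R_0(x)R_0(x-z)\sin^2(z/2)$ by $\frac{2M_{R_0'}}{m_{R_0}}$, so $|\partial_x A/A| \le \frac{1}{2m_{R_0}}(M_{R_0''}\pi+4M_{R_0'})$. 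Differentiating $\partial_x A$ once more and bounding its three groups of terms by means of (i) with the plain estimate $\sin^2(z/2)\ge z^2/\pi^2$, and of $A \ge 4m_{R_0}^2\sin^2(z/2)$, gives $|\partial_x^2 A/A| \le \frac{1}{m_{R_0}^2}\big(\frac{\pi^2(M_{R_0''}^2+M_{R_0'}M_{R_0'''})}{2} + 2(M_{R_0}M_{R_0''}+M_{R_0'}^2)\big)$, whence $|\partial_x^2\log A| \le |\partial_x^2 A/A| + |\partial_x A/A|^2$ picks up the extra term $\frac{1}{m_{R_0}^2}(\frac{M_{R_0''}\pi}{2}+2M_{R_0'})^2$. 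Combining these with the $|z|$-bounds on $B,\partial_x B,\partial_x^2 B$, each integrand is either $(\text{const})|z|$ or $(\text{const})|z|\cdot(-2\log(\frac{2m_{R_0}|z|}{\pi}))$; integrating over $[-\delta,\delta]$ (the first giving $\delta^2$, the second $\delta^2(1-2\log(\frac{2m_{R_0}\delta}{\pi}))$) and collecting terms reproduces the two displayed bounds.

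The conceptual content is minimal; the real work — and the only place errors can creep in — is the bookkeeping: computing $\partial_x A,\partial_x^2 A,\partial_x B,\partial_x^2 B$ by hand, correctly recognizing which summands are antisymmetric combinations vanishing at $z=0$ (hence carry a factor $|z|$) versus those that are merely bounded, and, for each summand of $\partial_x A/A$ and $\partial_x^2 A/A$, selecting the lower bound among (i)--(ii) for $A$ that fully absorbs the $\sin(z/2)$ singularity and yields precisely the constant appearing in the statement.
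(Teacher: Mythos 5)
Your proposal is correct and follows essentially the same approach the paper takes: the paper proves only the first bound (via $A\leq 1$, the lower bound $A\geq 4m_{R_0}^2\sin^2(z/2)\geq(2m_{R_0}|z|/\pi)^2$, the add-and-subtract trick to extract a factor $|z|$ from $B$, then integration) and states that the second and third follow "using the same procedure with longer computations," which is exactly the bookkeeping you carried out. Your computations of $\partial_x P$, $\partial_x^2 P$, the $|z|$-factors in $B$, $\partial_x B$, $\partial_x^2 B$, and the choice of the two different lower bounds for $A$ (the $4m_{R_0}^2\sin^2$ one and the AM--GM one $4m_{R_0}|R_0(x)-R_0(x-z)||\sin(z/2)|$) all reproduce the constants appearing in the statement.
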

\begin{proof}
    We only do the proof of the first bound, since the rest are done using the same procedure with longer computations. We start by bounding $F_K(x,z)$ pointwise in the region $|z| \leq \delta$ with $\delta \leq \frac{1}{2}$. 
    In this region, $A \leq 1$, since
    \begin{align*}
        A[0](x,x-z) \leq (M_{R_0}-m_{R_0})^2 + 4M_{R_0}^2 \sin^2(\delta/2) \leq 0.2^2 + 8(1/4)^2 \leq 1.
    \end{align*}
    Then, we can use that $A[0](x,x-z) \geq 4m_{R_0}^2 \sin^2(z/2)$ and that $|\sin(z/2)| \geq |z|/\pi$ for $|z| \leq \pi$
    to get:\begin{equation}\label{logA_bound_appendix}
        |\log(A[0](x,x-z))| = -\log(A[0](x,x-z)) \leq -2\log\left ( \frac{2m_{R_0} |z|}{\pi}\right ).
    \end{equation}
    Going back to the full expression of $F_K(x,z)$, adding and subtracting $R_0(x)R_0'(x)\cos(z)\log(A(x,x-z))$ and using the mean value theorem for $R_0, R_0'$, we get the estimate
    \begin{align*}
        |F_K(x,z)| \leq -2\log\left (\frac{2m_{R_0}|z|}{\pi}\right ) \left ( (M_{R_0}M_{R_0''} + M_{R_0'}^2) |z| + (M_{R_0}^2 + M_{R_0'}^2)|z|) \right ). 
    \end{align*}
    Integrating from $-\delta$ to $\delta$ we get the desired estimate
    \begin{align*}
        \int_{-\delta}^\delta \left | F_K(x,z)\right | dz
        &\leq \left ( M_{R_0}^2 + 2M_{R_0'}^2 + M_{R_0}M_{R_0''}\right )2 \int_{0}^\delta -2\log\left (\frac{2m_{R_0}|z|}{\pi}\right ) |z| dz \\
        &= \left ( M_{R_0}^2 + 2M_{R_0'}^2 + M_{R_0}M_{R_0''}\right ) \delta ^2 \left ( 1 - 2\log \left ( \frac{2m_{R_0}\delta}{\pi}\right )\right ).
    \end{align*}
\end{proof}

The following lemma is analogous to the previous one but for functions related to the kernel $K(x,y)$. We are going to define $f, g$ such that $K_1(x) = R_0(x) + \int_0^{2\pi} f(x,z) dz$ and $K_2(x) = R_0'(x) + \int_0^{2\pi} g(x,z) dz$, where $K_1, K_2$ are defined in \eqref{defK1}, \eqref{defK2}, and bound their integrals around a small neighbourhood of the singular point $z=0$.
\begin{lemma}\label{singularities_K1}
    Let $f, g$ be defined as:
    \begin{align*}
        f(x,z) &:= R_0(x-z)C[0](x,x-z) - R_0'(x-z)S[0](x,x-z),\\
        g(x,z) &:= -R_0'(x-z)C[0](x,x-z) - R_0(x-z)S[0](x,x-z) + M_1(x,x-z),
    \end{align*}
    where $C, S$ are defined in \eqref{defCS} and we recall the definition of $M_1$ 
    \begin{equation*}
    \begin{aligned}
        M_1(x,y) &= \frac{1}{4\pi \Omega}  \left [ \frac{\cos(x-y) \left ( R_0(x)R_0'(y) - R_0(y)R_0'(x) \right ) + \sin(x-y) \left ( R_0(x)R_0(y) + R_0'(x)R_0'(y) \right )}{A[0](x,y)}  \right ] \\
        & \times \left [ \left ( 2(R_0(x)-R_0(y)) + 4R_0(y) \sin^2\left ( \frac{x-y}{2}\right ) \right ). \right. 
    \end{aligned}
    \end{equation*}
    
    Then,  for all $0 < \delta < 1/2$ 
    \begin{align*}
        4\pi\Omega\int_{-\delta}^\delta \left | f(x,z)\right | dz \leq \delta\cdot   4M_{R_0}  \left(1 -\log\left(\frac{2m_{R_0} \delta}{\pi}\right) \right) + \delta^2 \cdot M_{R_0'}  \left( 1-2\log\left(\frac{2m_{R_0} \delta}{\pi}\right)  \right),
    \end{align*}
    and 
    \begin{align*}
        4\pi\Omega\int_{-\delta}^\delta \left | \frac{\partial}{\partial x}f(x,z)\right | dz &\leq \delta \cdot 4M_{R_0'} \left( 1-\log\left( \frac{2m_{R_0}  \delta}{\pi} \right) \right) 
        + \delta^2 \cdot M_{R_0''} \left( 1-2\log\left( \frac{2m_{R_0} \delta}{\pi} \right) \right)  \\
        &+ \frac{\delta}{m_{R_0}}\left ( \pi M_{R_0''} + 4 M_{R_0'} \right ) \left (M_{R_0} + \delta M_{R_0'} \right )
        %&+\frac{M_{R_0'}}{m_{R_0}^2} \left( \frac{\pi^2 M_{R_0''}}{2} + 2M_{R_0} \right) \left( 2M_{R_0} \cdot \delta + M_{R_0'} \cdot \delta^2 \right).
    \end{align*}
    Also for $g$ we have
    \begin{align*}
        4\pi\Omega\int_{-\delta}^\delta \left | g(x,z)\right | dz &\leq \delta \cdot 4M_{R_0'}  \left(1 -\log\left(\frac{2m_{R_0} \delta}{\pi}\right) \right) + \delta^2 \cdot M_{R_0}  \left( 1-2\log\left(\frac{2m_{R_0} \delta}{\pi}\right)  \right) \\
        &+ \frac{M_{R_0}M_{R_0''} + 2M_{R_0'}^2 + M_{R_0}^2 }{m_{R_0}} 
        (\pi + \delta) \cdot \delta,
    \end{align*}
    and
    \begin{equation}\label{K2_singularity_bound}
    \begin{aligned}
        4\pi\Omega\int_{-\delta}^\delta \left | \frac{\partial}{\partial x}g(x,z)\right | dz &\leq
        \delta \cdot 4M_{R_0''} \left( 1-\log\left( \frac{2m_{R_0} \delta}{\pi} \right)\right)
        +  \delta^2 \cdot M_{R_0'} \left( 1-2\log\left( \frac{2m_{R_0} \delta}{\pi} \right)  \right)  \\
        &+ \frac{\delta}{m_{R_0}}\left ( \pi M_{R_0''} + 4 M_{R_0'} \right ) \left (M_{R_0'} + \delta M_{R_0} \right ) \\
        &+ \delta \cdot \left [ \left (M_{R_0}M_{R_0'''} + 3M_{R_0'}M_{R_0''} + 2 M_{R_0}M_{R_0'} \right ) \left (\frac{\pi + \delta}{m_{R_0}}\right )\right .\\
        &+ \left ( M_{R_0}M_{R_0''} + 2M_{R_0'}^2 + M_{R_0}^2 \right )\left ( M_{R_0''} \pi + 4 M_{R_0'}\right )\left ( \frac{\pi + \delta}{2 m_{R_0}^2}\right )\\
        &+ \left . \left ( M_{R_0}M_{R_0''} + 2M_{R_0'}^2 + M_{R_0}^2 \right )\left ( \frac{\pi^2 M_{R_0''} + M_{R_0'} \delta}{m_{R_0}^2}\right ) \right ]
        %&+ 2 \delta \cdot \left [  \left( \frac{\pi^2 M_{R_0'} M_{R_0''}}{2m_{R_0}^2} + \frac{2M_{R_0} M_{R_0'}}{m_{R_0}^2} \right)  \frac{\pi^2}{4m_{R_0}^2}  \left( M_{R_0}^2 + 2M_{R_0'}^2 + M_{R_0} M_{R_0''} \right) 2(M_{R_0} + M_{R_0'})\right .\\
        %&+  \frac{\pi^2}{4m_{R_0}^2} \cdot \left( 2M_{R_0} M_{R_0'} + 3M_{R_0'} M_{R_0''} + M_{R_0} M_{R_0'''} \right)  2(M_{R_0} + M_{R_0'}) \\
        %&+ \left . \frac{\pi^2}{4m_{R_0}^2} \cdot  \left( M_{R_0}^2 + 2M_{R_0'}^2 + M_{R_0} M_{R_0''} \right)   2(M_{R_0'} + M_{R_0''}) \right ].
    \end{aligned}
    \end{equation}
\end{lemma}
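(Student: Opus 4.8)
The plan is to follow, essentially line for line, the argument used for Lemma \ref{singularities_E0}. The single analytic input needed is that on the window $\{|z|\leq\delta\}$ with $\delta<1/2$ one has $A[0](x,x-z)\leq 1$, exactly as checked in the proof of Lemma \ref{singularities_E0}; combining this with $A[0](x,x-z)\geq 4m_{R_0}^2\sin^2(z/2)\geq 4m_{R_0}^2 z^2/\pi^2$ and the lower bound of Lemma \ref{boundR0} gives
\begin{equation*}
    |\log(A[0](x,x-z))| = -\log(A[0](x,x-z)) \leq -2\log\left(\frac{2m_{R_0}|z|}{\pi}\right).
\end{equation*}
Once this is in hand, each of $f$, $\partial_x f$, $g$, $\partial_x g$ is written as a finite sum of terms of the shape (bounded coefficient)$\,\times\,|z|^j\,\times\,|\log(A[0])|$ or (bounded coefficient)$\,\times\,|z|^j\,\times\,|z|^{-2}$ -- the second type arising only from the factor $1/A[0]$ inside $M_1$ and from $\partial_x A[0]/A[0]$ -- so that the only integrals over $[-\delta,\delta]$ that appear are the elementary $\int_0^\delta z^j\,dz$ and $\int_0^\delta z^j\log(cz)\,dz$ already evaluated in the proof of Lemma \ref{singularities_E0}.

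For $f$ I would write $4\pi\Omega f(x,z) = \cos(z)R_0(x-z)\log(A[0]) - \sin(z)R_0'(x-z)\log(A[0])$, bound $|R_0(x-z)|\leq M_{R_0}$, $|R_0'(x-z)|\leq M_{R_0'}$, $|\cos(z)|\leq 1$ and $|\sin(z)|\leq|z|$, and use the log bound to get $4\pi\Omega|f(x,z)| \leq \bigl(-2\log(2m_{R_0}|z|/\pi)\bigr)\bigl(M_{R_0}+M_{R_0'}|z|\bigr)$; integrating, with $2\int_0^\delta -2\log(cz)\,dz = 4\delta(1-\log(c\delta))$ and $2\int_0^\delta -2z\log(cz)\,dz = \delta^2(1-2\log(c\delta))$ (here $c = 2m_{R_0}/\pi$), produces exactly the stated bound. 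For $g$ the two $\log$-terms are identical up to exchanging the roles of $M_{R_0}$ and $M_{R_0'}$, and to them I add the contribution of $M_1$: the identity $R_0(x)R_0'(x-z)-R_0(x-z)R_0'(x) = R_0(x)(R_0'(x-z)-R_0'(x)) + R_0'(x)(R_0(x)-R_0(x-z))$ together with $|\sin(z)|\leq|z|$ shows the first bracket of $M_1$ is of order $|z|$, and $2(R_0(x)-R_0(x-z)) + 4R_0(x-z)\sin^2(z/2)$ is also of order $|z|$, so the numerator of $M_1(x,x-z)$ is of order $z^2$; dividing by $A[0]\geq 4m_{R_0}^2 z^2/\pi^2$ shows $4\pi\Omega|M_1(x,x-z)|$ is bounded by a constant built from the $M_{R_0^{(k)}}$ and a factor $m_{R_0}^{-1}$, whose integral over $[-\delta,\delta]$ gives the $(\pi+\delta)\delta$ term.

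For the $x$-derivatives I will additionally use $\partial_x\log(A[0](x,x-z)) = d_xA[0](x,x-z)/A[0](x,x-z)$ with
\begin{equation*}
    d_xA[0](x,x-z) = 2(R_0(x)-R_0(x-z))(R_0'(x)-R_0'(x-z)) + 4\bigl(R_0'(x)R_0(x-z)+R_0(x)R_0'(x-z)\bigr)\sin^2(z/2),
\end{equation*}
which by the mean value theorem and Lemmas \ref{MdR0}, \ref{MddR0} is of order $z^2$, so that $\partial_x\log(A[0])$ is bounded by a constant in $m_{R_0},M_{R_0},M_{R_0'},M_{R_0''}$; for $\partial_x M_1$ I also need $\partial_x(1/A[0]) = -d_xA[0]/A[0]^2$, of order $z^{-2}$, which against the order-$z^2$ numerator of $M_1$ is again a constant, while the $x$-derivatives of that numerator are produced by the product rule and bounded by the $L^\infty$ bounds of Lemmas \ref{MdR0}--\ref{MdddR_MddddR}. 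Collecting the terms and integrating the elementary pieces gives the bounds for $\int_{-\delta}^\delta|\partial_x f|$, $\int_{-\delta}^\delta|g|$ and $\int_{-\delta}^\delta|\partial_x g|$, the last of these being \eqref{K2_singularity_bound}.

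The whole computation is routine once the logarithmic bound is in place; the hard part -- really the only part requiring care -- is the bookkeeping needed to land on the precise constants in the statement, namely deciding term by term whether to extract a power of $|z|$ via a Taylor/mean-value cancellation or to absorb it into a constant, and tracking the powers of $m_{R_0}$ and of $M_{R_0},M_{R_0'},M_{R_0''},M_{R_0'''}$ that result. I expect no genuinely new idea beyond Lemma \ref{singularities_E0} to be needed.
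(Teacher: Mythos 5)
Your overall strategy — reuse the logarithm bound $|\log A[0]|\leq -2\log(2m_{R_0}|z|/\pi)$, decompose each integrand into bounded-coefficient $\times$ power-of-$|z|$ $\times$ (log or $1/A[0]$), integrate the elementary pieces — is the same as the paper's, and your treatment of $f$ and of the two logarithmic terms of $g$ is fine. But there is a genuine gap in how you handle the singular factor $1/A[0]$ coming from $M_1$ (and likewise from $\partial_x A[0]/A[0]$ in the derivative bounds), and it would keep you from landing on the stated constants.

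You propose to bound the second bracket of $M_1$, namely $2(R_0(x)-R_0(x-z))+4R_0(x-z)\sin^2(z/2)$, by $O(|z|)$, multiply by the $O(|z|)$ first bracket, and then divide by the single crude lower bound $A[0]\geq 4m_{R_0}^2 z^2/\pi^2$. That produces a factor $\pi^2/(4m_{R_0}^2)$ — i.e.\ a power $m_{R_0}^{-2}$ — not the $m_{R_0}^{-1}$ that appears in the statement (and that you yourself claim to obtain, which is internally inconsistent). The stated bound $\frac{M_{R_0}M_{R_0''}+2M_{R_0'}^2+M_{R_0}^2}{m_{R_0}}(\pi+\delta)\delta$ cannot be reached this way. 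What the paper actually does, and what you need here, is to split the bracket term by term and use two \emph{different} lower bounds on $A[0]$, exactly as in \eqref{a1/A} and \eqref{a2/A}: against the $2(R_0(x)-R_0(x-z))$ piece use the AM--GM bound $A[0]\geq 4m_{R_0}|R_0(x)-R_0(x-z)||\sin(z/2)|$, which cancels the numerator factor and leaves $1/(2m_{R_0}|\sin(z/2)|)$; against the $4R_0(x-z)\sin^2(z/2)$ piece use $A[0]\geq 4R_0(x)R_0(x-z)\sin^2(z/2)$, leaving $1/m_{R_0}$. Combined with $|\sin(z/2)|\geq |z|/\pi$, this gives $\bigl(\frac{\pi}{2}+|z|\bigr)/m_{R_0}$, which integrates to the stated $(\pi+\delta)\delta/m_{R_0}$. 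The same refined splitting is what produces the factor $(\pi M_{R_0''}+4M_{R_0'})/m_{R_0}$ from $\partial_x A[0]/A[0]$ in the $\partial_x f$ and $\partial_x g$ bounds. So, contrary to your closing remark, there is one idea needed beyond Lemma \ref{singularities_E0}: $E[0]$ only involves $\log A[0]$ and hence only the crude lower bound on $A[0]$, whereas $f,g$ involve $1/A[0]$ and require the term-by-term cancellations from \eqref{a1/A}--\eqref{a2/A}.
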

\begin{proof}
    We are going to prove the bound \eqref{K2_singularity_bound} because it is the most illustrative, the rest are either easier or just longer computations using the same ideas. 
    Starting from the first two terms of $g$, by definition of $C, S$ we have
    \begin{align*}
        4\pi \Omega \left | R_0'(x-z) C[0](x,x-z) + R_0(x-z) S[0](x,x-z)\right | \leq  \left (M_{R_0'}  + M_{R_0} |z| \right )\log \left (A[0](x,x-z) \right ),
    \end{align*}
    using the bound we used for $\log A$ in the proof of the previous Lemma \eqref{logA_bound_appendix} and integrating over the desired interval, we get
    \begin{equation}\label{first_term_K2_singularity_bound}
    \begin{aligned}
        &4\pi \Omega \int_{-\delta}^\delta \left | R_0'(x-z) C[0](x,x-z) + R_0(x-z) S[0](x,x-z)\right | dz \\
        &\leq 4M_{R_0'} \int_0^\delta -\log \left ( \frac{2m_{R_0}z}{\pi}\right ) dz + 4M_{R_0} \int_0^\delta -z\log \left ( \frac{2m_{R_0}z}{\pi}\right ) dz \\
        &= 4M_{R_0'} \delta \left(1 -\log\left(\frac{2m_{R_0} \delta}{\pi}\right) \right) + M_{R_0} \delta^2 \left(1 -2\log\left(\frac{2m_{R_0} \delta}{\pi}\right) \right).
    \end{aligned}
    \end{equation}
    Now we proceed to estimate $4\pi \Omega M_1$. By definition
    \begin{align*}
        \left |4\pi\Omega M_1(x,x-z) \right |&= \left | \cos(z)\left ( R_0(x)R_0'(x-z)-R_0'(x)R_0(x-z)\right ) + \sin(z) \left ( R_0(x)R_0(x-z) + R_0'(x)R_0'(x-z) \right ) \right | \\
        &\times \left |\frac{2(R_0(x)-R_0(x-z)) + 4R(x-z)\sin^2(z/2)}{A[0](x,x-z)}\right |,
    \end{align*}
    adding and subtracting $R_0(x)R_0'(x)$ to the expression multiplying the cosine, applying the mean value theorem and using the bounds on $R_0$ and its derivatives, we get
    \begin{align*}
        \left |4\pi\Omega M_1(x,x-z) \right |&\leq \left (  M_{R_0} M_{R_0''} + M_{R_0'}^2  +  M_{R_0}^2 + M_{R_0'}^2\right ) |z| \left |\frac{2(R_0(x)-R_0(x-z)) + 4R(x-z)\sin^2(z/2)}{A[0](x,x-z)}\right | \\
        &\leq \left (  M_{R_0} M_{R_0''} + 2 M_{R_0'}^2  +  M_{R_0}^2 \right ) |z|\left (\frac{1}{2m_{R_0}|\sin(z/2)|} + \frac{1}{m_{R_0}} \right ) \\
        &\leq \frac{M_{R_0} M_{R_0''} + 2 M_{R_0'}^2  +  M_{R_0}^2}{m_{R_0}} \left ( \frac{\pi }{2}  + |z|\right ),
    \end{align*}
    where in the second inequality we have used the same ideas  as in \eqref{a2/A}, \eqref{a1/A} to bound $A$ from below appropriately for each term of the numerator. Integrating in $z$ from $-\delta$ to $\delta$ we obtain 
    \begin{equation*}
        4\pi\Omega\int_{-\delta}^\delta \left | M_1(x,x-z) \right | \leq \frac{M_{R_0} M_{R_0''} + 2 M_{R_0'}^2  +  M_{R_0}^2}{m_{R_0}} \left ( \pi \delta+ \delta^2 \right ).
    \end{equation*}
    Adding this bound with the previous one \eqref{first_term_K2_singularity_bound} we get the desired estimate.
\end{proof}
The following lemma states that the exact same bounds hold changing $f, g$ by $K_4, K_3$ respectively.
\begin{lemma} \label{singularities_K34}
    Let $K_3, K_4$ be defined as
    \begin{align*}
        K_3(x,y) &:= -R_0(x)S[0](x,y) + R_0'(x)C[0](x,y) + M_2(x,y), \\
        K_4(x,y) &:= -R_0'(x) S[0](x,y) - R_0(x) C[0](x,y),
    \end{align*}
    then, for all $0 < \delta < 1/2$, we have
\begin{align*}
        4\pi\Omega\max \left \{ \int_{x-\delta}^{x+\delta} \left |K_4(x,y)\right | dy, \int_{y-\delta}^{y+\delta} \left |K_4(x,y)\right | dx\right \} &\leq 4 \delta  M_{R_0}  \left(1 -\log\left(\frac{2m_{R_0} \delta}{\pi}\right) \right) + \delta^2  M_{R_0'}  \left( 1-2\log\left(\frac{2m_{R_0} \delta}{\pi}\right)  \right).
    \end{align*}
    Also, for $K_3$ we have
    \begin{align*}
        4\pi\Omega\int_{-\delta}^\delta \left | K_3(x,x-z)\right | dz &\leq \delta \cdot 4M_{R_0'}  \left(1 -\log\left(\frac{2m_{R_0} \delta}{\pi}\right) \right) + \delta^2 \cdot M_{R_0}  \left( 1-2\log\left(\frac{2m_{R_0} \delta}{\pi}\right)  \right) \\
        &+ \frac{M_{R_0}M_{R_0''} + 2M_{R_0'}^2 +  M_{R_0}^2 }{m_{R_0}} 
        (\pi + \delta) \cdot \delta,
    \end{align*}
    and 
    \begin{align*}
        4\pi\Omega\int_{-\delta}^\delta \left | \frac{\partial}{\partial x}g(x,z)\right | dz &\leq
        \delta \cdot 4M_{R_0''} \left( 1-\log\left( \frac{2m_{R_0} \delta}{\pi} \right)\right)
        +  \delta^2 \cdot M_{R_0'} \left( 1-2\log\left( \frac{2m_{R_0} \delta}{\pi} \right)  \right)  \\
        &+ \frac{\delta}{m_{R_0}}\left ( \pi M_{R_0''} + 4 M_{R_0'} \right ) \left (M_{R_0'} + \delta M_{R_0} \right ) \\
        &+ \delta \cdot \left [ \left (M_{R_0}M_{R_0'''} + 3M_{R_0'}M_{R_0''} + 2 M_{R_0}M_{R_0'} \right ) \left (\frac{\pi + \delta}{m_{R_0}}\right )\right .\\
        &+ \left ( M_{R_0}M_{R_0''} + 2M_{R_0'}^2 + M_{R_0}^2 \right )\left ( M_{R_0''} \pi + 4 M_{R_0'}\right )\left ( \frac{\pi + \delta}{2 m_{R_0}^2}\right )\\
        &+ \left . \left ( M_{R_0}M_{R_0''} + 2M_{R_0'}^2 + M_{R_0}^2 \right )\left ( \frac{\pi^2 M_{R_0''} + M_{R_0'} \delta}{m_{R_0}^2}\right ) \right ]
        %&+ 2 \delta \cdot \left [  \left( \frac{\pi^2 M_{R_0'} M_{R_0''}}{2m_{R_0}^2} + \frac{2M_{R_0} M_{R_0'}}{m_{R_0}^2} \right)  \frac{\pi^2}{4m_{R_0}^2}  \left( M_{R_0}^2 + 2M_{R_0'}^2 + M_{R_0} M_{R_0''} \right) 2(M_{R_0} + M_{R_0'})\right .\\
        %&+  \frac{\pi^2}{4m_{R_0}^2} \cdot \left( 2M_{R_0} M_{R_0'} + 3M_{R_0'} M_{R_0''} + M_{R_0} M_{R_0'''} \right)  2(M_{R_0} + M_{R_0'}) \\
        %&+ \left . \frac{\pi^2}{4m_{R_0}^2} \cdot  \left( M_{R_0}^2 + 2M_{R_0'}^2 + M_{R_0} M_{R_0''} \right)   2(M_{R_0'} + M_{R_0''}) \right ].
    \end{align*}
\end{lemma}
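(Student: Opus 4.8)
The plan is to run, essentially verbatim, the argument that proved Lemma \ref{singularities_K1}, exploiting the fact that $K_3,K_4$ have exactly the same algebraic shape as $g,f$. Concretely, each of $K_3,K_4$ is a finite sum of terms of the form (product of $R_0$, $R_0'$ — and, after applying $\partial_x$, higher derivatives of $R_0$ — evaluated at $x$ or at $x-z$) times either $\cos(z)\log(A[0](x,x-z))$ or $\sin(z)\log(A[0](x,x-z))$, plus, for $K_3$, the term $M_2$, which is structurally identical to $M_1$. The only discrepancies with $(f,g)$ are: (i) in $K_4=-R_0'(x)S[0]-R_0(x)C[0]$ the factors $R_0,R_0'$ are sampled at $x$ rather than at $x-z$ as in $f$; and (ii) the factors $\cos z$ (sitting in front of $C[0]$) and $\sin z$ (in front of $S[0]$) are attached to different prefactors. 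Since $|\cos z|,|\sin z|\le 1$, $|\sin z|\le|z|$, and all pointwise bounds below use only $m_{R_0}\le R_0\le M_{R_0}$, $|R_0^{(k)}|\le M_{R_0^{(k)}}$ and the mean value theorem $|R_0^{(k)}(x)-R_0^{(k)}(x-z)|\le M_{R_0^{(k+1)}}|z|$, neither discrepancy produces a term not already estimated in Lemma \ref{singularities_K1}.

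First I would treat $K_4$. Writing $4\pi\Omega|K_4(x,x-z)| = |R_0'(x)\sin(z)+R_0(x)\cos(z)|\cdot|\log(A[0](x,x-z))| \le (M_{R_0}+M_{R_0'}|z|)\,|\log(A[0](x,x-z))|$, which is literally the pointwise bound used for $4\pi\Omega|f(x,x-z)|$. For $|z|\le\delta<1/2$ one has $A[0](x,x-z)\le(M_{R_0}-m_{R_0})^2+4M_{R_0}^2\sin^2(\delta/2)\le 1$, exactly as in Lemma \ref{singularities_E0}, so $|\log(A[0](x,x-z))|\le -2\log(2m_{R_0}|z|/\pi)$, and integrating the elementary pieces $\int_0^\delta-\log(c z)\,dz=\delta(1-\log(c\delta))$ and $\int_0^\delta-z\log(c z)\,dz=\tfrac{\delta^2}{4}(1-2\log(c\delta))$ with $c=2m_{R_0}/\pi$ yields the asserted bound; the bound with the $x$-integral over $[y-\delta,y+\delta]$ is identical after the substitution $x=y+z$, using the symmetry of $A[0]$ in its two arguments. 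Then I would treat $K_3$ by splitting $K_3=(-R_0(x)S[0]+R_0'(x)C[0])+M_2$: the first bracket is bounded exactly as the first two terms of $g$ were in Lemma \ref{singularities_K1}, and $\int_{-\delta}^\delta|M_2(x,x-z)|\,dz$ is bounded exactly as $\int_{-\delta}^\delta|M_1(x,x-z)|\,dz$ there — the cancellation replacing $R_0(x)R_0'(x-z)-R_0'(x)R_0(x-z)$ by an $O(|z|)$ quantity (adding and subtracting $R_0(x)R_0'(x)$) and the estimate $|{-2(R_0(x)-R_0(x-z))+4R_0(x)\sin^2(z/2)}|/A[0](x,x-z)\lesssim \tfrac{1}{m_{R_0}}(\tfrac{\pi}{2}+|z|)$ both go through unchanged. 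The $\partial_x K_3$ bound follows by differentiating under the integral after changing variables to $z=x-y$ (which removes the singularity from the limits), distributing $\partial_x$ by the product and quotient rules, and reusing the same derivative estimates for $A[0]$, $\log A[0]$ and $(\partial_x A[0])/A[0]$ already computed in the proof of Lemma \ref{singularities_K1} (and of Proposition \ref{R_Cinfty}, specialized to $v\equiv0$).

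I do not expect a genuine obstacle here: the whole content is bookkeeping to confirm that none of the three structural differences between $(f,g)$ and $(K_4,K_3)$ introduces an uncontrolled term. The one spot deserving a moment's care is the $M_2$ term, whose "extra bracket" $-2(R_0(x)-R_0(y))+4R_0(x)\sin^2\!\big(\tfrac{x-y}{2}\big)$ differs from the $M_1$ bracket $2(R_0(x)-R_0(y))+4R_0(y)\sin^2\!\big(\tfrac{x-y}{2}\big)$ both in a sign and in whether $R_0$ is sampled at $x$ or at $y$ in the $\sin^2$ factor; but since both brackets satisfy the same bound $\le 2M_{R_0'}|x-y|+4M_{R_0}\sin^2\!\big(\tfrac{x-y}{2}\big)$, are divided by the same kernel $A[0](x,y)$, and the "common factor" $\frac{\cos(x-y)(R_0(x)R_0'(y)-R_0(y)R_0'(x))+\sin(x-y)(R_0(x)R_0(y)+R_0'(x)R_0'(y))}{A[0](x,y)}$ is \emph{identical} in $M_1$ and $M_2$, the estimate carries over verbatim. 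Hence the stated bounds hold.
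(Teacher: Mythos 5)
Your proposal is correct and follows essentially the same route as the paper's own (very brief) proof: you observe that $K_4,K_3$ have the same algebraic structure as $f,g$ from Lemma \ref{singularities_K1} — differing only in whether $R_0,R_0'$ are sampled at $x$ versus $x-z$ and in $M_2$ versus $M_1$ — and that since all the estimates use only uniform ($L^\infty$) bounds on $R_0^{(k)}$ and identical control of the common factor divided by $A[0]$, the same integrals appear and the same bounds follow. One small imprecision: for the $\partial_x K_3$ bound there is no ``differentiating under the integral'' step, since $K_3(x,x-z)$ is a pointwise quantity rather than an integral — you simply compute $\partial_x[K_3(x,x-z)]$ by the product and quotient rules and then integrate the resulting pointwise bound in $z$, exactly as done for $\partial_x g$ in Lemma \ref{singularities_K1}; this does not affect the validity of the argument.
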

\begin{proof}
    For the first bound we realize that the only difference between $f(x,z)$ from the previous Lemma and $K_4(x,x-z)$ is that the $R_0, R_0'$ multiplying $S, C$ are evaluated at $x$ instead of $x-z$. Since in the previous lemma we have used the uniform bounds $M_{R_0}, M_{R_0'}$ for those $R_0, R_0'$, the same procedure applies to proving this bound. Note also that changing variables from $K(x,y)$ to $K_4(y-z, y)$ and integrating in $z$ we would obtain the same bound for the integral in $x$ as stated in this Lemma.

    For the case of $K_4$ the same happens with $g$, since the two differences between $g(x,z)$ and $K_3(x,x-z)$ are on one hand the evaluation of $R_0, R_0'$ multiplying $C,S$ (as before), which doesn't matter in terms of the bound as explained, even when we take the derivative respect to $x$, because afterwards we use again the uniform bounds on $R_0', R_0''$ independently of the evaluation point; and on the other hand that we have $M_2$ instead of $M_1$. Recalling their definitions \eqref{T1E1_def}, their only difference is a sign in a term and having $R_0(x)$ instead of $R_0(x-z)$ in another term. In particular none of these things will be accounted for differently when doing the bounds.
\end{proof}

\section{Implementation details of the computer-assisted part}
\label{appendix:implementation}

In this appendix we discuss the technical details about the
implementation of the different rigorous
numerical computations that appear in the proofs throughout the paper.
We performed the rigorous computations using the Arb library
\cite{Johansson:Arb} and specifically its C/C++ implementation. We attach the code as supplementary material. See Table \ref{table:lemmas_comp_run} for the specific programs/commands to run each Lemma/Proposition and their runtime, with more details in the Supplementary Material.  We have also sacrificed efficiency by readability and some parts of the code could be optimized. Instead, we decided to write a much more modular design with many small functions performing simple tasks, at the price of sometimes duplicating code. To perform this computations we have four main files:
\begin{itemize}
    \item \texttt{methods.cc}: This file's main objective is to perform integrals of given functions. It has two methods, \texttt{integrate} and \texttt{lin\_integrate}. The first one performs the integral with an adaptative size of the intervals where the quadrature is performed, dividing each interval until the given error tolerance is satisfied. The second method subdivides the interval in subintervals of a predetermined size to perform the quadrature. In both methods it is possible to specify the desired quadrature between the following options: 
    \begin{enumerate}
        \item $\mathscr{C}^0$ quadrature:
        \begin{equation*}
            \int_a^b f(x) dx \subseteq f([a,b])\cdot (b-a)
        \end{equation*}

        \item $\mathscr{C}^1$ Taylor quadrature:
        \begin{equation*}
            \int_a^b f(x) dx \subseteq \left (f\left (\frac{a+b}{2} \right ) + f'([a,b]) \cdot \left [ \frac{a-b}{2}, \frac{b-a}{2}\right ]\right )\cdot (b-a)
        \end{equation*}

        \item Midpoint $\mathscr{C}^2$ quadrature:
        \begin{equation*}
            \int_a^b f(x) dx \subseteq f\left (\frac{a+b}{2} \right ) \cdot (b-a) + \frac{(b-a)^3}{24} f''([a,b])
        \end{equation*}

        \item $\mathscr{C}^2$ Taylor quadrature for the function squared:
        \color{black}
        \begin{equation*}
            \int_a^b f^2(x) dx \subseteq \left (f\left (\frac{a+b}{2} \right ) + f' \left ( \frac{a+b}{2} \right ) \cdot\left [ \frac{a-b}{2}, \frac{b-a}{2}\right ] + \frac{1}{2} f''([a,b]) \cdot \left [0, \frac{(b-a)^2}{4} \right ]\right )^2 \cdot(b-a)
        \end{equation*}
    \end{enumerate}
    where the enclosures are taken in the interval arithmetic sense.

    We should remark here that throughout the calculation of all the singular integrals we have tuned the relative tolerance error allowed in the integration, such that the error from the numerical integration and the error from the singularity are of the same order of magnitude in order to balance out the difference sources of error.

    This file also contains a \texttt{bound\_function} method, which given a function, an interval, a step size, a constant and the type of bound (from above, below, absolute value...) it checks that it holds in the whole interval by subdiving it in subintervals of the specified step size and checking the condition there.
    
    \item \texttt{basic\_functions.cc}: This file contains the necessary functions to bound the error $E[0]$ of the approximate solution $R_0$. It contains simple functions like $R_0(x), A(x,x-z)$, their derivatives and other functions such as $F[R_0]$, defined in \eqref{F_def}, that are computed through performing integrals of the simpler functions.

    \item \texttt{complex\_functions.cc}: This file contains the necessary functions to bound the error of the approximate operator $L_F$ and prove the bound of Lemma \ref{L-L_Fbound}. It contains some functions that are algebraic expressions of $x, y, R_0, A$ and of trigonometric functions, like $M_1, M_2$ (defined in \ref{T1E1_def}); and  some functions that also contain integrals of previous functions in their expressions like $K_1, K_2, K_3^{s,o}$, $K_4^{s,o}$, that are necessary to compute $K$. These functions build upon previous ones from this file and some from the \texttt{'basic\_functions'}. It also contains the approximate kernel $K_F$ defined in \eqref{LF_def}. Lastly, a function to efficiently perform the computation in a 2D grid of the error $K-K_F$ is implemented.
   
    \item \texttt{matrix\_utils.cc}: This file contains a fast method to enclose the eigenvalues of given symmetric matrices. It calls the C-library '\texttt{LAPACK}' in order to find an approximation of the eigenvectors and then uses an implementation of \cite[Lemma 2.4]{GomezSerrano-Orriols:negative-hearing-shape-triangle} to find rigorous enclosures for the eigenvalues.
\end{itemize}

The rest of the files are simpler and build upon these ones. The files to prove Lemma \ref{CE0} and Lemma \ref{L-L_Fbound}. Each lemma is split into two files. The first ones ('\texttt{boundE0\_parallel}' and \texttt{'boundK\_parallel'}) are designed to be run with different inputs so each execution does a fraction of the total work, and the second ones ('\texttt{boundE0\_final}' and \texttt{'boundK\_final'}) do the post-processing of the generated outputs to add them together appropriately. The correct way to run the files is explained in the \texttt{'README'} file from the Supplementary Material. The rest of the Lemmas are much quicker and no more than one file is needed.

%\color{red}
%\begin{detailssth}{Lemmas \ref{lemma:aux_WioverZi_7o5},  \ref{lemma:tenthousand_7o5}} The implementation is straightforward; however, due to the large numbers that appear throughout the process ($Z_{10000} \sim 10^{46770}$), ultra-high precision is required to avoid overestimation and to be able to extract the signs out of the relevant quantities. We used 2000 bits to accomplish this.
%\end{detailssth}

\color{black}

\begin{detailssth}{Lemma \ref{boundR0}, \ref{MdR0}, \ref{MddR0}, \ref{MdddR_MddddR}} By periodicity and symmetry (or antisymmetry) we only have to check these conditions in the interval $[0, \pi/m]$. We split it into intervals of size $10^{-3}$ and check the bounds there. We don't use any derivative information to evaluate the functions in the interval: we propagate the intervals directly through the Fourier series, i.e. $R_0([x_n, x_{n+1}]) = \sum_{k=0}^{30} c_k \cos(2mk [x_n, x_{n+1}])$, and the same for further derivatives with their respective Fourier series.
\end{detailssth}

\begin{detailssth}{Lemma \ref{Ineq_NonConvexity}} Using the Fourier series of $R_0$ we compute $R_0(0), R_0(\pi/m)$ and check the desired inequality \eqref{Ineq_NonConvexty_eq}.
\end{detailssth}

\begin{detailssth}{Lemma \ref{CE0}} In order to prove that $\int_0^{\pi/m} E[0]^2(x) dx \leq C_{E_0}^2$ we use the pointwise $\mathscr{C}^2$ enclosure for $E[0](x)$. That is, if $X_k$ is the interval $X_k = [x_k, x_{k+1}]$, $x_k^{\text{mid}} = (x_k+x_{k+1})/2$ and $\delta x = x_{k+1}-{x_k}$, we have
\begin{equation} \label{E0_enclosure}
    E[0](X_k) \subseteq E[0](x_k^{\text{mid}}) + E[0]'(x_k^{\text{mid}})\cdot \left [-\frac{\delta x}{2}, \frac{\delta x}{2} \right ] + \frac{1}{2}E[0]''(X_k)\cdot \left [0 , \left (\frac{\delta x}{2}\right )^2 \right ],
\end{equation}
in the interval arithmetic sense. Using Lemma \ref{ddE0_bound}, we have a uniform enclosure on $E[0]'' \subseteq [-50, 50]$ in all the integration domain $[0,\pi/m]$, therefore, we split the integration domain in intervals of length $\delta x = 1.1 \cdot 10^{-6}$,  i.e $\int_0^{\pi/m} E[0]^2(x) dx \subseteq \sum_{k=0}^n E[0](X_k)^2 \cdot \delta x$ and in each interval we use the enclosure \eqref{E0_enclosure}. To compute this we use \texttt{'lin\_integrate'} from the \texttt{'methods'} file with the quadrature option of the $\mathscr{C}^2$ Taylor enclosure for the function squared. 

The importance of doing the Taylor expansion and getting the uniform enclosure on $E[0]''$ is that we 'only' have to compute $E[0], E[0]'$ at points (or very small intervals) and not at (bigger) intervals, which is not possible with enough precision due to the complexity of this function. To implement $E[0]$ we define the function $F_K(x,z)$ (defined in Lemma \ref{singularities_E0}) which is the integrand of $F[R_0]$ defined in $\eqref{F_def}$, but changing variables $y = x-z$ to have the singularity at $z = 0$ instead of at $y =x$. Due to the periodicity of $R_0$ we can change the domain of integration to still have $F[R_0] = \int_0^{2\pi} F_K(x,z) dz$. Then 
\begin{equation}\label{E0_integral_splitting}
E[0](x) \subseteq -R_0(x)R_0'(x) + \int_{\delta}^{2\pi- \delta} F_K(x,z) dz + \int_{-\delta}^\delta \left | F_K(x,z)\right | dz \cdot [-1, 1],
\end{equation}
where again the inclusion is in the interval arithmetic sense. To compute the first integral we use the $\mathscr{C}^2$ midpoint quadrature (we need to compute $\frac{\partial^2 }{\partial z^2} F_K(x,z)$) with the adaptive step integrator \texttt{'integrate'} from the \texttt{'methods'} file with a relative tolerance of $3\cdot 10^{-8}$ and $\delta = 10^{-6}$. For the second integral we use Lemma \ref{singularities_E0} and therefore have a uniform bound for all $x$.

To compute $E[0]'$ we do the same with $\frac{\partial}{\partial x} F_K(x,z)$ instead of $F_K(x,z)$. The difference is that to compute the first integral we use a $\mathscr{C}^1$ Taylor quadrature so we only need to compute $\frac{\partial^2}{\partial z \partial x} F_K(x,z)$ and do not need to take two derivatives with respect to $z$. The parameters we use are a relative tolerance of $9\cdot 10^{-3}$ and $\delta = 6\cdot 10^{-4}$. The second integral is also bounded by the second claim of Lemma \ref{singularities_E0}.
\end{detailssth}

\begin{detailssth}{Lemma \ref{ddE0_bound}}
To bound the function in the whole interval we will divide it in subintervals of size $10^{-4}$ and check that the bound is satisfied in each of them. We now explain how we compute $\frac{d^2}{dx^2}E[0]$ in each subinterval. By definition  $\frac{d^2}{dx^2} E[0] \subseteq -3R_0'(x)R''(x) - R_0(x)R_0'''(x) + \int_{0}^{2\pi} \frac{\partial^2}{\partial x^2} F_K(x,z) dz $, where we are using the definition of $F_K$ given in Lemma \ref{singularities_E0}. We follow the exact same procedure explained in the details of Lemma \ref{CE0}, splitting the integral as in \eqref{E0_integral_splitting}. The difference is that we use a $\mathscr{C}^0$ enclosure for the first integral in order not to take further derivatives. We use a relative tolerance of $30$ and $\delta = 4 \cdot 10^{-2}$. For the second integral we use the third claim from Lemma \ref{singularities_E0}. 

%To bound the function in the whole interval, we use the procedure we just explained to evaluate the function in subintervals of size $2\cdot 10^{-4}$ and check that the bound is satisfied in each of them. 
\end{detailssth}

\begin{detailssth}{Lemma \ref{K1bound}} We use the implementation of $K_1$ from the file '\texttt{complex\_functions}', explained in detail in \eqref{K1_enclosure} in the details of Lemma \ref{L-L_Fbound}. We use the method '\texttt{bound\_function}', with type $2$, to compute $K_1$ in intervals $[x_n, x_{n+1}]$ of size and in each of them check $K_1([x_n, x_{n+1}]) < -C_{K_1}$
\end{detailssth}

\begin{detailssth}{Lemma \ref{I+Ainvert}} 
To prove that $B := I+A$ is invertible and $\Ltwo{B^{-1}} < C_2$, we use the following observation:
\begin{align*}
    \text{If } \lambda_{\min} (B^t B) > 0  \implies B \text{ is invertible and} \Ltwo{B^{-1}} = \sqrt{\lambda_{\max}((B B^t)^{-1})} = \frac{1}{\sqrt{\lambda_{\min}(B B^t)}}.
\end{align*}
Using this, we use an implementation of  \cite[Lemma 2.4]{GomezSerrano-Orriols:negative-hearing-shape-triangle}, as explained before in the description of the file '\texttt{matrix\_utils}', to check that $\frac{1}{\sqrt{\lambda_{\min}(B B^t)}} \leq C_2$. 
\end{detailssth}

\begin{detailssth}{Lemma \ref{L-L_Fbound}} In order to go from a bound of the $L^2$ operator norm of $L-L_F$ to a bound of integrals of the function $K-K_F$, we use the Generalized Young's inequality
    \begin{equation*}
        \X{(L-L_F)u} = \X{\int_0^\frac{\pi}{m} (K-K_F)(x,y)u(y)} \leq \max \left \{ \|K-K_F\|_{L_x^1 L_y^\infty}, \|K-K_F\|_{L_y^1 L_x^\infty} \right \} \X{u}.
    \end{equation*}
    Therefore the bound $\| L-L_F\|_2 \leq C_3$ from Lemma \ref{L-L_Fbound}  is reduced to proving $\max \left \{ \|K-K_F\|_{L_x^1 L_y^\infty}, \|K-K_F\|_{L_y^1 L_x^\infty} \right \} \leq C_3$.

    To do this we are going to divide our domain $[0, \pi/m] \times [0, \pi/m]$ into $N^{\text{grid}} \times N^{\text{grid}}$ small squares of the same size with $N^{\text{grid}} = 5888$. Then we are going to compute $K(X_i,Y_j)-K_F(X_i,Y_j)$ for all of the squares $X_i\times Y_j = [x_i, x_{i+1}] \times [y_j, y_{j+1}]$ mentioned above.

    We first focus on $K_F$. Let us recall its definition in \eqref{LF_def}: $ K_F(x,y) = \sum_{k,l=1}^N A_{k,l} e_k(x)e_l(y)$ where $e_k(x)$ is proportional to $\sin(2mkx)$ and $e_{2k+1}(x)$ is proportional to $\cos(2mkx)$, so it is a Fourier series. To perform the computation, we use the first order Taylor expansion
    \begin{align*}
        K_F(X_i,Y_j) \subseteq K_F(x_i^{\text{mid}}, y_j^{\text{mid}}) + \frac{\partial }{\partial x} K_F (X_i, y_j^{\text{mid}}) \cdot \left [ -\frac{\delta x}{2}, \frac{\delta x}{2} \right ]+ \frac{\partial}{\partial y} K_F (X_i, Y_j) \cdot \left [ -\frac{\delta y}{2}, \frac{\delta y}{2}\right ], 
    \end{align*}
    where $x_i^{\text{mid}} = (x_i + x_{i+1})/2$, $\delta x = x_{i+1}-x_i$, and analogously for $y_i^{\text{mid}}$. To compute the value $K_F(x_i^{\text{mid}}, y_j^{\text{mid}})$ we use the Fourier series. To compute the partial derivatives of $K_F$ we also differentiate term by term the Fourier series and evaluate it in intervals term by term. For example
    \begin{align*}
        \frac{\partial}{\partial x} K_F(X_i, y_j^{\text{mid}}) = \sum_{t=1}^{(N-1)/2} (2mt) \sum_{l = 1}^N A_{2t, l}  e_{2t+1}(X_i) e_l(y_j^{\text{mid}}) - A_{2t+1, l}  e_{2t}(X_i) e_l(y_j^{\text{mid}}),
    \end{align*}
    where we have used $(e_{2t}(x))' = (C \sin(2mtx))' = 2mt C\cos(2mtx) = 2mt e_{2t+1}(x)$, and analogously $e_{2t+1}(x)' = -2mt e_{2t}(x)$.

    Now we explain how we compute $K$ in these squares. We first recall its definition \eqref{Lu_def}
    \begin{align*}
        K(x,y) = \frac{1}{K_1(x)} \left ( K_2(x) \mathbb{I}_{0 \leq y \leq x} + K_3^{s,o}(x,y) + K_4^{s,o}(x,y)\right ).
    \end{align*}
    We start by computing $K_1, K_2$ in a very similar way to which we have computed $E[0]$. We begin using a first order Taylor expansion 
    \begin{align*}
        K_a(X_i) \subseteq K_a(x_i^{\text{mid}}) + \frac{\partial}{\partial x} K_a(X_i) \cdot \left [ -\delta x, \delta x\right ],
    \end{align*}
    for $a = 1,2$, now need to compute $K_a(x)$ and $\frac{\partial}{\partial x} K_a$. Using their definitions \eqref{defK1}, \eqref{defK2} we enclose

    \begin{equation}\label{K1_enclosure}
        K_1(x) \subseteq R_0(x) + \int_\delta^{2\pi-\delta} |IK_1(x,z)| dz + \int_{-\delta}^\delta |IK_1(x,z)| dz \cdot [-1,1], 
    \end{equation}
    where $IK_1(x,z) = R_0(x-z)C[0](x,x-z)-R_0'(x-z)S[0](x,x-z)$. To compute the first integral we use the $\mathscr{C}^2$ midpoint quadrature (we need to compute $\frac{\partial^2}{\partial z^2} IK_1(x,z)$) with the adaptative step integrator '\texttt{integrate}' from the '\texttt{methods}' file with a relative tolerance of $3\cdot 10^{-5}$ and $\delta = 10^{-6}$. For the second integral we use Lemma \ref{singularities_K1} and therefore we have a uniform bound for all $x$.
    To compute $K_1'(x)$ we do the same as in \eqref{K1_enclosure} with $\frac{\partial}{\partial x} IK_1(x,z)$ instead of $IK_1(x,z)$, but the difference is that we use a $\mathscr{C}^1$ Taylor quadrature so we only need to compute the derivative $\frac{\partial^2}{\partial z \partial x} IK_1(x,z)$. The parameters we use in this case are a relative tolerance of $2$ and $\delta = 10^{-1}$. The singularity bound is given by the second inequality of Lemma \ref{singularities_K1}.
    For $K_2$ we do exactly the same with its respective expression. We also have bounds given for the singularity given by Lemma \ref{singularities_K1}. We use the parameters relative tolerance $2.5\cdot 10^{-4}$, and $\delta = 3\cdot 10^{-5}$. In a similar way, we perform the same splittings and estimates as for $K_1'$ to compute the derivative $K_2'$, using the $\mathscr{C}^1$ Taylor enclosure for the integration method and the last inequality of Lemma \ref{singularities_K1} to bound the singularity, using as integration parameters a relative tolerance of $50$ and $\delta = 2 \cdot 10^{-2}$. This is the way we can compute $K_1, K_2$ in intervals.

    We proceed to explain how to treat $K_4^{s,o}$. An advantage about this term is that we don't need to compute any integral but we shall also note that it is in $L^1$ but not in $L^\infty$ due to the singularity on the diagonal. We define
    \begin{align*}
        A_{\delta}(x,y) &= \max \left \{ A(x,y), 4m_{R_0}^2 \sin^2 \left ( \frac{\delta}{2}\right )\right \}, 
    \end{align*}
    and $K_{4,\delta}^{s,o}$ having the same definition as $K_4^{s,o}$ \eqref{defK4} but changing $A$ by $A_\delta$ in the definition of $S, C$  (cf. \eqref{defCS}) used in $K_4^{s,o}$. Notice that if $A(x,y) < A_\delta$ then $|x-y - 2\pi k|< \delta$ for some $k \in \mathbb{Z}$. Therefore, defining $\Bar{K}_\delta$ the same as $K$ but changing $K_4^{s,o}$ by $K_{4,\delta}^{s,o}$ we have
    \begin{equation}\label{K4_delta_error}
        \| K-K_F\|_{L_x^1 L_y^\infty} \leq \| \Bar{K}_\delta-K_F \|_{L_x^1 L_y^\infty} + \frac{1}{C_{K_1}}\|K_{4,\delta}^{s,o}-K_4^{s,o} \|_{L_x^1 L_y^\infty},
    \end{equation}
    recalling that $C_{K_1}$ is a lower bound of $K_1$. The first norm we are going to bound it using the numerical integration and the second we can bound it using the first inequality of Lemma \ref{singularities_K34}, because notice the lemma gives us a bound of the singularity of $K_4$ not the symmetrized version, but if we unfold the integral of the symmetrized version in the singular set we just get the integral of $K_4$ between $-\delta, \delta$. We use $\delta_{K_4} = 10^{-5}$. To compute $K_{4,\delta}^{s,o}$ in the squares $X_i \times Y_j$ we use its expression, without computing any further derivatives.

    It is only left how to compute $K_3^{s,o}(x,y) = \int_y^{\pi/m} K_3^{s,e}(x,y') dy'$. Actually, we are interested in computing this integral between to arbitrary limits $y_\text{inf}, y_{\text{sup}}$, we will explain later why. Defining 
    \begin{align*}
        K_3(x,y) := - R_0(x)S[0](x,y) + R_0'(x) C[0](x,y) + M_2(x,y),
    \end{align*} and recalling the definition of $K_3^{s,e}$ \eqref{defK3}, we have
    \begin{equation}\label{K3so_integrate_interval}
    \begin{aligned}
        \int_{y_\text{inf}}^{y_{\text{sup}}} K_3^{s,e}(x,y')dy' &=
        \int_{y_\text{inf}}^{y_{\text{sup}}} \sum_{j=-1}^{m-2} K_3(x,y' + 2\pi j/m) + K_3(x,-y' - 2\pi j/m) dy' \\
        &= \sum_{j=-1}^{m-2} \int_{x-y_\text{sup} - \frac{2\pi j}{m}}^{x - y_{\text{inf}} - \frac{2\pi j}{m}} K_3(x, x-z) dz + \int_{x+y_\text{inf} + \frac{2\pi j}{m}}^{x + y_{\text{sup}} + \frac{2\pi j}{m}} K_3(x,x-z) dz.
    \end{aligned}
    \end{equation}
    To compute $K_3^{s,o}$ in a square $X_i\times Y_j$ we are going to use again a first order Taylor expansion
    \begin{equation*}
    \begin{aligned}
        K_3^{s,o}(X_i, Y_j) \subseteq K_3^{s,o}(x_i^{\text{mid}}, y_i^{\text{mid}}) + \frac{\partial}{\partial x} K_3(X_i, y_j^{\text{mid}}) \cdot \left [-\frac{\delta x}{2}, \frac{\delta x}{2}\right ] + \frac{\partial}{\partial y} K_3(X_i, Y_j) \cdot \left [-\frac{\delta y}{2}, \frac{\delta y}{2}\right ].  
    \end{aligned}
    \end{equation*}
    Now we need to compute $K_3^{s,o}$ pointwise and its derivatives in intervals. To do so we are going to exploit the structure of $K_3^{s,o}$ and that we have to compute it in all the squares to be more efficient. To compute it pointwise we are going to use the following recursion formula 
    \begin{equation}\label{K3so_recursion_formula}   
        K_3^{s,o}(x_i^{\text{mid}},y_j^{\text{mid}}) = \int_{y_j^{\text{mid}}}^{y_{j+1}^{\text{mid}}} K_3^{s,e}(x,y') dy' + K_3^{s,o}(x,y_{j+1}^\text{mid})
    \end{equation}
    and to compute the first value $K_3^{s,o}(x_i,y_{N^{\text{grid}}})$ we are going to compute the integral between $y_{N^{\text{grid}}}^{\text{mid}}$ and $\pi/m$. To compute each integral we are going to use the expression \eqref{K3so_integrate_interval} to reduce it to computing 12 very small integrals. Each of these integrals is going to be computed in the same way we have computed the integrals of $K_1, K_2$: we are going to split the integral into the singular domain $-\delta, \delta$ and the rest, where we will integrate using a $\mathscr{C}^2$ quadrature, for which we compute the second derivative of $K_3(x,x-z)$, with the adaptive interval method from the '\texttt{methods}' file.  In order to not have the problem of adding the singularity twice, we add it at the beginning of the iteration procedure, we use the second inequality of Lemma \ref{singularities_K34} to bound the singularity. We use as the integration parameters a relative tolerance of $10^{-4}$ and $\delta_{K_3} = 5 \cdot 10^{-6}$. Notice that we write the summation from $j = -1$ to $m-2$ so the so the singular set in the integrals is only when $z = 0$ and does not include $z = 2\pi$.%, which it would if we had written the summation from $j = 0$ to $m-1$ like in the definition. Because of periodicity this doesn't affect the function, is just a trick to make it easier to code the splitting of the integrals in the singular set.
    
    We are left with computing the derivatives, first using \eqref{K3so_integrate_interval} with the upper limit being $\pi/m$ we compute
    \begin{align*}
        \frac{\partial}{\partial x } K_3^{s,o}(x,y) &= \sum_{j =-1}^{m-2} \left \{ K_3\left (x,y + \frac{2\pi j}{m} \right )-K_3 \left (x,\frac{\pi}{m}+\frac{2\pi j}{m} \right )  +  \int_{x-\frac{\pi}{m} - \frac{2\pi j}{m}}^{x - y - \frac{2\pi j}{m}} \frac{d}{dx}K_3(x, x-z) dz \right .\\
        &\left . K_3 \left (x,\frac{\pi}{m}+\frac{2\pi j}{m} \right ) - K_3\left (x,-y - \frac{2\pi j}{m} \right ) + \int_{x+y+ \frac{2\pi j}{m}}^{x + \frac{\pi}{m} + \frac{2\pi j}{m}} \frac{d}{dx} K_3(x,x-z) dz \right \}, \\
        \frac{\partial}{\partial y } K_3^{s,o}(x,y) &= \sum_{j=-1}^{m-2}  -K_3\left (x, y+\frac{2\pi j}{m} \right ) dz -  K_3\left (x,-y - \frac{2\pi j}{m}\right ) dz.
    \end{align*}
    which we can write as
    \begin{align*}
        \frac{\partial}{\partial x} K_3^{s,o}(x,y) &= K_3^{s,odd}(x,y) - K_3^{s,odd}(x,\pi/m) + \int_{y}^{\frac{\pi}{m}} \frac{d}{dx} \left (K_3(x,x-z) \right)^{s,e} (x,y') dy', \\
        \frac{\partial}{\partial y} K_3^{s,o}(x,y) &= - K_3^{s,e}(x,y),
    \end{align*}
    where $K_3^{s,e}$ is defined in \eqref{defK3} and $K_3^{s, odd}$, $\frac{d}{dx}(K_3(x,x-z))^{s,e}$ are defined as
    \begin{equation*}
    \begin{aligned}
        K_3^{s,odd}(x,y) &= \sum_{j =-1}^{m-2} K_3\left (x,y + \frac{2\pi j}{m} \right ) - K_3\left (x,-y - \frac{2\pi j}{m} \right ), \\
        \frac{d}{dx}(K_3(x,x-z))^{s,e}(x, y) &= \sum_{j =-1}^{m-2}  \frac{d}{dx}(K_3(x,x-z))\left (x,x-y - \frac{2\pi j}{m} \right ) + \frac{d}{dx}(K_3(x,x-z))\left (x,x+y + \frac{2\pi j}{m} \right ).
    \end{aligned}
    \end{equation*}
    Then to compute $\frac{\partial }{\partial y} K_3^{s,o} (X_i, Y_j) \cdot [-\delta y/2, \delta y/ 2]$, we change $A$ by $A_\delta$ in the expression and do the same as in \eqref{K4_delta_error} to bound the singularity, the only difference is that we have to multiply the singularity error by $\delta y/2$ . We use $\delta_{K_3}^2 = 5 \cdot 10^{-5}$. For the first two terms of $\frac{\partial}{\partial x} K_3^{s,o}$ we do the same with $K_3^{s,odd}$ instead of $K_3^{s,e}$, the singularity bound is the same. 
    It is only left to compute the integral part of $\frac{\partial}{\partial x} K_3^{s,o}$ evaluated at $X_i, y_j^{\text{mid}}$. To do that, we use the following recursion formula analogous to \eqref{K3so_recursion_formula}
    \begin{equation*}
    \begin{aligned}
        \frac{\partial}{\partial x} K_3^{s,o}(X_i, y_j^{\text{mid}}) \subseteq \int_{y_j^\text{mid}}^{y_{j+1}^{\text{mid}}}  \frac{d}{dx}(K_3(x,x-z))^{s,e}(X_i, y') dy' + \frac{\partial}{\partial x} K_3^{s,o}(X_i, y_{j+1}^{\text{mid}}).      
    \end{aligned}
    \end{equation*}
    The main difference is that instead of splitting the integral and use a $\mathscr{C}^2$ quadrature in the non singular domain, what we do is just a $\mathscr{C}^0$ quadrature, so we compute the integral as
    \begin{equation*}
        \int_{y_j^\text{mid}}^{y_{j+1}^{\text{mid}}}  \frac{d}{dx}(K_3(x,x-z))^{s,e}(X_i, y') dy' \subseteq \frac{d}{dx}(K_3(x,x-z))^{s,e}(X_i, [y_{j}^\text{mid}, y_{j+1}^{\text{mid}}]) \cdot (y_{j+1}^{\text{mid}} - y_{j}^\text{mid}).
    \end{equation*}
    Notice that since the function has a singularity, we can't do that straightforwardly: instead we use the same trick as before, we substitute $A$ by $A_\delta$ and add the corresponding singularity, given by the last inequality of Lemma \ref{singularities_K34}. We use $\delta = 5 \cdot 10^{-2}$. Here note that there is a difference with the previous uses of $A_\delta$, because we have the integral of a singularity, so it is well defined for all $x,y$, we add the singularity pointwise (in the first iteration of the recursion, as in \eqref{K3so_recursion_formula}). We can do this because we are doing a $\mathscr{C}^0$ enclosure, so truncating the function doesn't affect the integration method. It is the same principle that in the other uses of $A_\delta$, since we are also using a $\mathscr{C}^0$ enclosure to compute the $L^1 L^\infty$ norm.

    We have explained how for every $X_i, Y_j$ we compute $(\Tilde{K}-K_F)(X_i, Y_j)$, where $\Tilde{K}$ is $K$ with the truncated singularities of $K_4^{s,o}, \frac{\partial}{\partial x} K_3^{s,o}, \frac{\partial}{\partial y} K_3^{s,o}$ so to put it all together, what we do is, for every $X_i$ we compute $\Tilde{K}-K_F$ in $X_i, Y_j$ for all $j$, starting with $j= N^\text{grid}$ to be more efficient with the recursive formula of $K_3^{s,o}$. Then accumulating the values appropriately, we bound the corresponding norms
    \begin{align*}
        \| K-K_F\|_{L_y^1 L_x^\infty} &\leq \max_{i=1,..., N^\text{grid}}\sum_{j= 1}^{N^\text{grid}} |\Tilde{K}-K_F|(X_i, Y_j) \cdot \delta y  + \frac{\text{sing}K_4 + 2\text{sing}K_3 \cdot \delta x /2 + \text{sing}K_3 \cdot \delta y / 2 }{C_{K_1}}, \\
        \| K-K_F\|_{L_x^1 L_y^\infty} &\leq \max_{j=1, ..., N^\text{grid}}\sum_{j= 1}^{N^\text{grid}} |\Tilde{K}-K_F|(X_i, Y_j) \cdot \delta x + \frac{\text{sing}K_4 + 2\text{sing}K_3 \cdot \delta x /2 + \text{sing}K_3 \cdot \delta y / 2 }{C_{K_1}},
    \end{align*}
    where $\text{sing}K_i$ are the intervals obtained with the bounds of Lemma \ref{singularities_K34}, that satisfy $\int_{-\delta}^\delta |K_i(x,z)| dz \cdot [-1,1] \subseteq \text{sing}K_i$ for all $x$.
\end{detailssth}

\begin{detailssth}{Lemmas \ref{CTildeKone}, \ref{FixedPointConstantChecking}} The implementation is straightforward, although some constants have long expressions. We define the basic constants (i.e. $\varepsilon, \Omega, M_R, I_i, C_{K_1}, $etc.) and use the definition of more complex constants (i.e. $b_i, C_N, C_E, C_T, $etc.) to obtain their values.
\end{detailssth}

\begin{table}[htbp]
\centering
\begin{tabular}{|l|c|c|c|c|}
\hline
\textbf{Lemma} & \textbf{Compilation} & \textbf{Executable}  & \textbf{CPUs} & \textbf{Avg. time} (HH:MM:SS)  \\ \hline
Lemmas \ref{boundR0}, \ref{MdR0}, \ref{MddR0}, \ref{MdddR_MddddR} & fast\_lemmas & fast\_lemmas & 1 & $\sim$ 00:00:00 \\ \hline
Lemma \ref{Ineq_NonConvexity} & fast\_lemmas & fast\_lemmas & 1 & $\sim$ 00:00:00 \\ \hline
Lemma \ref{CE0} & boundE0 & boundE0\_parallel & 64 & 08:43:01\\ \hline
Lemma \ref{ddE0_bound} & medium\_lemmas& medium\_lemmas & 1 & 00:06:30 \\ \hline
Lemma \ref{K1bound} & medium\_lemmas & medium\_lemmas & 1 & 00:05:56\\ \hline
Lemma \ref{I+Ainvert} & fast\_lemmas & fast\_lemmas & 1 &  00:00:03 \\ \hline
Lemma \ref{L-L_Fbound} & bound\_K-KF& bound\_K-KF\_parallel & 64 & 29:11:11\\ \hline
Lemma \ref{CTildeKone} & constant\_checking & constant\_checking & 1 &  $\sim$ 00:00:00 \\ \hline
Lemma \ref{FixedPointConstantChecking} & constant\_checking & constant\_checking & 1 & $\sim$ 00:00:00  \\ \hline
\end{tabular}
\caption{Commands and performance of the code for different Lemmas} 
\label{table:lemmas_comp_run}
\end{table}

\section{Explicit coefficients of the approximate solution}
\subsection{Coefficients of the solution}
\label{ck_coefs}

\begin{definition}
    The coefficients $\{c_k\}$, representing the approximate solution $R_0$ are given in Table \ref{tab:coefficients}.
   
    \begin{table}[htbp]
    \centering
    \begin{tabular}{cccc}
        \hline
        \( k \) & \( c_k \) & \( k \) & \( c_k \) \\
        \hline
        0 & 1 & 8 & 5.787204471053671e-05 \\
        1 & 6.990356082734282e-02 & 9 & 2.719406328587438e-05 \\
        2 & 1.481485591846559e-02 & 10 & 1.302302594759571e-05 \\
        3 & 4.562581793349477e-03 & 11 & 6.333059955650702e-06 \\
        4 & 1.662067385043346e-03 & 12 & 3.119022088447539e-06 \\
        5 & 6.660065136771192e-04 & 13 & 1.552543629347692e-06 \\
        6 & 2.837414439454572e-04 & 14 & 7.798356164400106e-07 \\
        7 & 1.261501302020939e-04 & 15 & 3.947768555872326e-07 \\
        \hline
        16 & 2.012104965444521e-07 & 24 & 1.084839772995918e-09 \\
        17 & 1.031670056456975e-07 & 25 & 5.732502021897411e-10 \\
        18 & 5.317712460135162e-08 & 26 & 3.036541152512813e-10 \\
        19 & 2.753934675108659e-08 & 27 & 1.612094123238438e-10 \\
        20 & 1.432242380745116e-08 & 28 & 8.576430342713042e-11 \\
        21 & 7.477109926654982e-09 & 29 & 4.571554686889860e-11 \\
        22 & 3.916974811811910e-09 & 30 & 2.441210348018661e-11 \\
        23 & 2.058418356388450e-09 &  &  \\
        \hline
    \end{tabular}
    \caption{Coefficients \( c_k \) for \( k = 0, 1, \ldots, 30 \)}
    \label{tab:coefficients}
\end{table}
  
\end{definition}

\subsection{Coefficients of the finite rank approximation of the operator $K$}
\label{Akl_coefs}

\begin{definition}
    The coefficients of the $N\times N$ matrix $A_{k,l}$ can be found in the supplementary material, in the file \texttt{A.txt}.
\end{definition}

%\printbibliography

\bibliographystyle{abbrv}
\bibliography{references.bib}

\begin{tabular}{l}
  \textbf{Gerard Castro-L\'opez} \\
  {Department of Mathematics} \\
  {Brown University} \\
  {010 Kassar House, 151 Thayer St.} \\
  {Providence, RI 02912, USA} \\ \\
%  {Email: gerard\_castro\_lopez@brown.edu} \\ \\
%  {and} \\ \\
  {Department of Mathematics} \\
  {Universitat Politècnica de Catalunya} \\
  {Carrer Pau Gargallo, 14} \\
  {Barcelona, 08030, Spain} \\ \\
%  {Email: gerard.castro.lopez@estudiantat.upc.edu} \\ \\

  \textit{Current address: } \\
  
  {Department of Mathematics} \\
  {ETH Zurich} \\
  {R\"amistrasse 101} \\
  {Zurich, 8092, Switzerland} \\
  {Email: gcastro@student.ethz.ch} \\ \\
  
  \textbf{Javier G\'omez-Serrano}\\
  {Department of Mathematics} \\
  {Brown University} \\
  {314 Kassar House, 151 Thayer St.} \\
  {Providence, RI 02912, USA} \\
  {Email: javier\_gomez\_serrano@brown.edu} \\
\end{tabular}

\end{document}